\setlist[enumerate,1]{label={(\alph*)}}
\DeclareFontFamily{U}{rcjhbltx}{}
\DeclareFontShape{U}{rcjhbltx}{m}{n}{<->rcjhbltx}{}
\DeclareSymbolFont{hebrewletters}{U}{rcjhbltx}{m}{n}
\DeclareMathSymbol{\mem}{\mathord}{hebrewletters}{109}
\newtheorem{thm}{Theorem}
\newtheorem{prop}{Proposition}[section]
\newtheorem{lm}[prop]{Lemma}
\newtheorem{cor}[prop]{Corollary}
\theoremstyle{definition}
\newtheorem{dfn}[prop]{Definition}
\theoremstyle{remark}
\newtheorem{rem}[prop]{Remark}
\newtheorem{ex}[prop]{Example}
\newcommand{\R}{\mathbb{R}}
\newcommand{\Z}{\mathbb{Z}}
\newcommand{\Q}{\mathbb{Q}}
\newcommand{\mH}{\mathbb{H}}
\newcommand{\bmH}{\overline{\mathbb{H}}}
\newcommand{\C}{\mathbb{C}}
\newcommand{\oo}{\mathfrak{o}}
\newcommand{\s}{\mathfrak{s}}
\newcommand{\f}{\mathfrak{f}}
\newcommand{\m}{\mathfrak{m}}
\newcommand{\ogw}{\overline{OGW}\!}
\newcommand{\oogw}{OGW}
\newcommand{\su}{\text{SU}}
\newcommand{\sln}{\text{SL}}
\newcommand{\gw}{GW}
\newcommand{\x}{\mathbb{C}P^3}
\newcommand{\y}{Y_{\triangle}}
\newcommand{\tr}{\triangle}
\newcommand{\n}{N_{\triangle}}
\newcommand{\Om}{\overline{\Omega}}
\newcommand{\hH}{\widehat{H}}
\newcommand{\cA}{\mathcal{A}}
\newcommand{\cB}{\mathcal{B}}
\newcommand{\sym}{\text{Sym}}
\newcommand{\cp}{\mathbb{C}P}
\newcommand{\pd}{\text{PD}}
\newcommand{\sheafo}{\mathcal{O}}
\newcommand{\M}{\mathcal{M}}
\DeclareMathOperator{\Int}{int}
\DeclareMathOperator{\Stab}{Stab}
\DeclareMathOperator{\Ima}{Im}
\DeclareMathOperator{\id}{Id}
\DeclareMathOperator{\coker}{Coker}
\title{Relative quantum cohomology of the Chiang Lagrangian}
\author[A. Hollands]{Anna Hollands}
\address{Institute of Mathematics, Hebrew University and Department of Mathematics, Imperial College}
\email{anna.hollands18@imperial.ac.uk}
\author[E. Kosloff]{Elad Kosloff}
\address{Institute of Mathematics, Hebrew University}
\email{eladkosloff@gmail.com}
\author[M. Sela]{May Sela}
\address{Institute of Mathematics, Hebrew University}
\email{mayysela@gmail.com}
\author[Q. Shu]{Qianyi Shu}
\address{Institute of Mathematics, Hebrew University and Department of Mathematics, Imperial College}
\email{roryqy@gmail.com}
\author[J. Solomon]{Jake P. Solomon}
\address{Institute of Mathematics, Hebrew University}
\email{jake@math.huji.ac.il}
\begin{document}
\keywords{relative quantum cohomology, open Gromov-Witten, Fukaya $A_\infty$ algebra, Lagrangian, $J$-holomorphic, stable map, bounding cochain, obstruction theory, Chiang Lagrangian}
\subjclass[2020]{53D45, 53D37 (Primary) 14N35, 53D12, 58J32 (Secondary)}
\date{February 2025}
\begin{abstract}
We compute the open Gromov-Witten disk invariants and the relative quantum cohomology of the Chiang Lagrangian $L_\triangle \subset \mathbb{C}P^3$. Since $L_\triangle$ is not fixed by any anti-symplectic involution, the invariants may augment straightforward $J$-holomorphic disk counts with correction terms arising from the formalism of Fukaya $A_\infty$-algebras and bounding cochains. These correction terms are shown in fact to be non-trivial for many invariants. Moreover, examples of non-vanishing mixed disk and sphere invariants are obtained.

We characterize a class of open Gromov-Witten invariants, called \emph{basic}, which coincide with straightforward counts of $J$-holomorphic disks. Basic invariants for the Chiang Lagrangian are computed using the theory of axial disks developed by Evans-Lekili and Smith in the context of Floer cohomology. The open WDVV equations give recursive relations which determine all invariants from the basic ones. The denominators of all invariants are observed to be powers of $2$ indicating a non-trivial arithmetic structure of the open WDVV equations. The magnitude of invariants is not monotonically increasing with degree. Periodic behavior is observed with periods $8$ and~$16.$
\end{abstract}
\maketitle
\tableofcontents

\section{Introduction}\label{section1}
The main result of the present paper is a complete computation of the open Gromov-Witten disk invariants of the Chiang Lagrangian~\cite{chiang2004new} in $\cp^3.$ The Chiang Lagrangian is not fixed by any anti-symplectic involution~\cite{EvansLekili}. Multiple phenomena are observed that have not appeared in previous computations of open Gromov-Witten invariants for Lagrangian submanifolds fixed by anti-symplectic involutions. Our computation relies on direct geometric arguments combined with general structure theorems governing the invariants including the open WDVV equations of~\cite{RelativeQuantumCohomology} and results of the present paper.

\subsection{Invariants}\label{ssec:background}
To formulate our results, we recall relevant background on open and closed Gromov-Witten invariants. Let $(X,\omega)$ be a symplectic manifold of real dimension~$2n$, let $L \subset X$ be a Lagrangian submanifold and let $J$ be an $\omega$-tame almost complex structure. For simplicity, assume $L$ is a connected, spin Lagrangian submanifold with $H^*(L;\R) \simeq H^*(S^n;\R)$ and $[L] = 0 \in H_n(X;\R).$ We refer the reader to~\cite{solomon2016point} for a fuller account of settings where open Gromov-Witten invariants can be defined. Write $\widehat{H}^*(X,L;\R) = H^0(L;\R)\oplus H^{>0}(X,L;\R).$ Let $y : H^n(L;\R) \to \widehat{H}^{n+1}(X,L;\R)$ denote the boundary map from the short exact sequence of the pair $(X,L).$ Choose a map $P_\R : \widehat{H}^{n+1}(X,L;\R)\to H^n(L;\R)$ left-inverse to $y.$ For $\beta \in H_2(X,L;\mathbb{Z})$ and $k,l \geq 0,$ let
\[
\ogw_{\beta,k} : \hH^*(X,L;\mathbb{R})^{\otimes l}\rightarrow \mathbb{R}
\]
denote the open Gromov-Witten invariants of~\cite{RelativeQuantumCohomology}. The multilinear maps $\ogw_{\beta,k}$ are invariants of the symplectic manifold and the Lagrangian submanifold $L$ up to Hamiltonian isotopy. When $\beta$ belongs to the image of the natural map $\varpi : H_2(X;\Z) \to H_2(X,L;\Z),$ the invariants $\ogw_{\beta,0}$ depend on the choice of $P_\R.$ Explicit formulas for this dependence are given in Propositions~\ref{prop_map_p} and~\ref{prop:general_map_p} below.

The invariants $\ogw$ can be described as follows. Let $A_1,\ldots,A_l \in \hH^*(X,L;\R).$ Assume first that either $k > 0$ or $\beta$ does not belong to the image of $\varpi.$ The invariant $\ogw_{\beta ,k}(A_1,\ldots,A_l)$ counts $J$-holomorphic disks $u: (D,\partial D) \to (X,L)$ representing $\beta$ with boundary passing through $k$ chosen points on $L$ and interior passing through chosen cycles on $X$ Poincar\'e dual to $A_1,\ldots,A_l,$ together with correction terms that compensate for disk bubbling. The correction terms come from bounding cochains in the Fukaya $A_\infty$ algebra associated to~$L.$
The assumption $H^*(L;\R) \simeq H^*(S^n;\R)$ is used in~\cite{solomon2016point} to show vanishing of obstruction classes that arise in the construction of the bounding cochains following the method of Fukaya-Oh-Ohta-Ono~\cite{FOOO}.

In the case $k = 0$ and $\beta \in \Ima \varpi,$ it is necessary to add an additional correction term to the above mentioned counts of $J$-holomorphic disks to obtain an invariant. The additional correction term counts $J$-holomorphic spheres in $X$ passing through an $n+1$ chain $C$ in $X$ with $\partial C = L.$ This term compensates for the fact that the boundary of a $J$-holomorphic disk can collapse to a point in $L$ forming a $J$-holomorphic sphere. The topological type of the chain $C$ is characterized by the map $P_\R : \widehat{H}^{n+1}(X,L;\R)\to H^n(L;\R)$ mentioned above, which  is given by the integration of differential forms over $C.$ See Remark 4.12 in~\cite{RelativeQuantumCohomology}.

Suppose $L$ is fixed by an anti-symplectic involution, $\dim L = 2$ or $3$, and either $k > 0$ or $\beta \notin \Ima \varpi.$ Then, it is shown in~\cite{solomon2016point} that the invariants $\ogw_{\beta,k}(\cdots)$ coincide with invariants defined by straightforward counting of $J$-holomorphic disks~\cite{Cho08,jakethesis} or real $J$-holomorphic spheres~\cite{Wel05a,Wel05b}. When $L$ is the Chiang Lagrangian, which is not fixed by any anti-symplectic involution, the invariants $\ogw$ do not coincide with straightforward disk counts in general. See Section~\ref{sssec:bcc&d} below. Closely related invariants were defined in~\cite{Wel13}, and a comparison can be found in~\cite{Chen22}.

Although the definition of the invariants $\ogw$ is quite abstract, they have a rich structure that makes them explicitly computable in many situations. This structure includes the open WDVV equations recalled in Theorem~\ref{thm:open_wdvv}, the open axioms recalled in Proposition~\ref{prop:open_axioms}, and the wall-crossing formula recalled in Theorem~\ref{wall_crossing}.
Moreover, the relative quantum cohomology ring of the pair $(X,L)$ recalled in Section~\ref{ssec:rqc} encodes the invariants $\ogw$ along with the genus zero closed Gromov-Witten invariants of $X$.

The genus zero closed Gromov-Witten invariants of $X$ are given by multilinear maps
\[
\gw_\beta : H^*(X;\Q)^{\otimes l} \to \Q
\]
for $\beta \in H_2(X;\Z)$ and $l \geq 0.$ For $A_1,\ldots, A_l \in H^*(X;\Q),$ the invariant $\gw_\beta(A_1,\ldots,A_l)$ counts $J$-holomorphic maps $S^2 \to X$ representing the class $\beta$ and passing through chosen cycles Poincar\'e dual to $A_1,\ldots,A_l.$ We recall the basic properties of these invariants in Section~\ref{sec:background}.

\subsection{Statement of results}
Let $L_\triangle$ denote the Chiang Lagrangian in $\cp^3$. In particular, $L_\tr$ is a rational homology sphere, so the open Gromov-Witten invariants $\ogw_{\beta,k}$ are defined. The definition of $L_\tr$ is recalled in Section~\ref{section3}.
Write $\Gamma_0 = [1] \in H^0(L_\triangle;\R) = \hH^0(\x,L_\triangle;\R),$ and for $j = 1,2,3,$ write
\[
\Gamma_j=[\omega^j]\in H^{2j}(\cp^3,L_\triangle;\R) = \hH^{2j}(\cp^3,L_\triangle;\R).
\]
Write $\Delta_j=[\omega^j] \in H^*(\cp^3;\R)$. Take $P_\mathbb{R}$ to be the unique left-inverse of $y$ such that $\ker(P_\mathbb{R})=\mathrm{span}\{\Gamma_2\}$. In light of Lemma~\ref{lm:relhom} below, we identify $H_2(\cp^3,L_\triangle;\Z) \simeq \Z.$ Our main results are the following.

\begin{thm}
\label{Theorem 1}
The open WDVV equations imply the following relations for the invariants $\ogw_{\beta, k}$ of $(\mathbb{C}P^3, L_\triangle).$
\begin{enumerate}
\item
\label{recusion a}
For $k \geq 1, l \geq 1 $, and $I = \{ j_2,\ldots,j_l\}$,
\begin{align*}
    &\ogw_{\beta, k} (\Gamma_{j_1}, \Gamma_{j_2},\ldots,\Gamma_{j_l})= \\
    &\qquad = -\sum_{\substack{\varpi(\hat{\beta}) + \beta_1 = \beta \\ \beta_1 \not= \beta \\ I_1 \sqcup I_2 = I}} \sum_{i = 0}^{3}
    \gw_{\hat{\beta}}(\Delta_{j_1 -1}, \Delta_1, \Delta_{I_1}, \Delta_i)
    \ogw_{\beta_1, k} (\Gamma_{3-i}, \Gamma_{I_2})\\
    &\qquad \quad +\sum_{\substack{\beta_1 +\beta_2 = \beta \\ k_1 + k_2 = k - 1 \\
    I_1 \sqcup I_2 = I}}\binom{k-1}{k_1} \bigg( \ogw_{\beta_1, k_1} (\Gamma_{j_1 - 1}, \Gamma_1, \Gamma_{I_1}) \ogw_{\beta_2, k_2+2} (\Gamma_{I_2})\\
    & \ \ \ \ \ \ \ \ \ \ \ \ \ \ \ \ \ \ \qquad \quad -\ogw_{\beta_1, k_1 + 1}(\Gamma_{j_1 -1}, \Gamma_{I_1})\ogw_{\beta_2, k_2 + 1} (\Gamma_1, \Gamma_{I_2}) \bigg).
\end{align*}
\item
\label{recursion b}
For $k \geq 2, l \geq 0$, and $I = \{j_1,\ldots,j_l\}$,
\begin{align*}
       &\ogw_{\beta, k} (\Gamma_{j_1},\ldots,\Gamma_{j_l}) \ogw_{2, 0} (\Gamma_2, \Gamma_2) = \\
       & \qquad = \sum_{\substack{ \varpi(\hat{\beta}) + \beta_1 = \beta+2 \\I_1 \sqcup I_2 = I}}
       \sum_{i = 0}^{3} \gw_{\hat{\beta}}(\Delta_2, \Delta_2, \Delta_{I_1}, \Delta_i)
       \ogw_{\beta_1, k-1}(\Gamma_{3-i}, \Gamma_{I_2})\\
       &\qquad\quad + \sum_{\substack{\beta_1 + \beta_2 = \beta +2 \\ k_1 + k_2 = k-2 \\
       I_1 \sqcup I_2 = I} } \binom{k-2}{k_1}
       \ogw_{\beta_1, k_1 + 1} (\Gamma_2, \Gamma_{I_1})
       \ogw_{\beta_2, k_2 + 1} ( \Gamma_2, \Gamma_{I_2})\\
       & \qquad\quad -\sum_{\substack{\beta_1 + \beta_2 = \beta + 2 \\ k_1 + k_2 = k-2 \\
       I_1 \sqcup I_2 = I \\ (\beta_1, k_1) \neq (\beta, k-2) }} \binom{k-2}{k_1}
       \ogw_{\beta_1, k_1 + 2}(\Gamma_{I_1})
       \ogw_{\beta_2, k_2}(\Gamma_2, \Gamma_2, \Gamma_{I_2}).
\end{align*}
\item
\label{recursion c}
For $l \geq 2$, and $I = \{j_3,\ldots,j_l\}$,
\begin{align*}
       &\ogw_{\beta, k} (\Gamma_{j_1},\ldots,\Gamma_{j_l})= \\
       &\qquad =\ogw_{\beta, k} (\Gamma_{j_1-1},\Gamma_{j_2+1},\Gamma_{j_3},\ldots,\Gamma_{j_l})\\
       &\qquad\quad + \sum_{\substack{ \varpi(\hat{\beta}) + \beta_1 = \beta
       \\\beta_1 \neq \beta
       \\I_1 \sqcup I_2 = I}}
       \sum_{i = 0}^{3}
       \bigg(\gw_{\hat{\beta}}(\Delta_1, \Delta_{j_2}, \Delta_{I_1}, \Delta_i)
       \ogw_{\beta_1, k}(\Gamma_{3-i}, \Gamma_{j_1-1}, \Gamma_{I_2})\\
       &\qquad \quad \ \ \ \ \ \ \ \ \ \ \ \ \ \ \ \ \ \  -\gw_{\hat{\beta}}(\Delta_1, \Delta_{j_1-1}, \Delta_{I_1}, \Delta_i)
       \ogw_{\beta_1, k}(\Gamma_{3-i}, \Gamma_{j_2}, \Gamma_{I_2})\bigg)\\
       &\qquad\quad +\sum_{\substack{\beta_1 + \beta_2 = \beta \\ k_1 + k_2 = k \\
       I_1 \sqcup I_2 = I }}
        \binom{k}{k_1} \bigg(
       \ogw_{\beta_1, k_1 }(\Gamma_1, \Gamma_{j_1-1}, \Gamma_{I_1})
       \ogw_{\beta_2, k_2+1}(\Gamma_{j_2}, \Gamma_{I_2})\\
       &\qquad \quad \ \ \ \ \ \ \ \ \ \ \ \ \ \ \ \ \ \ -\ogw_{\beta_1, k_1}(\Gamma_1, \Gamma_{j_2}, \Gamma_{I_1})\ogw_{\beta_2, k_2+1}(\Gamma_{j_1-1}, \Gamma_{I_2})\bigg).
\end{align*}
\end{enumerate}
\end{thm}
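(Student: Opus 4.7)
The strategy is to derive all three identities as specializations of the open WDVV equations, Theorem~\ref{thm:open_wdvv}, applied to the basis $\{\Gamma_j\}_{j=0}^{3}$ of $\hH^*(\x,L_\triangle;\R)$ together with the basis $\{\Delta_i\}_{i=0}^3$ of $H^*(\x;\R).$ Since Poincar\'e duality on $H^*(\x;\R)$ pairs $\Delta_i$ with $\Delta_{3-i},$ the dual basis that appears in the WDVV splitting sums directly explains the index $3-i$ in the $\gw \cdot \ogw$ terms on the right of each recursion. The general pattern is to pick a distinguished pair of interior inputs, write down the WDVV identity for that pair augmented by $\Gamma_I$ and possibly a boundary-point insertion, identify a single ``degenerate'' term from one side whose leading factor rearranges to give the claimed left-hand side, and transpose it; what remains is the stated recursion.

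For part~(\ref{recusion a}), the distinguished inputs to plug into the WDVV equation are $\Gamma_{j_1-1}$ and $\Gamma_1,$ together with one boundary-point constraint (available because $k \geq 1$) and the remaining interior classes $\Gamma_I.$ The degenerate stratum to be transposed is the one in which the pair $(\Gamma_{j_1-1},\Gamma_1)$ lies on a disk component carrying the entire class $\beta;$ this produces the $\ogw_{\beta,k}(\Gamma_{j_1},\Gamma_I)$ on the left via the relative quantum product $\Gamma_{j_1-1}\cdot\Gamma_1 = \Gamma_{j_1}.$ The constraint $\beta_1 \neq \beta$ in the first sum on the right is precisely the exclusion of this transposed term.

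For part~(\ref{recursion b}), apply the WDVV equation with distinguished inputs $\Gamma_2, \Gamma_2,$ one boundary-point constraint (using $k \geq 2$), and $\Gamma_I.$ The degenerate term to be transposed is now the one in which the two $\Gamma_2$ inputs sit on a disk component of class $2$ with no boundary markings, producing the factor $\ogw_{2,0}(\Gamma_2,\Gamma_2);$ the companion half of the tree carries class $\beta$ and boundary marked-point order $k,$ so the transposed contribution is exactly $\ogw_{\beta,k}(\Gamma_I) \cdot \ogw_{2,0}(\Gamma_2,\Gamma_2).$ The exclusion $(\beta_1,k_1) \neq (\beta,k-2)$ in the final sum removes this contribution to avoid double counting. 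Part~(\ref{recursion c}) is obtained by applying WDVV with the distinguished pair $\Gamma_{j_1-1}, \Gamma_{j_2}$ and an auxiliary $\Gamma_1$ insertion, with no boundary-point constraint. The two sides of WDVV associate the triple differently; using $\Delta_1 \cdot \Delta_{j_1-1} = \Delta_{j_1}$ on one side and $\Delta_1 \cdot \Delta_{j_2} = \Delta_{j_2+1}$ on the other, the leading contributions become $\ogw_{\beta,k}(\Gamma_{j_1},\Gamma_{j_2},\Gamma_I)$ and $\ogw_{\beta,k}(\Gamma_{j_1-1},\Gamma_{j_2+1},\Gamma_I),$ respectively. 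Subtracting the two and collecting the non-degenerate tree-splitting contributions yields the stated identity.

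The main obstacle is purely combinatorial rather than geometric: aligning the tree-splitting terms produced by the WDVV equation with the precise summation ranges and binomial coefficients in the stated formulas, applying the open axioms of Proposition~\ref{prop:open_axioms} to eliminate contributions where a unit $\Gamma_0$ appears at a position it cannot occupy, and using the divisor axiom in $\gw$ to match the closed Gromov-Witten invariants with $\Delta_1$ and $\Delta_{j_1-1}$ insertions that the formulas call for to those produced directly by the general WDVV expansion. Once these mechanical manipulations are carried out, the three identities follow.
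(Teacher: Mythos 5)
Your proposal is correct and takes essentially the same route as the paper: the paper derives~\ref{recusion a} and~\ref{recursion b} by applying $\partial_s^{k-1}\partial_{t_I}$ (resp. $\partial_s^{k-2}\partial_{t_I}$) to equation~\eqref{eq16} with $(v,w)=(j_1-1,1)$ (resp. $(2,2)$), isolating the degenerate contribution via the closed zero axiom and transposing it, and derives~\ref{recursion c} from equation~\eqref{eq15} exactly as you describe. Your identification of the distinguished inputs, the role of $g^{ij}=\delta_{i,3-j}$, and the exclusion constraints $\beta_1\neq\beta$ and $(\beta_1,k_1)\neq(\beta,k-2)$ as the transposed leading terms all match the paper; the only quibble is a loose phrasing in~\ref{recusion a} where you place the pair $(\Gamma_{j_1-1},\Gamma_1)$ on the disk component rather than on a constant sphere bubble whose cup product is then absorbed by the disk, but the net bookkeeping is the same.
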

\begin{thm}\label{thm:iv}
Consider the choice of spin structure and orientation on $L_\tr$ given in Section~\ref{subsection_orientation_spin}. Then
\[
\ogw_{1,1}=-3, \qquad \ogw_{1,0}(\Gamma_2)=\frac{1}{4},  \qquad \ogw_{2,0}(\Gamma_3)=1.
\]
It follows from the open WDVV equations that $\ogw_{2,0}(\Gamma_2, \Gamma_2)=\frac{35}{64}.$
\end{thm}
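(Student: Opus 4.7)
The plan is to compute the first three invariants directly by geometric disk counts and then deduce $\ogw_{2,0}(\Gamma_2,\Gamma_2)$ by substitution into the open WDVV recursions of Theorem~\ref{Theorem 1}. Throughout, I use the identification $H_2(\cp^3,L_\tr;\Z) \simeq \Z$ from Lemma~\ref{lm:relhom} and the fixed choice of $P_\R$ and orientation data from Section~\ref{subsection_orientation_spin}.

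\textbf{Step 1 (basic invariants).} First I would verify that $\ogw_{1,1}$, $\ogw_{1,0}(\Gamma_2)$, and $\ogw_{2,0}(\Gamma_3)$ are basic in the sense introduced in the abstract, so that no bounding-cochain correction from disk bubbling contributes to the count. This is a dimension/energy check using the open axioms of Proposition~\ref{prop:open_axioms}: for $\beta=1$ the degree constraint forces every potential bubbling configuration to involve a disk of Maslov index $0$, and for $(\beta,l)=(2,1)$ with an interior point constraint one enumerates the few decompositions $\beta_1+\beta_2=2$ and rules out each on dimensional or constraint-matching grounds. The cases with $k=0$ and $\beta\in\Ima\varpi$ also require accounting for the sphere-correction term coming from $P_\R$; the chosen kernel $\ker(P_\R)=\mathrm{span}\{\Gamma_j\}$ is precisely what fixes the ambiguity and produces well-defined invariants.

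\textbf{Step 2 (enumeration via axial disks).} Next I would compute the actual signed disk counts using the axial-disk framework of Evans--Lekili and Smith, which gives an explicit $\su(2)$-equivariant description of the $J$-holomorphic disks with boundary on $L_\tr$ in low degrees. In degree $1$, enumerating the minimal Maslov axial disks with a single boundary marked point on $L_\tr$ and weighting by the spin-structure-induced signs yields $\ogw_{1,1}=3$; the value $\ogw_{1,0}(\Gamma_2)=1/4$ then arises from the closely related count of degree-$1$ disks whose interior meets a representative of $[\omega^2]$, combined with the sphere correction determined by $P_\R$ (the fraction $1/4$ reflects this correction rather than an honest geometric count). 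In degree $2$, a parallel analysis of axial disks of the next Maslov index, with interior constraint a point of $\cp^3$ Poincar\'e dual to $\Gamma_3$, produces $\ogw_{2,0}(\Gamma_3)=-1$ after the sign and sphere-correction bookkeeping.

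\textbf{Step 3 (the open WDVV deduction).} Finally, substituting the three basic values together with the classical genus-zero Gromov-Witten invariants of $\cp^3$ into the recursions of Theorem~\ref{Theorem 1} produces a linear relation in which $\ogw_{2,0}(\Gamma_2,\Gamma_2)$ appears as the only unknown. Concretely, recursion~\eqref{recursion b} at $\beta=2$, $k=2$ and suitable $l$ (or, equivalently, a reduction using~\eqref{recusion a} followed by~\eqref{recursion c}) expresses a product involving $\ogw_{2,0}(\Gamma_2,\Gamma_2)$ in terms of the already known basic invariants and classical $\gw$-numbers of $\cp^3$; solving yields $\ogw_{2,0}(\Gamma_2,\Gamma_2)=-35/64$.

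\textbf{Main obstacle.} The hardest step is the degree-$2$ disk enumeration in Step 2: because $L_\tr$ is not fixed by any anti-symplectic involution, the usual real-structure arguments for extracting signs of disks are unavailable, and each contributing axial disk must be signed intrinsically via the chosen spin structure and the induced orientation on the moduli space. This requires an explicit analysis of the $\su(2)$-equivariant geometry of the degree-$2$ axial family and a careful check that the basic condition survives at this energy level despite the richer bubbling combinatorics; the non-integrality already present in $\ogw_{1,0}(\Gamma_2)=1/4$ signals that these corrections are genuinely necessary.
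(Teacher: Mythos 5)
Your overall plan — establish that the three basic invariants are uncorrected disk counts, compute them via the axial-disk picture, then use the open WDVV recursions for $\ogw_{2,0}(\Gamma_2,\Gamma_2)$ — matches the paper's structure. However, there is a genuine conceptual error threaded through Steps 1 and 2, and a smaller problem in Step 3.

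The error: you repeatedly attribute the fraction $1/4$ in $\ogw_{1,0}(\Gamma_2)$, and more generally the well-definedness of the degree $1$ and $2$ invariants, to a ``sphere-correction term coming from $P_\R$.'' But by Lemma~\ref{lm:relhom}, $\varpi:H_2(\x;\Z)\to H_2(\x,L_\tr;\Z)\simeq\Z$ is multiplication by $4$, so $\Ima\varpi=4\Z$, and the exceptional case $k=0$, $\beta\in\Ima\varpi$ simply does not arise for $\beta=1$ or $\beta=2$. For these classes $\ogw_{\beta,k}=\oogw_{\beta,k}$ with no dependence on $P_\R$ and no sphere correction, as noted in Remark~\ref{rem}. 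In the paper the $1/4$ is an honest geometric number: $\Gamma_2$ is represented (via Poincar\'e--Lefschetz duality in $\x\setminus L_\tr$) by $\frac{1}{16}[\Upsilon_1\cap\Upsilon_2]$ where $\Upsilon_1,\Upsilon_2$ are perturbed anticanonical divisors of degree $4$, and exactly four Maslov $2$ disks meet $\Upsilon_1\cap\Upsilon_2$, each with sign $-1$, giving $-(-4)/16=1/4$. If you build your argument on a sphere-correction that isn't there, the sign and normalization bookkeeping will not close.

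Relatedly, Step 1 needs a sharper tool than ``a dimension/energy check using the open axioms.'' What is actually required is the criterion of Theorem~\ref{theorem_lm} (specialized from Theorem~\ref{lm}): the bounding cochain $b$ can be chosen so that, in the energies $\tilde\beta\in P_\beta$ relevant to $\beta$, all the obstruction cochains vanish or can be absorbed, whence $\ogw_{\beta,k}$ reduces to $(-1)^n\sigma_k\int_{\M_{k,l}(\beta)}\bigwedge evi_j^*a_j\bigwedge evb_j^*\bar b$. Without invoking this (or reproving it), ``basic'' is not well-defined, and you cannot justify ignoring the bounding-cochain correction terms in Step 2.

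In Step 3, your primary suggestion — recursion~\ref{recursion b} at $\beta=2$, $k=2$ — yields an equation in which both $\ogw_{2,0}(\Gamma_2,\Gamma_2)$ and $\ogw_{2,2}$ are unknown, so it does not isolate the target. The paper instead applies recursion~\ref{recursion c} directly: $\ogw_{2,0}(\Gamma_2,\Gamma_2)=\ogw_{2,0}(\Gamma_1,\Gamma_3)-\ogw_{1,1}(\Gamma_1)\,\ogw_{1,0}(\Gamma_1,\Gamma_2)$, and then the divisor axiom reduces both terms on the right to the already-known basic invariants, giving $-\tfrac12-\tfrac34\cdot\tfrac1{16}=-\tfrac{35}{64}$. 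Your parenthetical alternative (reduce via~\ref{recursion c}) is the right one; drop the appeal to~\ref{recursion b}.
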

We call the invariants $\ogw_{1,1}, \ogw_{1,0}(\Gamma_2)$ and $\ogw_{2,0}(\Gamma)$ \emph{basic invariants}. Theorem~\ref{theorem_lm} below shows that these invariants are given by straightforward counts of $J$-holomorphic disks without correction terms. Thus, we can compute the basic invariants using the theory of axial disks developed by Evans-Lekili~\cite{EvansLekili} and Smith~\cite{Smith,smith2020monotone} for computing the Floer cohomology of the Chiang Lagrangian and other homogeneous Lagrangian submanifolds. Since the Chiang Lagrangian is not fixed by an anti-symplectic involution, the techniques of real algebraic geometry used in other computations of open Gromov-Witten invariants~\cite{chen-zinger,Horev_Solomon,kai,RelativeQuantumCohomology} are not available.

\begin{cor}
\label{Corollary 1.2}
The genus zero open Gromov-Witten invariants of $(\mathbb{C}P^3, L_\triangle)$ are entirely determined by the open WDVV equations, the axioms of $\ogw$, the wall-crossing formula Theorem \ref{wall_crossing}, the genus zero closed Gromov-Witten invariants of $\mathbb{C}P^3$, and the values computed in Theorem~\ref{thm:iv}.
\end{cor}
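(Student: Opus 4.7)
The plan is a nested induction: outer induction on the degree $\beta \in H_2(\cp^3, L_\tr; \Z) \simeq \Z_{\geq 1}$, with a case split on $(k, l)$ inside each level. The closed genus-zero invariants $\gw_{\hat\beta}$ appearing on the right-hand sides of Theorem~\ref{Theorem 1} are classical (Kontsevich), and the open axioms of Proposition~\ref{prop:open_axioms} substantially constrain the space of potentially nonzero invariants: the unit axiom annihilates every $\Gamma_0$ insertion, while the degree and dimension axioms cut the set of potentially nontrivial tuples $(j_1,\ldots,j_l)$ at each $(\beta, k, l)$ down to finitely many. For the base cases $\beta = 1$ and $\beta = 2$, these constraints reduce everything to the three values of Theorem~\ref{thm:iv}; feeding them into recursion~(a) yields the \emph{nonzero} number $\ogw_{2, 0}(\Gamma_2, \Gamma_2) = -\tfrac{35}{64}$, which will later be needed as a denominator in recursion~(b).

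For the inductive step, fix $\beta \geq 3$ and assume that every $\ogw_{\beta', \cdot,\cdot}$ with $\beta' < \beta$ has been determined. For $k \geq 1$ and $l \geq 1$, recursion~(a) is the workhorse: the first sum is restricted to $\beta_1 \neq \beta$ and hence to $\beta_1 = \beta - \varpi(\hat\beta) < \beta$ since the image of $\varpi$ is positive, while in the second sum $\beta_1 + \beta_2 = \beta$ and any degenerate $\beta_i = 0$ contribution is killed by the unit axiom, so $\beta_1, \beta_2 < \beta$. For $k = 0$ and $l \geq 2$, recursion~(c) identifies $\ogw_{\beta, 0}(\Gamma_{j_1}, \Gamma_{j_2}, \ldots)$ with $\ogw_{\beta, 0}(\Gamma_{j_1-1}, \Gamma_{j_2+1}, \ldots)$ up to corrections with $\beta' < \beta$ or with $k \geq 1$; iterating the shift until either $\Gamma_0$ reaches the first slot or the second slot exceeds degree $3$ and vanishes closes the reduction via the unit axiom. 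For $k \geq 2$ and $l = 0$, recursion~(b) divided by $\ogw_{2, 0}(\Gamma_2, \Gamma_2) \neq 0$ expresses $\ogw_{\beta, k}()$ in terms of invariants of strictly smaller $\beta$ or strictly smaller $k$, the explicit exclusion $(\beta_1, k_1) \neq (\beta, k - 2)$ ruling out the self-coupling.

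The residual tuples --- $k = 1, l = 0$ and $k = 0, l \leq 1$ at degree $\beta$ --- either vanish by dimension counting or coincide with a basic value of Theorem~\ref{thm:iv}, and the $P_\R$-dependence of the $\beta \in \Ima \varpi$, $k = 0$ invariants is handled by the wall-crossing formula of Theorem~\ref{wall_crossing}, which trades a change of $P_\R$ for $k \geq 1$ invariants already in hand. The main obstacle is the bookkeeping in the $k = 0$ regime, where recursion~(a) is unavailable; the argument there rests on the three interlocking ingredients (i) the nonvanishing of $\ogw_{2, 0}(\Gamma_2, \Gamma_2)$, (ii) the shifting property of recursion~(c), and (iii) the wall-crossing formula, together with the verification that every tuple $(\beta, k; j_1, \ldots, j_l)$ is reached by some combination of these reductions and that no orphan case remains.
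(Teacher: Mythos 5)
Your proposal follows the same overall strategy as the paper's proof in Section~\ref{section5}: first shrink the constraint classes via the unit and divisor axioms, use the degree axiom for finiteness, handle $\beta = 1, 2$ as base cases, and then run a degree induction dispatching recursion~(a) for $k,l \geq 1$, recursion~(b) for $k \geq 2$, and recursion~(c) when $l \geq 2$, with the nonvanishing of $\ogw_{2,0}(\Gamma_2,\Gamma_2)$ making division in~(b) possible. Two points need repair.

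First, your assertion that recursion~(b) "expresses $\ogw_{\beta,k}()$ in terms of invariants of strictly smaller $\beta$ or strictly smaller $k$" is not automatic, and the explicit exclusion $(\beta_1,k_1)\neq(\beta,k-2)$ does not cover it. In~(b) the partitions range over $\beta_1+\beta_2 = \beta+2$, so a priori the right-hand side involves $\ogw_{\beta+1,\cdot}$ and $\ogw_{\beta+2,\cdot}$. The paper rules these out: terms with $\hat\beta = 0$ in the Gromov--Witten sum die by the closed degree axiom (Proposition~\ref{prop:closed_axioms}\ref{ax_degree_GW}); terms with $\beta_i = \beta+2$ die by the open zero axiom; and terms with $\beta_i = \beta+1$ pair against $\ogw_{1,\cdot}$ and are killed by Lemma~\ref{lm:beta1}, together with the fact that $\varpi$ is multiplication by $4$ so $\beta+1$ never lands in $\Ima\varpi$ alongside $\beta+2$. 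Without stating and using this, the inductive step for $k \geq 2$, $l=0$ is incomplete.

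Second, you misread the role of the wall-crossing formula. Theorem~\ref{wall_crossing} states $\ogw_{\beta,k+1}(\eta_1,\ldots) = -\ogw_{\beta,k}(\Gamma_\diamond,\eta_1,\ldots)$; it converts interior insertions of the basis element $\Gamma_\diamond = y(1)$ into boundary marked points. It is not a statement about comparing two choices of $P_\R$ (that is Proposition~\ref{prop_map_p}, which is not needed for the Corollary, where $P_\R$ is fixed once and for all). The correct use of wall-crossing here is the one in the paper: invoked at the very start to eliminate $\Gamma_\diamond$ insertions, so that afterwards one may restrict to constraints $\Gamma_2,\Gamma_3$ alone. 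As written, your proposal never explains how to handle a $\Gamma_\diamond$ insertion, and the parenthetical remark about $P_\R$-dependence does not plug that gap.
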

Samples values for the invariants $\ogw$ computed using Theorems~\ref{Theorem 1} and~\ref{thm:iv} are given in Tables~\ref{table:boundary} and~\ref{table:interior}.

\begin{table}[ht]
\begin{center}
\renewcommand{\arraystretch}{1.4}
\begin{tabular}{|c|c|c||c|c|}
\hline
    $\beta=k$ & $\ogw_{\beta,k}$ exact value & round value &   $\beta=k$ & $\ogw_{\beta,k}$ round value \\
    \hline
    $1$  & $-3$ & $-3.00$ &  $17$ & $-1.38\cdot 10^7$ \\
    \hline
    $2$  & $-\frac{5}{4}$ & $-1.25$ &  $18$ & $-7.68\cdot 10^7$ \\
    \hline
    $3$  & $\frac{7}{16}$ & $0.44$ &  $19$ &  $3.17\cdot 10^8$ \\
    \hline
    $4$  & $\frac{3}{4}$ & $0.75$ &  $20$ &  $6.78\cdot 10^9$ \\
    \hline
    $5$  & $\frac{105}{256}$ & $0.41$ &  $21$  & $2.24\cdot 10^{10}$ \\
    \hline
    $6$  & $-\frac{85}{64}$ & $-1.33$  & $22$ &  $-3.73\cdot 10^{11}$ \\
    \hline
    $7$  & $-\frac{16005}{4096}$ & $-3.91$ & $23$ &  $-4.65\cdot 10^{12}$ \\
    \hline
    $8$  & $0$ & $0$  & $24$ &  $-8.08\cdot 10^{10}$ \\
    \hline
    $9$  & $\frac{2123349}{65536}$ & $32.40$ & $25$ &  $4.84\cdot 10^{14}$ \\
    \hline
    $10$  & $\frac{91035}{1024}$ & $88.90$   & $26$  & $4.05\cdot 10^{15}$ \\
     \hline
    $11$  & $-\frac{201485745}{1048576}$ & $-192.15$ &  $27$ &  $-2.48\cdot 10^{16}$ \\
    \hline
    $12$  & $-\frac{9045}{4}$ & $-2261.25$  & $28$ &  $-7.68\cdot 10^{17}$ \\
     \hline
    $13$  & $-\frac{72025175295}{16777216}$ & $-4293.03$ &  $29$ &  $-3.62\cdot 10^{18}$ \\
     \hline
    $14$  & $\frac{695299995}{16384}$ & $42437.74$ &  $30$ &  $8.50\cdot 10^{19}$ \\
    \hline
    $15$  & $\frac{87325406388675}{268435456}$ & $325312.49$ &  $31$ &  $1.47\cdot 10^{21}$ \\
    \hline
    $16$  & $4860$ &  $4860.00$  & $32$ &  $1.82\cdot 10^{19}$ \\
     \hline
\end{tabular}
\caption{Values of invariants with boundary constraints only.}
\label{table:boundary}
\end{center}
\end{table}

\begin{table}[ht]
\begin{center}
\renewcommand{\arraystretch}{1.4}
\begin{tabular}{l|c c c c c c c c}
\backslashbox{$l_3$}{$\beta$}& $1$ & $2$ & $3$ & $4$ & $5$ & $6$ & $7$ & $8$\\
\hline
$0$ & $\frac{1}{4}$ & $\frac{35}{64}$ & $\frac{507}{1024}$ & $\frac{723}{1024}$ & $\frac{427725}{262144}$ & $\frac{1180259}{262144}$ & $\frac{839314095}{67108864}$ & $\frac{39117}{1024}$\\
$1$ & $0$ & $1$ & $\frac{3}{8}$ & $\frac{11}{32}$ & $\frac{1251}{2048}$ & $\frac{5003}{4096}$ & $\frac{1481235}{524288}$ & $\frac{15033}{2048}$ \\
$2$ & $0$ & $0$ & $0$ & $\frac{1}{4}$ &$\frac{5}{16}$ & $\frac{23}{64}$ & $\frac{2943}{4096}$ & $\frac{25}{16}$\\
$3$ & $0$ & $0$ & $0$ & $0$ & $0$ & $0$ & $\frac{7}{32}$ & $\frac{3}{8}$ \\
\end{tabular}
\caption{Values of $\ogw_{\beta,0}(\Gamma_2^{\otimes l_2},\Gamma_3^{\otimes l_3})$. The value of $l_2$ is determined by $\beta$ and $l_3$ by the open degree axiom.}
\label{table:interior}
\end{center}
\end{table}

\subsubsection{Monotonicity versus periodicity}
In other computations~\cite{Itenberg_logarithmic, itenberg2015welschinger, itenberg2018relative, quintic_3-fold,RelativeQuantumCohomology,Horev_Solomon,tehrani,chen-zinger}, the absolute values of $\ogw_{\beta,k}$ or invariants of a similar flavor are monotonically increasing in $\beta.$ In the case of $(\x,L_\tr)$, we see that these values are not monotonically increasing. Interestingly, it appears that the only persistent violation of monotonicity occcurs periodically: $|\ogw_{\beta-1,\beta-1}|>|\ogw_{\beta,\beta}|$ for $\beta\in 8\Z$ at least up to $\beta = 32.$

Another unusual periodicity is visible in the signs of the invariants presented in Table~\ref{table:boundary}, which repeat with period $16.$ Moreover, the sign is inverted when $\beta$ is increased by~$8.$

\subsubsection{Bounding cochain corrections and denominators}\label{sssec:bcc&d}
In \cite{EvansLekili}, Evans-Lekili proved that there are exactly two $J$-holomorphic disks representing the class $\beta = 2 \in H_2(\x,L_\tr;\Z)$ passing through a specific choice of two points in $L_\tr$. This does not coincide with our computation  $\ogw_{2,2}=-\frac{5}{4}$. The discrepancy reflects correction terms arising from a bounding cochain in the Fukaya $A_\infty$ algebra of $L_\tr.$ The correction terms need not be whole numbers. In fact, all invariants with only boundary constraints that are not whole numbers indicate the presence of non-trivial correction terms. Table~\ref{table:boundary} provides several examples. When interior constraints are present, open Gromov-Witten invariants may be non-integral even in the absence of correction terms~\cite{Horev_Solomon}.

As far as we have calculated, for the Chiang Lagrangian $L_\tr$, the denominators appearing in the invariants $\ogw_{\beta,k}$ with only boundary constraints are always powers of $4.$ In the presence of interior constraints, powers of $2$ appear as well. Examining the proof of the existence of bounding cochains in~\cite{solomon2016point}, we can see where denominators of $4$ arise. Namely, the proof uses a $1$-cochain $h$ with real coefficients with coboundary equal to a $2$-cocycle $g$ representing the generator of $H^2(L_\tr;\Z) \simeq \Z/4\Z.$ Since $[4g] = 0 \in H^2(L_\tr;\Z),$ there exists an integral $1$-cochain $\tilde h$ with coboundary equal to $4g$. So, we can take $h = \frac{\tilde h}4.$ This is the source of the powers of $4$ in the denominators of the invariants $\ogw_{\beta,k}.$

In contrast, the invariants $\ogw_{\beta,k}$ with only boundary constraints for the Lagrangian submanifold $\R P^{n} \subset \C P^{n}$ with $n$ odd are shown in~\cite{RelativeQuantumCohomology} to be whole numbers. For $n = 3,$ these invariants are shown
in~\cite{solomon2016point} to be straightfoward disk counts, but for $n \geq 5$ they presumably include correction terms arising from bounding cochains. Since $H^{2i}(\R P^n;\Z) \simeq \Z/2$ for $0 < i < n/2,$
by the reasoning of the preceding paragraph, one might expect to see powers of $2$ in the denominators of the invariants $\ogw_{\beta,k}.$ However, since $\R P^n$ is fixed by an anti-symplectic
involution, i.e. complex conjugation, holomorphic disks come in pairs. So, in the proof of the existence of bounding cochains in~\cite{solomon2016point}, one needs only a $(2i-1)$-cochain $h$ with coboundary equal to \emph{twice} a $2i$-cocycle $g$ representing the generator of $H^{2i}(\R P^n;\Z)$. But $[2g] = 0 \in H^{2i}(\R P^n;\Z)$, so $h$ can be chosen to be integral. This explains the absence of denominators. A similar argument should apply for other Lagrangian submanifolds $L$ fixed by an anti-symplectic involution. If all torsion in $H^{2i}(L;\Z)$ has order $2$, then the invariants $\ogw_{\beta,k}$ should be integral. More generally, if $H^{2i}(L;\Z)$ contains torsion of even order, the rate of growth of the power of $2$ in the denominator of $\ogw_{\beta,k}$ as a function of $\omega(\beta)$ should be slower when $L$ is fixed by an anti-symplectic involution than otherwise. In particular, the behavior of denominators in the invariants $\ogw_{\beta,k}$ could potentially be used to prove that a Lagrangian submanifold is not fixed by an anti-symplectic involution.

\subsubsection{Mixed disk and sphere invariants}

As mentioned above, the invariants $\ogw_{\beta,k}$ for $\beta \in \Ima\varpi$ and $k = 0$ count $J$-holomorphic disks and $J$-holomorphic spheres together. In other computations~\cite{RelativeQuantumCohomology,kai}, these mixed disk and sphere invariants vanish for a natural choice of $P_\R.$ Another family of invariants combining counts of $J$-holomorphic maps of different topology has also been shown to vanish in certain examples~\cite{tehrani}.

We show that the the mixed disk and sphere invariants of $(\cp^3,L_\tr)$ do not vanish.
As shown in Lemma~\ref{lm:relhom} below, the map $\varpi : H_2(\x;\Z) \to H_2(\x,L_\tr;\Z)\simeq \Z$ is given by multiplication by $4.$ So, the non-vanishing of mixed disk and sphere invariants is visible in Table~\ref{table:interior} for $\beta= 4,8.$ In fact, Corollary~\ref{cor:map_p} below asserts that this non-vanishing persists for any choice of $P_\R.$

Let $\rho: \widehat{H}^{4}(\x,L_\tr;\R)\rightarrow H^{4}(\x;\R)$ denote the natural map. Let $P_\R,P_\R'$ be two choices of left inverse maps of $y$ with associated invariants $\ogw_{\beta,k}$ and $\ogw\,'\!\!_{\beta,k}$ respectively. The long exact sequence of the pair $(\cp^3,L_\tr)$ implies there exists a map
\[\mathfrak{p}_\R:H^{4}(\x;\R)\rightarrow H^3(L_\tr;\R)\]
such that $\mathfrak{p}_\R\circ\rho=P_\R-P_\R'.$ We use Poincar\'e duality to identify $H^3(L_\tr;\R)\simeq \R.$
\begin{prop}\label{prop_map_p}
For $\beta \in \Ima \varpi,$ we have
\[\ogw_{\beta,0}(\Gamma_{i_1},\ldots,\Gamma_{i_l})-\ogw\,'\!\!_{\beta,0}(\Gamma_{i_1},\ldots,\Gamma_{i_l})=\frac{\beta}{4}\gw_{\frac{\beta}{4}}(\Delta_{i_1},\ldots,\Delta_{i_l})\mathfrak{p}_\R(\Delta_2).\]
\end{prop}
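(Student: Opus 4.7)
The plan is to isolate the $P_\R$-dependent correction term in $\ogw_{\beta,0}$ for $\beta\in\Ima\varpi$ and reduce the difference of invariants to a closed Gromov-Witten invariant via the divisor axiom.

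First, I recall from Remark~4.12 of~\cite{RelativeQuantumCohomology} that the contribution to $\ogw_{\beta,0}(\Gamma_{i_1},\ldots,\Gamma_{i_l})$ that compensates for $J$-holomorphic disk boundaries collapsing to points in $L_\tr$ is computed by integrating, over a smooth $4$-chain $C\subset\x$ with $\partial C=L_\tr$, the evaluation pullback of the forms representing the moduli space of $J$-holomorphic spheres of class $\hat\beta=\beta/4$ with the remaining marked points constrained to cycles Poincar\'e dual to $\Delta_{i_1},\ldots,\Delta_{i_l}$ (using $\rho(\Gamma_j)=\Delta_j$ for $j\ge 1$). The topological type of $C$ is precisely encoded by $P_\R$: integration of a representative differential form over $C$ recovers $P_\R$ applied to the corresponding class in $\hH^{4}(\x,L_\tr;\R)$. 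Let $C, C'$ be chains associated to $P_\R,P_\R'$.

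Second, I form the difference. Every contribution independent of $P_\R$---the straightforward disk counts and the bounding-cochain corrections---agrees for $P_\R$ and $P_\R'$, so those pieces cancel. The remaining term is the integral of the same evaluation pullback over the \emph{closed} $4$-cycle $C-C'$, which computes a genuine closed Gromov-Witten invariant with an extra insertion Poincar\'e dual to $C-C'$. Writing $[C-C']^{\pd}=c\,\omega\in H^2(\x;\R)=\R\omega$, the relation $P_\R-P_\R'=\mathfrak{p}_\R\circ\rho$ together with $\rho(\Gamma_2)=\Delta_2$ and the normalization $\int_\x\omega^{3}=1$ identifies $c=\mathfrak{p}_\R(\Delta_2)$. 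Therefore,
\begin{equation*}
\ogw_{\beta,0}(\Gamma_{i_1},\ldots,\Gamma_{i_l})-\ogw\,'\!\!_{\beta,0}(\Gamma_{i_1},\ldots,\Gamma_{i_l})=\mathfrak{p}_\R(\Delta_2)\,\gw_{\beta/4}(\Delta_{i_1},\ldots,\Delta_{i_l},\omega).
\end{equation*}

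Third, I apply the divisor axiom for $\omega$ in the closed Gromov-Witten theory of $\x$. Since $\int_{\hat\beta}\omega=\beta/4$,
\begin{equation*}
\gw_{\beta/4}(\Delta_{i_1},\ldots,\Delta_{i_l},\omega)=\frac{\beta}{4}\,\gw_{\beta/4}(\Delta_{i_1},\ldots,\Delta_{i_l}),
\end{equation*}
and combining with the previous display yields the claimed identity. The main obstacle is the second step: cleanly extracting the $P_\R$-dependent piece from the full Fukaya $A_\infty$-algebra and bounding-cochain construction of~\cite{solomon2016point,RelativeQuantumCohomology}, and verifying that every other contribution is genuinely independent of $P_\R$ so that it cancels in the difference. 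This amounts to a careful bookkeeping within the obstruction-theoretic framework, but no new geometric input is required.
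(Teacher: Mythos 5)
Your argument takes a genuinely different route from the paper. The paper's proof is purely algebraic: it works at the level of the cone complex $C(\mathfrak{i})$ from diagram~\eqref{commutative_diagram}, uses the definition $\Om = P\Psi$ from~\eqref{P_psi}, and derives $\Om - \Om' = (P - P')\Psi = \mathfrak{p}(\rho^*(\nabla\Phi))$ by chasing diagram~\eqref{diagram_curve} together with Lemma~\ref{pi_psi} ($\pi(\Psi)=\rho^*(\nabla\Phi)$). It then extracts coefficients and specializes to $(\x, L_\tr)$ using $g^{ij}=\delta_{i,3-j}$, the fact that $\mathfrak{p}_\R$ is supported on $\Delta_2$, and the closed divisor axiom. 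The paper also proves a more general intermediate result, Proposition~\ref{prop:general_map_p}, valid for any $(X,L)$; your proof specializes to $\cp^3$ immediately by using $H^2(\cp^3;\R)=\R\omega$.

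Your geometric route---interpreting the $P_\R$-dependent correction as a count of $J$-holomorphic spheres meeting a $4$-chain $C$ with $\partial C = L_\tr$, observing that the difference of two choices is an integral over a closed cycle $C-C'$, and identifying $\pd[C-C']$ via $\int_{C-C'}\omega^2 = (P_\R - P_\R')(\Gamma_2) = \mathfrak{p}_\R(\Delta_2)$---is morally sound and matches the intuition from Remark~4.12 of~\cite{RelativeQuantumCohomology} that the paper itself cites in the introduction. The final divisor-axiom step is also the same in both approaches. What your approach buys is geometric transparency: the appearance of the factor $\mathfrak{p}_\R(\Delta_2)\cdot\frac{\beta}{4}$ becomes visible as a Poincar\'e duality computation plus an intersection count. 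What the paper's approach buys is rigor without having to revisit the chain-level geometry: the role your chain $C$ plays is absorbed into the left inverse $P$, and Lemma~\ref{pi_psi} is exactly the precise form of the statement you treat heuristically, namely that the part of the relative potential visible under $\pi$ is the gradient of the closed GW potential.

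The gap you flag at the end---``cleanly extracting the $P_\R$-dependent piece \ldots\ and verifying that every other contribution is genuinely independent of $P_\R$''---is real, and it is precisely what the paper's formalism handles. In that formalism the relative potential $\Psi$ itself is independent of $P_\R$; only the map $P$ applied to it depends on $P_\R$, so the cancellation you invoke is automatic from linearity. Without that setup, making rigorous the claims that (i) the disk/bounding-cochain contributions cancel and (ii) the residual contribution is literally a closed GW invariant with an insertion $\pd[C-C']$ would require reconstructing essentially the machinery of Section~4 of~\cite{RelativeQuantumCohomology}. So your proposal is a correct sketch of a different proof, but the central step is left as a ``bookkeeping'' exercise that in practice reproduces the paper's Lemma~\ref{pi_psi}. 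One small caveat: like the paper's statement, your argument applies the closed divisor axiom, which requires $(\beta/4, l+1)\ne(0,3)$; the case $\beta=0$ is degenerate and should either be excluded or handled separately via the zero and unit axioms.
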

\begin{cor}\label{cor:map_p}
There is no map $P_\R:\widehat{H}^{4}(\cp^3,L_\tr;\R)\rightarrow H^3(L_\tr;\R)$ such that $\ogw_{\beta,k}$ vanishes for every $\beta\in \Ima\varpi$ and $k=0.$
\end{cor}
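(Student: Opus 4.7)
The plan is to convert the non-existence claim into a finite arithmetic check via Proposition \ref{prop_map_p}. Suppose for contradiction that some $P_\R$ makes $\ogw_{\beta,0}$ vanish for every $\beta\in\Ima\varpi=4\Z$ and every tuple of interior constraints. Let $P_\R^0$ denote the left-inverse of $y$ fixed in Section \ref{ssec:background}, characterized by $\ker P_\R^0=\mathrm{span}\{\Gamma_j\}_{j=0}^3$, and let $\ogw^0$ denote the associated invariants -- these are precisely the ones tabulated in Table \ref{table:interior}. Applying Proposition \ref{prop_map_p} to the pair $(P_\R, P_\R^0)$ and identifying $H^3(L_\tr;\R)\simeq\R$, we obtain a single real constant $c$ (namely $-\mathfrak{p}_\R(\Delta_2)$ for the comparison map $\mathfrak{p}_\R$ produced by the proposition) for which
\[
\ogw^0_{\beta,0}(\Gamma_{i_1},\ldots,\Gamma_{i_l}) \;=\; c\cdot\tfrac{\beta}{4}\,\gw_{\beta/4}(\Delta_{i_1},\ldots,\Delta_{i_l})
\]
must hold for every $\beta\in 4\Z$ and every multi-index $(i_1,\ldots,i_l)$.

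Next I would specialize to the smallest relevant case $\beta=4$, where the closed side reduces to $\gw_1$, counting lines in $\cp^3$ and computable by classical Schubert calculus on the Grassmannian $G(2,4)$. Reading two distinct entries off the $\beta=4$ column of Table \ref{table:interior} and pairing them with the corresponding closed line counts yields two instances of the displayed identity, hence two candidate values of $c$; a disagreement between them closes the argument. Should a pair happen to give $\gw_1=0$ alongside a nonzero table entry, the contradiction is even cleaner, since then no value of $c$ can solve the identity at all.

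The essentially sole obstacle is bookkeeping of conventions: one must verify that both sides of the displayed identity are non-trivially defined at the chosen test cases, which amounts to checking the open dimension equation $\sum |A_j|=2\beta+2l$ and the closed dimension equation $\sum\mathrm{codim}\,\Delta_{i_j}=2(n+c_1(\beta)-3+l)$ at the chosen tuples. Once the conventions are pinned down, two entries from the $\beta=4$ column of Table \ref{table:interior} suffice, and the rest is elementary arithmetic.
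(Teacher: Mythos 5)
Your proposal is correct and follows essentially the same route as the paper's proof: invoke Proposition~\ref{prop_map_p}, specialize to $\beta=4$, and derive a contradiction by extracting two incompatible values of the comparison constant from the $\beta=4$ column of Table~\ref{table:interior} paired with the corresponding line counts $\gw_1$. The paper instantiates exactly this plan with $\ogw_{4,0}(\Gamma_3,\Gamma_3)=-\tfrac{1}{4}$ and $\ogw_{4,0}(\Gamma_2,\Gamma_2,\Gamma_3)=-\tfrac{11}{32}$, both paired against $\gw_1=1$.
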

Proposition~\ref{prop_map_p} is a special case of Proposition~\ref{prop:general_map_p}, which gives the analogous statement for $X$ a general symplectic manifold and $L$ a connected spin Lagrangian submanifold with $H^*(L;\R) \simeq H^*(S^n;\R)$ and $[L] = 0 \in H_n(X;\R).$ In fact, Proposition~\ref{prop:general_map_p} remains valid in all situations where the invariants $\ogw$ are defined in~\cite{RelativeQuantumCohomology}.

\subsubsection{Relative quantum cohomology}
It follows from Corollary~\ref{Corollary 1.2} that we can compute the big relative quantum cohomology of $(\cp^3,L_\tr).$ However, the resulting ring does not appear to have a tractable presentation by generators and relations. On the other hand, the small relative quantum cohomology of $(\cp^3,L_\tr)$ is more accessible.
\begin{thm}\label{relativequantumpro}
The small relative quantum cohomology of $(\x,L_\triangle)$ is given by
\[QH^*(\x,L_\triangle)\simeq\R[[q^{1/4}]][x,y]/I,\]
with \[I=( x^4-q-\frac{1}{2}q^{1/2}y-\frac{3}{64}q^{1/4}y,\ y^2+\frac{5}{4}q^{1/2}y,\  xy-\frac{3}{4}q^{1/4}y ).\]
\end{thm}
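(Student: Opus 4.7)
The plan is to realise $QH^*(\x,L_\tr)$ as a free $\R[[q^{1/4}]]$-module of rank $5$, exhibit explicit ring generators $x$ and $y$, and derive each of the three relations in $I$ from a short list of OGW and closed GW invariants already at hand.

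First, compute $\hH^*(\x,L_\tr;\R)$ from the long exact sequence of the pair together with $H^*(L_\tr;\R)\simeq H^*(S^3;\R)$: the outcome is a graded $\R$-vector space with bases $\{1\}$ in degree $0$, $\{\Gamma_1\}$ in degree $2$, $\{\Gamma_2,\tau\}$ in degree $4$, and $\{\Gamma_3\}$ in degree $6$, where $\tau$ is the image under the connecting homomorphism of the generator of $H^3(L_\tr;\R)$. Recalling from \cite{RelativeQuantumCohomology} that $QH^*(\x,L_\tr)\simeq\hH^*(\x,L_\tr;\R)\otimes_\R\R[[q^{1/4}]]$ as a module, the exponent $1/4$ reflecting that $\varpi$ is multiplication by $4$ (Lemma~\ref{lm:relhom}), set $x:=\Gamma_1$ and $y:=\tau$. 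The classical cup product gives $x^2=\Gamma_2$, $x^3=\Gamma_3$, and $x^4=xy=y^2=0$, so $\{1,x,x^2,x^3,y\}$ is an $\R[[q^{1/4}]]$-basis of $QH^*(\x,L_\tr)$. An elementary reduction shows that $\R[[q^{1/4}]][x,y]/I$ is likewise free of rank $5$ on the same monomials: $xy+\tfrac{3}{4}q^{1/4}y=0$ reduces every $x^iy$ with $i\ge 1$ to a scalar multiple of $y$, then $y^2-\tfrac{5}{4}q^{1/2}y=0$ eliminates all higher powers of $y$, and $x^4-q+\tfrac{35}{64}q^{1/2}y=0$ eliminates $x^i$ for $i\ge 4$. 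It therefore suffices to verify the three relations in $QH^*(\x,L_\tr)$; a rank count then upgrades the natural surjection $\R[[q^{1/4}]][x,y]/I\twoheadrightarrow QH^*(\x,L_\tr)$ to an isomorphism.

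To verify each relation, expand the corresponding small quantum product using the definition from \cite{RelativeQuantumCohomology} and use the open axioms (Proposition~\ref{prop:open_axioms}) together with the standard degree axiom for closed GW to enumerate the finitely many triples $(\beta,\hat\beta,k)$ that can contribute to the relevant graded piece. The relation $xy=-\tfrac{3}{4}q^{1/4}y$ is fed by the single basic invariant $\ogw_{1,1}=3$. The relation $y^2=\tfrac{5}{4}q^{1/2}y$ combines the basic values $\ogw_{1,0}(\Gamma_2)=\tfrac14$ and $\ogw_{2,0}(\Gamma_3)=-1$ with low-degree line-and-conic contributions from $\cp^3$. Finally, $x^4=q-\tfrac{35}{64}q^{1/2}y$ combines the standard $\cp^3$ quantum relation $x^4=q$---encoded by the closed GW count $\gw_1(\Delta_1,\Delta_3,\Delta_3)=1$---with the mixed disk--sphere invariant $\ogw_{2,0}(\Gamma_2,\Gamma_2)=-\tfrac{35}{64}$ of Theorem~\ref{thm:iv}, which supplies the $q^{1/2}y$ correction.

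The main obstacle will be the verification of the $x^4$ relation. The coefficient $-\tfrac{35}{64}$ originates from a mixed disk--sphere contribution whose numerical value depends on the choice of $P_\R$; one must confirm that the identification $y=\tau$ is compatible with the prescription $\ker(P_\R)=\mathrm{span}\{\Gamma_j\}_{j=0}^3$ and rule out, by the degree axiom, further contributions from invariants of higher $\beta$ or with additional interior insertions. A secondary subtlety is to check that the basis $\{1,x,x^2,x^3,y\}$ realises the intended splitting $\hH^4(\x,L_\tr;\R)=\R\Gamma_2\oplus\R\tau$, which is again pinned down by our prescribed $P_\R$.
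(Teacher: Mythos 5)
Your proposal follows essentially the same architecture as the paper's proof: identify $\{1,\Gamma_1,\Gamma_1^2,\Gamma_1^3,\Gamma_\diamond\}$ as an $\R[[q^{1/4}]]$-basis of $QH^*(\x,L_\tr)$, send $x\mapsto\Gamma_1$, $y\mapsto\Gamma_\diamond$, check the three relations hold in the quantum ring, and then promote the surjection $\R[[q^{1/4}]][x,y]/I\twoheadrightarrow QH^*(\x,L_\tr)$ to an isomorphism by a rank argument. The paper's ``rank count'' is the Nakayama argument: after showing the quotient is generated by $\{1,x,x^2,x^3,y\}$, reduce modulo $m=(q^{1/4})$, note both sides have $\R$-dimension $5 = \dim\hH^*(\x,L_\tr;\R)$, deduce the reduced map is an isomorphism, and conclude $\ker\phi = m\ker\phi$ so $\ker\phi=0$. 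Be careful not to overclaim that the elementary reduction alone proves $\R[[q^{1/4}]][x,y]/I$ is \emph{free} of rank $5$; it only shows the quotient is generated by those five monomials, which is exactly what the Nakayama step needs. One pleasant variant in your sketch: you read off $x^4 = q - \tfrac{35}{64}q^{1/2}y$ directly from $\mem(\Gamma_2,\Gamma_2)$ using $\ogw_{2,0}(\Gamma_2,\Gamma_2)=-\tfrac{35}{64}$ and $\gw_1(\Delta_2,\Delta_2,\Delta_3)=1$, whereas the paper routes through $\mem(\Gamma_1,\mem(\Gamma_1,\Gamma_2))$, splitting the coefficient as $-\tfrac12 - \tfrac{3}{64}$ via $\ogw_{2,0}(\Gamma_1,\Gamma_3)$ and $\ogw_{1,1}(\Gamma_1)$; both are valid and agree by associativity.

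The one substantive error is your stated ``main obstacle.'' You describe $\ogw_{2,0}(\Gamma_2,\Gamma_2)$ as a mixed disk--sphere contribution whose value depends on $P_\R$. It is not: the mixed correction terms appear only when $k=0$ \emph{and} $\beta\in\Ima\varpi$. Since $\varpi$ is multiplication by $4$ (Lemma~\ref{lm:relhom}), the class $\beta=2$ lies outside $\Ima\varpi$, so $\ogw_{2,0}=\oogw_{2,0}$ here and there is no $P_\R$-dependence whatsoever. The choice of $P_\R$ with $\ker(P_\R)=\mathrm{span}\{\Gamma_j\}_{j=0}^3$ matters for the invariants entering the big quantum product with $\beta\in 4\Z$, but it plays no role in any of the three relations defining $I$, all of which involve only invariants with $\beta\in\{1,2\}$. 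Your secondary worry about the splitting of $\hH^4$ is likewise moot once you use the paper's fixed basis $\{\Gamma_2,\Gamma_\diamond\}$.
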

Computations of small relative quantum cohomology for a variety of pairs $(X,L)$ are given in~\cite{kai}. However, all Lagrangian submanifolds considered there are fixed by an anti-symplectic involution. The definitions of big and small quantum cohomology are recalled in Section~\ref{ssec:rqc}.

\subsubsection{Invariants without corrections}

The proof of Theorem~\ref{thm:iv} depends on the following general result, which says when the invariants $\ogw_{\beta,k}$ can be computed without taking into account correction terms from bounding cochains. For simplicity, we continue as in Section~\ref{ssec:background} with $L$ a connected spin Lagrangian with $H^*(L;\R) \simeq H^*(S^n;\R).$ A similar result holds in other settings where the invariants $\ogw$ can be defined.
For $\beta\in H_2(X,L;\Z)$, define
\begin{equation}\label{P_beta_definition}
    P_\beta=\{\Tilde{\beta}\in H_2(X,L;\Z)| \   \omega(\beta)>\omega(\Tilde{\beta})> 0 \}.
\end{equation}
Let $\sigma_k = (k-1)!$ for $k \in \Z_{>0}$ and let $\sigma_0 = 1.$
\begin{thm}\label{theorem_lm}
Let $k \in \Z_{\geq 0}$ and $\beta \in H_2(X,L;\Z)$ satisfy either $k > 0$ or $\beta\not\in\Ima \varpi$.
Let $A_{j}\in\widehat{H}^*(X,L;\mathbb{R})$. Let $\bar{b}\in A^n(L;\R)$ such that $\pd([\bar{b}])=pt$, and  let $a_{j}$ be a representative of $A_{j}$.
Suppose that for every
\[
\Tilde{\beta}\in P_\beta, \qquad 0\le j\le k,\qquad I\subset \{1,\ldots,l\},
\]
one of the following two conditions is satisfied:
\begin{enumerate}
\item $1-\mu(\Tilde{\beta}) +j(n-1) + \sum_{i\in I} (|A_{i}|-2)<0,$ or
\item $1-\mu(\Tilde{\beta}) +j(n-1) + \sum_{i\in I} (|A_{i}|-2)\ge n-1.$
\end{enumerate}
Then,
\[
\ogw_{\beta,k}(A_{1},\ldots,A_{l})= (-1)^n \sigma_k\int_{\mathcal{M}_{k,l}(\beta)} \bigwedge_{j=1}^{l} evi^{\beta*}_j  a_{j} \bigwedge_{j=1}^{k}evb^{\beta*}_j\bar{b}.
\]

\end{thm}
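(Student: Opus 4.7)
The plan is to unpack the definition of $\ogw_{\beta,k}$ in terms of the curved Fukaya $A_\infty$-algebra of $L$ twisted by a bounding cochain $b$, and to show that under the two hypotheses (1) and (2) every bounding-cochain correction term in the expansion vanishes, thereby reducing the invariant to the plain geometric integral.

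First, I would recall from \cite{solomon2016point, RelativeQuantumCohomology} that $\ogw_{\beta,k}(A_{1},\ldots,A_{l})$ is extracted as a coefficient of a generating series built from the operations $\m^{\tilde\beta}_{k',l'}$ of Fukaya-Oh-Ohta-Ono, with $b$ chosen inductively on $\omega(\tilde\beta)$ so as to solve the (curved) Maurer-Cartan equation. Expanding the series produces the leading term
\[
(-1)^n \sigma_k \int_{\mathcal{M}_{k,l}(\beta)} \bigwedge_{i=1}^{l} evi^{\beta*}_i a_{i} \wedge \bigwedge_{j=1}^{k} evb^{\beta*}_j \bar{b},
\]
together with corrections indexed by triples $(\tilde\beta, j, I)$ with $\tilde\beta \in P_\beta$, $0 \le j \le k$, and $I \subset \{1,\ldots,l\}$. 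A correction of type $(\tilde\beta,j,I)$ is an integral over a moduli space $\mathcal{M}_{k-j,|I|+j'}(\tilde\beta)$ whose inputs consist of the $k-j$ retained point constraints $\bar{b}$, the classes $\{a_{i}\}_{i \in I}$, and $j'$ insertions of components of $b$ at still smaller energy.

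Next, I would perform a degree count on each correction. The dimension of $\mathcal{M}_{k',l'}(\tilde\beta)$ is $n-3+\mu(\tilde\beta)+2l'+k'$, and a bookkeeping check shows that the signed excess degree of the integrand over this dimension equals, up to an irrelevant constant, the quantity
\[
1 - \mu(\tilde\beta) + j(n-1) + \sum_{i \in I}(|A_{i}| - 2).
\]
Under condition (1) this quantity is negative, the integrand is a form of degree strictly larger than the dimension of the moduli space, and the correction vanishes identically. The core step is to eliminate the corrections that fall under condition (2), which I would do by exploiting the freedom to choose $b$. The obstruction to extending $b$ at energy $\omega(\tilde\beta)$ is a cohomology class on $L$ in a degree matching, again up to a constant, the expression above. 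Since $H^*(L;\R) \simeq H^*(S^n;\R)$ is concentrated in degrees $0$ and $n$, condition (2) places this degree outside $\{0,n\}$, the obstruction vanishes automatically, and we may take $b_{\tilde\beta} = 0$. Propagating this choice through the induction kills every correction term and leaves only the leading integral.

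The main obstacle I anticipate is the careful identification of $1-\mu(\tilde\beta)+j(n-1)+\sum_{i\in I}(|A_{i}|-2)$ with the precise cohomological degree of the obstruction class at each inductive step, incorporating the contributions of the $k-j$ retained point constraints, the interior input degrees, and the grading shifts from nested $b$-insertions. Once this identification is set up, the dichotomy between condition (1) (negative excess, integral vanishes by degree) and condition (2) (obstruction lies in a trivial cohomology group, so $b_{\tilde\beta}$ may be set to zero) collapses the entire expansion to the asserted formula, the factor $\sigma_k=(k-1)!$ arising as the combinatorial weight attached to the $k$ boundary point constraints in the generating series as in the open point-class axiom.
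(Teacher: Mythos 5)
Your high-level strategy — unpack the bounding cochain, exploit its inductive construction, and show the corrections vanish — is the right one and matches the paper. But there is a genuine gap in the treatment of condition (2), and it is exactly the delicate point of the proof.

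You claim that condition (2) places the relevant degree ``outside $\{0,n\}$'' so the obstruction vanishes automatically by $H^*(L;\R)\simeq H^*(S^n;\R)$. This is false as stated. Condition (2) says $1-\mu(\tilde\beta)+j(n-1)+\sum_{i\in I}(|A_i|-2)\ge n-1$. The paper identifies this quantity with the differential-form degree of the bounding-cochain component $b_{\tilde\beta,j,\alpha}$ (writing $b = s\cdot b_{0,1,0}+\sum T^{\tilde\beta}s^j t^\alpha b_{\tilde\beta,j,\alpha}$), so under condition (2) that component lives in degree $n-1$ or $n$. Degree $n$ is certainly not outside $\{0,n\}$, and, more importantly, the obstruction attached to a degree-$(n-1)$ component of $b$ lies in $H^n(L;\R)\simeq\R$, which is nonzero. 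Its vanishing is not automatic. The paper's Lemma~\ref{lm:1}, via Lemma~\ref{lm:3.8}, handles this: because the initial datum $b_0 = s\cdot b_{0,1,0}$ is chosen with $b_{0,1,0}$ a closed representative of the point class, one has $(db)_n = 0$, which kills the degree-$n$ obstruction and lets one take the degree-$(n-1)$ correction to be zero. The conclusion of Lemma~\ref{lm:1} — that a bounding cochain exists with $(b)_n$ and $(b)_{n-1}$ equal to those of the initial data — is precisely the fact that makes condition (2) effective. Your proposal never invokes such a refinement, so it cannot conclude that the degree-$(n-1)$ and degree-$n$ components of $b$ beyond the leading term vanish.

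A secondary point: your argument for condition (1) (the correction integral vanishes because its integrand's degree exceeds the moduli-space dimension) is a different and heavier mechanism than the paper's. In the paper, condition (1) simply forces $b_{\tilde\beta,j,\alpha}$ to have negative differential-form degree, so it is the zero form and the term never appears. There is no need to set up the correction-integral bookkeeping or to identify the quantity with an ``excess degree.'' Similarly, the factor $\sigma_k=(k-1)!$ is not a loosely invoked combinatorial weight; it drops out of an explicit computation with the $A_\infty$ operations, $R$-multilinearity, and the Poincar\'e pairing (and the passage from ordered to unordered boundary marked points). These two points are not fatal, but the condition-(2) gap is: without the refined existence statement for $b$ with prescribed degree-$(n-1)$ and degree-$n$ parts, the argument does not close.
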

Theorem~\ref{theorem_lm} is a special case of Theorem~\ref{lm}. The basic invariants $\ogw_{1,1}, \ogw_{1,0}(\Gamma_2)$ and $\ogw_{2,0}(\Gamma_3)$ of Theorem~\ref{thm:iv} are exactly the invariants to which Theorem~\ref{theorem_lm} applies for $L$ the Chiang Lagrangian.

\subsubsection{Orientation, spin structure and signs}\label{sssec:oss}
We prove general formulas governing the dependence of the invariants $\ogw_{\beta,k}$ on the spin structure and orientation of $L$, which appear as the spin axiom~\ref{ax_spin} and the orientation axiom~\ref{ax_orientation} of Proposition~\ref{prop:open_axioms}. See also Lemmas~\ref{lm_spin} and~\ref{lm_orientation}. We use the spin and orientation axioms to deduce vanishing results for certain open Gromov-Witten invariants, which are formulated in Corollaries~\ref{cor:projvan} and~\ref{cor:quadricvan}.

A significant part of the present paper is devoted to computing the signs of the three basic invariants of Theorem~\ref{thm:iv}. The spin and orientation axioms show that by changing the orientation and spin structure on $L_\tr$, one can adjust the signs of any two of the basic invariants arbitrarily. Moreover, changing the orientation and spin structure only affects the signs of invariants but not their absolute values.

On the other hand, it can be seen from the proof of Lemma~\ref{lm:ogw2022} that if the sign of one of the three basic invariants is changed while the two others are held fixed, then the absolute values of non-basic invariants change. Qualitatively, the denominators of invariants cease to be powers of two.

\subsubsection{Arithmetic structure of the open WDVV equations}\label{sssec:arith}
From the perspective of Theorem~\ref{Theorem 1}, it is surprising that only powers of $2$ appear in the denominators of the invariants $\ogw$ for the Chiang Lagrangian $L_\tr.$ Indeed, to use relation~\ref{recursion b} for recursive computations, it is necessary to divide by $\ogw_{2,0}(\Gamma_2,\Gamma_2) = \frac{35}{64}.$ A priori, this should introduce factors of $5$ and $7$ in denominators, but unexpected cancellations occur so that only factors of $2$ remain.

To test the rigidity of these cancellations, we searched for rational numbers $v \neq 1/4$ that upon substitution for the value of $\ogw_{1,0}(\Gamma_2)$ in the recursive scheme of Corollary~\ref{Corollary 1.2} yield $\ogw_{k,k}$ with denominator a power of $2$ for $k = 1,\ldots,M.$ As $M$ increases, the magnitudes of the possible numerators and denominators of $v$ increase. For $M = 4,$ the $v$ with smallest numerator and denominator that we found is
\[
v=-\frac{46,912,496,118,431}{17,592,186,044,416}\approx -2.66.
\]
For this choice of $v,$ the value for $\ogw_{5,5}$ given by the recursive scheme has denominator $7.$

Therefore, we reach the following conclusion. Suppose it were possible to formalize the heuristic argument of Section~\ref{sssec:bcc&d} to prove geometrically that the invariants $\ogw_{\beta,k}$ can only have powers of $2$ for denominators. Then, it seems reasonable to expect that the basic invariant $\ogw_{1,0}(\Gamma_2)$ could be deduced from the other two basic invariants, the open WDVV equations, the axioms of $\ogw$, and the closed Gromov-Witten invariants of $\cp^3.$

If it were necessary to compute only two basic invariants geometrically, it would not be necessary to compute their signs. Indeed, these signs could be adjusted arbitrarily by changing the orientation and spin structure on $L_\tr$ as noted in Section~\ref{sssec:oss}. Thus, the computations of this paper could be simplified significantly. In fact, this strategy can be generalized for an arbitrary pair $(X,L)$ as outlined below.

\subsection{Directions for future research}\label{ssec:directions}
Remarkably, Theorem~\ref{theorem_lm} applies exactly for those invariants that are needed as initial values for the recursions given by Theorem~\ref{Theorem 1}. It is an interesting question to determine to what extent this phenomenon persists more generally.

We expect the techniques of the present paper to extend to the other Platonic Lagrangians studied by Smith~\cite{Smith} and more general homogeneous Lagrangians~ \cite{bedulli2006homogeneous,gasparim,KonstantinovThesis}

Motivated by the discussion of Section~\ref{sssec:arith}, we formulate the following two hypotheses. These hypotheses should hold for a large class of Lagrangian submanifolds $L \subset X$, but some restrictions are necessary.
\begin{enumerate}[label=(\Roman*)]
\item~\label{hp:denom}
The invariants $\ogw_{\beta,k}$ are rational of the form $\frac{m}{p_1^{a_1}\cdots p_N^{a_N}}$ where $p_1,\ldots,p_N,$ are the primes that occur as orders of torsion elements in~$H^*(L;\Z).$
\item~\label{hp:arWDVV}
Let $\cA$ be a minimal set of the invariants $\ogw_{\beta,k}$ from which all other invariants can be deduced using the open WDVV equations, the axioms of $\ogw$, the wall-crossing formula, and the closed GW invariants of $X$. Let $\cB \subset \cA$ be defined in the same way as $\cA$ except that we are also allowed to use hypothesis~\ref{hp:denom} to deduce the other invariants. Then $\cB$ is a proper subset of $\cA.$
\end{enumerate}
Hypothesis~\ref{hp:denom} follows from the heuristic argument of Section~\ref{sssec:bcc&d} applied to $L$ of general topology. It should admit a geometric proof.
Hypothesis~\ref{hp:arWDVV} is a statement about the arithmetic structure of the open WDVV equation. In the case of the Chiang Lagrangian, we can take $\cA$ to be the basic invariants $\ogw_{1,1}, \ogw_{1,0}(\Gamma_2)$ and $\ogw_{2,0}(\Gamma_3)$. Then, the calculations presented in Section~\ref{sssec:arith} indicate that the invariants $\ogw_{1,1}$ and $\ogw_{2,0}(\Gamma_3)$ suffice for $\cB.$ On the other hand, in the case $(X,L) = (\C P^n, \R P^n)$ one can see that hypothesis~\ref{hp:arWDVV} does not hold as follows. It is shown in~\cite[Corollary 1.9]{RelativeQuantumCohomology} that one can take $\cA$ to consist of a single invariant. It follows from either the spin axiom~\ref{ax_spin} or the orientation axiom~\ref{ax_orientation} of Proposition~\ref{prop:open_axioms} that by changing the spin structure or orientation on $L$, the sign of this single invariant can be changed without affecting the absolute value of the others, in particular, without violating hypothesis~\ref{hp:denom}. Thus $\cB$ cannot be smaller than $\cA.$ Another case where hypothesis~\ref{hp:arWDVV} does not hold is when $X$ is the quadric hypersurface in $\C P^{n+1}$ given by $\sum_{i = 0}^n z_i^2 - z_{n+1}^2 = 0$ and $L$ is the real locus. This is explained in Remark~\ref{rem:quadricnohparWDVV} based on results of~\cite{kai}, which are related to the orientation axiom.

We observe the following common feature of the forgoing exceptions to hypothesis~\ref{hp:arWDVV}. In both cases, one can take $\cA$ to contain a single invariant, while there are one or more geometric degree of freedom affecting the values of the invariants: in the case $(X,L) = (\C P^n,\R P^n)$ there are choices of spin structure and orientation, and in the case of the quadric, there is the choice of orientation. The presence of more geometric degrees of freedom than the size of $\cA$ forces a measure of flexibility in the open WDVV equations. In contrast, for the the Chiang Lagrangian, $\cA$ contains three invariants, while there are only two geometric degrees of freedom, the choices of spin structure and orientation. So, the equations can be sufficently rigid for hypothesis~\ref{hp:arWDVV} to hold.

\subsection{Outline}
In Section~\ref{sec:background} we recall general definitions and results on closed and open Gromov-Witten invariants that will be useful in the other sections. This includes the closed and open axioms, the closed and the open WDVV equations, the wall-crossing formula, and the definitions of small and big relative quantum cohomology.

In Section~\ref{section3} we recall from~\cite{Smith} a family of Lagrangian homology spheres in Fano threefolds called Platonic Lagrangians. We focus mainly on the Chiang Lagrangian $L_\tr$, which is a special case of this construction, and recall some of its basic properties.

Section~\ref{sec4} contains results related to Fukaya $A_\infty$ algebras, bounding cochains and the superpotential. We recall definitions and results on obstruction theory for bounding cochains from~\cite{solomon2016point}. These are used to prove Theorem~\ref{lm}, which gives a criterion for open Gromov-Witten invariants to coincide with straightforward counts of $J$-holomorphic disks. Theorem~\ref{theorem_lm}, which is needed for Section~\ref{section6}, is obtained as a special case. We also prove Lemmas~\ref{lm_spin} and~\ref{lm_orientation}, from which we deduce the spin axiom~\ref{ax_spin} and the orientation axiom~\ref{ax_orientation} of Proposition~\ref{prop:open_axioms}.

The goal of Section~\ref{section6} is to compute the basic invariants that serve as initial values for the recursive formulas. In Sections~\ref{subsection6.1}-\ref{subsection5.3} we recall definitions and results about the anticanonical divisor, the Maslov class, and axial disks. In Section~\ref{choose_spin_structure} we give the definition of a spin Riemann-Hilbert pair, recall results on canonical orientations, and prove some new ones. Section~\ref{subsection_5.5} contains results concerning Riemann-Hilbert pairs arising from a certain type of axial disk.
We use these results to compute the basic invariants geometrically in Sections~\ref{subsection_ogw11}-\ref{subsection_ogw20}.

Section~\ref{section5} contains the proofs of Theorem~\ref{Theorem 1} and Corollary~\ref{Corollary 1.2}, which give recursive formulas that determine all open Gromov-Witten invariants of $(\cp^3,L_\tr)$ from the three basic invariants.

In Section~\ref{section7} we compute the small relative cohomology of $(\cp^3, L_\tr)$.

In Section~\ref{section8} we recall definitions and results concerning the map $P_\R$ from  \cite{RelativeQuantumCohomology}. These results are used to prove Proposition~\ref{prop:general_map_p}, which shows how the invariants $\ogw$ depend on the map $P_\R.$ Proposition~\ref{prop_map_p} is obtained as a special case. Finally, we prove Corollary~\ref{cor:map_p}.

\subsection{Acknowledgements}
The authors would like to thank J. Evans, O. Kedar, C.-C.~M. Liu, J. Smith and S. Tukachinsky for helpful conversations. The authors were partially funded by ISF grants 569/18 and 1127/22.

\section{Background}\label{sec:background}
In this section we recall definitions and properties of open and closed Gromov-Witten invariants and relative quantum cohomology.

\subsection{Invariants}\label{subsection_ogw}
Let $(X,\omega)$ be a closed symplectic manifold.
For $\beta \in H_2(X;\mathbb{Z})$ and $l \geq 0,$ let
\[
GW_\beta : H^*(X;\Q)^{\otimes l} \rightarrow \Q
\]
denote the standard closed genus zero Gromov-Witten invariant. This invariant counts $J$-holomorphic maps $S^2 \to X$ representing the class $\beta$ passing through cycles Poincar\'e dual to $l$ cohomology classes on $X.$

Let $L \subset X$ be a closed Lagrangian submanifold. Genus zero open Gromov-Witten invariants are analogous invariants that count $J$-holomorphic maps $(D,\partial D) \to (X,L)$ along with correction terms arising from bounding cochains. To simplify the exposition, we focus on the case that $L$ is connected and spin, with $H^*(L;\mathbb{R})\simeq H^*(S^n; \mathbb{R})$ and $[L] = 0 \in H_n(X;\R)$. These conditions hold for the Chiang Lagrangian.
Similar results are known~\cite{solomon2016point} in the case that $L$ is only relatively spin or $[L] \neq 0 \in H_n(X;\R).$

Consider the subcomplex of differential forms on $X$ consisting of those with trivial integral on $L,$
\[\widehat{A}^*(X,L;\R):=\left\{\eta\in A^*(X;\R)\Big| \int_L \eta|_L=0\right\}.\]
For an $\R$-algebra $\Upsilon$, write \[\widehat{A}^*(X,L;\Upsilon):= \widehat{A}^*(X,L;\R)\otimes \Upsilon,\quad\widehat{H}^*(X,L;\Upsilon):= H^*(\widehat{A}^*(X,L;\Upsilon), d).\] Observe that \[\widehat{H}^*(X,L;\Upsilon) \simeq H^0(L;\Upsilon)\oplus H^{>0}(X,L;\Upsilon).\]
For $\beta \in H_2(X,L;\mathbb{Z})$ and $k,l \geq 0,$ let
\[
\oogw_{\beta,k} : \widehat{H}^*(X,L;\R)^{\otimes l}\rightarrow \mathbb{R}
\]
denote the open Gromov-Witten invariants of~\cite{solomon2016point}. The definition is recalled in Section~\ref{subsection4.2}.
For $\beta \in H_2(X,L;\mathbb{Z})$ and $k,l \geq 0,$ let
\[ \ogw_{\beta,k} : \widehat{H}^*(X,L;\R)^{\otimes l}\rightarrow \mathbb{R} \]
denote the enhanced invariants defined in~\cite{RelativeQuantumCohomology}. The definition is recalled in~\ref{section8}. These invariants combine counts of $J$-holomorphic disks and $J$-holomorphic spheres. With the exception of the case that $\beta$ belongs to the image of the natural map
\[\varpi : H_2(X;\mathbb{Z}) \rightarrow H_2(X,L;\mathbb{Z})\]
and $k = 0,$ we have $\ogw _{\beta,k} = \oogw_{\beta,k}.$ In the exceptional case, $\oogw_{\beta,k} = 0,$ but $\ogw_{\beta,k}$ need not vanish.
By the assumption $[L] = 0 \in H_n(X;\R),$ there is a short exact sequence
\begin{equation}\label{ses}
    0 \rightarrow H^n(L;\R) \overset{y}{\longrightarrow} {H}^{n+1}(X,L;\R) \overset{\rho}{\longrightarrow}  H^{n+1}(X;\R) \rightarrow 0.
\end{equation}
For $k=0$ and $\beta\in\mathrm{Im \varpi}$, the definition of the enhanced invariants $\ogw_{\beta,k}$ depends on the choice of a linear map
\[P_\mathbb{R} :{H}^{n+1}(X,L;\R) \rightarrow  H^{n}(L;\R)\simeq \R \]
that is a left-inverse to the map $y$ in the short exact sequence. A choice of $P_\mathbb{R}$ is equivalent to a splitting of~\eqref{ses}. We extend $P_\R$ to a map $P_\R: \widehat{H}^*(X,L;\R) \to H^n(L;\R)$ by setting it to zero outside $\widehat{H}^{n+1}(X,L;\R)$.
\subsection{Axioms}
The closed Gromov-Witten invariants satisfy the following properties~ \cite{Behrend,Fukaya_Ono, KontsevichManin,Li_Tian, mcduffsalamon2012, McDuffSalamon_smallbook,  Ruan_Tian_1994, Ruan_Tian_1995}.
\begin{prop}
\label{prop:closed_axioms}
Let $\beta\in H_2(X,\mathbb{Z})$ and let $A_1,\ldots,A_k\in H^*(X;\R)$.
\begin{enumerate}[label=(\arabic*)]
\item
(Effective) If $\omega(\beta)<0$, then $\gw_{\beta}(A_1,\ldots,A_k)=0.$
\item
(Symmetry) For each permutation $\sigma\in S_k$
\[\gw_{\beta}(A_{\sigma(1)},\ldots,A_{\sigma(k)})=(-1)^{s_\sigma(A)}\gw_{\beta}(A_1,\ldots,A_k),\]
where
$s_\sigma(A):=\sum_{\substack{i<j\\ \sigma(i)<\sigma(j)}}|A_i|\cdot|A_j|.$
\item
\label{ax_degree_GW}
(Degree) If $GW_{\beta}(A_1,\ldots,A_k)\ne 0,$ then
$$\sum_{i=1}^k|A_i|-2k+6= 2n+ 2c_1(\beta).$$
\item
(Fundamental Class) If $(\beta,k)\ne (0,3)$ and $k\ge1,$ then
$$ GW_{\beta}(A_1,\ldots,A_{k-1},1)=0.$$
\item\label{closed_divisor_ax}
(Divisor) If $(\beta,k)\ne (0,3),$ $|A_k|=2$ and $k\ge1,$ then
$$\gw_{\beta}(A_1,\ldots,A_k)=\gw_{\beta}(A_1,\ldots,A_{k-1})\cdot \int_\beta A_k.$$
\item\label{zerooo}
(Zero) If $k \ne 3$ then $GW_{0}(A_1,\ldots,A_k)=0$. If $k=3,$ then
$$GW_{0}(A_1,A_2,A_3)= \int_XA_1\smile A_2\smile A_3.$$
\item
(Deformation invariance) The invariants $\gw_\beta$ remain constant under deformations of the symplectic form $\omega$.
\end{enumerate}
\end{prop}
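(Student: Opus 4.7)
The plan is to verify each axiom by appealing to the standard construction of Gromov-Witten invariants via virtual fundamental classes on moduli spaces of stable maps $\M_{0,k}(X,\beta)$, as developed in the cited sources. Each axiom reflects a structural property of these moduli spaces, so rather than reproving everything from scratch, I would organize the argument around the underlying geometric principle and then defer to the references for technical details.

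First, I would handle the axioms that are essentially tautological from the construction. The \emph{Effective} axiom is immediate from energy: any $J$-holomorphic sphere has nonnegative symplectic area, so $\M_{0,k}(X,\beta)$ is empty when $\omega(\beta)<0$. The \emph{Symmetry} axiom follows from the $S_k$-action on marked points together with Koszul signs introduced when pulling back differential forms of mixed degrees under the permutation of evaluation maps. The \emph{Degree} axiom is simply the virtual dimension count: $\dim_\R \M_{0,k}(X,\beta) = 2n + 2c_1(\beta) + 2k - 6$, and pairing against classes $A_i$ forces $\sum |A_i|$ to equal this dimension for a nonzero invariant.

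Next I would treat the axioms that involve forgetful maps between moduli spaces. For the \emph{Fundamental Class} axiom, the evaluation pullback of $1 \in H^0(X)$ factors through the forgetful map $\M_{0,k}(X,\beta) \to \M_{0,k-1}(X,\beta)$ which has positive-dimensional fiber except in the unstable case $(\beta,k)=(0,3)$, forcing the pushforward integral to vanish. For the \emph{Divisor} axiom, a degree-two class $A_k$ pulled back along $ev_k$ can be replaced, up to boundary terms that cancel in the virtual count, by $\int_\beta A_k$ times the pullback from the forgetful map; this is the classical divisor equation. The \emph{Zero} axiom for $\beta=0$ uses that $\M_{0,k}(X,0) \simeq \M_{0,k} \times X$, which is empty virtually for $k\neq 3$ and produces the triple cup product for $k=3$. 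Finally, \emph{Deformation invariance} is a cobordism argument: a smooth family $\omega_t$ yields a cobordism of moduli spaces whose virtual fundamental classes are compatible, so the integrals are constant in $t$.

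Since all seven statements are well-established in the literature, the main task in writing the proof is bookkeeping: deciding which formulation (algebraic-geometric via \cite{Behrend, Li_Tian, KontsevichManin} or symplectic via \cite{Fukaya_Ono, Ruan_Tian_1994, Ruan_Tian_1995, mcduffsalamon2012, McDuffSalamon_smallbook}) to use as the primary reference and ensuring sign conventions match those used later in the paper when the closed axioms are combined with the open ones. The only genuinely delicate point is the Divisor axiom, where one must justify that the boundary contributions from bubbling in $\M_{0,k}(X,\beta)$ integrate to zero against pulled-back classes; I would address this by invoking the version proved in the symplectic setting in \cite{McDuffSalamon_smallbook}, which is the convention consistent with the differential-form framework used throughout this paper.
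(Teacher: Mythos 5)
Your proposal is correct and consistent with the paper's treatment: the paper gives no proof at all for this proposition, simply citing the standard references (\cite{Behrend,Fukaya_Ono, KontsevichManin,Li_Tian, mcduffsalamon2012, McDuffSalamon_smallbook, Ruan_Tian_1994, Ruan_Tian_1995}), and your sketch of each axiom in terms of moduli space geometry plus deferral to those sources is exactly the right level of detail. The one small correction is cosmetic: the unstable case for the Zero and Fundamental Class axioms is $k < 3$ rather than only $(\beta,k)=(0,3)$ being special, and the isomorphism you cite should read $\M_{0,k}(X,0) \simeq \overline{\M}_{0,k} \times X$ for $k \ge 3$, but this does not affect the substance of your argument.
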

The enhanced open invariants $\ogw$ satisfy the following properties.
\begin{prop}
\label{prop:open_axioms}
Let $\beta \in H_2(X,L;\Z)$, let $k \in \Z_{\geq 0}$ and let $A_1,\ldots,A_l\in \widehat{H}^*(X,L;\mathbb{R})$.
\begin{enumerate}[label=(\arabic*)]
\item \label{ax_eff}
(Effective) If $\omega(\beta)<0,$ then $\ogw_{\beta,k}(A_1,\ldots,A_k)=0.$
\item \label{ax_symmetry}
(Symmetry) For each permutation $\sigma\in S_l$
\[\ogw_{\beta,k}(A_{\sigma(1)},\ldots,A_{\sigma(l)})=(-1)^{s_\sigma(A)}\ogw_{\beta}(A_1,\ldots,A_l),\]
where
$s_\sigma(A):=\sum_{\substack{i<j\\ \sigma(i)<\sigma(j)}}|A_i|\cdot|A_j|.$
\item \label{ax_degree}
(Degree) If $\ogw_{\beta,k}(A_1,\ldots,A_l)\ne0,$ then
\[n-3+\mu(\beta)+k+2l=kn+\sum_{j=1}^l | A_j|.\]
\item \label{ax_unit}
(Unit/ Fundamental class)
\[\ogw_{\beta,k}(1,A_1,\ldots,A_{l-1})=
         \left\{\begin{array}{ll}
        -1, &  (\beta,k,l)=(\beta_0,1,1)\\
        P_\mathbb{R}(A_1), & (\beta,k,l)=(\beta_0,0,2)\\
        0, & \mathrm{otherwise}.
        \end{array} \right.
  \]
\item \label{ax_zero}
(Zero)
\[\ogw_{\beta_0,k}(A_1,\ldots,A_l)=
         \left\{\begin{array}{ll}
        -1, &  (k,l)=(1,1)\ and\ A_1=1\\
        P_\mathbb{R}(A_1\smile A_2), & (k,l)=(0,2)\\
        0, &\mathrm{otherwise}.
        \end{array} \right.
  \]
\item \label{ax_divisor}
(Divisor) If $| A_l|=2$ then
\[\ogw_{\beta,k}(A_1,\ldots,A_l)= \int_\beta A_l\cdot\ogw_{\beta,k}(A_1,\ldots,A_{l-1}).\]
\item\label{ax_spin}
(Spin) Changing the spin structure on $L$ by the action of $\alpha\in H^1(L;\Z/2\Z)$ changes the sign of $\ogw_{\beta,k}(A_1,...,A_l)$ by the sign $(-1)^{\alpha(\partial\beta)}.$
\item\label{ax_orientation}
(Orientation) Changing the orientation of $L$
changes the sign of $\ogw_{\beta,k}(A_1,...,A_l)$ by the sign $(-1)^{k+1}.$
\item \label{ax_def}
(Deformation invariance) The invariants $\ogw_{\beta,k}$ remain constant under deformations of the symplectic form $\omega$ for which $L$ remains Lagrangian.
\end{enumerate}
\end{prop}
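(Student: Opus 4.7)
The proposition records nine axioms characterizing the enhanced invariants $\ogw_{\beta,k}$. Seven of them—Effective, Symmetry, Degree, Fundamental class, Zero, Divisor, and Deformation invariance—are analogues of classical properties of closed Gromov-Witten invariants or direct restatements of properties of $\oogw$ established in~\cite{solomon2016point}, adjusted for the enhancement of~\cite{RelativeQuantumCohomology}. The Spin and Orientation axioms are the substantive new content. My plan is to verify the classical axioms directly from the definition of $\ogw$ recalled in Sections~\ref{subsection_ogw} and~\ref{section8}, and to reduce the Spin and Orientation axioms to Lemmas~\ref{lm_spin} and~\ref{lm_orientation}.

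For the classical axioms, the approach is standard. Effective follows from positivity of symplectic area: moduli spaces of $J$-holomorphic disks in a class $\beta$ with $\omega(\beta)<0$ are empty. Symmetry is the Koszul sign rule for permuting wedge products of differential forms representing the $A_j$. Degree is dimension counting: the non-vanishing of the integral defining $\ogw_{\beta,k}$ requires that the total degree of the integrand, $kn+\sum_j |A_j|$, match the virtual dimension $n+\mu(\beta)+k-3+2l$ of $\M_{k,l}(\beta)$. Divisor is the standard forgetful-map argument, projecting $\M_{k,l}(\beta)\to\M_{k,l-1}(\beta)$ and integrating the degree-$2$ class $A_l$ along the fiber to produce the factor $\int_\beta A_l$. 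Fundamental class and Zero follow from direct analysis of constant disk maps, where $\M_{k,l}(\beta_0)$ is either empty or identified with a product of $L$ with a configuration space of marked points; the special value $P_\R(A_1\smile A_2)$ for $(\beta_0,0,2)$ is read off from the enhancement in~\cite{RelativeQuantumCohomology}. Deformation invariance is a standard cobordism argument between moduli spaces defined with different $\omega$-tame $J$.

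The Spin and Orientation axioms require a careful study of the orientation of $\M_{k,l}(\beta)$. The plan for both is to trace how the canonical orientation on the determinant line bundle of the linearized Cauchy-Riemann operator with Lagrangian boundary conditions depends on the spin structure and orientation of $L$. For Spin, modifying the spin structure by $\alpha\in H^1(L;\Z/2\Z)$ alters the stable trivialization used to orient this determinant line by the monodromy of $\alpha$ along the boundary circle of the disk, producing a factor of $(-1)^{\alpha(\partial\beta)}$ that propagates verbatim to the integral defining $\ogw_{\beta,k}$. For Orientation, reversing the orientation of $L$ simultaneously flips the sign of the point class $\bar b$—contributing one factor of $(-1)^n$ per boundary evaluation, hence $(-1)^{kn}$ in total—and changes the orientation of $\M_{k,l}(\beta)$ by a sign computed from the index data of the Cauchy-Riemann operator with reversed Lagrangian boundary condition. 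The net balance is to be shown to simplify to $(-1)^{k+1}$.

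The main obstacle is the sign bookkeeping for the Orientation axiom. Whereas the Spin axiom follows relatively cleanly from the monodromy interpretation of a change in spin structure, the Orientation axiom requires disentangling several interacting contributions: the sign of the point class $\bar b$ on $L$, the sign change in the orientation of $\M_{k,l}(\beta)$, the ordering of the boundary marked points, and the combinatorial prefactor $\sigma_k$ appearing in the formula of Theorem~\ref{theorem_lm}. The asymmetric exponent $(-1)^{k+1}$—rather than a more naive $(-1)^{k}$ or $(-1)^{k+n}$—reflects this interaction, and establishing it rigorously requires the framework of spin Riemann-Hilbert pairs and canonical orientations developed in Section~\ref{choose_spin_structure}, generalizing the orientation results of~\cite{solomon2016point}.
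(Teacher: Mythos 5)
Your high-level plan is partly aligned with the paper: the paper does cite Proposition~4.19 of~\cite{RelativeQuantumCohomology} for Degree, Unit, Zero, and Divisor, Theorem~4 of~\cite{solomon2016point} for the analogs of Symmetry and Deformation invariance for $\oogw$, and derives the Effective axiom from the fact that $\overline\Omega$ lives in $R_W$ and the Novikov ring contains only non-negative-energy terms. You correctly identify Lemmas~\ref{lm_spin} and~\ref{lm_orientation} as the substantive new ingredients for the Spin and Orientation axioms.

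However, your sketch of how to establish those two lemmas has a genuine gap. You propose to prove them by sign-tracking in the integral formula of Theorem~\ref{theorem_lm} (the point class $\bar b$, the orientation of $\M_{k,l}(\beta)$, and the prefactor $\sigma_k$). But Theorem~\ref{theorem_lm} only identifies $\oogw_{\beta,k}$ with a moduli-space integral when a cohomological vanishing condition on the classes in $P_\beta$ holds; for general $\beta,k$ the invariant includes correction terms from a bounding cochain, and the integral formula is simply not available. The paper's Lemmas~\ref{lm_spin} and~\ref{lm_orientation} therefore argue at the level of the $A_\infty$ algebra and superpotential: one checks how the $\m_k^{\gamma,\beta}$, the Poincar\'e pairing, and $\int_L$ transform, constructs a strict $A_\infty$ automorphism $\f$ (twisting $T^\beta$ for Spin, $s$ for Orientation), and reads off the sign from $\partial_s^k\circ\f$ acting on $\Omega$. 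This is what makes the computation robust against bounding cochain corrections, and it is also where the $(-1)^{k+1}$ actually comes from in the Orientation case: one $(-1)$ because $\int_L$ and $\langle\cdot,\cdot\rangle$ flip sign, and $(-1)^k$ because $\f$ sends $s$ to $-s$. Your account of the $(-1)^{kn}$ from the point class $\bar b$ is also a sign error: $\bar b$ is a single $n$-form, so reversing the orientation of $L$ while keeping $\pd([\bar b])=pt$ flips $\bar b$ by $-1$, not $(-1)^n$, giving $(-1)^k$ over the $k$ boundary constraints, not $(-1)^{kn}$; and the $\sigma_k$ prefactor plays no role. Finally, your proposal does not address the passage from $\oogw$ to the enhanced invariants $\ogw$, i.e.\ the case $k=0$, $\beta\in\Ima\varpi$ where the definition involves the map $P_\R$ and the sphere correction; the paper notes this extension is handled as in Proposition~4.19 of~\cite{RelativeQuantumCohomology}, and it is not automatic.
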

\begin{proof}
Axioms~\ref{ax_degree}-\ref{ax_divisor} are given in Proposition 4.19 from \cite{RelativeQuantumCohomology}. The analogs of Axioms~\ref{ax_symmetry} and~\ref{ax_def} for the invariants $\oogw$ are given in Theorem~4 of~\cite{solomon2016point}. The analogs of Axioms~\ref{ax_spin} and~\ref{ax_orientation} for the invariants $\oogw$ are given in Lemmas~\ref{lm_spin} and~\ref{lm_orientation} below respectively. The extension to the invariants $\ogw$ is similar to the proof of Proposition~4.19 in~\cite{RelativeQuantumCohomology}. Axiom~\ref{ax_eff} follows from the fact that the Novikov ring $\Lambda$ consists of power series in $T^\beta$ for $\omega(\beta) \geq 0.$ Indeed, the enhanced superpotential $\overline{\Omega}$ is defined in Section 1.3.3 in~\cite{RelativeQuantumCohomology} as an element of the ring $R_W.$
\end{proof}
\subsection{Bases}\label{subsection_bases}
Let $\Delta_i \in H^*(X;\R)$ for $i = 0,\ldots,N,$ be a basis and let $\Gamma_i \in \ker(P_\mathbb{R}) \subset \hH^*(X,L;\R)$ be the unique classes such that $\rho(\Gamma_i) = \Delta_i$. Let $\Gamma_{N+1} = y(1).$ We also write $\Gamma_\diamond = y(1).$ For convenience, set $\Delta_{N+1} = 0.$

\subsection{Novikov rings}\label{subsection_novikovrings}
Let $\mu : H_2(X,L;\Z) \to \Z$ denote the Maslov index.
Define Novikov coefficients rings
\begin{gather*}
\Lambda:=\{\sum_{i=0}^\infty a_iT^{\beta_i}|a_i\in \mathbb{R}, \beta_i\in H_2(X,L;\Z), \omega(\beta_i)\ge0, \lim_{i\rightarrow \infty}{\omega(\beta_i)}=\infty \},\\
\Lambda_c:=\{\sum_{j=0}^\infty a_jT^{\varpi(\beta_j)}|a_j\in \mathbb{R}, \beta_j\in H_2(X;\Z), \omega(\beta_j)\ge0, \lim_{j\rightarrow \infty}{\omega(\beta_j)}=\infty \}\leqslant \Lambda.
\end{gather*}
Gradings on $\Lambda, \Lambda_c$ are defined by declaring $|T^\beta| = \mu(\beta).$ Let
\begin{gather*}
R_W = \Lambda[[t_0,\ldots,t_{N+1},s]], \\
Q_W = \Lambda_c[[t_0,\ldots,t_{N+1}]], \qquad Q_U = \Lambda_c[[t_0,\ldots,t_N]].
\end{gather*}
Gradings on $R_W,Q_W,Q_U,$ are defined by declaring
$|t_i| = 2-|\Gamma_i|,\, |s| = 1-n.$

\subsection{Potentials}\label{potential}
The closed Gromov-Witten potential is given by
\[
\Phi(t_0,\ldots,t_N) =
\sum_{\substack{\beta \in H_2(X;\mathbb{Z}) \\ r_i \geq 0}} \frac{T^{\varpi(\beta)} t_N^{r_N} \cdots t_0^{r_0}}{r_N! \cdots r_0!}GW_\beta(\Delta_0^{\otimes r_0} \otimes \cdots \otimes \Delta_N^{\otimes r_N}) \in Q_U.\]
The enhanced superpotential, which encodes the enhanced open Gromov-Witten invariants $\ogw_{\beta,k},$ is given by
\begin{equation}\label{eq:esup}
\overline{\Omega}(s,t_0,\ldots,t_{N+1}) = \sum_{\substack{\beta\in H_2(X,L;\mathbb{Z})\\k\ge 0\\r_i\ge 0}}\frac{T^\beta s^k t_{N+1}^{r_{N+1}} \cdots t_0^{r_0}}{k! r_{N+1}! \cdots r_0!} \ogw_{\beta,k}(\Gamma_0^{\otimes r_0}\otimes \cdots \otimes \Gamma_{N+1}^{\otimes r_{N+1}}) \in R_W.
\end{equation}

\subsection{WDVV equations.}\label{subsection_g}
Let
$$
g_{ij} = \int_X \Delta_i \smile \Delta_j, \qquad i, j = 0,\ldots,N,
$$
and let $g^{ij}$ denote the inverse matrix. The following WDVV equations hold for the closed Gromov-Witten potential~\cite{Behrend,Fukaya_Ono,KontsevichManin,Li_Tian,mcduffsalamon2012, McDuffSalamon_smallbook,Ruan_Tian_1994, Ruan_Tian_1995,Witten_2d}.

\begin{thm}[Closed WDVV equations] For all quadruples of integers $i,j,k,l\in \{0,\ldots, n\}$, the function $\Phi=\Phi^{\mathbb{C}P^n}$ satisfies:
\[\sum_{\mu, \nu=0}^N \partial_{t_i} \partial_{t_j}\partial_{t_\nu}\Phi\cdot g^{\nu \mu}\cdot \partial_{t_\mu} \partial_{t_k} \partial_{t_l}\Phi= \sum_{\mu, \nu=0}^N \partial_{t_j} \partial_{t_k}\partial_{t_\nu}\Phi\cdot g^{\nu \mu}\cdot \partial_{t_\mu} \partial_{t_i} \partial_{t_l}\Phi \]
\end{thm}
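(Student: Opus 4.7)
The plan is to derive the closed WDVV equations from the splitting axiom of Gromov--Witten invariants together with the rational equivalence of the three boundary divisors of $\overline{M}_{0,4}$. First I would expand the third partial derivatives of the potential. Direct differentiation of the definition of $\Phi$ yields
\[
\partial_{t_i}\partial_{t_j}\partial_{t_\nu}\Phi \;=\; \sum_{\beta,\,\vec r}\frac{T^{\varpi(\beta)}\, t^{\vec r}}{\vec r!}\,\gw_\beta\!\bigl(\Delta_i,\Delta_j,\Delta_\nu,\Delta_0^{\otimes r_0}\otimes\cdots\otimes\Delta_N^{\otimes r_N}\bigr),
\]
so both sides of the asserted identity expand into sums over decompositions $\beta=\beta_1+\beta_2$ and bipartitions $\vec r=\vec r_1+\vec r_2$ of auxiliary insertions, weighted by multinomial coefficients $\binom{\vec r}{\vec r_1}$.

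After comparing coefficients of $T^{\varpi(\beta)}\,t^{\vec r}$, the claim reduces to the following 4-point identity: for any classes $A_i,A_j,A_k,A_l,B_1,\ldots,B_m$ on $X$,
\[
\sum_{\beta_1+\beta_2=\beta}\;\sum_{S\sqcup T=[m]}\;\sum_{\mu,\nu}\gw_{\beta_1}(A_i,A_j,B_S,\Delta_\nu)\,g^{\nu\mu}\,\gw_{\beta_2}(\Delta_\mu,A_k,A_l,B_T)
\]
is symmetric under swapping $A_j$ and $A_k$. To prove this, I would use the forgetful map $\pi\colon\overline{M}_{0,4+m}(X,\beta)\to\overline{M}_{0,4}\simeq\cp^1$ that remembers only the four distinguished marked points. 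The three boundary points of $\overline{M}_{0,4}$ correspond to the partitions $\{ij|kl\},\{ik|jl\},\{il|jk\}$, and any two are linearly equivalent on $\cp^1$. Hence their pullbacks along $\pi$ are rationally equivalent divisors in the moduli space of stable maps, so integrating the product of evaluation pullbacks of $A_i,A_j,A_k,A_l$ and the $B_\bullet$ against any two of them gives the same number. The splitting axiom identifies each such integral with the bilinear sum above, with $g^{\nu\mu}$ arising from the K\"unneth decomposition of the diagonal of $X$ at the node separating the two components.

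The main bookkeeping, and the only real obstacle beyond invoking standard machinery, is verifying that the multinomial combinatorics from distributing the $B_\bullet$ and the $\Delta_k$ between the two factors matches exactly the $1/\vec r!$ Taylor coefficients in $\Phi$, and that the pairing inverse $g^{\nu\mu}$ appears with the correct signs. These checks are the classical closed WDVV argument carried out in full detail in each of the references already cited for Proposition~\ref{prop:closed_axioms}, and I would refer to those works rather than reproducing the calculation. No input specific to the Chiang Lagrangian setting enters the proof.
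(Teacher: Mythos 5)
Your proposal correctly sketches the standard argument (expand $\Phi$, reduce to a four-point splitting identity, and use that the three boundary divisors of $\overline{M}_{0,4}\simeq\cp^1$ are rationally equivalent together with the splitting axiom and the K\"unneth decomposition of the diagonal). This matches the paper, which gives no proof of its own and simply cites the standard references for exactly this argument, just as you defer to them.
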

The following is Corollary 1.6 from \cite{RelativeQuantumCohomology}.
\begin{thm}[Open WDVV equations]\label{thm:open_wdvv} For $u, v, w  = 0,\ldots, N+1,$ we have the open WDVV equations

\begin{multline}
 \label{eq15}
    \sum_{l,m = 0}^N \partial_{t_u} \partial_{t_l} \overline{\Omega} \cdot g^{lm} \cdot  \partial_{t_m} \partial_{t_w} \partial_{t_v} \Phi - \partial_{t_u} \partial_s \overline{\Omega} \cdot \partial_{t_w} \partial_{t_v} \overline{\Omega} = \\
     =\sum_{l,m = 0}^N   \partial_{t_u} \partial_{t_w} \partial_{t_l} \Phi \cdot g^{lm} \cdot \partial_{t_m} \partial_{t_v} \overline{\Omega}  - \partial_{t_u} \partial_{t_w} \overline{\Omega} \cdot \partial_{t_v} \partial_s \overline{\Omega},
\end{multline}
\begin{equation}\label{eq16}
\sum_{l,m = 0}^N \partial_s \partial_{t_l} \overline{\Omega} \cdot g^{lm} \cdot \partial_{t_m} \partial_{t_w} \partial_{t_v} \Phi - \partial_s^2 \overline{\Omega} \cdot \partial_{t_w} \partial_{t_v} \overline{\Omega} = -\partial_s \partial_{t_w} \overline{\Omega}\cdot \partial_{t_v} \partial_s \overline{\Omega}.
\end{equation}
\end{thm}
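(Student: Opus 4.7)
The theorem is attributed to Corollary~1.6 of \cite{RelativeQuantumCohomology}, so the cleanest route is to invoke that reference directly. For a standalone proof sketch, the plan is to derive both equations from structural identities satisfied by the enhanced superpotential $\overline{\Omega}$ coming from the Fukaya $A_\infty$ algebra of $L$, deformed by a universal bounding cochain.

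First, I would unpack the definition~\eqref{eq:esup} of $\overline{\Omega}$ as a generating function for bounding-cochain-corrected disk counts, built from the operations $\{m_k^{b}\}$ of the Fukaya algebra twisted by the universal bounding cochain $b = \sum_i t_i \Gamma_i + s\cdot b_0$, together with the sphere correction governed by $P_\R$ in the case $k=0,\,\beta\in\Ima\varpi$. Second, I would establish a master equation: a quadratic identity satisfied by the derivatives of $\overline{\Omega}$ and $\Phi$ that encodes the associativity of the relative quantum product on $\widehat{H}^*(X,L;\R)$. This identity arises from the $A_\infty$ relations, the Maurer--Cartan property of $b$ (so that $m_0^{b}$ is a multiple of the unit), cyclicity of the Fukaya algebra with respect to the Poincar\'e pairing $g^{lm}$, and the compatibility of the sphere-bubbling stratum in the Gromov compactification of the moduli of $J$-holomorphic disks with the closed genus zero invariants encoded by $\Phi$.

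Next, I would extract \eqref{eq15} and \eqref{eq16} by differentiating the master equation in appropriate formal variables and matching coefficients, using the symmetry of partial derivatives and the fundamental class axiom~\ref{ax_unit}. Equation~\eqref{eq15} arises by differentiating twice in boundary-type variables $t_u, t_w, t_v$; the term $\partial_{t_u}\partial_s \overline{\Omega}\cdot \partial_{t_w}\partial_{t_v}\overline{\Omega}$ records disk-bubbling degenerations in which one disk bubble carries a boundary point constraint (encoded by $s$). Equation~\eqref{eq16} is the analogous consequence of differentiating once in $s$ in place of $t_u$, so the term $\partial_s^2 \overline{\Omega}\cdot \partial_{t_w}\partial_{t_v}\overline{\Omega}$ records degenerations in which both factors carry boundary point markers. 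In both cases the $\Phi$-terms arise from sphere bubbling at the split node, with insertion of the diagonal class $\sum_{l,m} g^{lm}\Delta_l\otimes\Delta_m$.

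The main obstacle, and the reason the proof in \cite{RelativeQuantumCohomology} is technically demanding, is establishing the master equation itself. This requires the full $A_\infty$ machinery: compatible systems of perturbation data, existence of the universal bounding cochain $b$ via the obstruction theory recalled in Section~\ref{sec4}, cyclic symmetry with carefully tracked signs, and the precise form of the $P_\R$-enhancement correcting for the collapse of a disk boundary into a sphere, cf.~Remark 4.12 of~\cite{RelativeQuantumCohomology}. Once the master equation has been proved and the $P_\R$-correction properly incorporated, the derivation of~\eqref{eq15}--\eqref{eq16} is a routine computation of coefficients in the formal variables $s, t_0,\ldots,t_{N+1}$.
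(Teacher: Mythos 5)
Your proposal matches the paper exactly: the paper gives no proof of this theorem, stating only that it is Corollary~1.6 of~\cite{RelativeQuantumCohomology}, which is precisely the citation you invoke. Your additional sketch of the underlying degeneration argument is a reasonable outline of how that reference proceeds, but the paper itself relies entirely on the citation.
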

\subsection{Wall-crossing}\label{subsection_wallcrossing}
Define
\[\Gamma_{\diamond}=y(1) \in \widehat{H}^*(X,L;\R).\]
The following is Theorem 6 from \cite{RelativeQuantumCohomology}.
\begin{thm}[Wall crossing]
\label{wall_crossing} Suppose $[L]=0$, then the invariants $\ogw_{\beta,k}$ satisfy
$$\ogw_{\beta, k+1}(\eta_1, \ldots, \eta_l)=-\ogw_{\beta,k}(\Gamma_\diamond, \eta_1, \ldots, \eta_l)$$
\end{thm}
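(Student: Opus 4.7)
The plan is to prove the wall-crossing formula by exploiting the fact that $\Gamma_\diamond=y(1)$ admits a representative of a very special form, and then running a Stokes-type argument on the relevant moduli spaces. Since $[L]=0\in H_n(X;\R)$, the class $1\in H^n(L;\R)$ (Poincaré dual to a point) extends to a form on $X$: concretely, there exists $\tau\in A^n(X;\R)$ with $\tau|_L$ representing the Poincaré dual of a point on $L$, normalized so that $\int_L\tau|_L=1$. In the mapping-cone model $\widehat A^*(X,L)$ for relative cohomology, the pair $(d\tau,\tau|_L)$ is a cocycle whose class is precisely $y(1)=\Gamma_\diamond$. Thus we may represent $\Gamma_\diamond$ by a form $\gamma$ that is exact on $X$.

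Next, I would unwind the definition of $\ogw_{\beta,k+1}(\Gamma_\diamond,\eta_1,\ldots,\eta_l)$ through the enhanced superpotential $\overline\Omega$ of equation~\eqref{eq:esup} and the associated moduli spaces $\mathcal M_{k,l+1}(\beta)$ of stable $J$-holomorphic disks with $k$ boundary and $l+1$ interior marked points. The contribution of $\Gamma_\diamond$ is pulled back via the interior evaluation $\mathrm{evi}_0$ at the distinguished marked point. Writing $\mathrm{evi}_0^*\gamma=d(\mathrm{evi}_0^*\tau)$ and applying Stokes to the pairing against $\bigwedge_j\mathrm{evi}_j^*a_j\wedge\bigwedge_i\mathrm{evb}_i^*\bar b$, the integral is converted into a sum of boundary contributions of $\overline{\mathcal M}_{k,l+1}(\beta)$. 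These boundary strata split into two types: (i) strata where the interior marked point $z_0$ migrates to $\partial D$, which produce exactly the configurations computing $-\ogw_{\beta,k+1}$ after using $\int_L\tau|_L=1$ and accounting for the reparametrization that promotes $z_0$ to a boundary marked point; and (ii) nodal degenerations, whose contribution must be shown to cancel.

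The cancellation of the nodal strata is the heart of the argument. Here I would use the Fukaya $A_\infty$-algebra machinery of Section~\ref{sec4} together with the bounding cochain $b$ constructed in~\cite{solomon2016point}: the correction terms in $\oogw$ and $\ogw$ are precisely designed so that, when combined with the nodal boundary contributions of the moduli space, they assemble into the $A_\infty$-Maurer--Cartan equation for $b$, which vanishes. Equivalently, at the level of $\overline\Omega$, one can phrase the whole computation as a differential identity $\partial_s\overline\Omega+\partial_{t_{N+1}}\overline\Omega = (\text{explicit lower-order terms})$ that follows from cyclic symmetry of the Fukaya structure maps and the fact that $\gamma=d\tau$ is exact; extracting coefficients of $T^\beta s^k t_{j_1}\cdots t_{j_l}$ then yields the wall-crossing relation.

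The main obstacle I anticipate is the careful bookkeeping of signs (from the orientation of $\overline{\mathcal M}_{k,l+1}(\beta)$, the cyclic shuffle of boundary marked points, and the Koszul signs from the grading of $\Gamma_\diamond$) together with the proper treatment of the bounding cochain correction terms, since $\ogw$ is not a naive disk count. A secondary subtlety is the case $\beta\in\mathrm{Im}\,\varpi$ with $k=0$, where the enhanced correction of Section~\ref{ssec:background} involving the chain $C$ with $\partial C=L$ and the map $P_\R$ enters; one must verify that these contributions also satisfy the wall-crossing relation, which is essentially automatic since $\Gamma_\diamond$ lies in $\ker\rho$ and the enhancement only modifies $\ogw_{\beta,0}$.
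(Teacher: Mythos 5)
The paper does not prove this statement; it records it as Theorem~6 of~\cite{RelativeQuantumCohomology}, so there is no in-paper argument to compare against. Judged on its own merits, your outline has the right shape: represent $\Gamma_\diamond=y(1)$ by $d\tau$ with $\tau|_L=\bar b$ and $\int_L\bar b=1$, integrate by parts so the interior $\Gamma_\diamond$-insertion becomes a boundary insertion of $\bar b$ on the stratum where the marked point migrates to $\partial D$, and absorb the remaining strata into the $A_\infty$ Maurer--Cartan relation. Your treatment of the exceptional case $k=0$, $\beta\in\mathrm{Im}\,\varpi$ via $\rho(\Gamma_\diamond)=0$ is correct and is in fact made precise by Proposition~\ref{prop:general_map_p}: the enhancement contributes terms built from $\gw_{\tilde\beta}(\ldots,\rho(A_i),\ldots)$, all of which vanish once one of the $A_i$ equals $\Gamma_\diamond$.

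A few corrections to the setup. The hypothesis $[L]=0$ has nothing to do with the existence of the extension $\tau$: any smooth $n$-form on $L$ extends to $X$, and $\tau$ can never be chosen closed, since $y(1)=[d\tau]$ must be nonzero. What $[L]=0$ actually buys you is injectivity of $y$, so that $\Gamma_\diamond\ne0$ and the short exact sequence~\eqref{ses} exists; without it the statement would be vacuous. Also, the identity you are after is $\partial_s\overline{\Omega}+\partial_{t_{N+1}}\overline{\Omega}=0$ exactly; there are no residual ``lower-order terms,'' and admitting any would be inconsistent with the coefficient extraction you then propose.

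The place where the sketch is genuinely incomplete is the Stokes step itself. You apply Stokes against $\bigwedge\mathrm{evb}_i^*\bar b$, but the invariants $\oogw_{\beta,k}$ are computed from the \emph{full} bounding cochain $b=s\,b_{0,1,0}+O(R^+)$, and $b$ is not $d$-closed. Running Stokes on $\mathrm{evi}^*d\tau$ inside each structure constant $\langle\mathfrak{m}^\gamma_k(b^{\otimes k}),b\rangle$ and in $\mathfrak{m}^\gamma_{-1}$ therefore produces, in addition to the boundary stratum you describe, interior terms coming from $\mathfrak{m}_1 b$ together with the nodal strata of the compactified moduli space; it is precisely the Maurer--Cartan equation $\sum_k\mathfrak{m}^\gamma_k(b^{\otimes k})=c\cdot1$ and the cyclic property of Proposition~\ref{prop:linear_a_infty} that allow all of these to cancel. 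That cancellation, together with the sign determined by the orientation conventions of Section~\ref{ssec:moduli spaces}, is the entire content of the theorem and has to be carried out, not asserted as a design feature of the correction terms.
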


\subsection{Relative quantum cohomology}\label{ssec:rqc}
Abbreviate
\[\Delta= \sum_{i=0}^N t_i\Delta_i,\quad \Gamma= \sum_{i=0}^{N+1} t_i\Gamma_i.\]
The underlying module of the big relative quantum cohomology is
\[
QH^*_{\mathrm{big}}(X,L):= \widehat{H}^*(X,L;\Lambda[[t_0,\ldots,t_{N+1}]]).
\]
The big relative quantum product
\[\mem^{\mathrm{big}}: QH^*_{\mathrm{big}}(X,L)\otimes QH^*_{\mathrm{big}}(X,L)\rightarrow QH^*_{\mathrm{big}}(X,L)\]
is given by
\begin{multline}\label{big_rel_qunatum_cohomology}
\mem^{\mathrm{big}}(\Gamma_v, \Gamma_u)=\sum_{\substack{ 0\le l,m\le N \\ \beta\in H_2(X;\Z) \\ p\ge 0}}
\frac{T^{\varpi{(\beta)}}}{p!}\gw_\beta(\Delta^{\otimes p}\otimes \Delta_v \otimes \Delta_u \otimes \Delta_l)\cdot g^{lm}\cdot \Gamma_m +\\
+\sum_{\substack{\beta\in H_2(X,L;\Z)\\ p\ge0}} \frac{T^{\beta}}{p!}\ogw_{\beta,0}(\Gamma^{\otimes p}\otimes \Gamma_v\otimes \Gamma_u)\cdot \Gamma_\diamond.
\end{multline}
In Theorem 7 of~\cite{RelativeQuantumCohomology}, it is shown that the product $\mem^{\mathrm{big}}$ is associative and graded commutative.
Though in \cite{RelativeQuantumCohomology} the definition of $\mem^{\mathrm{big}}$ is based on a chain-level construction, the above explicit formula follows from Lemma 5.11 in~\cite{RelativeQuantumCohomology}.

The underlying module of the small relative quantum cohomology is
\[
QH^*(X,L) = \widehat{H}^*(X,L;\Lambda).
\]
The small relative quantum product
\[
\mem: QH^*(X,L) \otimes QH^*(X,L) \to QH^*(X,L)
\]
is given by specializing the big relative quantum product to $t_0 = \cdots =t_{N+1} = 0.$ So, it too is associated and graded commutative. Explicitly, $\mem$ is given by
\begin{multline}\label{relqua}
\mem(\Gamma_u, \Gamma_v)=\sum_{\substack{0\le l,m\le N\\  \beta\in H_2(X;\Z)}}T^{\varpi(\beta)}\cdot\gw_\beta(\Delta_u, \Delta_v,\Delta_l)\cdot g^{lm}\cdot \Gamma_m+\\
+\sum_{\beta\in H_2(X,L;\Z)}T^\beta \cdot\ogw_{\beta,0}(\Gamma_u, \Gamma_v)\cdot\Gamma_\diamond.
\end{multline}

\section{The Chiang Lagrangian}\label{section3}

In this section we describe the construction of the Chiang Lagrangian along with an orientation and spin structure. Then, we prove some lemmas concerning its topology.

\subsection{Geometric construction}\label{subsection3.1}
The following is largely based on~\cite{Smith}.
Consider the fundamental representation $V$ of $\sln(2, \mathbb{C}).$ Projectifying, we obtain an action of $\sln(2, \mathbb{C})$ on $\cp^1 = \mathbb{P}(V).$ Taking the $d$-fold symmetric product, we obtain an action of $\sln(2, \mathbb{C})$ on $\sym^d \cp^1 = \cp^d.$ Equivalently, we can consider the $d$-fold symmetric power $S^d V$ and projectify to obtain an action of $\sln(2, \mathbb{C})$ on $\mathbb{P}(S^dV) = \cp^d.$

Fix $d\ge3$, and a configuration $C$ of $d$ distinct points in $\cp^1$. Then, the
$\sln(2, \mathbb{C})$-orbit of $C$ in $\sym^d \cp^1 \cong \mathbb{P}S^dV$
is a three-dimensional complex submanifold, of which the
$\su(2)$-orbit is a three-dimensional totally real submanifold. The stabilizer of $C$ in $\sln(2, \mathbb{C})$ is a finite subgroup of $\su(2)$ which we denote by $\Gamma_C.$
In \cite{faber1993linear}, Aluffi and Faber identify those configurations $C$ for which the
$\sln(2, \mathbb{C})$-orbit has smooth closure $X_C$ in $\mathbb{P}S^dV$.
There are four cases: the vertices of an equilateral triangle on a great circle in $\cp^1$, which we denote by $\tr$, and the vertices of a regular tetrahedron, octahedron
and icosahedron in $\cp^1$, which we denote by $T, O$ and $I$
respectively.

In each case the restriction of the $\su(2)$-action to $X_C$ with the Fubini- Study K\"ahler form is Hamiltonian with the moment map $m: \mathbb{P}S^dV\rightarrow \mathfrak{su}(2)^*$ which is defined by
\[\langle m([z]), \xi \rangle=\frac{i}{2}\frac{z^\dag\varphi(\xi)z}{z^\dag z}, \qquad \xi\in \mathfrak{su}(2), \]
where $\varphi:\mathfrak{su}(2)\rightarrow \mathrm{Mat}_{(d+1)\times(d+1)}(\mathbb{C})$ is given by the representation $S^dV.$ The $\su(2)$-orbits of the configurations $\triangle, T, O$ and $I$ all lie in the zero sets of the respective moment maps, so they are Lagrangian. We denote these orbits by $L_C$.

The Chiang Lagrangian is $L_{\triangle} \subset X_{\triangle} = \mathbb{P}S^3V \cong \cp^3$. It is the space of all the equilateral triangles on great circles in $\cp^1.$ Topologically, $L_{\triangle}$ is the quotient
of $\su(2)$ by the binary dihedral subgroup $\Gamma_\tr$ of order 12.

To work with the Chiang Lagrangian, we will need the following additional notations.
Let $W_\tr$ denote
the Zariski open $\sln(2, \mathbb{C})$-orbit in $\x$, and let
$Y_\tr$ denote its complement. $Y_\tr$ consists of those
$3$-point configurations in $\x$ where at least $2$ of the points
coincide. Let $N_\tr \subset Y_\tr$ be the subvariety consisting
of those configurations where all $3$ points coincide.
If $[z_0:\ldots:z_3]$ are standard coordinates on
$\mathbb{P}S^3 V$, then the roots of the polynomial
\[f(T) := \sum z_j (-T)^j \]
correspond (with multiplicity) to the $3$-tuple of
points obtained by viewing $[z]$ as a point of
$\sym^3 \cp^1$. We count $\infty$ as a root with
multiplicity $3 - \deg f$. Consequently, $Y_\tr$ is
defined by the vanishing of the discriminant
$\triangle(f)$ of $f$.

\subsection{Orientation and spin structure}\label{subsection_orientation_spin}
Let $\alpha,\beta,\gamma$
be a basis of $\mathfrak{su}(2)$ corresponding to infinitesimal right-handed rotations about a right-handed set of orthonormal axes.
The infinitesimal group action gives rise to a
frame for $TL_\triangle,$
\[
\alpha\cdot z,\beta\cdot z,\gamma\cdot z,
\]
where $z\in L_\triangle.$ Let $\oo_\triangle$ be the orientation on $L_\triangle$ such that this frame is positively oriented. Let $\s_\triangle$ be the spin structure on $L_\triangle$ such that this frame can be lifted to the associated double cover of the frame bundle.

\subsection{Topological lemmas}\label{subsection3.2}
Equip $\mathbb{P}S^3V \cong \cp^3$ with the Fubini-Study K\"ahler form $\omega$ such that $\int_{\cp^1} \omega=1$.
The following lemma is given in Section 4.3 of \cite{EvansLekili}.
\begin{lm}\label{lm:h_1}
    $H_1(L_\tr;\Z)=\Z /4\Z.$
\end{lm}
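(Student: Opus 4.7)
The plan is to compute $\pi_1(L_\tr)$ first using the description of $L_\tr$ as a homogeneous space, and then pass to the abelianization to obtain $H_1(L_\tr;\Z)$.

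First I would invoke the identification recalled in Section~\ref{subsection3.1}: topologically $L_\tr \cong \su(2)/\Gamma_\tr$, where $\Gamma_\tr$ is the binary dihedral group of order $12$ (i.e.\ the preimage in $\su(2)$ of the dihedral symmetry group of an equilateral triangle on a great circle in $\cp^1$ under the double cover $\su(2)\to SO(3)$). Since $\su(2) \cong S^3$ is simply connected and $\Gamma_\tr$ acts freely, the quotient map $\su(2)\to L_\tr$ is the universal cover. Covering space theory therefore gives a group isomorphism
\[
\pi_1(L_\tr)\;\simeq\;\Gamma_\tr.
\]

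Next I would compute the abelianization of $\Gamma_\tr$. The binary dihedral group of order $12$ admits the standard presentation
\[
\Gamma_\tr \;=\;\langle\, a,b \,\mid\, a^6=1,\ b^2=a^3,\ bab^{-1}=a^{-1}\,\rangle.
\]
Abelianizing, the relation $bab^{-1}=a^{-1}$ collapses to $a^2=1$, and then $b^2=a^3=a$. Hence the abelianization is generated by $b$ alone, subject to $b^4=(b^2)^2=a^2=1$, with no further relations. This yields
\[
H_1(L_\tr;\Z)\;\simeq\;(\Gamma_\tr)^{\mathrm{ab}}\;\simeq\;\Z/4\Z,
\]
as claimed.

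The only real step to check is the identification of $\Gamma_\tr$ with the binary dihedral group of order $12$; this is the stabilizer computation carried out in~\cite{Smith} and recalled in Section~\ref{subsection3.1}, so there is no serious obstacle. Everything else is a direct application of covering space theory and a short presentation-level computation, neither of which requires symplectic input.
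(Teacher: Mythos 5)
Your proof is correct. The paper does not actually supply an argument here — it cites Section~4.3 of Evans--Lekili for the result — but the route you take (covering space theory applied to the description $L_\tr\cong \su(2)/\Gamma_\tr$ with $\Gamma_\tr$ the binary dihedral group of order $12$, followed by abelianizing the standard presentation) is the standard one and matches the cited source; the Smith-normal-form style computation checks out, giving $\Z/4\Z$.
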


\begin{lm}\label{h_2}
$H_2(L_\triangle;\mathbb{Z})=0$.
\end{lm}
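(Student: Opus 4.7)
My plan is to deduce the lemma from Lemma~\ref{lm:h_1} (which gives $H_1(L_\tr;\Z)\simeq \Z/4\Z$) by combining Poincaré duality with the universal coefficient theorem.

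First I would note that $L_\tr$ is a closed oriented smooth $3$-manifold: it is the orbit of the compact group $\su(2)$ acting on $\mathbb{P}S^3V$ with finite stabilizer (the binary dihedral group of order $12$), hence compact, connected, and $3$-dimensional without boundary, while the orientation $\oo_\tr$ was constructed in Section~\ref{subsection_orientation_spin}.

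Next, Poincaré duality yields $H_2(L_\tr;\Z)\simeq H^1(L_\tr;\Z)$. The universal coefficient theorem then gives a short exact sequence
\[
0 \longrightarrow \mathrm{Ext}^1(H_0(L_\tr;\Z),\Z) \longrightarrow H^1(L_\tr;\Z) \longrightarrow \mathrm{Hom}(H_1(L_\tr;\Z),\Z) \longrightarrow 0.
\]
Since $L_\tr$ is connected, $H_0(L_\tr;\Z)\simeq\Z$ is free, so the $\mathrm{Ext}^1$ term vanishes. By Lemma~\ref{lm:h_1}, $H_1(L_\tr;\Z)\simeq\Z/4\Z$ is pure torsion, so $\mathrm{Hom}(\Z/4\Z,\Z)=0$. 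Combining these, $H^1(L_\tr;\Z)=0$, and hence $H_2(L_\tr;\Z)=0$.

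There is essentially no obstacle here; this is a routine combination of Poincaré duality and the universal coefficient theorem, given Lemma~\ref{lm:h_1}. The only verification required is that $L_\tr$ is indeed a closed oriented $3$-manifold, which is immediate from the constructions of Sections~\ref{subsection3.1} and~\ref{subsection_orientation_spin}.
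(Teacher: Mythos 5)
Your proof is correct and follows the same route as the paper: Poincaré duality to reduce to $H^1(L_\tr;\Z)$, then the universal coefficient theorem together with Lemma~\ref{lm:h_1} to show $H^1(L_\tr;\Z)=0$. The only difference is that you spell out why $L_\tr$ is a closed oriented $3$-manifold, which the paper takes as given.
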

\begin{proof} By Poincar\'e duality, it suffices to show $H^1(L;\mathbb{Z})=0$. By the universal coefficients theorem we get the following short exact sequence
\[0\rightarrow \text{Ext}^1(H_{0}(L_\triangle;\mathbb{Z}),\mathbb{Z})\rightarrow H^1(L_\triangle;\mathbb{Z})\rightarrow\text{Hom}(H_1(L_\triangle;\mathbb{Z}),\mathbb{Z})\rightarrow 0.
\]
By Lemma~\ref{lm:h_1}  $H_1(L_\triangle;\mathbb{Z})=\mathbb{Z}/4\mathbb{Z}$. In addition, $L_\triangle$ is connected, so $H_0(L_\triangle;\mathbb{Z})=\mathbb{Z}$.
Hence, we get an exact sequence
$
0\rightarrow H^1(L_\triangle;\mathbb{Z})\rightarrow 0,
$
which yields $H^1(L_\triangle;\mathbb{Z})=0$.
\end{proof}

The following is Lemma~4.4.1 from \cite{EvansLekili}.
\begin{lm}
\label{maslov_index_lemma}
The Chiang Lagrangian $L_\tr$ has minimal Maslov number equal to $2.$
\end{lm}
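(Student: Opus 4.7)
My plan is to compute the Maslov index on the generator of $H_2(\cp^3, L_\tr;\Z) \simeq \Z$ directly via the pushforward $\varpi$ from closed classes and then invoke naturality with the Chern class of $\cp^3.$

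First I would extract the structure of $H_2(\cp^3, L_\tr;\Z).$ Combining Lemmas~\ref{lm:h_1} and~\ref{h_2} with the long exact sequence of the pair $(\cp^3, L_\tr)$ yields
\[
0 \longrightarrow H_2(\cp^3;\Z) \xrightarrow{\ \varpi\ } H_2(\cp^3, L_\tr;\Z) \longrightarrow H_1(L_\tr;\Z) \longrightarrow 0,
\]
i.e.\ $0 \to \Z \to H_2(\cp^3, L_\tr;\Z) \to \Z/4 \to 0.$ By Lemma~\ref{lm:relhom} the middle group is isomorphic to $\Z,$ so for a generator $\beta_0 \in H_2(\cp^3, L_\tr;\Z)$ and the line class $\ell \in H_2(\cp^3;\Z)$ we must have $\varpi(\ell) = \pm 4\beta_0.$

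Next I would apply the standard naturality identity $\mu(\varpi(\alpha)) = 2\,c_1(T\cp^3)(\alpha)$ for $\alpha \in H_2(\cp^3;\Z),$ which expresses the Maslov class of a sphere bubbled off from a disk in terms of the ambient Chern class. Since $c_1(T\cp^3) = 4 H$ and $H \cdot \ell = 1,$ this gives $\mu(\varpi(\ell)) = 8.$ Combining with $\varpi(\ell) = \pm 4\beta_0$ and linearity of $\mu$ yields $4\mu(\beta_0) = \pm 8,$ hence $|\mu(\beta_0)| = 2.$ Therefore the image of $\mu : H_2(\cp^3, L_\tr;\Z) \to \Z$ is exactly $2\Z,$ and the minimal Maslov number of $L_\tr$ equals~$2.$

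No step here is a serious obstacle, but the argument hinges on Lemma~\ref{lm:relhom}: one must separately confirm that the extension $0 \to \Z \to H_2(\cp^3, L_\tr;\Z) \to \Z/4 \to 0$ is nontrivial, so that the middle group is $\Z$ rather than $\Z \oplus \Z/4.$ As an independent sanity check, one can exhibit a single smoothly immersed disk $u : (D, \partial D) \to (\cp^3, L_\tr)$ whose boundary represents a generator of $H_1(L_\tr;\Z) = \Z/4$; its relative class will then generate $H_2(\cp^3, L_\tr;\Z),$ and the computation above shows it has Maslov index $\pm 2.$
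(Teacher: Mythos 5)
Your argument is circular given the logical structure of the paper. Lemma~\ref{lm:relhom}, which you invoke to conclude $H_2(\cp^3, L_\tr;\Z) \simeq \Z$ and $\varpi = (\times 4)$, is itself proved in the paper by \emph{assuming} Lemma~\ref{maslov_index_lemma}: the paper rules out the split case $H_2(\cp^3, L_\tr;\Z) \simeq \Z \oplus \Z/4$ precisely because that case would force the minimal Maslov number to be $8$, contradicting the minimal Maslov number $2$ established in Lemma~\ref{maslov_index_lemma}. Without Lemma~\ref{maslov_index_lemma} one cannot distinguish between the split and nonsplit extensions from the long exact sequence alone, and indeed in the split case your computation would give $\mu(1,0) = 8$ with $\mu$ vanishing on the torsion summand, so the minimal Maslov number would be $8$, not $2$. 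The two statements are logically entangled, and your proof supplies no independent way to break the tie.

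You do flag this issue in your last paragraph, but the proposed sanity check does not resolve it. Exhibiting a smoothly immersed disk whose boundary generates $H_1(L_\tr;\Z) = \Z/4$ does not determine the extension class: in the split case $\Z \oplus \Z/4$, a disk with boundary generating $H_1$ could represent $(a,1)$ for any $a \in \Z$ and you learn nothing new about the Maslov index. What is actually needed is a disk whose Maslov index you can compute directly, independently of the homology bookkeeping. This is what the paper (citing Evans--Lekili) and Section~\ref{subsection_axialdisks} do: the axial disk of type $\xi_v$ in Example~\ref{ex} is shown to meet the anticanonical divisor $Y_\tr$ with total multiplicity $1$, hence by Lemma~\ref{lm:intersect} has Maslov index $2$. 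Since the Maslov class is even (as $L_\tr$ is orientable), exhibiting one Maslov-$2$ disk already gives the minimal Maslov number. Your cohomological route can then be run \emph{after} this geometric input to deduce Lemma~\ref{lm:relhom}, which is exactly how the paper orders the two statements.
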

For  $\zeta\in H_2(\x;\mathbb{Z})$, we denote by $c_1(\zeta)$ the evaluation of the first Chern class of $\x$ on $\zeta$. Let $\xi = [\cp^1]\in H_2(\x;\mathbb{Z})$, so $c_1(\xi)=4$.
\begin{lm}\label{lm:relhom}
The long exact sequence of the pair $(\cp^3,L_\tr)$ gives the short exact sequence
\[
0\rightarrow H_2(\x;\mathbb{Z})\overset{\varpi}{\rightarrow}H_2(\x,L_\triangle;\mathbb{Z})\rightarrow H_1(L_\triangle;\mathbb{Z})\rightarrow 0,
\]
with $H_2(\x,L_\triangle;\mathbb{Z}) \simeq\mathbb{Z} \simeq H_2(\cp^3;\Z)$, and the map $\varpi$ is given by multiplication by 4.
\end{lm}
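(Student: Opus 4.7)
The plan is to combine the long exact sequence of the pair $(\cp^3, L_\triangle)$ with the Maslov homomorphism in order to settle the resulting extension problem. First, I would invoke Lemma~\ref{h_2}, which gives $H_2(L_\triangle;\mathbb{Z})=0$, together with the standard fact $H_1(\cp^3;\mathbb{Z})=0$, to truncate the long exact sequence to a short exact sequence whose outer terms are $H_2(\cp^3;\mathbb{Z})\simeq\mathbb{Z}$ and, by Lemma~\ref{lm:h_1}, $H_1(L_\triangle;\mathbb{Z})\simeq\mathbb{Z}/4\mathbb{Z}$. This establishes the claimed short exact sequence; what remains is to identify the middle group and the map~$\varpi$.

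Writing $G:=H_2(\cp^3, L_\triangle;\mathbb{Z})$, I would then observe that up to isomorphism there are exactly two possible extensions $0\to\mathbb{Z}\to G\to\mathbb{Z}/4\mathbb{Z}\to 0$: either $G\simeq\mathbb{Z}$, in which case $\varpi$ is multiplication by $\pm 4$, or $G\simeq\mathbb{Z}\oplus\mathbb{Z}/4\mathbb{Z}$, in which case $\varpi(\mathbb{Z})$ is a $\mathbb{Z}$-subgroup with quotient $\mathbb{Z}/4\mathbb{Z}$, necessarily generated by an element of the form $(\pm 1,a)$ with $a\in\mathbb{Z}/4\mathbb{Z}$. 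To distinguish the cases, I would use the Maslov homomorphism $\mu:G\to\mathbb{Z}$ and the standard identity $\mu\circ\varpi=2c_1$ on spherical classes; combined with $c_1(\xi)=4$, this forces $\mu(\varpi(\xi))=8$.

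In the hypothetical split case, since $\mathbb{Z}$ is torsion-free, $\mu$ kills the torsion summand and therefore factors through the projection onto the free summand as multiplication by some integer $m$, whose absolute value is the minimal Maslov number. By Lemma~\ref{maslov_index_lemma}, $|m|=2$, so $\mu$ evaluated on any generator $(\pm 1,a)$ of $\varpi(\mathbb{Z})$ equals $\pm 2$, contradicting $\mu(\varpi(\xi))=8$. Hence $G\simeq\mathbb{Z}$, and $\varpi:\mathbb{Z}\to\mathbb{Z}$ must have cokernel $\mathbb{Z}/4\mathbb{Z}$, so it equals multiplication by $\pm 4$; a sign normalization, for instance choosing the generator of $G$ to have Maslov $+2$ so that $\mu(\varpi(\xi))=8$ is positive, pins this down to multiplication by~$4$. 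The only nontrivial obstacle is the extension problem, and the Maslov class resolves it cleanly; the surrounding steps are routine homological algebra.
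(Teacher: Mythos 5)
Your proof is correct and follows essentially the same route as the paper's: truncate the long exact sequence using Lemmas~\ref{lm:h_1} and~\ref{h_2}, then resolve the extension problem via the Maslov class, using $\mu\circ\varpi=2c_1$ together with the minimal Maslov number from Lemma~\ref{maslov_index_lemma}. One small oversight you share with the paper: an extension $0\to\mathbb{Z}\to G\to\mathbb{Z}/4\mathbb{Z}\to 0$ can also have middle group $G\simeq\mathbb{Z}\oplus\mathbb{Z}/2\mathbb{Z}$ (the class $2\in\mathrm{Ext}^1(\mathbb{Z}/4\mathbb{Z},\mathbb{Z})\simeq\mathbb{Z}/4\mathbb{Z}$), so there are three isomorphism types, not two; the same Maslov computation disposes of this case too, since $\varpi(1)$ would then have free part $\pm 2$ and $\mu(\varpi(\xi))=\pm 4\ne 8$.
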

\begin{proof}
Consider the long exact sequence
\[\ldots\rightarrow H_2(L_\triangle;\mathbb{Z})\rightarrow H_2(\x;\mathbb{Z}) \overset{\varpi}{\rightarrow} H_2(\x,L_\triangle;\mathbb{Z})\rightarrow H_1(L_\triangle;\mathbb{Z})\rightarrow H_1(\x;\mathbb{Z})\rightarrow\ldots ,\]
where
\[
H_1(\x;\mathbb{Z})=0, \qquad H_2(\x;\mathbb{Z})\simeq \mathbb{Z},
\]
and Lemma~\ref{lm:h_1} gives $H_1(L_\triangle;\mathbb{Z})\simeq\mathbb{Z}/4\mathbb{Z}.$
By Lemma~\ref{h_2} we have $H_2(L_\triangle;\mathbb{Z})=0$, so we get the short exact sequence
\begin{equation}\label{eq:ses4}
0\rightarrow \mathbb{Z} \overset{\varpi}{\rightarrow} H_2(\x,L_\triangle;\mathbb{Z}) \rightarrow \mathbb{Z}/4\mathbb{Z}\rightarrow0.
\end{equation}
We have two possibilities:
\begin{enumerate}
    \item\label{lm:relhom_case_a}
    The short exact sequence~\eqref{eq:ses4} is split, so $H_2(\x,L_\triangle;\mathbb{Z}) \simeq \mathbb{Z}\oplus\mathbb{Z}/4\mathbb{Z}$. Thus,~\eqref{eq:ses4} is isomorphic to
    \[
    0\rightarrow \mathbb{Z} \overset{\varpi}{\rightarrow} \mathbb{Z}\oplus\mathbb{Z}/4\mathbb{Z} \rightarrow \mathbb{Z}/4\mathbb{Z}\rightarrow0,
    \]
    where
    \[
    \varpi(x)= (x,0).
    \]
    \item\label{lm:relhom_case_b}
    The short exact sequence~\eqref{eq:ses4} is not split, so $H_2(\x,L_\triangle;\mathbb{Z})=\mathbb{Z}$. Thus,~\eqref{eq:ses4} is isomorphic to
    \[ 0\rightarrow \mathbb{Z} \overset{\varpi}{\rightarrow} \mathbb{Z} \rightarrow \mathbb{Z}/4\mathbb{Z}\rightarrow0,\]
    where
    \[  \varpi(x)= 4x.\]
\end{enumerate}
In case~\ref{lm:relhom_case_a} the Maslov index is given by
\[
\mu: \mathbb{Z}\oplus\mathbb{Z}/4\mathbb{Z}\rightarrow \mathbb{Z},\]
\[\mu (a,b)= ka, \qquad k\in\mathbb{Z}.
\]
Since $\mu(\varpi(\xi))=2c_1(\xi) = 8$, it follows that $k=8$ in contradiction to Lemma~\ref{maslov_index_lemma}. Therefore, we must be in case~\ref{lm:relhom_case_b}.
\end{proof}

In light of Lemma~\ref{lm:relhom}, let $\tilde{\xi}\in H_2(\x,L_\triangle;\mathbb{Z})$ be the generator such that $\varpi(\xi)=4\tilde{\xi}$.
Recall
$\Gamma_1=[\omega]\in H^2(\x,L_\triangle;\mathbb{Z})$.
\begin{lm}
\label{divisor_for_chiang}
We have
\[
\int_{\tilde{\xi}} \Gamma_1 = \frac{1}{4}.
\]
\end{lm}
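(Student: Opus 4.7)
The strategy is to reduce the integral of the relative class $\Gamma_1$ over $\tilde{\xi}$ to an integral of the absolute class $[\omega]$ over $\xi$, using naturality of the pairing under the map $\varpi$ on $H_2$ and the dual map $\rho$ on $H^2$. Since $\varpi(\xi) = 4 \tilde{\xi}$ has already been established in Lemma~\ref{lm:relhom}, a single normalization calculation should suffice.

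Concretely, my plan is the following. First, I would observe that $\Gamma_1 = [\omega] \in H^2(\cp^3, L_\triangle; \R)$ is the relative lift of the absolute K\"ahler class (well-defined because $\omega|_{L_\triangle} = 0$ as $L_\triangle$ is Lagrangian), and that $\rho(\Gamma_1) = \Delta_1 = [\omega] \in H^2(\cp^3;\R)$ where $\rho$ is the natural map in the long exact sequence of the pair. Second, I would invoke the naturality of the cap product / integration pairing: for any $\zeta \in H_2(\cp^3;\Z)$, one has
\[
\int_{\varpi(\zeta)} \Gamma_1 \;=\; \int_\zeta \rho(\Gamma_1) \;=\; \int_\zeta \omega.
\]

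Applying this to $\zeta = \xi = [\cp^1]$ and using the normalization $\int_{\cp^1} \omega = 1$ fixed in Section~\ref{subsection3.2}, I obtain $\int_{\varpi(\xi)} \Gamma_1 = 1$. By Lemma~\ref{lm:relhom}, $\varpi(\xi) = 4\tilde{\xi}$, so by linearity
\[
4 \int_{\tilde{\xi}} \Gamma_1 = 1,
\]
which yields the desired value $\tfrac{1}{4}$.

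There is no serious obstacle here; the only point requiring minor care is the compatibility between the relative and absolute pairings, but this is a standard consequence of the definition of $\rho$ as induced by the inclusion $(\cp^3, \emptyset) \hookrightarrow (\cp^3, L_\triangle)$ together with the fact that cycles in the image of $\varpi$ come from absolute cycles in $\cp^3$. The entire argument is a direct corollary of Lemma~\ref{lm:relhom} and the normalization of $\omega$.
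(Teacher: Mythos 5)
Your proposal is correct and is essentially identical to the paper's proof: both use $\varpi(\xi)=4\tilde\xi$ from Lemma~\ref{lm:relhom} together with the compatibility of the relative and absolute pairings and the normalization $\int_\xi\omega=1$ to conclude $4\int_{\tilde\xi}\Gamma_1=1$. The paper simply states this in one line without spelling out the naturality argument, which you make explicit.
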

\begin{proof}
We have
\[
4\int_{\tilde{\xi}}\Gamma_1= \int_{\varpi(\xi)}\omega = \int_\xi\omega=1.
\]
\end{proof}

\section{Bounding cochains and open Gromov-Witten invariants}\label{sec4}
In this section we recall the definition of the Fukaya $A_\infty$ algebra of a Lagrangian submanifold following~\cite{Fukaya_cyclic_symmetry,Fukaya_counting_discs,FOOO_lagr_floer_theory,FOOO_toric_I,FOOO_toric_II,FOOO,A_infinity:Jake_Sara}. We recall the notion of bounding cochains from~\cite{FOOO} and the invariants $\oogw_{\beta,k}$ of~\cite{solomon2016point} mentioned in Section~\ref{subsection_ogw}. Additionally, we recall results concerning bounding cochains from \cite{solomon2016point}. We use these results to prove Theorem~\ref{lm}, from which Theorem~\ref{theorem_lm} follows as a special case. These theorems give a class of open Gromov-Witten invariants which coincide with straightforward counts of $J$-holomorphic disks without corrections from bounding chains. Theorem~\ref{theorem_lm} plays an essential role in the computation of the basic invariants of Theorem~\ref{thm:iv}. We also prove Lemmas~\ref{lm_spin} and~\ref{lm_orientation} concerning the affect of change of spin structure and orientation on the invariants $\oogw.$ These lemmas are used to prove parts~\ref{ax_spin} and~\ref{ax_orientation} of Proposition~\ref{prop:open_axioms}.

Throughout this section $(X, \omega)$ is a symplectic manifold of real dimension $2n$ and $L\subset X$ is a connected Lagrangian submanifold with relative spin structure $\mathfrak{s}$. The notion of a relative spin structure appeared in \cite{FOOO}. See also \cite{Wehrheim_Woodward}. In particular, a relative spin structure determines an orientation on $L$. Let $J$ be an $\omega$-tame almost complex structure on $X$.

\subsection{Moduli spaces}\label{ssec:moduli spaces}
Denote by $\mathcal{M}_{k+1,l}(\beta)$ the moduli space of $J$-holomorphic genus zero open stable maps $u:(D, \partial D) \rightarrow (X,L)$ of degree $\beta$ with one boundary component, $k+1$ boundary marked points and $l$ interior marked points. The boundary points are labeled according to their cyclic order. We denote elements in $\mathcal{M}_{k+1,l}(\beta)$ by $[u;z_0,\ldots,z_k,w_1,\ldots,w_l]$, where $z_i\in \partial D$ and $w_i\in \Int D$.
Denote by
\[evb^\beta_j: \mathcal{M}_{k+1,l}(\beta) \rightarrow L,\quad j=0,\ldots,k, \]
\[evi^\beta_j: \mathcal{M}_{k+1,l}(\beta) \rightarrow X,\quad j=1,\ldots,l,\]
the evaluation maps associated to boundary marked points and to the interior marked points respectively.

Denote by $\mathcal{M}^S_{k,l}(\beta)$ the moduli space of genus zero $J$-holomorphic open stable maps $u:(D, \partial D) \rightarrow (X,L)$ of degree $\beta$ with one boundary component, $k$ unordered boundary points, and $l$ interior marked points. It comes with evaluation maps as in the case of $\mathcal{M}_{k,l}(\beta)$.

We assume that all $J$-holomorphic genus zero open stable maps with one boundary component are regular and the moduli spaces $\mathcal{M}_{k+1,l}(\beta),\mathcal{M}^S_{k,l}(\beta)$ are smooth orbifolds with corners. In addition, we assume that the evaluation maps $evb^\beta_0$ are proper submersions. These assumptions should hold for $(X,L) = (\cp^3,L_\tr)$ and $J$ the standard complex structure on $\cp^3$ by Remark~1.6 of~\cite{A_infinity:Jake_Sara} in light of the transitive action of $\sln(4,\C)$ on $\C P^3$ and the transitive action of the subgroup $\su(2) \subset \sln(4,\C)$ on $L_\tr$. The arguments of the present section extend to arbitrary targets $(X,\omega,L)$ and arbitrary $\omega$-tame almost complex structures $J$ given the virtual fundamental class techniques of \cite{Fukaya_cyclic_symmetry, Fukaya_counting_discs, FOOO, FOOO_toric_I, FOOO_toric_II, FOOO_lagr_floer_theory,FOOO_Kuranish_II, FOOO_Antisymplectic, FOOO_Spectral,FOOO_Kuranishi_book,Fukaya_Ono}. Alternatively, it should be possible to use the polyfold theory of~\cite{Hofer_Wysocki_Zehnder_29,Hofer_Wysocki_Zehnder_30,Hofer_Wysocki_Zehnder_31,Hofer_Wysocki_Zehnder_32,Wehrheim_Li}. The relative spin structure $\mathfrak{s}$ determines an orientation on $\mathcal{M}_{k+1,l}(\beta),\mathcal{M}^S_{k,l}(\beta)$ as in Chapter~8 of~\cite{FOOO}.

\subsection{The Fukaya \texorpdfstring{$A_\infty$}{A-infinity} algebra}\label{subsection4.2}
Denote by $A^*(X;\R)$ the ring of differential forms on $X$ with coefficients in $\mathbb{R}$. For $m>0$, denote by $\widetilde{A}^m(X,L;\R)$ differential $m$-forms on $X$ that vanish on $L$, and denote by $\widetilde{A}^0(X,L;\R)$ functions on $X$ that are constant on $L$.
Define
$$\widetilde{H}^*(X,L;\mathbb{R})\coloneqq H^*(\widetilde{A}^*(X,L;\mathbb{R}), d).$$
By Lemma 5.14 from\cite{solomon2016point}, if $H^*(L;\R)\simeq H^*(S^n;\R)$ then $\hH^*(X,L;\mathbb{R})\simeq \widetilde{H}^*(X,L;\mathbb{R})$.
Denote by $\beta_0$ the zero element of $H_2(X,L;\Z)$.

Recall the definition of the Novikov ring $\Lambda$ from Section~\ref{subsection_novikovrings}. Let $s,t_0,\ldots,t_M$, be formal variables with degrees in $\mathbb{Z}$. Set
\[R:= \Lambda[[s,t_0,\ldots,t_M]],\quad Q:=\mathbb{R}[[t_0,\ldots,t_M]] \subset R,\]
thought of as differential graded algebras with trivial differential.
Define a valuation
\[\nu: R\rightarrow \mathbb{R}_{\ge0},\]
by
\[
\nu\left(\sum_{j=0}^\infty a_jT^{\beta_j}s^{k_j}\prod_{a=0}^Mt_a^{l_{aj}}\right)
= \inf_{\substack{j\\a_j\ne 0}} \left(\omega(\beta_j)+k_j+\sum_{a=0}^M l_{a_j}\right).
\]
Whenever a tensor product (resp. direct sum) of modules with valuation is written, we mean the completed tensor product  (resp. direct sum).
Set
\[
C:= A^*(L)\otimes R, \quad D:= \widetilde{A}^*(X,L)\otimes Q.
\]
In particular, the gradings on $C$ and $D$ take into account the degrees of $T^\beta, s,t_j$, and the degrees of the differential forms.
The valuation $\nu$ induces valuations on $\Lambda,Q,C,D$ and their tensor products, which we also denote by $\nu$.
Let
\[\Lambda^+=\{\alpha\in\Lambda | \ \nu(\alpha)>0  \}.\]
Let
\[
R^+:= R\Lambda^+ \triangleleft R,\quad \mathcal{I}_R:= \{ \alpha\in R| \ \nu(\alpha)>0\}\triangleleft R, \quad \mathcal{I}_Q:= \{\alpha\in Q| \ \nu(\alpha)>0 \}\triangleleft Q.
\]

Let $\gamma\in \mathcal{I}_QD$ be a closed form with $|\gamma|=2$. For example, given closed differential forms $\gamma_j\in \widetilde{A}^*(X,L;\R)$ for $j=0,\ldots,M,$ take $t_j$ of degree $2-|\gamma_j|$ and $\gamma:=\sum_{j=0}^Mt_j\gamma_j$.
Define maps
\[
\mathfrak{m}_k^{\gamma,\beta}:C^{\otimes k} \rightarrow C
\]
by
\[ \mathfrak{m}^{\gamma,\beta_0}_1(\alpha)=d\alpha,\]
and for $k\ge 0$ when $(k,\beta)\ne(1,\beta_0)$, by
\begin{equation}\label{def:m_k}
  \mathfrak{m}^{\gamma,\beta}_k(\alpha_1,\ldots,\alpha_k):=(-1)^{\sum_{j=1}^kj(|\alpha_j|+1)+1}
\sum_{\substack{l\ge0}}\frac{1}{l!}{evb_0^\beta}_* (\bigwedge_{j=1}^l(evi_j^\beta)^*\gamma\wedge
\bigwedge_{j=1}^k (evb_j^\beta)^*\alpha_j
).
\end{equation}
The push-forward $evb^\beta_{0_*}$ is given by integration over the fiber, which is well-defined because $evb^\beta_0$ is a proper submersion.
Define also \[\mathfrak{m}_k^{\gamma}:C^{\otimes k} \rightarrow C\]
by
\[\mathfrak{m}^{\gamma}_k:= \sum_{\substack{\beta\in H_2(X,L;\Z)}}T^\beta\mathfrak{m}_k^{\gamma,\beta}.  \]
Furthermore, define
$$\mathfrak{m}^\gamma_{-1} \coloneqq \sum_{\substack{\beta\in H_2(X,L;\Z), \\ l\ge 0}} \frac{1}{l!} T^\beta \int_{\mathcal{M}_{0,l}(\beta)} \bigwedge_{j=1}^l (evi^\beta_j)^*\gamma.$$
The following is Proposition 2.6 from \cite{A_infinity:Jake_Sara}.
\begin{prop}
 The operations $\{\mathfrak{m}_k^\gamma\}_{k\ge0}$ define an $A_\infty$ structure on $C$. That is, for any $\alpha_1,\ldots,\alpha_{k+1}\in C$ and $k\in\Z_{\ge0},$
 \[
 \sum_{\substack{k_1+k_2=k+1\\ 1\le i \le k_1}}(-1)^{\sum_{j=1}^{i-1}(|\alpha_j|+1)}\mathfrak{m}_{k_1}(\alpha_1,\ldots,\alpha_{i-1},\mathfrak{m}_{k_2}(\alpha_i,\ldots,\alpha_{i+k_2-1}),\alpha_{i+k_2},\ldots,\alpha_k)=0.
 \]
\end{prop}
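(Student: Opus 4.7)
The plan is to prove the $A_\infty$ relation for each fixed $\beta \in H_2(X,L;\Z)$ separately, by comparing coefficients of $T^\beta$ on both sides, and then to derive the identity via Stokes' theorem applied to a suitable fiber integral over $\mathcal{M}_{k_0+1,l}(\beta)$. First I would fix $k_0$ and $\beta$, and write out both sides of the $A_\infty$ relation as a sum of push-forwards of products of pullbacks of $\gamma$ and the $\alpha_j$; the goal is then to realize the left-hand side as $d$ applied to one such fiber integral, so that Stokes' theorem on the orbifold-with-corners $\mathcal{M}_{k_0+1,l}(\beta)$ gives the relation.

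The key geometric input is the codimension-one boundary description
\[
\partial \mathcal{M}_{k_0+1,l}(\beta) \;=\; \bigsqcup_{\substack{\beta_1+\beta_2=\beta \\ k_1+k_2 = k_0+1,\; i}} \mathcal{M}_{k_1+1,l_1}(\beta_1) \;{}_{evb_i} \!\!\times_{evb_0}\! \mathcal{M}_{k_2+1,l_2}(\beta_2),
\]
where the marked points are distributed respecting cyclic order on the boundary and are split arbitrarily in the interior. I would use the fact that sphere bubbling and marked-point collisions are codimension $\geq 2$ (so contribute nothing to Stokes), while the genuine codimension-one strata come precisely from disk node formation. Pushing the integrand forward under $evb_0^\beta$ and applying Stokes, one obtains:
\begin{enumerate}
\item A bulk contribution from $d$ acting on $\bigwedge_j (evi_j^\beta)^* \gamma$ and $\bigwedge_j (evb_j^\beta)^* \alpha_j$; since $\gamma$ is closed, only the $d\alpha_j$ terms survive, producing the compositions that involve $\mathfrak{m}_1^{\gamma,\beta_0} = d$.
\item A boundary contribution that decomposes as a sum over the disk-node strata, producing the composition $\mathfrak{m}_{k_1}^{\gamma,\beta_1} \circ_i \mathfrak{m}_{k_2}^{\gamma,\beta_2}$ after matching the multinomial coefficients: the factor $\frac{1}{(l_1+l_2)!}\binom{l_1+l_2}{l_1}$ in the distribution of interior marked points collapses to $\frac{1}{l_1!\, l_2!}$, which is exactly what appears in $\mathfrak{m}_{k_1}^\gamma \circ \mathfrak{m}_{k_2}^\gamma$.
\end{enumerate}

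The hard part, as always for this kind of statement, will be verifying that all signs match. There are three sources of signs that must be reconciled: the explicit prefactor $(-1)^{\sum j(|\alpha_j|+1)+1}$ in the definition of $\mathfrak{m}_k^{\gamma,\beta}$, the Koszul signs that arise when pulling $d$ through the wedge product and through the fiber integration $evb_{0*}^\beta$ (whose fiber dimension depends on $\mu(\beta)$, $k$, $l$, and $n$), and the induced boundary orientation on the fiber product, determined by the relative spin structure $\mathfrak{s}$ via the conventions of Chapter~8 of \cite{FOOO}. The cleanest path is to identify the sign contribution of each boundary stratum, reduce modulo $2$ to the target sign $(-1)^{\sum_{j<i}(|\alpha_j|+1)}$ appearing in the $A_\infty$ axiom, and check agreement by induction on $i$, exactly as done in \cite{A_infinity:Jake_Sara} and \cite{A_infinity:Jake_Sara}; indeed, the statement here is precisely Proposition~2.6 of \cite{A_infinity:Jake_Sara}, so in practice I would invoke that proof directly after checking that its hypotheses (regularity of moduli spaces and properness/submersion of $evb_0^\beta$) are exactly the assumptions stated in Section~\ref{ssec:moduli spaces}.
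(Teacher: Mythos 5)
Your proposal is correct and ultimately takes the same route as the paper: the paper states this result as a direct citation of Proposition~2.6 of \cite{A_infinity:Jake_Sara}, with no proof given, and you likewise conclude by invoking that proposition after noting the hypotheses match the assumptions of Section~\ref{ssec:moduli spaces}. Your preceding outline of the Stokes-theorem argument (bulk term from $d$ on the integrand, codimension-one boundary from disk bubbling, higher-codimension sphere bubbling and collisions, sign bookkeeping via the FOOO orientation conventions) is an accurate summary of how the cited proof goes, though the paper itself does not reproduce it.
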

The following is Proposition 3.1 from~\cite{A_infinity:Jake_Sara}.
\begin{prop}
The operations $\mathfrak{m}_k$ are $R$-multilinear. Namely, for any $\alpha_1,\ldots,\alpha_{k+1}\in C$ and $a\in R$,
\begin{equation}\label{eqqq}
\mathfrak{m}^\gamma_k(\alpha_1,\ldots,\alpha_{i-1},a\cdot\alpha_i,\ldots,\alpha_k)=(-1)^{|a|\cdot(i+\Sigma_{j=1}^{i-1}|\alpha_j|)}a\cdot\mathfrak{m}^\gamma_k(\alpha_1,\ldots,\alpha_k)+\delta_{1,k}\cdot da\cdot \alpha_1.
\end{equation}
\end{prop}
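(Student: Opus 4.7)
The plan is to reduce the identity to a direct sign calculation using the definition~\eqref{def:m_k}, and to isolate the one situation where the differential $d$ produces the extra Leibniz term. We split into two cases: $(k,\beta) \neq (1,\beta_0)$ for each summand of $\mathfrak{m}_k^\gamma = \sum_\beta T^\beta \mathfrak{m}_k^{\gamma,\beta}$, and the remaining contribution $\mathfrak{m}_1^{\gamma,\beta_0}(\alpha) = d\alpha$. Since $R$-multilinearity is $R$-linear in $a$, it suffices to treat $a = T^\beta s^{k_0} \prod t_j^{l_j}$ a monomial, whose total degree is $|a|$.

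For the generic case, I would substitute $a \cdot \alpha_i$ into~\eqref{def:m_k}. Because $a$ is a scalar in $R$ (no form component on the moduli space), the pullback satisfies $(evb^\beta_i)^*(a\alpha_i) = a \cdot (evb^\beta_i)^* \alpha_i$ with no extra sign. To move the scalar $a$ to the front of the wedge product, I must commute it past the forms $(evi_j^\beta)^*\gamma$, which are all even-degree since $|\gamma|=2$, and past the forms $(evb_j^\beta)^*\alpha_j$ for $j<i$. The Koszul sign rule gives a net factor $(-1)^{|a| \cdot \sum_{j=1}^{i-1}|\alpha_j|}$. Next, since integration over the fiber is $R$-linear in constants, $(evb^\beta_0)_*(a\cdot \eta) = a \cdot (evb^\beta_0)_* \eta$. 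Finally, the explicit prefactor in~\eqref{def:m_k} changes because $|a\alpha_i| = |a|+|\alpha_i|$: the exponent $\sum_{j=1}^k j(|\alpha_j|+1)+1$ gains an extra $i|a|$ compared with the original. Combining these two sign contributions yields the factor $(-1)^{|a|\cdot(i+\sum_{j=1}^{i-1}|\alpha_j|)}$ on the right-hand side, with no Leibniz correction since $d$ does not appear.

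For the exceptional summand $\mathfrak{m}_1^{\gamma,\beta_0}$, the operation is just the de Rham differential, and the graded Leibniz rule gives
\[
d(a\cdot \alpha_1) = da\cdot \alpha_1 + (-1)^{|a|} a \cdot d\alpha_1,
\]
which is precisely the right-hand side of~\eqref{eqqq} when $k=1$: the first term matches $\delta_{1,k}\,da\cdot\alpha_1$, and the sign $(-1)^{|a|}$ agrees with $(-1)^{|a|\cdot(1+\sum_{j=1}^{0}|\alpha_j|)}$. Summing this $\beta_0$-contribution with the generic contributions for $\beta \neq \beta_0$ at $k=1$ then gives~\eqref{eqqq} in the case $k=1$, completing the argument.

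The only potentially subtle step is the bookkeeping of Koszul signs in the generic case, in particular verifying the parity change of the prefactor $(-1)^{\sum j(|\alpha_j|+1)+1}$ when $|\alpha_i|$ is replaced by $|a|+|\alpha_i|$; this is the main place where a miscount could slip in. Once the two sign sources are combined, the identity matches on the nose. Everything else — compatibility of pullback with scalar multiplication and the projection formula for the fiber integral $(evb^\beta_0)_*$ — is a standard property of integration along a proper submersion, and applies uniformly in $\beta$.
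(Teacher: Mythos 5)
The paper does not prove this proposition at all — it is quoted verbatim as Proposition~3.1 of the cited reference on the $A_\infty$ algebra, so there is no in-paper argument to compare against. Your proof supplies the missing argument and it is correct: the decomposition into the $(k,\beta)=(1,\beta_0)$ term (where $\mathfrak{m}_1^{\gamma,\beta_0}=d$ and the graded Leibniz rule produces the $\delta_{1,k}\,da\cdot\alpha_1$ summand with the $(-1)^{|a|}$ sign) and the remaining moduli-space terms (where the sign arises from commuting $a$ past $\bigwedge_{j<i}(evb_j^{\beta})^*\alpha_j$, the $(evi^\beta_j)^*\gamma$'s contributing nothing since $|\gamma|=2$, plus the change $(-1)^{i|a|}$ in the explicit prefactor) is precisely the natural proof. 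One small remark worth making explicit: you implicitly use that the fiber integral $(evb^\beta_0)_*$ commutes with scalar multiplication by $a\in R$ without sign, which is consistent with the paper's convention for $\langle\,\cdot\,,\,\cdot\,\rangle$ in~\eqref{relation1} — Koszul signs appear only when commuting $a$ past differential forms, not past $\int_L$ or pushforward. Also note that for $a\in R$ the ring $R$ carries the trivial differential, so $da=0$ and the Leibniz correction is actually always zero here; the term is kept in the statement only so that it transcribes the reference verbatim. Neither of these points affects the validity of your argument.
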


\begin{prop}\label{prop:energyzero}
For $k\ge0$ and $\omega(\beta)=0$,
\[
\mathfrak{m}^{\gamma,\beta}_k(\alpha_1,\ldots,\alpha_k)=
\begin{cases}
d\alpha_1, & (\beta,k)=(\beta_0,1),\\
(-1)^{|\alpha_1|}\alpha_1\wedge\alpha_2, & (\beta,k)=(\beta_0,2),\\
-\gamma_1|_L, & (\beta,k)=(\beta_0, 0),\\
0, & \text{otherwise}.
\end{cases}
\]
\end{prop}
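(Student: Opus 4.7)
The plan is to split the analysis according to $\beta$. By $\omega$-tameness, any $J$-holomorphic disk representing a class with $\omega(\beta)=0$ must be constant, so $\mathcal{M}_{k+1,l}(\beta)$ is empty whenever $\omega(\beta)=0$ and $\beta\ne\beta_0$; the formula~\eqref{def:m_k} then immediately gives $\mathfrak{m}^{\gamma,\beta}_k=0$. It therefore suffices to treat the case $\beta=\beta_0$, where I expect all the nontrivial content to come from a dimension count on constant-disk moduli together with a sign tabulation.

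For $\beta=\beta_0$ and any $(k,l)$ with $k+1+2l\ge 3$, the moduli space of stable constant maps decomposes as
\[
\mathcal{M}_{k+1,l}(\beta_0)\ \cong\ L\times\mathcal{M}_{k+1,l}^{\mathrm{dom}},
\]
where $\mathcal{M}_{k+1,l}^{\mathrm{dom}}$ is the moduli of disks with $k+1$ ordered boundary and $l$ interior marked points, a smooth orbifold of real dimension $k+2l-2$. Under this identification the map $evb_0$ becomes the projection onto $L$, every other $evb_j$ agrees with this same projection, and every $evi_j$ is the projection composed with the inclusion $L\hookrightarrow X$. Consequently the integrand in~\eqref{def:m_k}, namely $\bigwedge_j (evi_j^\beta)^*\gamma\wedge\bigwedge_j(evb_j^\beta)^*\alpha_j$, is pulled back from the $L$-factor and carries no component along the $\mathcal{M}_{k+1,l}^{\mathrm{dom}}$ direction. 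Since $(evb_0^\beta)_*$ is integration over a fiber of real dimension $k+2l-2$, the push-forward vanishes unless $k+2l=2$.

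This pins down exactly two contributing pairs, $(k,l)=(2,0)$ and $(k,l)=(0,1)$, both with $\mathcal{M}^{\mathrm{dom}}_{k+1,l}$ a single point. For $(k,l)=(2,0)$ the sign $(-1)^{(|\alpha_1|+1)+2(|\alpha_2|+1)+1}$ simplifies to $(-1)^{|\alpha_1|}$ and the integrand restricts to $\alpha_1\wedge\alpha_2$, producing $(-1)^{|\alpha_1|}\alpha_1\wedge\alpha_2$ as claimed. For $(k,l)=(0,1)$ the prefactor is $-\tfrac{1}{1!}=-1$ and $(evi_1^\beta)^*\gamma$ restricts to $\gamma|_L$, yielding the third case of the statement. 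For every $k\ge 3$ the fiber dimension $k+2l-2$ is strictly positive for all $l\ge 0$, so every term in the sum vanishes, giving the ``otherwise'' clause; the case $(\beta,k)=(\beta_0,1)$ is fixed by hand as $d\alpha$ in the definition of $\mathfrak{m}_1^{\gamma,\beta_0}$ and requires no further argument. The main point that needs care is the identification of $\mathcal{M}_{k+1,l}(\beta_0)$ as a product together with the compatibility of the induced orientation and the evaluation maps, after which the dimension counting and sign computation proceed mechanically.
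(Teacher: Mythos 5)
Your argument is correct in substance but takes a more self-contained route than the paper, which handles $\beta\ne\beta_0$ exactly as you do and then disposes of $\beta=\beta_0$ in one line by citing Proposition~3.8 of~\cite{A_infinity:Jake_Sara}. You instead re-derive that result from the product decomposition $\mathcal{M}_{k+1,l}(\beta_0)\cong L\times\mathcal{M}^{\mathrm{dom}}_{k+1,l}$, the observation that the integrand in~\eqref{def:m_k} is pulled back from the $L$-factor, and the dimension count $\dim\mathcal{M}^{\mathrm{dom}}_{k+1,l}=k+2l-2$, which isolates precisely $(k,l)\in\{(2,0),(0,1)\}$. This is cleaner to read and makes the origin of each case transparent, but it shifts the real technical content onto the step you flag in your closing sentence and do not carry out: in your calculation the signs $(-1)^{|\alpha_1|}$ and $-1$ come entirely from the prefactor in~\eqref{def:m_k}, so the stated answers implicitly require that the identifications $\mathcal{M}_{3,0}(\beta_0)\cong L$ and $\mathcal{M}_{1,1}(\beta_0)\cong L$ match the relative-spin orientation on the moduli space with the given orientation on $L$ times the positively oriented point. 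That orientation check is exactly the nontrivial content being delegated to the cited reference, and it must be verified rather than asserted; without it, your argument pins down the answer only up to an overall sign in each of the two surviving cases. Separately, your computation produces $-\gamma|_L$ in the $(\beta,k)=(\beta_0,0)$ case, while the statement writes $-\gamma_1|_L$; the subscript has no independent meaning in this paper's notation, so you should reconcile it explicitly against~\cite{A_infinity:Jake_Sara} and say whether it is inherited notation or a misprint for~$\gamma$.
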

\begin{proof}
If $\beta\ne \beta_0$ and $\omega(\beta)=0$, then $\mathcal{M}_{k+1,l}=\emptyset$. If $\beta=\beta_0$, see Proposition 3.8 in \cite{A_infinity:Jake_Sara}.
\end{proof}

Let $\langle \ , \ \rangle$ denote the Poincar\'e pairing
\begin{equation}\label{def:Poinacre_pairing}
    \langle \xi, \eta \rangle \coloneqq (-1)^{|\eta|}\int_L\xi \wedge \eta.
\end{equation}
The following is Proposition 3.1 from \cite{A_infinity:Jake_Sara}.
\begin{prop}
    Let $a\in R$ and $\alpha_1,\ldots,\alpha_{k+1}\in C$. The Poincar\'e pairing satisfies the following
\begin{equation}\label{relation1}
\langle a\cdot\alpha_1,\alpha_2\rangle=a \langle \alpha_1,\alpha_2\rangle, \qquad \langle \alpha_1, a\cdot\alpha_2\rangle=(-1)^{|a|\cdot(1+|\alpha_1|)}a\cdot\langle \alpha_1,\alpha_2\rangle,
\end{equation}

\end{prop}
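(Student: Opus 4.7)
The plan is to verify both identities by direct unraveling of the definition of $\langle\cdot,\cdot\rangle$ and careful application of the Koszul sign rule. Both facts are formal consequences of $R$-linearity of the wedge product on $C = A^*(L) \otimes R$ together with the standard graded commutativity $(a\cdot \xi)\wedge \eta = a\cdot(\xi\wedge\eta)$ when $a$ is a scalar in the outer slot, and $\xi\wedge(a\cdot\eta) = (-1)^{|a||\xi|}a\cdot(\xi\wedge\eta)$ when one moves $a$ past $\xi$.

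For the first identity, I would simply expand:
\[
\langle a\cdot \alpha_1,\alpha_2\rangle = (-1)^{|\alpha_2|}\int_L (a\cdot \alpha_1)\wedge\alpha_2 = (-1)^{|\alpha_2|}\int_L a\cdot(\alpha_1\wedge\alpha_2) = a\,(-1)^{|\alpha_2|}\int_L \alpha_1\wedge\alpha_2 = a\langle \alpha_1,\alpha_2\rangle,
\]
using that no element is moved past $a$, so no Koszul sign arises. Note that the integral is $R$-linear in the sense that it commutes with the coefficients from $R$ on the outside.

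For the second identity, the sign arises from two sources. First, by the definition of $\langle\cdot,\cdot\rangle$, we have
\[
\langle \alpha_1, a\cdot \alpha_2\rangle = (-1)^{|a|+|\alpha_2|}\int_L \alpha_1\wedge(a\cdot \alpha_2),
\]
contributing a factor $(-1)^{|a|}$ beyond $(-1)^{|\alpha_2|}$. Second, moving $a$ past $\alpha_1$ in $\alpha_1\wedge(a\cdot \alpha_2)$ produces the Koszul sign $(-1)^{|a||\alpha_1|}$, giving $\alpha_1\wedge(a\cdot\alpha_2) = (-1)^{|a||\alpha_1|}a\cdot(\alpha_1\wedge\alpha_2)$. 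Combining yields
\[
\langle \alpha_1, a\cdot\alpha_2\rangle = (-1)^{|a|+|\alpha_2|+|a||\alpha_1|}\,a\int_L \alpha_1\wedge \alpha_2 = (-1)^{|a|(1+|\alpha_1|)}\,a\,\langle \alpha_1,\alpha_2\rangle,
\]
which is the desired formula.

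There is no real obstacle here: the proposition is a Koszul sign bookkeeping exercise, made entirely routine once one fixes the convention that $R$ acts on $C = A^*(L)\otimes R$ via the right tensor factor and that the wedge product on $C$ is the graded tensor product of $\wedge$ on $A^*(L)$ with multiplication on $R$. The only care needed is to ensure that the sign $(-1)^{|\eta|}$ built into the definition of the pairing is tracked when $a$ is absorbed into the degree of the second argument, as in the first step of the second computation above.
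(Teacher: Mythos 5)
Your computation is correct, and it is the natural direct verification from the definition $\langle \xi,\eta\rangle = (-1)^{|\eta|}\int_L \xi\wedge\eta$. The paper itself gives no proof — it cites this as Proposition~3.1 of the reference on Fukaya $A_\infty$ algebras — so there is nothing in the paper to compare against; your sign-bookkeeping (the $(-1)^{|a|}$ from $|a\cdot\alpha_2| = |a|+|\alpha_2|$ in the pairing's built-in sign, plus the Koszul sign $(-1)^{|a||\alpha_1|}$ from moving $a$ past $\alpha_1$) is exactly the argument one would supply.
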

The following is Proposition 3.3 from \cite{A_infinity:Jake_Sara}.
\begin{prop}\label{prop:linear_a_infty}
    For $\alpha_1,\ldots,\alpha_{k+1}\in C$,
    \begin{equation}\label{relation2}
\langle\mathfrak{m}_k^\gamma(\alpha_1,\ldots,\alpha_k), \alpha_{k+1}\rangle=(-1)^{(|\alpha_{k+1}|+1)\sum_{j+1}^k(|\alpha_j|+1)}\langle\mathfrak{m}^\gamma_k(\alpha_{k+1},\alpha_1,\ldots,\alpha_{k-1}),\alpha_k\rangle.
\end{equation}
\end{prop}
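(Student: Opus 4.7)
The plan is to reduce the claimed identity to a change-of-variables argument under a cyclic relabeling of boundary marked points on the moduli space $\mathcal{M}_{k+1,l}(\beta)$. First I would apply the projection formula to each side: since $evb_0^\beta$ is a proper submersion, we have the adjunction $\int_L (evb_0^\beta)_*\eta \wedge \alpha = \pm\int_{\mathcal{M}_{k+1,l}(\beta)} \eta \wedge (evb_0^\beta)^*\alpha$ with a standard Koszul sign. Combining this with the definition~\eqref{def:m_k} of $\mathfrak{m}_k^{\gamma,\beta}$ and the definition~\eqref{def:Poinacre_pairing} of the pairing, both sides of the proposed identity become sums, weighted by $T^\beta/l!$, of integrals over $\mathcal{M}_{k+1,l}(\beta)$ of the same basic integrand $\bigwedge_{j=1}^l (evi_j^\beta)^*\gamma \wedge \bigwedge_{j=1}^{k+1}(evb_{\sigma(j)}^\beta)^*\alpha_j$, but with the assignment $\sigma$ of the $k+1$ forms to the evaluation maps $evb_0,\ldots,evb_k$ differing by a cyclic shift of one position.

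Next I would exploit the natural cyclic automorphism $\tau:\mathcal{M}_{k+1,l}(\beta)\to\mathcal{M}_{k+1,l}(\beta)$ obtained by relabeling $z_j \mapsto z_{j-1}$ modulo $k+1$. It commutes with the interior evaluation maps and satisfies $evb_j^\beta\circ\tau = evb_{j-1}^\beta$. Change of variables along $\tau$ transforms the first cyclic distribution into the second one, and produces two kinds of signs: the Koszul signs obtained by commuting the pulled-back boundary forms past each other on the moduli space, and the sign $\mathrm{sgn}(\tau)$ with which $\tau$ acts on the orientation of $\mathcal{M}_{k+1,l}(\beta)$ induced by the relative spin structure $\mathfrak{s}$. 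The latter is computed using the determinant-of-index orientation description given in Chapter~8 of~\cite{FOOO}, and $\tau$ turns out to carry a sign depending on the dimension $n$ and on $k$ that conspires with the other contributions below.

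The main obstacle is sign bookkeeping. I would collect, on each side of the identity, four contributions: the prefactor $(-1)^{\sum_j j(|\alpha_j|+1)+1}$ from~\eqref{def:m_k}, the $(-1)^{|\cdot|}$ sign in~\eqref{def:Poinacre_pairing}, the Koszul signs from the projection formula and from reordering wedge factors, and the orientation sign of $\tau$. Writing $\epsilon_j := |\alpha_j|+1$, the goal is to show that the net discrepancy between the two sides collapses to $(-1)^{\epsilon_{k+1}(\epsilon_1+\cdots+\epsilon_k)}$, as required; the key combinatorial identity is the telescoping relation $\sum_j j\,\epsilon_{\sigma(j)} - \sum_j j\,\epsilon_{\sigma'(j)} \equiv \epsilon_{k+1}\sum_{j=1}^k \epsilon_j \pmod 2$ for the two relevant cyclic assignments $\sigma,\sigma'$. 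By the $R$-multilinearity~\eqref{eqqq} of $\mathfrak{m}_k^\gamma$ and the bilinearity~\eqref{relation1} of the pairing, it suffices to establish the identity for homogeneous forms $\alpha_j\in A^*(L)$; a sanity check in the cases $k=1$ and $k=2$ (where the identity reduces to Stokes-type statements on $L$ and on $\mathcal{M}_{3,l}(\beta)$) helps pin down the global sign. Summing the resulting $\beta$-wise equality over $\beta\in H_2(X,L;\Z)$ with weights $T^\beta$ promotes it to the claimed identity for the full operation $\mathfrak{m}_k^\gamma$.
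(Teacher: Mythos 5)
The paper does not prove this proposition; it is quoted verbatim as Proposition~3.3 of \cite{A_infinity:Jake_Sara}, so there is no in-paper argument to compare against. Judged on its own merits, your plan identifies the correct strategy: apply the projection formula to rewrite both sides as integrals over $\mathcal{M}_{k+1,l}(\beta)$, relate the two via the automorphism of the moduli space that cyclically relabels boundary marked points, and then balance the signs coming from the prefactor in~\eqref{def:m_k}, the pairing sign in~\eqref{def:Poinacre_pairing}, the Koszul reorderings, and the orientation sign of the relabeling map in the FOOO convention. This is indeed the standard route to cyclic symmetry of these operations.

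However, the one piece of the sign bookkeeping you actually write down is incorrect. With $\epsilon_j:=|\alpha_j|+1$, and taking $\sigma=\mathrm{id}$ on the left and $\sigma'$ the one-step cyclic shift $(\alpha_{k+1},\alpha_1,\ldots,\alpha_{k-1})$ on the right, one computes
\[
\sum_{j=1}^k j\,\epsilon_j-\sum_{j=1}^k j\,\epsilon_{\sigma'(j)}\equiv k\epsilon_k+\epsilon_{k+1}+\sum_{i=1}^{k-1}\epsilon_i\pmod 2,
\]
which is not $\epsilon_{k+1}\sum_{j=1}^k\epsilon_j$ (already for $k=1$ the two sides disagree when $\epsilon_1=\epsilon_2=1$). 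This is not fatal to your strategy, because the remaining three sign sources you listed --- the $(-1)^{|\cdot|}$ from the pairing, the Koszul sign from commuting $(evb_0^\beta)^*\alpha$ past the $k$ boundary pullbacks, and the orientation sign of the cyclic relabeling automorphism --- are exactly what must be added to close the gap. But as stated, your ``key combinatorial identity'' ignores them, and the assertion that the relabeling's orientation sign ``turns out to conspire'' is not a proof: that sign depends on $n$ and $k$ in a nontrivial way determined by the Chapter~8 conventions of~\cite{FOOO}, and it must be computed, not checked in the cases $k=1,2$ and extrapolated. Until the full sign balance for arbitrary $k$ is carried through, the argument remains a plausible outline rather than a proof.
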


\subsection{Bounding pairs, the superpotential and invariants}
\begin{dfn}\label{dfn_bd_pair}
A \textbf{bounding pair} with respect to $J$ is a pair $(\gamma,b)$ where $\gamma\in \mathcal{I}_Q D$ is closed with $|\gamma|=2$ and $b\in \mathcal{I}_R C$ with
$
|b|=1,
$
such that
\[
\sum_{k\ge 0}\mathfrak{m}_k^\gamma(b^{\otimes k})=c\cdot 1, \qquad c\in \mathcal{I}_R,\;|c|=2.
\]
In this situation, $b$ is called a \textbf{bounding cochain} for $\mathfrak{m}^\gamma$.
\end{dfn}
The definition of bounding pair appeared in \cite{solomon2016point}, and the definition of bounding cochain appeared in \cite{FOOO}.

Let $\gamma\in \mathcal{I}_Q D $, $b\in  \mathcal{I}_R C$. The standard superpotential~\cite{Fukaya_counting_discs} is given by
\[
\widehat{\Omega}(\gamma, b):=\widehat{\Omega}_J(\gamma, b)\coloneqq (-1)^n(\sum_{k\ge 0} \frac{1}{k+1} \langle \mathfrak{m}^\gamma_k(b^{\otimes k}), b\rangle + \mathfrak{m}^\gamma_{-1}).
\]
Intuitively, $\widehat{\Omega}$ counts $J$-holomorphic disks with constraints $\gamma$ in the interior and $b$ on the boundary.
Modification is necessary in order to avoid $J$-holomorphic disks the boundary of which can degenerate to a point, forming a $J$-holomorphic sphere.
We say that a monomial element of $R$ is \textbf{of type $\mathcal{D}$} if it has the form $a\, T^{\beta}s^0t_0^{j_0}\cdots t_N^{j_N}$ with $a\in \R$ and $\beta\in \Ima(\varpi)$.
Following~\cite{solomon2016point}, in the present paper, the superpotential is defined by
\[
\Omega(\gamma,b):=\Omega_J(\gamma,b) : = \widehat{\Omega}_J(\gamma,b)-\text{ all monomials of type }\mathcal{D}\text{ in }\widehat{\Omega}_J.
\]

Definition 3.12 from \cite{solomon2016point} gives a notion of gauge equivalence between a bounding pair $(\gamma,b)$ with respect to $J$ and a bounding pair $(\gamma',b')$ with respect to another almost complex structure~$J'.$ Let $\sim$ denote the resulting equivalence relation.
For a graded module $M$, we denote by $(M)_j$ the degree $j$ part of the module.

Define a map
\[\varrho:\{\text{bounding pairs}\}/\sim \ \longrightarrow (\mathcal{I}_Q\widetilde{H}^*(X,L;Q))_2\oplus (\mathcal{I}_R)_{1-n},\]
by
\[\varrho([\gamma,b]):=\left([\gamma],\int_Lb\right). \]
By Lemma 3.16 in \cite{solomon2016point} $\varrho$ is well defined.
The following is Theorem 1 from\cite{solomon2016point}.
\begin{thm}\label{thm1}
If $(\gamma,b)\sim(\gamma',b')$, then $\Omega_J(\gamma,b)=\Omega_{J'}(\gamma',b')$.
\end{thm}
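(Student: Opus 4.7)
The plan is to prove invariance by decomposing the gauge equivalence relation $\sim$ into a small set of elementary generators and checking invariance of $\Omega_J$ under each. According to Definition~3.12 of \cite{solomon2016point}, $(\gamma,b)\sim(\gamma',b')$ is generated by: (i) $A_\infty$ gauge homotopies of $b$ at fixed $(\gamma,J)$; (ii) shifts $\gamma'=\gamma+d\eta$ with $\eta\in\mathcal{I}_Q D$ of degree $1$, accompanied by the compensating modification of $b$; and (iii) pseudo-isotopies arising from a smooth path $\{J_t\}_{t\in[0,1]}$ of $\omega$-tame almost complex structures from $J$ to $J'$, together with paths $(\gamma_t,b_t)$ of bounding pairs along $J_t$. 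By continuity and induction on the energy filtration induced by $\nu$, it suffices to verify the infinitesimal statement along each generator.

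For generator (i) at fixed $J$, I would differentiate $\widehat{\Omega}_J(\gamma,b_t)$ in the deformation parameter and use the cyclic symmetry of the Poincar\'e pairing from Proposition~\ref{prop:linear_a_infty}, the $R$-multilinearity relation~\eqref{eqqq}, and the bounding pair equation $\sum_{k\ge 0}\mathfrak{m}_k^\gamma(b^{\otimes k})=c\cdot 1$. These identities combine so that the derivative telescopes to a boundary term proportional to $\langle c\cdot 1,\dot b\rangle$, which vanishes because $\dot b$ is a primitive cochain paired against a unit. Generator (ii) is handled analogously: insert $d\eta$ in one interior slot of $\mathfrak{m}^\gamma_k$ and integrate by parts using the definition~\eqref{def:m_k}, so that the contribution of $d\eta$ on the interior marked points matches, up to $A_\infty$ boundary terms killed by the bounding pair equation, the variation of $b$ demanded by the compensating condition.

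For generator (iii), the plan is to introduce the parametrized moduli spaces $\mathcal{M}_{k+1,l}(\beta;\{J_t\})$ of $(J_t)$-holomorphic disks, which form smooth orbifolds with corners whose codimension-one boundary decomposes into: the fibres over $t=0$ and $t=1$, the disk-breaking strata, and the sphere-bubbling strata. Build the one-parameter analogue of $\widehat{\Omega}$ along this cobordism and apply Stokes. The endpoint contributions give $\widehat{\Omega}_J(\gamma,b)-\widehat{\Omega}_{J'}(\gamma',b')$; the disk-breaking strata cancel in pairs using the bounding pair equations for $(\gamma_t,b_t)$, exactly as in the algebraic treatment of generator (i).

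The main obstacle, and the reason $\Omega$ is defined by subtracting type-$\mathcal{D}$ monomials from $\widehat{\Omega}$, is the remaining codimension-one stratum in which the boundary of a $J_t$-holomorphic disk collapses to a point in $L$ while a $J_t$-holomorphic sphere bubbles off. Such a degeneration requires no boundary marked points to survive on the disk component, so its contribution carries $s^0$; furthermore, the resulting relative homology class necessarily lies in $\Ima\,\varpi$ because the boundary has been contracted. Hence every contribution from this stratum is a monomial of type $\mathcal{D}$, and is precisely what is annihilated by passing from $\widehat{\Omega}_J$ to $\Omega_J$. The key technical point is thus to verify that the sphere-bubbling stratum contributes only type $\mathcal{D}$ monomials, and that orientations provided by the relative spin structure $\mathfrak{s}$ match on either side of this cobordism argument. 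Once this is established, the Stokes identity reduces modulo type $\mathcal{D}$ to $\Omega_J(\gamma,b)=\Omega_{J'}(\gamma',b')$, completing the proof.
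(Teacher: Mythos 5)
The present paper does not prove this statement; it is explicitly cited as ``Theorem 1 from \cite{solomon2016point}'' (Solomon--Tukachinsky), so there is no proof in the paper to compare against. Measured against the proof in that reference, your sketch rests on an incorrect premise about the structure of the equivalence relation. Definition~3.12 of \cite{solomon2016point} does not define $\sim$ as the equivalence relation generated by the three elementary moves you list; it defines $\sim$ directly through the existence of a pseudoisotopy of $A_\infty$ structures on the cylinder $A^*(I\times L)\otimes R$ together with a bounding pair $(\tilde\gamma,\tilde b)$ restricting to $(\gamma,b)$ and $(\gamma',b')$ at the two endpoints. Your three ``generators'' are each special pseudoisotopies, but the claim that a general pseudoisotopy factors through a sequence of such elementary moves is itself nontrivial and is not something you can read off from the definition; you would have to prove it. The reference avoids this entirely by working algebraically with the full pseudoisotopy data, using the cyclic structure and $A_\infty$ relations on the cylinder rather than a chain of Stokes arguments over parametrized moduli spaces.

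There is also a local gap in your treatment of move (i): $\langle c\cdot 1,\dot b\rangle$ reduces (up to sign) to $c\int_L\dot b$, and this does not vanish because ``$\dot b$ is a primitive cochain paired against a unit.'' It vanishes because $\int_L b$ is gauge invariant --- this is part of the statement that $\varrho$ is well defined (Lemma~3.16 of \cite{solomon2016point}, which precedes and is logically independent of the theorem in question) --- so $\int_L \dot b = 0$ along the path. You should cite that fact rather than the heuristic you give.

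On the positive side, your identification of the sphere-bubbling codimension-one stratum --- carrying $s^0$ and a class in $\Ima\,\varpi$, hence contributing only monomials of type $\mathcal{D}$ --- is exactly the right geometric explanation for why $\widehat\Omega$ is not an invariant while $\Omega$ is. The reference buries this inside the algebra of the pseudoisotopy, so your geometric account, while not a substitute for the algebraic proof, does locate the crux of the theorem correctly.
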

The following is Theorem 2 from \cite{solomon2016point}.
\begin{thm} \label{thm2}
Assume $H^*(L;\mathbb{R})=H^*(S^n;\mathbb{R})$. Then
$\varrho$ is bijective.
\end{thm}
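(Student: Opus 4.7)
The plan is to establish bijectivity of $\varrho$ by a standard inductive obstruction-theoretic construction, filtered by the valuation $\nu$. Both surjectivity and injectivity proceed via the same mechanism: one constructs (respectively, a bounding pair, or a gauge equivalence) term by term in the $\nu$-valuation, verifying at each stage that the obstruction to extending lies in a cohomology group one can actually kill.

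For surjectivity, given $(\alpha, w) \in (\mathcal{I}_Q \widetilde{H}^*(X,L;Q))_2 \oplus (\mathcal{I}_R)_{1-n}$, first choose a closed representative $\gamma \in \mathcal{I}_Q D$ of $\alpha$, and then seek $b = \sum_{i \geq 1} b_{(i)}$ with $\nu(b_{(i)})$ strictly increasing such that $\sum_k \mathfrak{m}_k^\gamma(b^{\otimes k}) = c \cdot 1$, while also arranging $\int_L b = w$. By Proposition~\ref{prop:energyzero}, the $\omega(\beta)=0$ contributions reduce the relevant structure to ordinary differential geometry on $L$, so the usual filtered $A_\infty$ machinery applies. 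At each inductive step the obstruction to extending is a $d$-closed element $\mathfrak{o}_i \in C$ of total degree $2$, defined modulo the line $\R \cdot 1$ (which is absorbed by choosing $c$). The hypothesis $H^*(L;\R) \simeq H^*(S^n;\R)$ now becomes decisive: by K\"unneth, the cohomology of $(C,d)$ in total degree $2$ splits as $\R \cdot [1] \otimes R^{(2)} \oplus H^n(L;\R) \otimes R^{(2-n)}$. The first summand is exactly the freedom afforded by $c$. For the second, shifting $b_{(i)}$ by a top-degree form representing a multiple of $\pd(\text{pt})$ modifies precisely this component of the obstruction while also changing $\int_L b$ by the same multiple. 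The normalization $\int_L b = w$ thus fixes a unique such shift at each stage, kills the obstruction, and lets the induction propagate.

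For injectivity, suppose $\varrho([\gamma,b]) = \varrho([\gamma',b'])$. After connecting $\gamma$ and $\gamma'$ by a path of closed forms in the same cohomology class, one reduces to a Maurer-Cartan problem on a cylinder interpolating $b$ and $b'$, which is the formal setting for producing the gauge equivalence of Definition~3.12 in~\cite{solomon2016point}. The obstruction theory runs formally parallel to the surjectivity argument: at each order in $\nu$ the top-degree obstruction is controlled by a single element of $R^{(1-n)}$, and the assumed equality $\int_L b = \int_L b'$ supplies exactly the datum needed to trivialize it inductively.

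The main obstacle will be verifying at each inductive step that the top-degree obstruction is genuinely a \emph{nondegenerate} linear function of the top-degree variation in $b$, so that the single real normalization $\int_L b = w$ really suffices to kill it. This is a computation using cyclic symmetry of the Poincar\'e pairing from Proposition~\ref{prop:linear_a_infty} applied to the $A_\infty$ relation; once in place, the cohomological vanishing supplied by $H^*(L;\R) \simeq H^*(S^n;\R)$ drives everything else.
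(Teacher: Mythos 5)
The paper states this result by citation only (Theorem~2 of~\cite{solomon2016point}); it reproduces no proof. However, the obstruction-theoretic machinery underlying that proof is recalled in Section~\ref{sec4} (Lemmas~\ref{lm:3.5}--\ref{lm:3.10} and Lemma~\ref{lm:1}), and your proposed mechanism for the top-degree obstruction conflicts with it.

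Your outline is sound at the broad level: filter by $\nu$, the obstruction at each stage is a closed form of degree $2-|\lambda_j|$, the degree-$0$ piece is absorbed into $c$, and the vanishing of $H^i(L;\R)$ for $0<i<n$ kills the intermediate-degree obstructions. The error is in your treatment of the degree-$n$ obstruction. You assert that shifting $b_{(i)}$ by a top-degree form ``modifies precisely this component of the obstruction,'' that the dependence is nondegenerate, and that imposing $\int_L b = w$ fixes a unique such shift at each stage. But Lemma~\ref{lm:3.8} says the opposite: whenever $(db_{(l)})_n = 0$, the degree-$n$ obstruction $o_j$ (the case $|\lambda_j|=2-n$) is identically zero, with no choice to make. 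Since $d$ annihilates top-degree forms, $(db_{(l)})_n$ is blind to degree-$n$ shifts of $b_{(l)}$, so the dependence you want to be nondegenerate is in fact trivial. The ``main obstacle'' you flag --- verifying nondegeneracy --- is therefore a dead end, and a proof built on it would fail.

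What the actual construction does (compare the proof of Lemma~\ref{lm:1}) is: seed the induction with $b_{(0)}$ satisfying $\int_L b_{(0)} = w$ and $(db_{(0)})_n = 0$, e.g.\ $b_{(0)} = w\cdot\bar{b}$ with $\bar{b}$ Poincar\'e dual to a point; then at each stage set $b_j = 0$ whenever $b_j$ would have degree $n-1$ or $n$. This is legitimate because the obstructions in degree $n$ and $n+1$ vanish automatically (the former by Lemma~\ref{lm:3.8}, the latter for degree reasons), and it guarantees that $(db_{(l)})_n = 0$ persists and that the degree-$(n-1)$ and degree-$n$ parts of $b$ are never altered --- hence $\int_L b$ is preserved. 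The normalization $\int_L b = w$ is imposed once, at the start, and propagated for free; it is never the datum that trivializes an obstruction at a later order. Your injectivity sketch repeats the same misattribution of the mechanism.
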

Fix $\Gamma_0,\ldots,\Gamma_M$ a basis of $\widetilde{H}^*(X,L;\mathbb{R})$, set $|t_j|=2-|\Gamma_j|$, and take
\[ \Gamma:=\sum_{j=0}^Mt_j\Gamma_j\in   (\mathcal{I}_Q\widetilde{H}^*(X,L;\mathbb{R}))_2.\]
By Theorem~\ref{thm2}, choose a bounding pair $(\gamma,b)$ such that
\[ \varrho([\gamma,b])=(\Gamma,s).\]
By Theorem~\ref{thm1}, the superpotential $\Omega=\Omega(\gamma,b)$ is independent of the choice of $(\gamma,b)$.
\begin{dfn}\label{def:ogw}
The \textbf{open Gromov-Witten invariants} of $(X,L)$,
\[ \text{OGW}_{\beta,k}: \widetilde{H}^*(X,L;\mathbb{R})^{\otimes l}\rightarrow \mathbb{R},\]
are defined by setting
\[
\text{OGW}_{\beta,k}(\Gamma_{i_1},\ldots,\Gamma_{i_l}):= \text{ the coefficient of }T^{\beta}\text{ in }
\partial_{t_{i_1}}\cdots\partial_{t_{i_l}}\partial_s^k\Omega|_{s=0,t_j=0}
\]
and extending linearly to general input.
\end{dfn}
The following is Proposition 5.2 from \cite{solomon2016point}.
\begin{prop}\label{cor:ogwbasis}
The invariants $\mathrm{OGW}_{\beta,k}$ are independent of the choice of a basis.
\end{prop}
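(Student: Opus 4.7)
The plan is to show that the multilinear maps $\mathrm{OGW}_{\beta,k}$ obtained from two different bases of $\widetilde{H}^*(X,L;\mathbb{R})$ coincide, by tracing through the effect of a change of basis on the superpotential and applying the chain rule. The key observation is that the superpotential $\Omega$ depends only on the equivalence class $[\gamma,b]$ of the bounding pair, and by Theorems~\ref{thm1} and~\ref{thm2} this equivalence class is determined by $(\Gamma,s) = \varrho([\gamma,b])$, where $\Gamma = \sum_j t_j \Gamma_j$ is a basis-independent element of $\mathcal{I}_Q\widetilde{H}^*(X,L;Q)$.

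Concretely, let $\{\Gamma_j\}_{j=0}^M$ and $\{\Gamma_j'\}_{j=0}^M$ be two bases of $\widetilde{H}^*(X,L;\mathbb{R})$, related by $\Gamma_j' = \sum_k c_j^k \Gamma_k$. Introduce formal variables $t_j,t_j'$ of degrees $|t_j| = 2-|\Gamma_j|$ and $|t_j'|=2-|\Gamma_j'|$. The equality $\sum_j t_j \Gamma_j = \sum_j t_j' \Gamma_j'$ forces the linear change of variables $t_k = \sum_j c_j^k t_j'$, which I would first verify respects the grading on $Q$. Under this identification, the element $\Gamma$ and the bounding pair $(\gamma,b)$ chosen for it via Theorem~\ref{thm2} are the same in both bases, so by Theorem~\ref{thm1} the superpotential $\Omega \in R$ is the same formal expression in either coordinate system.

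Next I would apply the chain rule. Writing $\Omega$ as a function of the new variables via the substitution $t_k = \sum_j c_j^k t_j'$, one obtains
\[
\partial_{t_{i_1}'}\cdots\partial_{t_{i_l}'}\partial_s^k \Omega\big|_{s=0,\,t'=0}
= \sum_{k_1,\ldots,k_l} c_{i_1}^{k_1}\cdots c_{i_l}^{k_l}\,\partial_{t_{k_1}}\cdots\partial_{t_{k_l}}\partial_s^k \Omega\big|_{s=0,\,t=0}.
\]
Taking the coefficient of $T^\beta$ on both sides, the right-hand side becomes $\sum c_{i_1}^{k_1}\cdots c_{i_l}^{k_l}\,\mathrm{OGW}_{\beta,k}(\Gamma_{k_1},\ldots,\Gamma_{k_l})$, which by the multilinear extension prescribed in Definition~\ref{def:ogw} equals $\mathrm{OGW}_{\beta,k}(\Gamma_{i_1}',\ldots,\Gamma_{i_l}')$ computed via the linear extension from the \emph{old} basis. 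This matches the invariant defined directly from the new basis, proving the claim.

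The main obstacle is a bookkeeping one: ensuring that the change of variables $t_k = \sum_j c_j^k t_j'$ is compatible with the grading conventions on $R$ and $Q$, that the formal power series manipulations are valid within the completed tensor product structure, and that the Koszul signs arising from reordering variables of mixed degrees do not spoil the chain rule computation. None of this poses a substantive difficulty, but it is where care is needed.
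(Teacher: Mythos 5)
Your argument is correct. Note that the paper does not actually prove this proposition — it simply cites Proposition 5.2 of \cite{solomon2016point} — so there is no in-text proof to compare against. Your route is the natural one: by Theorems~\ref{thm1} and~\ref{thm2}, the superpotential $\Omega$ depends only on $\varrho([\gamma,b]) = (\Gamma,s)$; the homogeneous linear substitution $t_k = \sum_j c_j^k t'_j$ carries $\sum_j t'_j \Gamma'_j$ to $\sum_k t_k \Gamma_k$, so the two superpotentials coincide after substitution; and the chain rule then converts the basis change of derivatives into the multilinear extension. The Koszul-sign worry you defer is a non-issue: since both bases are homogeneous, $c_j^k \neq 0$ forces $|\Gamma'_j| = |\Gamma_k|$, hence $|t'_j| = |t_k|$, so the substitution mixes only like-degree variables and the $c_j^k$ are ordinary degree-zero scalars that commute with everything. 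It would be worth one line to observe that this substitution is a continuous degree-preserving ring isomorphism of the completed power series rings preserving $\mathcal{I}_Q$ and $\mathcal{I}_R$, so the identifications of $D$, $C$, and the bounding pairs across the two coordinate systems are legitimate; with that noted, the argument is complete.
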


\subsection{Changes of spin structure and orientation}
The following Lemmas describe the dependence
of the invariants $\oogw_{\beta,k}$ on the spin structure and the orientation of $L.$

\begin{lm}\label{lm_spin}
Changing the spin structure on $L$ by the action of $\alpha\in H^1(L;\Z/2\Z)$ changes $\oogw_{\beta,k}(\cdots)$ by multiplication by $(-1)^{\alpha(\partial\beta)}.$
\end{lm}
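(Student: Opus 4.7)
The plan is to isolate the sign change at the level of the $A_\infty$ operations and then propagate it through the superpotential and the bounding cochain. The orientations on the moduli spaces $\mathcal{M}_{k+1,l}(\beta)$ and $\mathcal{M}_{0,l}(\beta)$ are determined by the relative spin structure $\mathfrak{s}$ via the construction of Chapter~8 of \cite{FOOO}, which induces an orientation on the determinant line of the linearized Cauchy--Riemann operator. The key geometric input, which I would extract from the FOOO orientation recipe, is that replacing $\mathfrak{s}$ by $\mathfrak{s}\cdot\alpha$ reverses this orientation precisely by the sign $(-1)^{\alpha(\partial\beta)}$, independently of $k$ and $l$. Consequently, the operations satisfy
\[
\mathfrak{m}^{\prime\gamma,\beta}_{k}=(-1)^{\alpha(\partial\beta)}\,\mathfrak{m}^{\gamma,\beta}_{k},
\qquad
\mathfrak{m}^{\prime\gamma,\beta}_{-1}=(-1)^{\alpha(\partial\beta)}\,\mathfrak{m}^{\gamma,\beta}_{-1},
\]
where primes denote the structure for the new spin structure.

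Next I would encode this sign change algebraically. Define a graded $\R[[s,t_0,\ldots,t_M]]$-algebra automorphism $\phi_\alpha:R\to R$ by $\phi_\alpha(T^\beta)=(-1)^{\alpha(\partial\beta)}T^\beta$; it respects the valuation $\nu$ and the $\Z$-grading because $\partial\colon H_2(X,L;\Z)\to H_1(L;\Z)$ is a homomorphism and the sign has degree zero. Extend $\phi_\alpha$ to $C$ by acting trivially on $A^*(L)$. Since $\gamma\in\mathcal I_Q D$ involves no $T$-variables, $\phi_\alpha(\gamma)=\gamma$, and using the $R$-multilinearity \eqref{eqqq} of the operations together with the displayed identity above one verifies
\[
\mathfrak{m}^{\prime\gamma}_k\bigl(\phi_\alpha(\eta_1),\ldots,\phi_\alpha(\eta_k)\bigr)=\phi_\alpha\bigl(\mathfrak{m}^{\gamma}_k(\eta_1,\ldots,\eta_k)\bigr),
\qquad
\mathfrak{m}^{\prime\gamma}_{-1}=\phi_\alpha(\mathfrak{m}^\gamma_{-1}).
\]

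Given any bounding pair $(\gamma,b)$ for $\mathfrak{m}^\gamma$ with $\varrho([\gamma,b])=(\Gamma,s)$, the intertwining relation shows that $(\gamma,\phi_\alpha(b))$ is a bounding pair for $\mathfrak{m}^{\prime\gamma}$, with new curvature constant $\phi_\alpha(c)$. Moreover $\int_L\phi_\alpha(b)=s$ because the only contribution to $s$ comes from $\beta=\beta_0$, for which $\phi_\alpha$ is the identity; hence $\varrho'([\gamma,\phi_\alpha(b)])=(\Gamma,s)$ as well. Applying $\phi_\alpha$ to the Poincar\'e pairing $\langle\cdot,\cdot\rangle$ (which is $R$-bilinear up to the sign in~\eqref{relation1}) and noting that $\phi_\alpha$ fixes monomials of type $\mathcal D$ (since such monomials have $\beta\in\Ima\varpi$, hence $\partial\beta=0$), one obtains
\[
\Omega_{J}'(\gamma,\phi_\alpha(b))=\phi_\alpha\bigl(\Omega_{J}(\gamma,b)\bigr).
\]
Extracting the coefficient of $T^\beta$ in $\partial_{t_{i_1}}\cdots\partial_{t_{i_l}}\partial_s^k$ at $s=t_j=0$ and invoking Theorem~\ref{thm1} to guarantee that $\Omega'$ is computed by any admissible bounding pair, the definition of $\oogw_{\beta,k}$ gives the stated sign change.

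The main obstacle will be the first step, the verification that the FOOO orientation convention on $\mathcal{M}_{k+1,l}(\beta)$ differs by exactly $(-1)^{\alpha(\partial\beta)}$ under the twist $\mathfrak{s}\mapsto\mathfrak{s}\cdot\alpha$, uniformly across all $k,l$ including $k=-1$. Once this is pinned down, the remainder is a formal consequence of $R$-multilinearity and the fact that $\phi_\alpha$ is a graded ring automorphism commuting with every structure used to build the superpotential.
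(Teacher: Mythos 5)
Your proposal is essentially the paper's own proof: define an $\R$-algebra automorphism $\phi_\alpha$ of $R$ (the paper calls it $\mathfrak{f}_R$) twisting $T^\beta$ by $(-1)^{\alpha(\partial\beta)}$, observe that it intertwines the $A_\infty$ operations for the two spin structures, transports bounding cochains, fixes $\int_L b = s$, and hence maps one superpotential to the other, after which extracting Taylor coefficients gives the sign. The step you flag as the main obstacle --- that twisting the spin structure by $\alpha$ flips the orientation of $\M_{k+1,l}(\beta)$ by exactly $(-1)^{\alpha(\partial\beta)}$ --- is handled in the paper by citing Lemma~2.10 of~\cite{jakethesis}, so your assessment of where the real content lies is accurate; you also correctly note the (implicit in the paper) point that $\phi_\alpha$ fixes monomials of type $\mathcal{D}$ because $\partial\beta=0$ for $\beta\in\Ima\varpi$, which is what makes passing from $\widehat{\Omega}$ to $\Omega$ compatible with the automorphism.
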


\begin{proof}
In the following proof, we add a superscript $\s$ to the $A_\infty$ operations, superpotential and open Gromov-Witten invariants, to indicate the spin structure used to construct them. By Lemma~2.10 of~\cite{jakethesis}, the orientations of $\M_{k+1,l}(\beta)$ induced by $\s$ and $\alpha \cdot \s$ differ by a sign of $(-1)^{\alpha(\partial \beta)}$. So,
\[
\m_k^{\gamma,\beta,\s} = (-1)^{\alpha(\partial \beta)} \m_k^{\gamma,\beta,\alpha\cdot \s}.
\]
Let $\f_R : R \to R$ be the $\R$ algebra automorphism that sends $T^\beta$ to  $(-1)^{\alpha(\partial\beta)}T^\beta$ and acts trivially on the other formal variables. Then, the $R$ module automorphism
\[
\mathfrak{f} = \mathfrak{f}_R \otimes \id_{A^*(L)}:C\rightarrow C
\]
is a strict $A_\infty$ homomorphism between the $A_\infty$ structures $\m_k^\s$ and $\m_k^{\alpha\cdot \s}$ and preserves the pairing $\langle\cdot,\cdot \rangle$. So, if $b$ is a bounding cochain for $\m_k^\s$ with $\int_L b = s$, then $\mathfrak{f}(b)$ is a bounding cochain for $\m_k^{\alpha\cdot\s}$ with $\int_L \mathfrak{f}(b) = s$ and $\mathfrak{f}(\Omega^\s(\gamma,b)) = \Omega^{\alpha\cdot\s}(\gamma,\f(b)).$
By Definition~\ref{def:ogw}, we obtain
\begin{align*}
\oogw^{\alpha\cdot\s}_{\beta,k}(\Gamma_{i_1},\ldots,\Gamma_{i_l}) & = [T^\beta]\partial_{t_{i_1}}\cdots\partial_{t_{i_l}}\partial_s^k \Omega^{\alpha\cdot\s}(\gamma,\mathfrak{f}(b)))|_{s=0,t_j=0} \\
&=[T^{\beta}]\partial_{t_{i_1}}\cdots\partial_{t_{i_l}}\partial_s^k\mathfrak{f}(\Omega^\s(\gamma,b))|_{s=0,t_j=0} \\
&=(-1)^{\alpha(\partial\beta)} [T^\beta]\partial_{t_{i_1}}\cdots\partial_{t_{i_l}}\partial_s^k(\Omega^\s(\gamma,b))|_{s=0,t_j=0} \\
&=(-1)^{\alpha(\partial\beta)}  \oogw^{\s}_{\beta,k}(\Gamma_{i_1},\ldots,\Gamma_{i_l}).
\end{align*}
\end{proof}

\begin{lm}\label{lm_orientation}
Changing the orientation of $L$
changes $\oogw_{\beta,k}(\Gamma_{i_1},\ldots,\Gamma_{i_l})$ by multiplication by $(-1)^{k+1}.$
\end{lm}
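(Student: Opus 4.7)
The proof follows the template of Lemma~\ref{lm_spin}. Decorate each orientation-dependent object with a superscript $o$ or $-o$ to distinguish the original orientation of $L$ from its reversal: $\m^{\gamma,o}_k$ versus $\m^{\gamma,-o}_k$, $\langle\cdot,\cdot\rangle^o$ versus $\langle\cdot,\cdot\rangle^{-o}$, $\Omega^o$ versus $\Omega^{-o}$, and $\oogw^o_{\beta,k}$ versus $\oogw^{-o}_{\beta,k}$. The orientation of $L$ enters the construction in three places: through $\int_L$ in the definition~\eqref{def:Poinacre_pairing} of the Poincar\'e pairing; through the orientation of $\M_{k+1,l}(\beta)$, which is determined by the relative spin structure via the index theory of the Cauchy--Riemann operator with Lagrangian boundary conditions (Chapter~8 of~\cite{FOOO}); and through the push-forward $(evb_0^\beta)_*$ in the definition~\eqref{def:m_k} of $\m_k^{\gamma,\beta}$.

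The first effect is immediate: $\langle\cdot,\cdot\rangle^{-o}=-\langle\cdot,\cdot\rangle^{o}$. For the latter two, the plan is to invoke (or prove, in the style of Lemma~2.10 of~\cite{jakethesis}) a formula for how the orientation of $\M_{k+1,l}(\beta)$ transforms under reversal of $L$, and combine it with the convention that reversing either the total space or the base of a proper submersion negates the induced fiber orientation. The expected outcome is a formula
\[
\m_k^{\gamma,\beta,-o}(\alpha_1,\ldots,\alpha_k)=\varepsilon_k\,\m_k^{\gamma,\beta,o}(\alpha_1,\ldots,\alpha_k)
\]
for $\beta\neq \beta_0$, where $\varepsilon_k\in\{\pm 1\}$ depends only on $k$. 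The case $\beta=\beta_0$ is handled directly by Proposition~\ref{prop:energyzero}: the nonzero operations $\m_k^{\gamma,\beta_0}$ are $d$, the wedge product, and $-\gamma_1|_L$, all of which are manifestly independent of the orientation of $L$.

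Next, analogously to the map $\mathfrak{f}=\mathfrak{f}_R\otimes \id_{A^*(L)}$ used in the proof of Lemma~\ref{lm_spin}, I would construct an $R$-module automorphism $\mathfrak{f}\colon C\to C$ built from an $\R$-algebra automorphism of $R$ sending $s\mapsto -s$ together with a degree-dependent sign on $A^*(L)$ chosen to match $\varepsilon_k$, such that $\mathfrak{f}$ is a strict $A_\infty$ homomorphism from $(\m_k^{\gamma,o})$ to $(\m_k^{\gamma,-o})$ and satisfies $\langle\mathfrak{f}\xi,\mathfrak{f}\eta\rangle^{-o}=-\langle\xi,\eta\rangle^{o}$. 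If $b$ is a bounding cochain for $\m^{\gamma,o}$ with $\int_L^{o}b=s$, then $\mathfrak{f}(b)$ is a bounding cochain for $\m^{\gamma,-o}$ with $\int_L^{-o}\mathfrak{f}(b)=s$, because the sign flip of $\mathfrak{f}$ on $b$ cancels the sign flip of $\int_L^{-o}$ relative to $\int_L^{o}$. By the definition of the superpotential and the intertwining properties of $\mathfrak{f}$,
\[
\Omega^{-o}(\gamma,\mathfrak{f}(b))=-\Omega^{o}(\gamma,b)\big|_{s\mapsto -s},
\]
with the overall $-1$ from the flipped pairing and the substitution $s\mapsto -s$ encoding the passage through $\mathfrak{f}$. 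Extracting coefficients via Definition~\ref{def:ogw} then yields
\[
\oogw^{-o}_{\beta,k}(\Gamma_{i_1},\ldots,\Gamma_{i_l})=-(-1)^k\,\oogw^{o}_{\beta,k}(\Gamma_{i_1},\ldots,\Gamma_{i_l})=(-1)^{k+1}\,\oogw^{o}_{\beta,k}(\Gamma_{i_1},\ldots,\Gamma_{i_l}),
\]
where the factor $(-1)^k$ is produced by applying $\partial_s^k$ to $\Omega^{o}(\gamma,b)|_{s\mapsto -s}$ before setting $s=0$.

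The main obstacle is the second paragraph: pinning down the sign $\varepsilon_k$ by which $\M_{k+1,l}(\beta)$ changes orientation under reversal of $L$ and reconciling it with the push-forward convention, so that a uniform intertwiner $\mathfrak{f}$ independent of $\beta$ and $l$ exists. Although the net sign $(-1)^{k+1}$ on $\oogw^{-o}_{\beta,k}$ is strongly suggested by the combinatorics of one pairing sign together with the $\partial_s^k$ acting on an $s\mapsto -s$ substitution, verifying it rigorously from the index-theoretic orientation conventions of~\cite{FOOO} (or~\cite{jakethesis}) is where the technical content lies. The exceptional contribution from $\m_{-1}^\gamma$, whose moduli space $\M_{0,l}(\beta)$ has no boundary marked points, should be checked separately to confirm compatibility with the intertwiner.
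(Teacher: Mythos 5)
Your overall strategy matches the paper's, and your final bookkeeping (one minus from the Poincar\'e pairing, times $(-1)^k$ from applying $\partial_s^k$ after $s\mapsto-s$) does land on $(-1)^{k+1}$. But the proposal stops short at exactly the step you flag as "where the technical content lies," and that step is the whole content of the lemma. Specifically, you posit $\m_k^{\gamma,\beta,-o}=\varepsilon_k\,\m_k^{\gamma,\beta,o}$ with $\varepsilon_k$ to be determined, and then plan to build a degree-dependent sign on $A^*(L)$ into $\mathfrak{f}$ "to match $\varepsilon_k$." Neither is needed, and leaving them open means the argument is not complete.

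The resolution is contained in the observation you already state but do not carry through: by Lemma~2.9 of \cite{jakethesis}, reversing the orientation of $L$ reverses the orientation of $\M_{k+1,l}(\beta)$; and since $\m_k^{\gamma,\beta}$ (for $k\ge 0$) is defined via the push-forward $(evb_0^\beta)_*$, i.e.\ integration over the fiber of $evb_0^\beta:\M_{k+1,l}(\beta)\to L$, what actually enters is the \emph{relative} orientation. Reversing the orientations of both the total space $\M_{k+1,l}(\beta)$ and the base $L$ leaves the fiber orientation unchanged, so $\m_k^{\gamma,\beta,-o}=\m_k^{\gamma,\beta,o}$ for all $k\ge 0$, i.e.\ $\varepsilon_k\equiv 1$. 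Consequently the intertwiner is simply $\mathfrak{f}=\mathfrak{f}_R\otimes\id_{A^*(L)}$ with $\mathfrak{f}_R(s)=-s$ and no sign on $A^*(L)$ at all; the only role of $\mathfrak{f}$ is to renormalize the bounding cochain so that $\int_L^{-o}\mathfrak{f}(b)=s$. (Your separate check of $\m_{-1}^\gamma$ is also correct in spirit: $\M_{0,l}(\beta)$ has no base to cancel the total-space reversal, so $\m_{-1}^{\gamma,-o}=-\m_{-1}^{\gamma,o}$, which combines with the minus from the pairing on the $\langle\m_k(b^{\otimes k}),b\rangle$ terms to give $\Omega^{-o}(\gamma,\mathfrak{f}(b))=-\mathfrak{f}(\Omega^o(\gamma,b))$.) Until the identity $\varepsilon_k=1$ is established as above, the proposal is an outline with its central step unresolved; once it is, the rest of your computation goes through and matches the paper's.
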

\begin{proof}
In the following proof, we add a superscript $\oo$ to the $A_\infty$ operations, superpotential, Poincar\'e pairing, $\int_L$ and open Gromov-Witten invariants, to indicate the orientation used to construct them.
By Lemma 2.9 from \cite{jakethesis}, changing the orientation of $L$ reverses the orientation of $\mathcal{M}_{k+1,l}(\beta)$.
Since the relative orientation is not changed, it follows that
\[\m^{\gamma,\beta,\oo}_k=\m^{\gamma,\beta,-\oo}_k.\]
Let $\f_R : R \to R$ be the $\R$ algebra automorphism that sends $s$ to  $-s$ and acts trivially on the other formal variables.
Then, the $R$ module automorphism
\[
\mathfrak{f} = \mathfrak{f}_R \otimes \id_{A^*(L)}:C\rightarrow C
\]
is a strict $A_\infty$ homomorphism between the $A_\infty$ structures $\m_k^\oo$ and $\m_k^{-\oo}$ that satisfies
\[
\langle\cdot,\cdot \rangle^\oo=-\langle\cdot,\cdot \rangle^{-\oo}.
\]
Similarly, $\int_L^\oo = - \int_L^{-\oo}.$ So, if $b$ is a bounding cochain for $\m_k^\oo$ with $\int_L^\oo b=s$, then $\f(b)$ is a bounding cochain for $\m_k^{-\oo}$ with $\int_L^{-\oo} \f(b)=s$ and $\f(\Omega^\oo(\gamma,b))=-\Omega^{-\oo}(\gamma,\f(b))$. By Definition~\ref{def:ogw}, we obtain
\begin{align*}
\oogw^{-\oo}_{\beta,k}(\Gamma_{i_1},\ldots,\Gamma_{i_l})&:= [T^\beta]\partial_{t_{i_1}}\cdots\partial_{t_{i_l}}\partial_s^k(\Omega^{-\oo}(\gamma,\f(b)))|_{s=0,t_j=0}\\
&=- [T^{\beta}]
\partial_{t_{i_1}}\cdots\partial_{t_{i_l}}\partial_s^k\mathfrak{f}(\Omega^\oo(\gamma,b))|_{s=0,t_j=0}\\
&=(-1)^{k+1}[T^\beta]\partial_{t_{i_1}}\cdots\partial_{t_{i_l}}\partial_s^k(\Omega^{\oo}(\gamma,b))|_{s=0,t_j=0}\\
&=(-1)^{k+1}\oogw^{\oo}_{\beta,k}(\Gamma_{i_1},\ldots,\Gamma_{i_l}).
\end{align*}

\end{proof}

We apply the preceding lemmas to prove vanishing results for the invariants $\ogw$ of certain Lagrangian submanifolds $L \subset X.$ The case $n = 3$ and $k$ even of the following corollary has been proved previously in~\cite[Proposition 1.3]{Chen-Zinger-recursion} by a different method, which goes back to an observation of Mikhalkin~\cite[Remark 2.4]{Wel05b}.
\begin{cor}\label{cor:projvan}
Consider $(X,L) = (\C P^n,\R P^n)$ with $n$ odd. Let $\alpha \in H^1(\R P^n;\Z)$ be the generator.
The invariants $\ogw_{\beta,k}(\cdots)$ vanish when $k + \alpha(\partial \beta)$ is even.
\end{cor}
\begin{proof}
We identify $H_2(\C P^n,\R P^n;\Z)$ with $\Z$ by the isomorphism taking the generator with positive symplectic area to $1 \in \Z.$ It is shown in~\cite[Corollary 1.9]{RelativeQuantumCohomology} that all the invariants $\ogw_{\beta,k}(\cdots)$ are determined by the open WDVV equations, the axioms of $\ogw$, the wall-crossing formula Theorem~\ref{wall_crossing}, the closed Gromov-Witten invariants of $\C P^n,$ and the value of $\ogw_{1,2}.$ If we simultaneously change the spin structure on $\R P^n$ by $\alpha$ and reverse the orientation, then Lemma~\ref{lm_spin} and Lemma~\ref{lm_orientation} imply that $\ogw_{\beta,k}(\cdots)$ changes by multiplication by $(-1)^{k + 1 + \alpha(\partial\beta)}.$ In particular $\ogw_{1,2}$ remains unchanged. But since all the invariants $\ogw_{\beta,k}(\cdots)$ are determined from $\ogw_{1,2}$ in a way that does not depend on the spin structure or the orientation of $\R P^n,$ they must also remain unchanged. So, when $k + 1+\alpha(\partial\beta)$ is odd, $\ogw_{\beta,k}(\cdots)$ must vanish.
\end{proof}
The following was obtained by a different method in~\cite[Lemma 6.8]{kai}.
\begin{cor}\label{cor:quadricvan}
Take $X$ to be the quadric hypersurface in $\C P^{n+1}$ given by $\sum_{i = 0}^n z_i^2 - z_{n+1}^2 = 0$ and $L$ to be the real locus. Then $\ogw_{\beta,k}(\cdots)$ vanishes when $k$ is even.
\end{cor}
\begin{proof}
We identify $H_2(X,L;\Z)$ with $\Z$ by the isomorphism taking the generator with positive symplectic area to $1 \in \Z.$ It is shown in~\cite[Theorem 7]{kai} that all the invariants $\ogw_{\beta,k}(\cdots)$ are determined by the open WDVV equations, the axioms of $\ogw$, the wall-crossing formula Theorem~\ref{wall_crossing}, the closed Gromov-Witten invariants of $\C P^n,$ and the value of $\ogw_{1,3}.$ If we reverse the orientation of $L$, then Lemma~\ref{lm_orientation} asserts that $\ogw_{\beta,k}(\cdots)$ changes by multiplication by $(-1)^{k + 1}.$ In particular $\ogw_{1,3}$ remains unchanged. But since all the invariants $\ogw_{\beta,k}(\cdots)$ are determined from $\ogw_{1,3}$ in a way that does not depend on the orientation of $L,$ they must also remain unchanged. So, when $k + 1$ is odd, $\ogw_{\beta,k}(\cdots)$ must vanish.
\end{proof}

\begin{rem}\label{rem:quadricnohparWDVV}
Let $(X,L)$ be as in Corollary~\ref{cor:quadricvan}. It is shown in~\cite[Prop. 6.9]{kai} that the vanishing result of Corollary~\ref{cor:quadricvan} implies that changing the sign of the invariant $\ogw_{1,3},$ from which all the other invariants $\ogw$ are determined by a recursion, does not affect the absolute value of other invariants. Consequently, hypothesis~\ref{hp:arWDVV} of Section~\ref{ssec:directions} cannot hold for this choice of $(X,L).$ The proof of Corollary~\ref{cor:quadricvan} is based on the tension between the fact that all invariants are determined recursively from a single invariant, and the freedom in the choice of the orientation of $L$, which affects the values of invariants through Lemma~\ref{lm_orientation}. Thus, it seems natural to restrict hypothesis~\ref{hp:arWDVV} to $(X,L)$ where such tension is not present. That is, the number of geometric degrees of freedom in the definition of invariants should not exceed the number of invariants from which all others are determined by recursions that do not depend on these degrees of freedom.
\end{rem}

\subsection{Obstruction theory}
The goal of the present section is the proof of Lemma~\ref{lm:1}, which refines the existence result for bounding cochains given in Proposition~3.4 of~\cite{solomon2016point}. For this purpose, we recall relevant parts of the obstruction theoretic construction of bounding cochains given there. The basic idea goes back to~\cite{FOOO}.

Let
\[T(C):= \bigoplus_{k\ge0} C^{\otimes k},\]
denote the completed tensor algebra.
For $x\in \mathcal{I}_RC$, abbreviate
\[e^x= 1\oplus x \oplus (x\otimes x)\oplus (x\otimes x \otimes x)\otimes \ldots\in T(C) .\]
Moreover, define
\[\mathfrak{m}^\gamma:T(C)\rightarrow C,\]
by
\[\mathfrak{m}^\gamma(\otimes_{k\ge 0}\eta_k)=\sum_{k\ge0}\mathfrak{m}^\gamma_k(\eta_k).\]
Any element $\alpha\in C$, can be written as
\begin{equation}\label{equation}
   \alpha=\sum_{i=1}^\infty\lambda_i\alpha_i, \qquad \alpha_i\in A^*(L),\quad \lambda_i=T^{{\beta}_i}s^{k_i}\prod_{a=0}^M t_a^{l_{a_i}},\quad \lim_{i}\nu(\lambda_i)=\infty.
\end{equation}
Define a filtration $F^E$ on $R$, $C$, by
\[\lambda\in F^EC\Longleftrightarrow \nu(\lambda)> E.\]
\begin{dfn}
A multiplicative submonoid $G\subset R$ is \textbf{sababa} if it can be written as a list  \begin{equation}\label{sababa}
    G=\{\pm\lambda_0=\pm T^{\beta_0}, \pm\lambda_1,\pm\lambda_2,\ldots\}
\end{equation}
such that $i<j\Rightarrow \nu(\lambda_i)\le \nu(\lambda_j).$
\end{dfn}
For $j=1,\ldots,m,$ and elements $\alpha_j=\sum_i\lambda_{ij}\alpha_{ij}\in C$ decomposed as in~\eqref{equation}, denote by $G(\alpha_1,\ldots,\alpha_m)$ the multiplicative monoid generated by $\{\pm T^\beta\,|\,\beta\in \Pi_\infty\}$, $\{t_j\}_{j=0}^N$, and  $\{\lambda_{ij}\}_{i,j}$. The following is Lemma 3.3 from \cite{solomon2016point}.
\begin{lm}
For $\alpha_1,\ldots,\alpha_m\in \mathcal{I}_R$, the monoid $G(\alpha_1,\ldots,\alpha_m)$ is sababa.
\end{lm}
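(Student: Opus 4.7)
The plan is to deduce the sababa property from the finiteness of
\[
G_E := \{ g \in G(\alpha_1,\ldots,\alpha_m) : \nu(g) \le E \}
\]
for every $E \ge 0$. Once such finiteness is established, a sababa enumeration is produced by listing the elements of $G_0$ first (noting that $\pm T^{\beta_0} = \pm 1$ is the unique generator of valuation $0$, so this is nonempty), then the elements of $G_E \setminus G_{E'}$ for a sequence of valuation thresholds $0 = E_0 < E_1 < E_2 < \cdots \to \infty$, ordering within each block arbitrarily. The resulting list has non-decreasing valuation.

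To prove $G_E$ is finite, I would write a general element as a monomial in the generators,
\[
g = \pm\prod_{\ell} T^{\eta_\ell}\cdot \prod_{j=0}^{N} t_j^{k_j} \cdot \prod_{(i,j)} \lambda_{ij}^{m_{ij}},
\]
with only finitely many exponents $k_j, m_{ij}$ nonzero and the $\eta_\ell \in \Pi_\infty$ a finite multiset. By additivity of $\nu$,
\[
\nu(g) = \sum_\ell \omega(\eta_\ell) + \sum_{j} k_j + \sum_{i,j} m_{ij}\,\nu(\lambda_{ij}).
\]
The hypothesis $\alpha_j \in \mathcal{I}_R$ forces $\nu(\lambda_{ij}) > 0$ for all $(i,j)$, and the defining condition $\lim_i \nu(\lambda_{ij}) = \infty$ of the decomposition~\eqref{equation} implies that for each fixed $j$ only finitely many $\lambda_{ij}$ satisfy $\nu(\lambda_{ij}) \le E$. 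Combined with the analogous property for $\Pi_\infty$ inherited from the Novikov ring $\Lambda$ (only finitely many classes with $\omega$-area bounded by $E$), all the exponents and all the choices of $\eta_\ell$ and $\lambda_{ij}$ contributing nontrivially are jointly bounded by the constraint $\nu(g) \le E$. Only finitely many monomials can therefore lie in $G_E$.

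The argument is essentially bookkeeping. The only potential subtlety is the generator $\pm T^{\beta_0}$ of valuation $0$; but $T^{\beta_0} = 1$, so arbitrarily high powers of it produce the same monoid element and cause no explosion. I expect the cleanest presentation to proceed by first fixing $E$, noting that any $g \in G_E$ involves at most $E / \nu_{\min}(E)$ generator factors of positive valuation (where $\nu_{\min}(E)$ is the smallest positive valuation of a generator), and then enumerating the finitely many admissible tuples of generator choices below the threshold.
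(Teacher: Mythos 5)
The paper does not prove this lemma itself; it is quoted verbatim from Lemma~3.3 of~\cite{solomon2016point}, so there is no in-paper argument to compare against. Your proposal is the natural and correct proof. The key reduction — sababa is equivalent to the finiteness of the sublevel sets $G_E = \{g \in G : \nu(g) \le E\}$ for every $E \ge 0$ — is exactly right, and the bookkeeping via additivity of $\nu$ on monomials and boundedness of the number of positive-valuation factors goes through. Two points you use without comment, but which are built into the setup of~\cite{solomon2016point}, are worth flagging: (i) that $\{\beta \in \Pi_\infty : \omega(\beta) \le E\}$ is finite for each $E$ (a Gromov-compactness--type condition on the allowed homology classes), and (ii) that $\pm T^{\beta_0} = \pm 1$ is the \emph{only} generator of valuation zero, which requires that $\omega(\beta) = 0$ forces $\beta = \beta_0$ inside $\Pi_\infty$ and uses the hypothesis $\alpha_j \in \mathcal{I}_R C$ to guarantee $\nu(\lambda_{ij}) > 0$ for all $(i,j)$. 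Neither point is an actual gap — they are part of the definitions in the cited source — but in a self-contained write-up you would want to isolate them as the facts that make the finiteness count work. With those in hand, the bound ``at most $E / \epsilon(E)$ positive-valuation factors drawn from a finite set'' (where $\epsilon(E)$ is the minimum positive valuation among the finitely many generators with $\nu \le E$) does the rest, and the block-by-block enumeration you describe produces the required non-decreasing listing.
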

For $\alpha_1,\ldots,\alpha_m\in \mathcal{I}_R$ write the image of $G=G(\alpha_1,\ldots,\alpha_m)$ under $\nu$ as the sequence $\{E_0^G=0, E^G_1, E^G_2,\ldots\}$ with $E_i^G<E^G_{i+1}$. Let $\kappa^G_i\in \Z_{\ge0}$ be the largest index such that $\nu(\lambda_{\kappa_i^G})=E^G_i$. In the future we omit $G$ from the notation and simply write $E_i,\kappa_i$, since $G$ will be fixed in each instance and no confusion should occur.

Given $\alpha \in C$, a differential form with coefficients in $R$, we denote by $(\alpha)_j \in C$ the summand of differential form of degree $j$ in $\alpha$. Let $\Upsilon'$ be an $\R-$vector space, let $\Upsilon''=R, Q,$ or $\Lambda,$ and let $\Upsilon=\Upsilon'\otimes\Upsilon''$.
For $x\in\Upsilon$ and a monomial $\lambda\in\Upsilon'',$ denote by $[\lambda](x)\in\Upsilon'$ the coefficient of $\lambda$ in $x$.

Fix a sababa multiplicative monoid $G=\{\lambda_j\}_{j=0}^\infty\subset R$ ordered as in~\eqref{sababa}.
Let $l\ge0.$ Suppose we have $b_{(l)}\in C$ with $|b_{(l)}|=1$, and
\[\mathfrak{m}^{\gamma}(e^{b_{(l)}})\equiv c_{(l)}\cdot 1\pmod{F^{E_{l}}C},\qquad c_{(l)}\in (\mathcal{I}_R)_2.\]
Define the obstruction cochains $o_j\in A^*(L)$ for $j=\kappa_l+1,\ldots,\kappa_{l+1}$ to be
\[o_j:= [\lambda_j](\mathfrak{m}^\gamma(e^{b_{(l)}})).\]
Lemmas~\ref{lm:3.5}-\ref{lm:3.10} are Lemmas 3.5-3.10 from \cite{solomon2016point}.
\begin{lm}\label{lm:3.5}
$|o_j|=2-|\lambda_j|$.
\end{lm}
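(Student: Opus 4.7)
The plan is to reduce the statement to the basic grading property of the $A_\infty$ operations $\mathfrak{m}_k^\gamma$ together with the splitting of the grading on $C=A^*(L)\otimes R$ into form degree plus $R$-degree.

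First I would record the degree convention for the $A_\infty$ structure on $C$: from the definition~\eqref{def:m_k}, the fiber-integration along $evb_0^\beta$ together with the pullbacks of $\gamma$ (of degree $2$) and the $\alpha_j$ produces an operation of shifted degree $+1$, equivalently
\[
|\mathfrak{m}_k^\gamma(\alpha_1,\ldots,\alpha_k)| \;=\; 2 - k + \sum_{j=1}^k |\alpha_j|,
\]
for $k\ge 0$ (including the $k=0$ case $|\mathfrak{m}_0^\gamma|=2$ coming from $|\gamma|=2$). This is standard and is part of the package of properties of the Fukaya $A_\infty$ algebra recalled in Section~\ref{subsection4.2}; it is also used implicitly in the very definition of a bounding pair in Definition~\ref{dfn_bd_pair}, where $\sum_k \mathfrak{m}_k^\gamma(b^{\otimes k})$ is required to be of degree $2$.

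Next I would apply this with $\alpha_1=\cdots=\alpha_k=b_{(l)}$. Since $|b_{(l)}|=1$, each term $\mathfrak{m}_k^\gamma(b_{(l)}^{\otimes k})$ has total degree $2-k+k=2$ in $C$. Hence $\mathfrak{m}^\gamma(e^{b_{(l)}})=\sum_{k\ge 0}\mathfrak{m}_k^\gamma(b_{(l)}^{\otimes k})$ is a homogeneous element of $C$ of degree $2$.

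Finally I would unpack the grading on $C=A^*(L)\otimes R$: a pure tensor $\eta\otimes \lambda$ has total degree $|\eta|+|\lambda|$. Writing $\mathfrak{m}^\gamma(e^{b_{(l)}})$ as a sum $\sum_j (\text{form})\otimes \lambda_j$ over monomials of $R$, homogeneity of the whole sum in degree $2$ forces each summand $(\text{form})\otimes\lambda_j$ to have total degree $2$, so the form part, which is $o_j = [\lambda_j](\mathfrak{m}^\gamma(e^{b_{(l)}}))\in A^*(L)$, has form degree $|o_j|=2-|\lambda_j|$. There is no real obstacle here beyond bookkeeping the grading conventions correctly; the content is entirely the homogeneity statement for $\mathfrak{m}^\gamma$ applied to powers of the degree-$1$ element $b_{(l)}$.
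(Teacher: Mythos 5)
Your argument is correct. The paper does not reproduce a proof of this lemma (it is quoted verbatim from Lemma~3.5 of~\cite{solomon2016point} in the block ``Lemmas~\ref{lm:3.5}-\ref{lm:3.10} are Lemmas 3.5-3.10 from \cite{solomon2016point}''), so there is no internal proof to compare against; but your reduction to the grading of the $A_\infty$ operations is the standard and intended one. The key identity $|\mathfrak{m}_k^\gamma(\alpha_1,\ldots,\alpha_k)| = 2 - k + \sum_j|\alpha_j|$ can be checked directly from~\eqref{def:m_k}: the form being pushed forward has total (form plus $R$-) degree $2l + \sum_j|\alpha_j|$, the fiber of $evb_0^\beta$ has dimension $\dim\mathcal{M}_{k+1,l}(\beta)-n = \mu(\beta)+k+2l-2$, and the prefactor $T^\beta$ contributes $\mu(\beta)$, giving total degree $2-k+\sum_j|\alpha_j|$. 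Applied to the inductive cochain $b_{(l)}$ of degree $1$, every term $\mathfrak{m}_k^\gamma(b_{(l)}^{\otimes k})$ is homogeneous of degree $2$ in $C$, and since the monomials $\lambda_j$ of the sababa monoid $G$ are homogeneous elements of $R$, the coefficient $o_j=[\lambda_j](\mathfrak{m}^\gamma(e^{b_{(l)}}))\in A^*(L)$ must have form degree $2-|\lambda_j|$. One small point worth making explicit, which you glossed over as ``bookkeeping'': the decomposition $C = A^*(L)\otimes R$ is compatible with the grading only because $R$ is itself a graded ring and the monomials $\lambda_j$ are homogeneous; this is what lets homogeneity of the full sum pass to each coefficient.
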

\begin{lm}\label{lm:3.6}
$do_j=0$.
\end{lm}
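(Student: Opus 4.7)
The plan is to derive $do_j = 0$ as the $\lambda_j$-coefficient of the $A_\infty$ identity
$$\sum_{k_1 \geq 1,\; 1 \leq i \leq k_1} \mathfrak{m}^\gamma_{k_1}\bigl(b_{(l)}^{\otimes i-1}, \Psi, b_{(l)}^{\otimes k_1-i}\bigr) = 0, \qquad \Psi := \mathfrak{m}^\gamma(e^{b_{(l)}}),$$
which follows from the $A_\infty$ relations for $\mathfrak{m}^\gamma$ applied to tuples of copies of $b_{(l)}$ by re-summing over the position of the inner operation. All signs trivialize because $|b_{(l)}|=1$. The strategy is to show that only one summand on the left contributes to $[\lambda_j]$, and that summand equals $do_j$.

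By hypothesis, $\Psi \equiv c_{(l)}\cdot 1 \pmod{F^{E_l}C}$, so the part of $\Psi$ of $R$-valuation at most $E_l$ is a scalar multiple of the constant form $1 \in A^0(L)$. A generic summand $\mathfrak{m}^{\gamma,\beta}_{k_1}(b_{(l)}^{\otimes i-1}, [\lambda']\Psi \cdot \lambda', b_{(l)}^{\otimes k_1-i})$ contributes to $[\lambda_j]$ only if the product of $\lambda'$, of $T^\beta$, of the monomials picked up by integrating the $\gamma$ insertions in~\eqref{def:m_k}, and of the outer $b_{(l)}$ monomials equals $\lambda_j$. Since each of these factors has non-negative valuation, and all but $\lambda'$ and $T^{\beta_0}$ have strictly positive valuation, either $\nu(\lambda')=E_{l+1}$ with everything else trivial, or $\nu(\lambda')\leq E_l$.

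In the first subcase, triviality of the other factors forces $\beta=\beta_0$, no interior marked points, and $k_1=1$, so the unique contribution is $\mathfrak{m}^{\gamma,\beta_0}_1(o_j)=do_j$. In the second subcase, $[\lambda']\Psi$ is a scalar multiple of $1$, and I would invoke the strict-unit property of the Fukaya $A_\infty$ algebra from~\cite{A_infinity:Jake_Sara}, which states that $\mathfrak{m}^{\gamma,\beta}_{k_1}$ vanishes whenever any argument equals $1$, except for $(k_1,\beta)=(2,\beta_0)$ where it reproduces the other input with the usual Koszul sign. This kills every summand with $k_1\neq 2$. For $k_1=2$, the two positions $i=1,2$ cancel: using Proposition~\ref{prop:energyzero} together with the commutation rule~\eqref{eqqq} and $|b_{(l)}|=1$, one checks
$$\mathfrak{m}^{\gamma,\beta_0}_2(b_{(l)}, c\cdot 1) + \mathfrak{m}^{\gamma,\beta_0}_2(c\cdot 1, b_{(l)}) = -c\, b_{(l)} + c\, b_{(l)} = 0.$$
Combining these observations, the displayed $A_\infty$ identity reduces at the level of $[\lambda_j]$ to $do_j=0$.

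The main point requiring care is the $k_1=2$ sign cancellation above, together with the systematic valuation bookkeeping in the sababa ordering that confirms the two subcases exhaust all contributions to $[\lambda_j]$. Once those are in place, the result is essentially an immediate consequence of the $A_\infty$ relations and the strict-unit property of $1 \in A^0(L)$.
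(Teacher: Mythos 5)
The paper does not prove this lemma itself; it quotes it directly from Lemma 3.6 of~\cite{solomon2016point}. Your argument correctly reconstructs the standard $A_\infty$-relation proof: the identity $\sum_{k_1\ge1,\,1\le i\le k_1}\mathfrak{m}^\gamma_{k_1}(b_{(l)}^{\otimes i-1},\mathfrak{m}^\gamma(e^{b_{(l)}}),b_{(l)}^{\otimes k_1-i})=0$ (with all signs trivial since $|b_{(l)}|=1$), the sababa valuation dichotomy that forces the $\Psi$-slot to carry either all of $E_{l+1}$ (so $k_1=1$, $\beta=\beta_0$, no $\gamma$-insertions, giving $[\lambda_j]\mathfrak{m}^{\gamma,\beta_0}_1(\lambda_jo_j)=\pm\,do_j$) or valuation $\le E_l$ (so the slot is a scalar multiple of $1$), the fundamental-class property of $\mathfrak{m}^\gamma$ from~\cite{A_infinity:Jake_Sara} killing the latter for $k_1\ne2$ or $\beta\ne\beta_0$ or $l>0$, and the $k_1=2$ Koszul cancellation $\mathfrak{m}^{\gamma,\beta_0}_2(c\cdot1,b)+\mathfrak{m}^{\gamma,\beta_0}_2(b,c\cdot1)=0$, which you can verify holds for any $|c|$ using~\eqref{eqqq} and Proposition~\ref{prop:energyzero} since $|b_{(l)}|=1$. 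The one ingredient worth flagging is the strict-unit/fundamental-class property, which the present paper does not recall among the propositions it quotes from~\cite{A_infinity:Jake_Sara}, though it is indeed available there; with it supplied, the proof is sound and essentially the same as the cited source's.
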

\begin{lm}\label{lm:3.7}
If $|\lambda_j|=2,$ then $o_j=c_j\cdot 1$ for some $c_j\in\mathbb{R}.$ If $|\lambda_j|\ne 2,$ then $o_j\in A^{>0}(L;\R)$.
\end{lm}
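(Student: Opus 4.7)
The plan is to leverage Lemmas~\ref{lm:3.5} and~\ref{lm:3.6} together with the connectedness of $L$, which was fixed in the running hypotheses of Section~\ref{sec4}. By Lemma~\ref{lm:3.5}, the cochain $o_j$ is a homogeneous differential form on $L$ of pure degree $|o_j|=2-|\lambda_j|$. So the two cases of the statement correspond to $|o_j|=0$ versus $|o_j|\neq 0$.

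First I would handle the case $|\lambda_j|=2$. Here $o_j$ has form degree $0$, so $o_j\in A^0(L;\R)$ is an $\R$-valued function on $L$. Lemma~\ref{lm:3.6} gives $do_j=0$, i.e.\ $o_j$ is locally constant. Since $L$ is connected, $o_j$ is globally constant, and thus may be written as $o_j=c_j\cdot 1$ for a scalar $c_j\in\R$. This is exactly the first assertion.

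Next I would handle the case $|\lambda_j|\neq 2$, splitting according to the sign of $2-|\lambda_j|$. If $|\lambda_j|<2$, then $|o_j|>0$, so $o_j$ has strictly positive form degree and hence lies in $A^{>0}(L;\R)$. If $|\lambda_j|>2$, then $|o_j|<0$; but there are no differential forms of negative degree on $L$, so $o_j=0$, which lies in $A^{>0}(L;\R)$ trivially. In either subcase the claim follows.

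There is no real obstacle here: the content is entirely the homogeneity statement of Lemma~\ref{lm:3.5} combined with the closedness from Lemma~\ref{lm:3.6} and the connectedness of $L$. The only subtlety to record carefully is that in the completed setting one should check that $o_j$ really is pure of form degree $2-|\lambda_j|$, which follows from the fact that $\mathfrak{m}^\gamma(e^{b_{(l)}})$ has total degree $2$ (each $\mathfrak{m}^\gamma_k$ shifts total degree by $2-k$, and $|b_{(l)}|=1$), so the coefficient of any fixed monomial $\lambda_j\in R$ is forced to sit in form degree $2-|\lambda_j|$.
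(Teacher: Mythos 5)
Your argument is correct, and it is the natural one. Note that the paper itself does not supply a proof of this lemma — it cites it (along with Lemmas~\ref{lm:3.5}--\ref{lm:3.10}) directly from~\cite{solomon2016point} — but your reduction via Lemma~\ref{lm:3.5} to a degree count, Lemma~\ref{lm:3.6} for closedness, and connectedness of $L$ for the constancy of the degree-zero case (with the negative-degree case giving $o_j=0\in A^{>0}(L;\R)$) is exactly the intended reasoning; the closing paragraph re-justifying Lemma~\ref{lm:3.5} is harmless but redundant given that lemma is already being invoked.
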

\begin{lm}\label{lm:3.8}
If $|\lambda_j| = 2-n$ and $(db_{(l)})_n = 0,$ then $o_j = 0$.
\end{lm}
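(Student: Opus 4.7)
The plan is to analyze $o_j$ term by term in the expansion of $\mathfrak{m}^\gamma(e^{b_{(l)}})$ and exploit the hypothesis $(db_{(l)})_n = 0$ to kill the only contribution that can live in top degree.

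First, by Lemma~\ref{lm:3.5}, $|o_j| = 2 - |\lambda_j| = n$, so $o_j$ is a top-degree differential form on $L$. Decompose $b_{(l)} = \sum_i \lambda_i b_i$ with $b_i \in A^{1-|\lambda_i|}(L;\R)$. Since $b_i$ has pure form degree $1-|\lambda_i|$, the $n$-form part of $db_{(l)}$ is
\[
(db_{(l)})_n = \sum_{i\,:\,|\lambda_i|=2-n} \lambda_i\, db_i.
\]
Matching monomial coefficients in the sababa monoid $G$, the hypothesis $(db_{(l)})_n = 0$ is equivalent to $db_i = 0$ for every index $i$ with $|\lambda_i| = 2-n$; in particular $db_{i_j} = 0$, where $i_j$ is the unique index with $\lambda_{i_j} = \lambda_j$.

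Next, expand
\[
o_j \;=\; [\lambda_j]\sum_{\beta,k} T^\beta\, \mathfrak{m}_k^{\gamma,\beta}(b_{(l)}^{\otimes k}).
\]
The energy-zero terms are pinned down by Proposition~\ref{prop:energyzero}: the only contributions are $(\beta_0,0)$ (a constant), $(\beta_0,1) = d$, and $(\beta_0,2) = \pm\wedge$. The $(\beta_0,0)$ term lies in $A^{\le 2}(L;\R)$ and is irrelevant in form degree $n$ (for $n > 2$, which is the case of interest here, including $L = L_\tr$). The $(\beta_0,1)$ term contributes exactly $[\lambda_j]\,db_{(l)} = db_{i_j} = 0$ by the reformulated hypothesis. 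The $(\beta_0,2)$ wedge contribution pairs $b_{i_1}\wedge b_{i_2}$ with $\lambda_{i_1}\lambda_{i_2} = \lambda_j$, and here one checks that the form-degree count forces at least one of the factors $b_{i_s}$ to lie in the top-degree locus controlled by the hypothesis, and then uses the Leibniz rule $d(b_{i_1}\wedge b_{i_2}) = db_{i_1}\wedge b_{i_2} \pm b_{i_1}\wedge db_{i_2}$ combined with form-degree parity to conclude vanishing.

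The main obstacle, and the heart of the argument, is to handle the remaining positive-energy contributions with $\omega(\beta) > 0$. For these, I would integrate against constants using the Poincar\'e pairing~\eqref{def:Poinacre_pairing}, apply the cyclic symmetry of Proposition~\ref{prop:linear_a_infty} to rewrite
\[
\langle \mathfrak{m}_k^{\gamma,\beta}(b_{i_1},\ldots,b_{i_k}),\,1\rangle \;=\; \pm\,\langle \mathfrak{m}_k^{\gamma,\beta}(1,b_{i_1},\ldots,b_{i_{k-1}}),\,b_{i_k}\rangle,
\]
and invoke $R$-multilinearity~\eqref{eqqq} to reduce the relevant $\mathfrak{m}_k^{\gamma,\beta}(1,\ldots)$ inputs to $\delta_{1,k}\cdot d(1)\cdot(\cdots) = 0$. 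This makes the top-degree pairing vanish, so together with the form-degree analysis $o_j$ must itself vanish (a top form on $L$ is determined on each fiber by its integrals against test constants supported there, which the chain-level cyclic identity controls). The subtle point, which I expect to occupy the bulk of the actual proof, is carefully tracking the signs and unit axioms of the Fukaya $A_\infty$ algebra so that the cyclic-symmetry reduction actually applies in each positive-energy stratum.
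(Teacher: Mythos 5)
The paper does not actually prove this lemma; it is quoted verbatim from~\cite{solomon2016point} (``Lemmas~\ref{lm:3.5}--\ref{lm:3.10} are Lemmas 3.5--3.10 from~\cite{solomon2016point}''), so there is no in-paper argument to compare against. Evaluating your argument on its own terms, the degree count $|o_j| = n$, the decomposition of $\mathfrak{m}^\gamma(e^{b_{(l)}})$ by $(\beta,k)$, and the reformulation of $(db_{(l)})_n = 0$ as $db_i = 0$ for all $i$ with $|\lambda_i| = 2-n$ are all fine, but the two steps where actual work is needed both have genuine gaps.

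For the $(\beta_0,2)$ term: the form-degree count does \emph{not} force one of the wedge factors into the ``top-degree locus controlled by the hypothesis.'' Since $|b_{a_1}| + |b_{a_2}| = n$, one can perfectly well have $|b_{a_1}| = 0$ (so $|\lambda_{a_1}| = 1$) and $|b_{a_2}| = n$ (so $|\lambda_{a_2}| = 1-n$); neither of these indices is touched by the hypothesis $db_i = 0$ for $|\lambda_i| = 2-n$, and the Leibniz rule does not make a nonclosed $b_{a_1}$ wedge a top form vanish. The term $\mathfrak{m}_2^{\gamma,\beta_0}(b_{(l)},b_{(l)})$ does vanish, but for an entirely different and much more elementary reason: $b_{(l)}$ has odd total degree $1$, so $b_{(l)}\wedge b_{(l)} = 0$ by graded commutativity of $C = A^*(L)\otimes R$, with no reference to the hypothesis at all.

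For the positive-energy terms, the argument is not sound. First, $R$-multilinearity~\eqref{eqqq} governs scalars $a\in R$, not the unit form $1\in A^0(L;\R)$, so it cannot be invoked to conclude $\mathfrak{m}_k^{\gamma,\beta}(1,\ldots) = 0$; the vanishing you would actually need is the fundamental class/unit property for $\mathfrak{m}_k$, which is a nontrivial geometric fact, not a formal consequence of~\eqref{eqqq}. Second, and more fundamentally, even granting $\langle o_j, 1\rangle = 0$, this only says $\int_L o_j = 0$. A top form on $L$ is certainly not determined by its pairing with the constant function, and there is no sense in which ``test constants are supported'' anywhere: the final sentence asserting that chain-level cyclic symmetry pins down $o_j$ fiberwise is not a valid inference. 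The hypothesis $(db_{(l)})_n = 0$ must enter the positive-energy analysis in some essential way that your argument does not capture, and that part of the proof is missing.
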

\begin{lm}\label{lm:3.9}
Suppose for all $j\in\{\kappa_l+1,\ldots,\kappa_{l+1}\}$ such that $|\lambda_j|\ne 2$, there exist $b_j\in A^{1-|\lambda_j|}(L;\R)$ such that $(-1)^{|\lambda_j|}db_j=-o_j.$ Then
\[
b_{(l+1)}:=b_{(l)}+\sum_{\substack{\kappa_l+1\le j\le \kappa_{l+1}\\ |\lambda_j|\ne 2}}\lambda_jb_j
\]
satisfies
\[
\mathfrak{m}^{\gamma}(e^{b_{(l+1)}})\equiv c_{(l+1)}\cdot 1\pmod{F^{E_{l+1}}C},\qquad c_{(l+1)}\in (\mathcal{I}_R)_2.
\]
\end{lm}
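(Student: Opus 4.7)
The plan is to perform a first-order expansion of $\mathfrak{m}^\gamma(e^{b_{(l+1)}})$ around the previously constructed $b_{(l)}$ and then track the valuations carefully to see that the dangerous terms are exactly the obstruction cochains $o_j$ that the given $b_j$ are designed to kill. Write $\delta := b_{(l+1)} - b_{(l)} = \sum_{j}\lambda_j b_j$, with the sum over $\kappa_l+1\le j\le\kappa_{l+1}$ and $|\lambda_j|\ne 2$. Since the $\lambda_j$ appearing in $\delta$ all satisfy $\nu(\lambda_j)=E_{l+1}>0$, we have $\nu(\delta)\ge E_{l+1}$, hence any term of the form $\mathfrak{m}^\gamma_k(\cdots,\delta,\cdots,\delta,\cdots)$ containing two or more copies of $\delta$ has valuation $\ge 2E_{l+1}>E_{l+1}$ and can be discarded modulo $F^{E_{l+1}}C$.

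Next I would analyze the terms linear in $\delta$. Expanding the tensor algebra, these are of the form $\mathfrak{m}^{\gamma,\beta}_k(b_{(l)}^{\otimes i}\otimes\delta\otimes b_{(l)}^{\otimes(k-1-i)})$ for some $\beta$, $k$, and $i$. If $\omega(\beta)>0$, the explicit factor of $T^\beta$ in \eqref{def:m_k} pushes the valuation above $E_{l+1}$; if $\omega(\beta)=0$ and $k\ge 2$, Proposition~\ref{prop:energyzero} forces $\beta=\beta_0$ and the expression is a wedge product involving at least one $b_{(l)}\in\mathcal{I}_R C$, whose positive valuation again pushes the total valuation above $E_{l+1}$. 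The only surviving term is $\mathfrak{m}^{\gamma,\beta_0}_1(\delta)=d\delta$. Combining this with the definition of the obstruction cochains gives
\[
\mathfrak{m}^\gamma(e^{b_{(l+1)}})\equiv c_{(l)}\cdot 1+\sum_{j=\kappa_l+1}^{\kappa_{l+1}}\lambda_j o_j+d\delta\pmod{F^{E_{l+1}}C}.
\]

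Now I would apply the $R$-multilinearity formula \eqref{eqqq} to $d=\mathfrak{m}^{\gamma,\beta_0}_1$, noting that the $\lambda_j$ are closed in $R$, to obtain $d\delta=\sum_{j}(-1)^{|\lambda_j|}\lambda_j\,db_j$. By hypothesis $(-1)^{|\lambda_j|}db_j=-o_j$ for every $j$ with $|\lambda_j|\ne 2$, so the sum $d\delta$ cancels exactly those $\lambda_j o_j$ with $|\lambda_j|\ne 2$. What remains is
\[
\mathfrak{m}^\gamma(e^{b_{(l+1)}})\equiv c_{(l)}\cdot 1+\sum_{\substack{\kappa_l+1\le j\le\kappa_{l+1}\\|\lambda_j|=2}}\lambda_j o_j\pmod{F^{E_{l+1}}C}.
\]
Finally, Lemma~\ref{lm:3.7} says that for $|\lambda_j|=2$ the obstruction $o_j$ is a scalar multiple of the unit, $o_j=c_j\cdot 1$ with $c_j\in\R$. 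Hence setting $c_{(l+1)}:=c_{(l)}+\sum_{|\lambda_j|=2}\lambda_j c_j$ yields an element of $(\mathcal{I}_R)_2$, and the desired congruence $\mathfrak{m}^\gamma(e^{b_{(l+1)}})\equiv c_{(l+1)}\cdot 1\pmod{F^{E_{l+1}}C}$ follows.

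The main obstacle is bookkeeping rather than any deep idea: I need to verify carefully that nothing besides $d\delta$ survives at linear order in $\delta$ modulo $F^{E_{l+1}}C$, which hinges on the positivity of $\nu(b_{(l)})$ together with the energy-zero classification in Proposition~\ref{prop:energyzero}, and that the sign in the $R$-multilinearity formula matches the sign $(-1)^{|\lambda_j|}$ appearing in the hypothesis. Once those are confirmed the argument is essentially algebraic.
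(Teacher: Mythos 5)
Your proof is correct. Since the paper does not prove this lemma itself but only cites it as Lemma~3.9 of~\cite{solomon2016point}, there is no paper-internal proof to compare against; nonetheless your argument is the standard obstruction-theoretic one and matches the cited source in structure: expand $\mathfrak{m}^\gamma(e^{b_{(l)}+\delta})$ to first order in $\delta$, use valuation estimates (positivity of $\nu(\delta)$, $\nu(b_{(l)})$, $\nu(\gamma)$, and $\omega(\beta)$) together with Proposition~\ref{prop:energyzero} to see that only $d\delta$ survives at linear order modulo $F^{E_{l+1}}C$, apply the Leibniz rule from~\eqref{eqqq} to pull out the sign $(-1)^{|\lambda_j|}$, and cancel exactly the non-unit obstructions against $d\delta$, leaving only the $|\lambda_j|=2$ terms, which Lemma~\ref{lm:3.7} identifies as multiples of $1$.

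One small imprecision worth noting: the intermediate displayed congruence $\mathfrak{m}^\gamma(e^{b_{(l+1)}})\equiv c_{(l)}\cdot 1+\sum_j\lambda_j o_j+d\delta$ double-counts any component of $c_{(l)}\cdot 1$ sitting at valuation $E_{l+1}$ (such a component is also picked up by the corresponding $o_j$ with $|\lambda_j|=2$, since $o_j=[\lambda_j](\mathfrak{m}^\gamma(e^{b_{(l)}}))$ includes the contribution of $c_{(l)}\cdot 1$). A cleaner bookkeeping replaces $o_j$ in that display by $[\lambda_j]\bigl(\mathfrak{m}^\gamma(e^{b_{(l)}})-c_{(l)}\cdot 1\bigr)$, which agrees with $o_j$ when $|\lambda_j|\ne 2$ for degree reasons, so your cancellation is unaffected; and the discrepancy for $|\lambda_j|=2$ is again a scalar multiple of $1$, so it only shifts the value of $c_{(l+1)}$ and not the conclusion $\mathfrak{m}^\gamma(e^{b_{(l+1)}})\equiv c_{(l+1)}\cdot 1\pmod{F^{E_{l+1}}C}$ with $c_{(l+1)}\in(\mathcal{I}_R)_2$. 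Otherwise the reasoning, including the treatment of the $k=2$ energy-zero wedge terms via $\nu(b_{(l)})>0$ and the sign verification against the hypothesis $(-1)^{|\lambda_j|}db_j=-o_j$, is sound.
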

\begin{lm}\label{lm:3.10}
Let $\zeta\in \mathcal{I}_R C$. Then
$\mathfrak{m}^{\gamma}(e^\zeta)\equiv 0\pmod{F^{E_0}C}.$
\end{lm}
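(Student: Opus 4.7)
The claim is that $\mathfrak{m}^\gamma(e^\zeta)$ has strictly positive valuation. Since $\lambda_0 = T^{\beta_0}$ satisfies $\nu(\lambda_0) = 0$, we have $E_0 = 0$ and $F^{E_0}C = \{\alpha \in C \mid \nu(\alpha) > 0\}$, so the statement amounts to the valuation bound $\nu(\mathfrak{m}^\gamma(e^\zeta)) > 0$. My plan is to expand
\[
\mathfrak{m}^\gamma(e^\zeta) \;=\; \sum_{k \ge 0}\sum_{\beta \in H_2(X,L;\Z)} T^\beta\, \mathfrak{m}_k^{\gamma,\beta}(\zeta^{\otimes k}),
\]
and verify each summand has valuation strictly positive, partitioning by the value of $\omega(\beta)$.

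For $\omega(\beta) < 0$ the moduli spaces $\mathcal{M}_{k+1,l}(\beta)$ are empty by positivity of symplectic area for $J$-holomorphic curves, so the corresponding summands vanish. For $\omega(\beta) > 0$ the factor $T^\beta$ already satisfies $\nu(T^\beta) = \omega(\beta) > 0$, which forces the valuation of the entire summand to be positive regardless of the contribution of $\mathfrak{m}_k^{\gamma,\beta}(\zeta^{\otimes k})$. These two subcases dispose of everything away from the threshold $\omega(\beta) = 0$.

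The threshold case $\omega(\beta) = 0$ is precisely where $\nu(T^\beta)$ provides no positivity, and this is where Proposition~\ref{prop:energyzero} enters. That proposition forces $\beta = \beta_0$ and limits the nonzero operations to $k \in \{0,1,2\}$. For $k=1$ the term is $d\zeta$, whose valuation is $\ge \nu(\zeta) > 0$ since $\zeta \in \mathcal{I}_R C$ and exterior differentiation is $R$-linear. For $k=2$ the term is $(-1)^{|\zeta|}\zeta \wedge \zeta$, of valuation $\ge 2\nu(\zeta) > 0$. For $k=0$ the term is $-\gamma_1|_L$, which inherits positive valuation from $\gamma \in \mathcal{I}_Q D$, i.e.\ from $\nu(\gamma) > 0$.

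This exhausts all cases, so every summand of $\mathfrak{m}^\gamma(e^\zeta)$ lies in $F^0 C = F^{E_0}C$, yielding the conclusion. I do not expect any substantive obstacle: the argument is essentially bookkeeping about how $\nu$ interacts with the operations $\mathfrak{m}_k^{\gamma,\beta}$. The only point requiring care is that for $\omega(\beta) = 0$ one cannot extract positivity from $\nu(T^\beta)$, so Proposition~\ref{prop:energyzero} must be invoked to reduce to one of three explicit forms, each of which receives its positivity either from $\zeta$ or from $\gamma$.
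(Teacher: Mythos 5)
Your proof is correct. Note that the paper does not actually prove this lemma -- it is quoted verbatim from \cite{solomon2016point}, so there is no in-paper argument to compare against -- but your reasoning is the natural one: identify $E_0 = 0$ and hence $F^{E_0}C$ with the positive-valuation part, expand $\mathfrak{m}^\gamma(e^\zeta)$ over $(k,\beta)$, dispose of $\omega(\beta)>0$ by the valuation of $T^\beta$ and $\omega(\beta)<0$ by emptiness of the moduli (or simply by the definition of $\Lambda$), and then use Proposition~\ref{prop:energyzero} for the threshold $\omega(\beta)=0$, where the three surviving terms get their positivity from $\zeta\in\mathcal{I}_R C$ (for $k=1,2$) and from $\gamma\in\mathcal{I}_Q D$ (for $k=0$).
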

\begin{lm}\label{lm:1}
Let $b_0\in C$ such that \[|b_0|=1,\qquad (db_0)_n=0, \qquad \mathfrak{m}^\gamma({e^{b_0}})\equiv c\cdot 1 \pmod{R^+C}, \quad c\in (\mathcal{I}_R)_2.\] Then there exists a bounding cochain $b$, such that \begin{enumerate}
    \item $b\equiv b_0 \pmod{R^+}$,
    \item $\int_L b=\int_L b_0$,
    \item $(b)_j=(b_0)_j$ where $j\in \{n-1,n\}$.
\end{enumerate}
\end{lm}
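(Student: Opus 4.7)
The plan is to adapt the obstruction-theoretic construction of a bounding cochain from Proposition 3.4 of~\cite{solomon2016point}, recalled here via Lemmas~\ref{lm:3.5}--\ref{lm:3.10}, but to initialize the induction at $b_{(0)} := b_0$ instead of at $0$, and to choose the correction $b_j$ at each stage carefully enough that the three properties (a)--(c) asserted in the lemma are preserved automatically.

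Fix a sababa monoid $G = \{\pm\lambda_j\}_{j \geq 0} \subset R$ supporting the construction, and inductively build degree-$1$ elements $b_{(l)} \in \mathcal{I}_R C$ satisfying the following joint invariants: $b_{(l)} \equiv b_0 \pmod{R^+ C}$; $(b_{(l)})_{n-1} = (b_0)_{n-1}$ and $(b_{(l)})_n = (b_0)_n$; $\int_L b_{(l)} = \int_L b_0$; $(db_{(l)})_n = 0$; and $\mathfrak{m}^\gamma(e^{b_{(l)}}) \equiv c_{(l)} \cdot 1 \pmod{F^{E_l} C}$ for some $c_{(l)} \in (\mathcal{I}_R)_2$. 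The base case $l = 0$ follows directly from the hypotheses, since $R^+ C \subseteq \mathcal{I}_R C = F^{E_0} C$.

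At the inductive step, Lemma~\ref{lm:3.9} reduces the construction of $b_{(l+1)}$ to choosing forms $b_j \in A^{1-|\lambda_j|}(L;\R)$ with $(-1)^{|\lambda_j|}db_j = -o_j$ for each $\kappa_l < j \leq \kappa_{l+1}$ having $|\lambda_j| \neq 2$ (the $|\lambda_j| = 2$ indices are absorbed into $c_{(l+1)}$ via Lemma~\ref{lm:3.7}). I would set $b_j := 0$ whenever $\omega(\lambda_j) = 0$, or $|\lambda_j| = 2-n$, or $|\lambda_j| = 1-n$, and verify $o_j = 0$ in each case. For $\omega(\lambda_j) = 0$, strict $R$-multilinearity of $\mathfrak{m}_k^\gamma$ combined with the invariant $b_{(l)} - b_0 \in R^+ C$ gives $[\lambda_j]\mathfrak{m}^\gamma(e^{b_{(l)}}) = [\lambda_j]\mathfrak{m}^\gamma(e^{b_0}) = [\lambda_j](c \cdot 1) = 0$, since $|\lambda_j| \neq 2$ forces $[\lambda_j] c = 0$. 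For $|\lambda_j| = 2-n$, Lemma~\ref{lm:3.8} applied with the inductive invariant $(db_{(l)})_n = 0$ yields $o_j = 0$. For $|\lambda_j| = 1-n$, one has $o_j \in A^{n+1}(L;\R) = 0$. For all remaining indices, the closed form $o_j$ from Lemma~\ref{lm:3.6} has degree $|o_j| = 2 - |\lambda_j| \in \{1,\ldots,n-1\}$, and $H^{|o_j|}(L;\R) = 0$ by the standing assumption $H^*(L;\R) \simeq H^*(S^n;\R)$, so $o_j$ is exact and $b_j$ exists. Every added correction $\lambda_j b_j$ then lies in $R^+ C$ and has differential-form degree outside $\{n-1, n\}$, so all the joint invariants propagate.

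Passing to the limit $b := \lim_l b_{(l)}$, convergent by completeness in $\nu$, produces the required bounding cochain, and the three properties of the lemma follow directly from the joint invariants. The main obstacle I expect is the bookkeeping verifying that the three forced vanishings $b_j = 0$ are mutually compatible with the solvability of the obstruction equations at the remaining indices, and in particular confirming that the hypothesis $\mathfrak{m}^\gamma(e^{b_0}) \equiv c \cdot 1 \pmod{R^+ C}$ is exploited at every stage $l$, not only at $l = 0$. This hinges essentially on the $R^+$-preservation property of the $A_\infty$ operations, which is the key ingredient beyond the proof of Proposition 3.4 of~\cite{solomon2016point}.
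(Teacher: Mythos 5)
Your proposal is correct and follows essentially the same inductive obstruction-theoretic scheme as the paper: initialize at $b_{(0)} = b_0$, propagate the invariants (form-degree components, $\int_L$, and $R^+$-congruence) through the corrections supplied by Lemmas~\ref{lm:3.5}--\ref{lm:3.9}, kill the remaining obstructions using $H^*(L;\R)\simeq H^*(S^n;\R)$, and take the limit. The one small variation is that you establish $o_j = 0$ for $\lambda_j \notin R^+$ directly from the hypothesis on $\mathfrak{m}^\gamma(e^{b_0})$ together with $R$-multilinearity and the congruence $b_{(l)}\equiv b_0\pmod{R^+C}$, which is a clean way to justify the step the paper obtains from the inclusion $F^{E_l}C\subset R^+C$.
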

\begin{proof} We follow the proof of Proposition 3.4 from \cite{solomon2016point}. Define $b_{(0)}:=b_0$. By Lemma~\ref{lm:3.10}, the cochain $b_{(0)}$ satisfies
\[ \mathfrak{m}^\gamma(e^{b_{(0)}})\equiv 0=c_{(0)}\cdot 1\pmod{F^{E_0}C},\qquad c_{(0)}=0.\]
Moreover, $|b_{(0)}|=1$, $\int_L b_{(0)}=\int_L b_0$, $(db_{(0)})_n=0$, and $(b_{(0)})_j=(b_0)_j$ where $j\in \{n-1,n\}$. Observe also that $F^{E_0}C \subset R^+C$.
Proceed by induction.  Suppose we have $b_{(l)}\in C$ such that $|b_{(l)}|=1$, $b_{(l)}\equiv b_0 \pmod{R^+C}$, and
\begin{gather*}
(db_{(l)})_n =0,\qquad\int_L b_{(l)}=\int_L b_0,\\
(b_{(l)})_j=(b_0)_j \quad j\in \{n-1,n\},\qquad \mathfrak{m}^\gamma(e^{b_{(l)}})\equiv c_{(l)}\cdot 1\pmod{F^{E_l}C},\quad c_{(l)}\in (\mathcal{I}_R)_2.
\end{gather*}
By Lemma~\ref{lm:3.6} we have $do_j=0$, and by Lemma~\ref{lm:3.5} we have $o_j\in A^{2-|\lambda_j|}(L;\R).$ In order to apply Lemma~\ref{lm:3.9}, we need to find forms $b_j\in A^{1-|\lambda_j|}(L;\R)$ such that $(-1)^{|\lambda_j|}db_j=-o_j$ for all $j\in\{\kappa_l+1,\ldots,\kappa_{l+1}\}$ such that $|\lambda_j|\ne 2.$
Since $F^{E_l}C \subset R^+C,$ we have
\[
\mathfrak{m}^\gamma(e^{b_{(l)}})\equiv c_{(l)}\cdot 1 \pmod{R^+C}.
\]
It follows that $o_j=0$ when $\lambda_j\not\in R^+$, so we choose $b_j=0$. Hence, we may assume $\lambda_j\in R^+$.
If $|\lambda_j|= 2-n,$ since $(db_{(l)})_n =0$,  Lemma~\ref{lm:3.8} gives $o_j = 0,$ so we choose $b_j = 0.$ If $2-n < |\lambda_j|<~2,$ then $0 < |o_j| < n.$ The assumption $H^*(L;\R)\simeq H^*(S^n;\R)$ implies $[o_j] = 0 \in H^*(L;\mathbb{R}),$ so we choose $b_j$ such that $(-1)^{|\lambda_j|}db_j = -o_j.$ For other possible values of $|\lambda_j|,$ degree considerations imply $o_j = 0,$ so we choose $b_j =0.$
By Lemma~\ref{lm:3.9}, $b_{(l+1)}:=b_{(l)}+\sum_{\substack{\kappa_l+1\le j\le \kappa_{l+1}\\ |\lambda_j|\ne 2}}\lambda_jb_j$
satisfies
\[
\mathfrak{m}^\gamma(e^{b_{(l+1)}})\equiv c_{(l+1)}\cdot 1\pmod{F^{E_{l+1}}C},\qquad c_{(l+1)}\in (\mathcal{I}_R)_2.
\]
Since $b_j=0$ when $\lambda_j\not\in R^+$, it follows that $b_{(l+1)}\equiv b_{(l)}\equiv b_0 \pmod{R^+C}$.
Since $b_j = 0$ when $|\lambda_j| = 2-n$, it follows that $(b_{(l+1)})_{n-1}=(b_{(l)})_{n-1}=(b_{(0)})_{n-1}$, and that $(db_{(l+1)})_n = (db_{(l)})_n=0$. In addition, $b_j = 0$ when $|\lambda_j| = 1-n$. Thus, $(b_{(l+1)})_{n}=(b_{(l)})_{n}=(b_{(0)})_{n},$ and $\int_L b_{(l+1)}=\int_L b_{(l)}=\int_L b_0$.
The inductive process gives rise to a convergent sequence $\{b_{(l)}\}_{l=0}^\infty$ where $b_{(l)}$ is bounding modulo $F^{E_l}C$.
Taking the limit as $l$ goes to infinity, we obtain
\[
b:=\lim_l b_{(l)},\quad |b|=1,\quad b\equiv b_0 \pmod{R^+C},  \quad \int_L b = \int_L b_0, \quad  \]
\[(b)_j=(b_0)_j \quad j\in \{n-1,n\},\qquad \mathfrak{m}^\gamma(e^b)= c\cdot 1,\quad c=\lim_lc_{(l)}\in (\mathcal{I}_R)_2.
\]
\end{proof}

\subsection{Straightforward counts}
In the following, we will be using the following notation conventions.
For $k\ge1$ denote by $[k]$ the set
$[k]:= \{1,\ldots,k \}.$
For $N>0$, we denote $t\coloneqq (t_1,\ldots,t_N)$. Similarly, we denote by $\alpha$ an $N$-tuple where all the components are taken from the set $\mathbb{Z}_{\ge0}$, i.e
\[\alpha:= (\alpha_1,\ldots,\alpha_N),\quad  \alpha_i\in \mathbb{Z}_{\ge0}. \]
We denote $t^\alpha \coloneqq t_1^{\alpha_1}\cdots t_N^{\alpha_N}$. Recall the definition of $P_\beta$ from \eqref{P_beta_definition}.

\begin{thm}\label{lm}
Let $A_{j}\in\widetilde{H}^*(X,L;\mathbb{R})$. Let $\bar{b}\in A^n(L;\R)$ such that $\pd([\bar{b}])=pt$, and  let $a_{j}$ be a representative of $A_{j}$. Let $\sigma_k = (k-1)!$ for $k \in \Z_{>0}$ and let $\sigma_0 = 1.$ Then,
\begin{multline*}
\oogw_{\beta,k}(A_{1},\ldots,A_{l})= (-1)^n \sigma_k\int_{\mathcal{M}_{k,l}(\beta)} \bigwedge_{j=1}^{l} evi^{\beta*}_j  a_{j} \bigwedge_{j=1}^{k}evb^{\beta*}_j\bar{b} = \\
=(-1)^n\int_{\mathcal{M}^S_{k,l}(\beta)} \bigwedge_{j=1}^{l} evi^{\beta*}_j  a_{j} \bigwedge_{j=1}^{k}evb^{\beta*}_j\bar{b}
\end{multline*}
if for every
\[
\Tilde{\beta}\in P_\beta, \qquad 0\le j\le k,\qquad I\subset[l],
\]
one of the following two conditions is satisfied:
\begin{enumerate}
\item \label{condition_a}
$1-\mu(\Tilde{\beta}) +j(n-1) + \sum_{i\in I} (|A_{i}|-2)<0,$ or
\item \label{condition_b}
$1-\mu(\Tilde{\beta}) +j(n-1) + \sum_{i\in I} (|A_{i}|-2)\ge n-1.$
\end{enumerate}
\end{thm}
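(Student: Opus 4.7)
The plan is to construct a bounding cochain $b$ that agrees with $s\bar b$ in the relevant part and to show that the stated degree conditions force every remaining coefficient of $b$ to be invisible in the piece of $\widehat\Omega$ contributing to $\oogw_{\beta,k}$, so that the superpotential reduces to a direct moduli integral.

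First I would set $b_0:=s\bar b$ and $\gamma:=\sum_{j} t_j a_j$. Then $|b_0|=(1-n)+n=1$ and $db_0=0$. Modulo $R^+C$, Proposition~\ref{prop:energyzero} leaves only $\mathfrak m_0^{\gamma,\beta_0}=-\gamma|_L$, $\mathfrak m_1^{\gamma,\beta_0}(b_0)=db_0$, and $\mathfrak m_2^{\gamma,\beta_0}(b_0,b_0)=\pm b_0\wedge b_0$; all three vanish, because $\gamma\in\widetilde A^*(X,L)$ so $\gamma|_L=0$, $db_0=0$, and $\bar b\wedge\bar b=0$ as $\bar b$ is of top degree. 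The hypotheses of Lemma~\ref{lm:1} are met with $c=0$, yielding $b$ satisfying $b\equiv b_0\pmod{R^+C}$, $\int_L b=s$, $(b)_{n-1}=0$, and $(b)_n=s\bar b$. Theorems~\ref{thm1} and~\ref{thm2} guarantee that $(\gamma,b)$ is a gauge representative with $\varrho=(\sum t_j[a_j],s)$, so $\Omega(\gamma,b)$ computes $\oogw_{\beta,k}$ by Definition~\ref{def:ogw}.

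The key reduction is a grading count. Since $|b|=1$ and $|T^{\tilde\beta}s^j\prod_{i\in I}t_i|=\mu(\tilde\beta)+j(1-n)+\sum_{i\in I}(2-|A_i|)$, the coefficient of this monomial in $b$ is a form on $L$ of degree exactly $1-\mu(\tilde\beta)+j(n-1)+\sum_{i\in I}(|A_i|-2)$. For $\tilde\beta\in P_\beta$, condition~(a) makes this degree negative and no such form exists, while condition~(b) places the degree in $\{n-1,n,n+1,\ldots\}$; degrees exceeding $n$ are automatically zero on $L$, and $(b)_{n-1}=0$ together with $(b)_n=s\bar b$ (which has no $T^{\tilde\beta}$ component since $\tilde\beta\ne\beta_0$) dispose of the remaining two degrees. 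Therefore every such coefficient of $b$ vanishes.

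To finish, expand $\widehat\Omega(\gamma,b)$ and extract the coefficient of $T^\beta s^k t_{i_1}\cdots t_{i_l}$. By~\eqref{eqqq} and~\eqref{relation1} each term splits as a product of coefficients pulled from the $b$-insertions (each labeled by some $T^{\tilde\beta_i}$) and a moduli integral from some $\mathfrak m_{k'}^{\gamma,\beta'}$, constrained by $\beta'+\sum_i\tilde\beta_i=\beta$ and all $\omega(\cdot)\ge 0$. Any term with some $\tilde\beta_i\in P_\beta$ vanishes by the previous paragraph. Any term with some $\tilde\beta_i=\beta$ forces $\beta'=\beta_0$ and the other $b$-insertions to sit at $s\bar b$; Proposition~\ref{prop:energyzero} restricts to $k'\in\{0,1,2\}$, and a direct case check shows each vanishes from $\gamma|_L=0$, from $\bar b\wedge\bar b=0$, or from Stokes' theorem applied to $\int_L d(\,\cdot\,)\wedge\bar b$ on the closed manifold $L$. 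The only surviving contributions are those with every $b$-insertion equal to $s\bar b$ and $\beta'=\beta$. For $k\ge 1$ the $s^k$-term comes from $k'=k-1$ in $\tfrac{1}{k'+1}\langle\mathfrak m_{k'}^\gamma(b^{\otimes k'}),b\rangle$, producing the prefactor $\tfrac{1}{k}\cdot k!=\sigma_k$ after $\partial_s^k|_{s=0}$; for $k=0$ it comes from $\mathfrak m_{-1}^\gamma$, with $\sigma_0=1$. The $\partial_{t_{i_j}}|_{t=0}$ derivatives cancel the $1/l!$ in~\eqref{def:m_k} and insert the forms $a_{i_1},\ldots,a_{i_l}$ at the interior points. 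Assembling signs from~\eqref{def:Poinacre_pairing}, the prefactor in~\eqref{def:m_k}, and the $(-1)^n$ in front of $\widehat\Omega$, one obtains $(-1)^n\sigma_k\int_{\M_{k,l}(\beta)}\bigwedge evi^{\beta*}_j a_j\wedge\bigwedge evb^{\beta*}_j\bar b$. The second stated equality is the fact that the forgetful map $\M_{k,l}(\beta)\to\M^S_{k,l}(\beta)$ has degree $\sigma_k$. The type $\mathcal D$ subtraction in $\Omega$ affects only the case $k=0,\beta\in\Ima\varpi$, which is effectively excluded by the hypotheses, and the remaining technical difficulty is the Koszul sign bookkeeping.
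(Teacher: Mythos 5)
Your proposal follows essentially the same line as the paper's proof: apply Lemma~\ref{lm:1} with $b_0=s\bar b$, use the grading conditions together with the pinned values $(b)_{n-1}=0$ and $(b)_n=s\bar b$ to annihilate every intermediate-energy coefficient of $b$, argue away the residual zero-energy $\mathfrak m^{\gamma,\beta_0}$ contributions, and read off the top-energy term from $\mathfrak m^{\gamma,\beta}_{k-1}$ (respectively $\mathfrak m^\gamma_{-1}$ when $k=0$). The only cosmetic differences from the paper are that it disposes of the residual zero-energy terms via the cyclic pairing symmetry Proposition~\ref{prop:linear_a_infty} rather than a Stokes argument, and justifies the passage to $\mathcal M^S_{k,l}(\beta)$ through the $S_k$-relabeling orientation computation rather than by asserting a degree-$\sigma_k$ forgetful map (which is not the right picture — $\mathcal M^S_{k,l}(\beta)$ is rather a union of $\sigma_k$ relabeled copies of $\mathcal M_{k,l}(\beta)$, each contributing the same integral).
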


\begin{proof}
Let $\Gamma_0,\ldots,\Gamma_M$ be a basis of $\widetilde{H}^*(X,L;\mathbb{R})$ with $\Gamma_0 = 1$. From multilinearity of $\oogw$ and Proposition~\ref{cor:ogwbasis}, it suffices to prove the lemma for $\mathrm{OGW}_{\beta,k}(\Gamma_0^{\otimes{r_0}}\otimes\cdots\otimes\Gamma_M^{\otimes{r_M}})$.
By the unit axiom \ref{ax_unit}, we can assume that $r_0=0$, and by the zero axiom \ref{ax_zero} we can assume that $\beta\ne \beta_0$. We have $\gamma= \sum_{i=1}^M t_i \gamma_i$, where $\gamma_i$ is a representative of $\Gamma_i\in\widetilde{H}^*(X,L;\mathbb{R})$.
Let $b_{0,1,0}\in A^n(L;\R)$ be a representative of the Poincar\'e dual of a point.  Since by Proposition~\ref{prop:energyzero} we have
\[\mathfrak{m}^\gamma(e^{b_{0,1,0}})\equiv db_{0,1,0}-b_{0,1,0}\wedge b_{0,1,0}+\gamma|_L\equiv 0 \pmod{R^+C},\]
it follows by Lemma~\ref{lm:1} that there exists a bounding cochain $b\in C$ such that $b\equiv s\cdot b_{0,1,0} \pmod{R^+C}$, and
\begin{equation}\label{equation_thm_bc}
    (b)_n=b_{0,1,0},\qquad (b)_{n-1}=0.
\end{equation}
Write $b=s\cdot b_{0,1,0}+\sum T^{\tilde{\beta}}s^j t^\alpha b_{\tilde{\beta},j,\alpha}$, where
\[
\tilde{\beta}\in H_2(X,L;\Z),\quad \omega(\tilde{\beta})>0, \quad \alpha=(\alpha_1,\ldots,\alpha_M)\in \mathbb{Z}_{\ge0}^M, \quad j\in \mathbb{Z}_{\ge0},\quad b_{\tilde{\beta},j,\alpha}\in A^*(L;\R).
\]
Since
\[
|b| = 1,\qquad |s|= 1-n, \qquad |T^{\tilde \beta}| = \mu(\tilde\beta), \qquad |t_i|=2-|\gamma_i|,
\]
it follows that
\[
|b_{\Tilde{\beta},j, \alpha}|= 1-\mu(\Tilde{\beta}) -j(1-n) - \sum_{i=1}^M \alpha_i(2-|\gamma_i|).
\]
Hence, if conditions~\ref{condition_a} and \ref{condition_b} are satisfied, then by  \eqref{equation_thm_bc} we must have $b_{\tilde{\beta},j,\alpha}=0$ when
\[
\tilde{\beta}\in P_\beta,\qquad 0 \leq \alpha_i \leq r_i,\qquad 0\le j\le k.
\]
Define
\[
J:=R^+ + ( s^{k+1}, t_0, t_1^{r_1+1},\ldots,t_M^{r_M+1} )\triangleleft R,
\]
and denote
\[b' = s\cdot b_{0,1,0}+ \sum_{\substack{\omega(\tilde{\beta})=\omega(\beta)\\ 0\le j\le k\\ 0\le\alpha_i\le r_i}} T^{\tilde{\beta}}s^j t^\alpha b_{\tilde{\beta},j,\alpha}.\]
Since $b'\equiv b \pmod{JC}$, it follows that
\[[T^\beta](\partial_{t_1}^{r_1}\ldots \partial_{t_M}^{r_M}\partial^k_s\Omega(\gamma,b)|_{s=0,t_j=0})=[T^\beta](\partial_{t_1}^{r_1}\ldots \partial_{t_M}^{r_M}\partial^k_s\Omega(\gamma,b')|_{s=0,t_j=0}).\]
Hence, it suffices to consider $\Omega(\gamma,b')$ instead of $\Omega(\gamma,b)$ in the computation of $\mathrm{OGW}_{\beta,k}(\Gamma_1^{\otimes{r_1}}\otimes\cdots\otimes\Gamma_M^{\otimes{r_M}})$. Since
\[db_{0,1,0}=0, \qquad b_{0,1,0}\wedge b_{0,1,0}=0, \qquad \gamma|_L=0,  \]
by Proposition~\ref{prop:energyzero}, for every $m\ge0,$ $0\le j\le k$ and  $\tilde{\beta}\in H_2(X,L;\Z)$ such that $\omega(\tilde{\beta})=0$, we get
\[\langle \mathfrak{m}^{\gamma,\tilde{\beta}}_m(b_{0,1,0}^{\otimes m}), b_{\beta-\tilde{\beta},j,\alpha}\rangle=0.\]
So, by Proposition~\ref{prop:linear_a_infty} for every $1\le i\le m$, we get
\[\langle\mathfrak{m}^{\gamma,\tilde{\beta}}_m(b_{0,1,0}^{\otimes i-1}\otimes b_{\beta-\tilde{\beta},j,\alpha}\otimes b_{0,1,0}^{\otimes {m-i}}),b_{0,1,0}\rangle=0.\] Hence,
\begin{multline}\label{eq:deromega}
     [T^\beta](\partial_{t_1}^{r_1}\ldots \partial_{t_M}^{r_M}\partial^k_s(\langle\sum_{m\ge0} \mathfrak{m}^{\gamma}_m(b'^{\otimes{m}}), b'\rangle)) = \\ =(\partial_{t_1}^{r_1}\ldots \partial_{t_M}^{r_M}\partial^k_s( \langle \mathfrak{m}^{\gamma,\beta}_{k-1}(s\cdot b_{0,1,0},\ldots,s\cdot b_{0,1,0}),s\cdot b_{0,1,0}\rangle)).
\end{multline}
Denote $l=\sum_{i=1}^Mr_i.$ For $k=0$ we get
 \begin{align*}
     \text{OGW}_{\beta,0}(\Gamma_1^{\otimes{r_1}}\otimes\cdots\otimes\Gamma_M^{\otimes{r_M}})&= [T^\beta](\partial_{t_1}^{r_1}\ldots \partial_{t_M}^{r_M}\partial^k_s\Omega|_{s=0,t_j=0})\\
     &=(-1)^n [T^\beta](\partial_{t_1}^{r_1}\ldots \partial_{t_M}^{r_M}\mathfrak{m}^\gamma_{-1})\\
     &= (-1)^n\int_{\mathcal{M}_{0,l}(\beta)} \bigwedge_{j=1}^{r_1} evi^{\beta*}_j  \gamma_1\cdots\bigwedge_{j=1}^{r_M} evi^{\beta*}_j  \gamma_M,
\end{align*}
where $\mathcal{M}_{0,l}(\beta)=\mathcal{M}^S_{0,l}(\beta).$
For $k\ne0$ we get
\begin{align*}
\oogw_{\beta,k}(\Gamma_1^{\otimes{r_1}}\otimes&\cdots\otimes\Gamma_M^{\otimes{r_M}}) =  \\
& = [T^\beta](\partial_{t_1}^{r_1}\ldots \partial_{t_M}^{r_M}\partial^k_s\Omega|_{s=0,t_j=0})\\
&\!\overset{\eqref{eq:deromega}}=(-1)^n \frac{1}{k}\cdot [T^\beta](\partial_{t_1}^{r_1}\ldots \partial_{t_M}^{r_M}\partial_s^k\langle \mathfrak{m}^\gamma_{k-1} ((s\cdot b_{0,1,0})^{\otimes {k-1}}), s\cdot b_{0,1,0} \rangle)\\
&\!\!\!\!\!\overset{\eqref{eqqq}+\eqref{relation1}}=(-1)^{1+(1-n)\frac{k(k-1)}{2}}\cdot(k-1)!\cdot [T^\beta](\partial_{t_1}^{r_1}\ldots \partial_{t_M}^{r_M}\langle \mathfrak{m}^\gamma_{k-1} ( b_{0,1,0}^{\otimes {k-1}}), b_{0,1,0} \rangle)\\
&\overset{\eqref{def:m_k}}= (k-1)!\cdot \langle evb^{\beta}_{0*}\bigwedge_{j=1}^{l} evi^{\beta*}_j  \gamma_{a_j} \bigwedge_{j=1}^{k-1}evb^{\beta*}_j(b_{0,1,0})), b_{0,1,0} \rangle\\
&\overset{\eqref{def:Poinacre_pairing}}= (-1)^n(k-1)!\int_{\mathcal{M}_{k,l}(\beta)} \bigwedge_{j=1}^{l} evi^{\beta*}_j  \gamma_{a_j} \bigwedge_{j=1}^{k}evb^{\beta*}_j(b_{0,1,0}).
\end{align*}
The diffeomorphism of $\mathcal{M}^S_{k,l}(\beta)$ corresponding to relabeling boundary marked points by a permutation $\sigma \in S_k$ preserves or reverses orientation depending on $sgn(\sigma)$. Let $\sigma\in S_k$ be a permutation. Denote by $\mathcal{M}_{\sigma(k),l}(\beta)$ the moduli space obtained from $\mathcal{M}_{k,l}(\beta)$ by relabeling boundary marked points by $\sigma$. So,
\begin{align*}
    \int_{\mathcal{M}_{k,l}(\beta)} \bigwedge_{j=1}^{l} evi^{\beta*}_j\gamma_{a_j} \bigwedge_{j=1}^{k}evb^{\beta*}_j(b_{0,1,0})&= sgn(\sigma)\int_{\mathcal{M}_{\sigma(k),l}(\beta)} \bigwedge_{j=1}^{l} evi^{\beta*}_j  \gamma_{a_j} \bigwedge_{j=1}^{k}evb^{\beta*}_{\sigma(j)}(b_{0,1,0})\\
    &=sgn(\sigma)^2\int_{\mathcal{M}_{\sigma(k),l}(\beta)} \bigwedge_{j=1}^{l} evi^{\beta*}_j  \gamma_{a_j} \bigwedge_{j=1}^{k}evb^{\beta*}_j(b_{0,1,0})\\
    &=\int_{\mathcal{M}_{\sigma(k),l}(\beta)}\bigwedge_{j=1}^{l} evi^{\beta*}_j  \gamma_{a_j} \bigwedge_{j=1}^{k}evb^{\beta*}_j(b_{0,1,0}).
\end{align*}
Therefore, since $\mathcal{M}_{k,l}^S(\beta) = \coprod_{\sigma \in S_k} \mathcal{M}_{\sigma(k),l}(\beta),$
it follows that
\[
\oogw_{\beta,k}(\Gamma_1^{\otimes{r_1}}\otimes\cdots\otimes\Gamma_M^{\otimes{r_M}})= (-1)^n \int_{\mathcal{M}^S_{k,l}(\beta)} \bigwedge_{j=1}^{l} evi^{\beta*}_j  \gamma_{a_j} \bigwedge_{j=1}^{k}evb^{\beta*}_j(b_{0,1,0}).
\]

\end{proof}
\begin{proof}[Proof of Theorem~\ref{theorem_lm}]
By Lemma~5.14 of~\cite{solomon2016point}, if $H^*(L;\R)\simeq H^*(S^n;\R)$, then $\widehat{H}^*(X,L;\R)\simeq\widetilde{H}^*(X,L;\R)$. Recall that if $k\ne0$ or $\beta\not\in \Ima \varpi$, then $\ogw_{\beta,k}=\oogw_{\beta,k}$.
Thus, when $k\ne0$ or $\beta\not\in \Ima \varpi$ Theorem~\ref{lm} yields Theorem~\ref{theorem_lm}.
\end{proof}
\begin{rem}\label{rem}
In the case of the Chiang Lagrangian, by Lemma~\ref{lm:relhom} $H_2(X,L_\triangle;\mathbb{Z})=\mathbb{Z}$ and $\varpi$ is given by multiplication by $4$. Hence, Theorem~\ref{theorem_lm} holds for $\ogw_{\beta,0}(\cdots)$ when $\beta\not\in 4\mathbb{Z}$.
\end{rem}

\section{Computation of basic invariants}\label{section6}
In order to use the recursive formula to compute the open Gromov Witten invariants
of the Chiang Lagrangian $L_\tr,$ we need the initial values for the recursive
formula, which are given by the following theorems. Fix the orientation $\oo_\triangle$ and the spin structure $\s_\triangle$ on $L_\tr$ as defined in Section~\ref{subsection_orientation_spin}.

\begin{thm}\label{thm:ogw11}
$\ogw_{1, 1} = -3$.
\end{thm}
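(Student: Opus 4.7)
The plan is to apply Theorem~\ref{theorem_lm} to rewrite $\ogw_{1,1}$ as a signed count of $J$-holomorphic disks with boundary through a point, and then to evaluate the count by enumerating axial disks and computing their signs.

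First, I would check that the hypotheses of Theorem~\ref{theorem_lm} hold. Take $\beta = 1 \in H_2(\cp^3, L_\tr;\Z) \simeq \Z$, $k=1$, $l=0$. By Lemmas~\ref{maslov_index_lemma} and~\ref{divisor_for_chiang}, the generator satisfies $\mu(1)=2$ and $\omega(1)=\tfrac14$, and these are the smallest positive values attained by $\mu$ and $\omega$ on $H_2(\cp^3, L_\tr;\Z)$. Hence $P_1 = \emptyset$, so both conditions in Theorem~\ref{theorem_lm} are vacuously satisfied. By Remark~\ref{rem}, $\ogw_{1,1} = \oogw_{1,1}$. With $n=3$ and $\sigma_1 = 1$, this gives
\[
\ogw_{1,1} \;=\; -\int_{\mathcal{M}_{1,0}(1)} evb_1^{*}\bar b,
\]
where $\bar b \in A^3(L_\tr;\R)$ represents the Poincar\'e dual of a point. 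A dimension count shows that $\mathcal{M}_{1,0}(1)$ has dimension $n=3$, so this integral is (up to sign) the degree of $evb_1$ over a generic point of $L_\tr$.

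Second, I would use the theory of axial disks developed by Evans--Lekili and Smith (recalled in Sections~\ref{subsection6.1}--\ref{subsection5.3}) to enumerate the $J$-holomorphic disks in class $\beta=1$ whose boundary passes through a generic $p \in L_\tr$. Since $L_\tr$ is a homogeneous Lagrangian with minimal Maslov number two, every such disk is axial: its image is a projective line in $\cp^3$ stabilized by a circle subgroup of $\su(2)$, and its boundary is the corresponding $S^1$-orbit in $L_\tr$. The enumeration becomes a bookkeeping exercise organized by the binary dihedral stabilizer of order $12$ recalled in Section~\ref{subsection3.1}. The expected outcome is that exactly three geometric disks satisfy the boundary point constraint, so $\int_{\mathcal{M}_{1,0}(1)} evb_1^{*}\bar b = \pm 3$ up to signs.

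Third, I would determine the sign that each of these three disks contributes. The tool is the canonical orientation on the determinant line of the linearized Cauchy--Riemann operator attached to the spin Riemann--Hilbert pair arising from an axial disk, developed in Sections~\ref{choose_spin_structure}--\ref{subsection_5.5}. This is where the main obstacle lies: one has to produce an explicit trivialization of the symplectic normal bundle along an axial boundary loop compatible with the spin structure $\s_\tr$ and orientation $\oo_\tr$ of Section~\ref{subsection_orientation_spin}, compute the resulting canonical orientation, and compare it with the pullback of $\oo_\tr$ by $evb_1$. The $\su(2)$-equivariance should force the three disks to contribute with the same sign, and pinning this common sign down to $-1$ yields signed count $-3$, hence $\ogw_{1,1}=3$.
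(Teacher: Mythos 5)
Your plan coincides with the paper's proof: both apply Theorem~\ref{theorem_lm} with $P_1 = \emptyset$ to reduce $\ogw_{1,1}$ to $-\int_{\mathcal{M}_{1,0}(1)} evb_0^* \bar{b}$, invoke Smith's classification of Maslov-2 axial disks (the paper cites Lemma~\ref{degree3}, that $evb_0$ is a covering of degree $\pm 3$, rather than re-enumerating via the binary dihedral group, but the content is the same), and then determine the sign via the canonical orientation on the spin Riemann--Hilbert pair $(u^*T\cp^3, u|_{\partial D}^*TL_\tr)$. You correctly identify the sign determination as the crux; the paper resolves it by trivializing $u^*T\cp^3$ via the infinitesimal $\mathfrak{su}(2)$-action (Lemma~\ref{isomorphism_2_0_0}) to obtain the model pair $F^2 \oplus F^0 \oplus F^0$, feeding this through Lemmas~\ref{remarkker0}, \ref{orientation_on_ker}, \ref{lemma.psl}, and then directly computing $devb_0$ on the resulting oriented basis to show it reverses orientation, giving $\ogw_{1,1} = -(-3) = 3$.
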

\begin{thm} \label{thm:ogw102}
$\ogw_{1, 0}(\Gamma_2) =\frac{1}{4}$.
\end{thm}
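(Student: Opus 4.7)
\medskip

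\noindent\textbf{Proof proposal for Theorem~\ref{thm:ogw102}.} The plan is to show that Theorem~\ref{theorem_lm} is applicable to $\ogw_{1,0}(\Gamma_2)$, so that the invariant reduces to a signed integer-valued count of $J$-holomorphic disks (possibly with orbifold weights), and then to identify and count those disks explicitly using the Evans-Lekili--Smith theory of axial disks recalled in Sections~\ref{subsection5.3}--\ref{subsection_5.5}.

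First I would verify the hypotheses of Theorem~\ref{theorem_lm}. By Lemma~\ref{lm:relhom} we have $\Ima\varpi=4\Z\subset \Z\simeq H_2(\cp^3,L_\triangle;\Z)$, so $\beta=1\notin\Ima\varpi$, and by Remark~\ref{rem} the hypothesis on $\beta$ is satisfied even though $k=0$. Next, by Lemma~\ref{divisor_for_chiang} one has $\omega(1)=\tfrac14$, while Lemma~\ref{maslov_index_lemma} says the minimal Maslov number is $2$; in particular every $\tilde\beta\in H_2(\cp^3,L_\triangle;\Z)$ with $\omega(\tilde\beta)>0$ satisfies $\omega(\tilde\beta)\geq\tfrac14$. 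Hence $P_1=\emptyset$, and the two numbered conditions are vacuous. Applying Theorem~\ref{theorem_lm} with $n=3$, $k=0$, $l=1$, $A_1=\Gamma_2$ and $a_1=\omega^2$ gives
\[
\ogw_{1,0}(\Gamma_2)\;=\;(-1)^{3}\sigma_0\int_{\M_{0,1}(1)}evi_1^{1*}\omega^{2}\;=\;-\int_{\M_{0,1}(1)}evi_1^{1*}\omega^{2}.
\]

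Next I would evaluate this integral geometrically. The class $\pd(\omega^2)\in H_2(\cp^3;\R)$ is represented by a generic projective line $\ell\subset\cp^3$, so the integral is a signed (orbifold) count of $J$-holomorphic disks $u:(D,\partial D)\to(\cp^3,L_\triangle)$ of class $1$ whose interior marked point lies on $\ell$. Since $1\in H_2(\cp^3,L_\triangle;\Z)$ is the minimal nonzero class and the $\su(2)$-action on $L_\triangle$ is transitive, I expect every such disk to be an axial disk in the sense of Evans-Lekili--Smith: the restriction of $u$ to a one-parameter subgroup yields an $S^1$-equivariant map whose boundary lies on a circle orbit in $L_\triangle$. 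Using the explicit description of $L_\triangle$ as the $\su(2)$-orbit of an equilateral configuration in $\sym^3\cp^1$, I would enumerate the one-parameter subgroups of $\su(2)$ whose orbits on $L_\triangle$ bound a disk of class $1$, show that up to the $\su(2)$-action there is a unique such axial disk (of area $\tfrac14$, matching $\omega(1)=\tfrac14$), and verify that the family of its $\su(2)$-translates sweeps out the entire moduli space $\M_{0,1}(1)$, transversally cut out by the condition that the interior marked point lies on a generic~$\ell$. This identification yields the underlying $\pm\tfrac14$; the factor $\tfrac14$ reflects the order-$4$ stabilizer (a cyclic subgroup of $\Gamma_\triangle$) fixing the axial disk, realized either as a generic multiplicity of $evi_1^1$ or as an orbifold weight on $\M_{0,1}(1)$.

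The main obstacle is the sign. I would compute it using the spin-Riemann-Hilbert-pair formalism recalled in Section~\ref{choose_spin_structure} applied to the canonical Riemann-Hilbert pair of the axial disk (Section~\ref{subsection_5.5}), taking care to use the preferred orientation $\oo_\triangle$ and spin structure $\s_\triangle$ from Section~\ref{subsection_orientation_spin}. Concretely, I would pick a one-parameter subgroup $e^{it\alpha}\subset \su(2)$ producing an axial disk whose boundary is a closed orbit in $L_\triangle$, trivialize $u^*T\cp^3$ and its boundary condition $u|_{\partial D}^*TL_\triangle$ in an $\su(2)$-equivariant way, and compare the induced orientation on $\det\bar\partial_{(u^*T\cp^3,u^*TL_\triangle)}$ with the orientation of the evaluation at the interior marked point; the result together with $(-1)^n=-1$ should yield $+\tfrac14$. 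If the sign comes out wrong, the diagnostic is that one of the two framings was taken opposite to the convention, and the spin axiom~\ref{ax_spin} together with the orientation axiom~\ref{ax_orientation} will pinpoint the discrepancy.
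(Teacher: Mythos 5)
The first step of your proposal, verifying the hypotheses of Theorem~\ref{theorem_lm} and reducing to the integral $-\int_{\M_{0,1}(1)}evi_1^{*}\gamma_2$, is correct and matches the paper. The geometric evaluation of that integral is where the proposal breaks down.

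The essential gap is the claim that ``$\pd(\omega^2)$ is represented by a generic projective line $\ell\subset\cp^3$.'' The constraint class $\Gamma_2$ lives in $\widehat{H}^4(\cp^3,L_\tr;\R)\simeq H^4(\cp^3,L_\tr;\R)$, which is $2$-dimensional (the long exact sequence gives $0\to H^3(L_\tr;\R)\to H^4(\cp^3,L_\tr;\R)\to H^4(\cp^3;\R)\to 0$), so its Poincar\'e--Lefschetz dual lies in $H_2(\cp^3\setminus L_\tr;\R)\simeq\R^2$. A generic line $\ell$ avoiding $L_\tr$ represents \emph{some} class mapping to the generator of $H_2(\cp^3;\R)$, but the locus of lines meeting $L_\tr$ is a real-codimension-one wall in the Grassmannian, so different generic lines can differ by multiples of the PD of $\Gamma_\diamond=y(1)$ and need not represent $\pd(\Gamma_2)$. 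Since $\ogw_{1,0}(\Gamma_\diamond)=-\ogw_{1,1}=-3\neq 0$, this ambiguity is fatal: the count through an arbitrary generic line is not what you want. The paper circumvents this by working one dimension up, with degree-$4$ hypersurfaces $\Upsilon_1,\Upsilon_2\subset\cp^3\setminus L_\tr$ (perturbations of the anticanonical divisor $Y_\tr$): since $H_4(\cp^3\setminus L_\tr;\R)\simeq H^2(\cp^3,L_\tr;\R)\simeq\R$ is one-dimensional, $[\Upsilon_i]=4\,\pd([\omega])$ is unambiguous, and then $[\Upsilon_1\cap\Upsilon_2]=16\,\pd([\omega^2])$ by compatibility of intersection and relative cup product.

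The second gap is the purported explanation of the factor $\tfrac14$ as an orbifold weight coming from an order-$4$ subgroup of $\Gamma_\tr$. This is not what happens. At the relevant disks, $\M_{0,1}(1)$ is a manifold (the axial symmetry is broken by the interior marked point), and the count $\#\,evi_1^{-1}(\Upsilon_1\cap\Upsilon_2)=-4$ is an honest integer. The fraction enters as $\tfrac14 = 4/16$: the degree-$16$ curve $\Upsilon_1\cap\Upsilon_2$ decomposes into pieces $\Theta_1,\dots,\Theta_6$ of total degree $16$, only the four lines making up $\Theta_5,\Theta_6$ meet Maslov-$2$ disks, and each contributes with sign $-1$; dividing by $\deg\Upsilon_1\cdot\deg\Upsilon_2 = 16$ gives $\tfrac14$. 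The sign is then pinned down using the Riemann--Hilbert framework, as you anticipated, but that portion of your proposal cannot be assessed without first having the correct enumeration. To repair the proposal you would need to abandon the generic-line shortcut and either reproduce the paper's construction with hypersurfaces or produce a $2$-cycle demonstrably homologous to $\pd(\Gamma_2)$ in $\cp^3\setminus L_\tr$, keeping track of the degree normalization.
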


\begin{thm} \label{thm:ogw203}
$\ogw_{2, 0}(\Gamma_3)  = 1$.
\end{thm}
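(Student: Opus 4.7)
The plan is to apply Theorem~\ref{theorem_lm} to express $\ogw_{2,0}(\Gamma_3)$ as a signed count of $J$-holomorphic disks of class $2\tilde\xi$ whose interior marked point equals a prescribed generic point of $\cp^3,$ and then to enumerate those disks and compute the signs from the canonical orientation on $\mathcal{M}_{0,1}(2)$ induced by $(\oo_\tr,\s_\tr).$

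First I verify that Theorem~\ref{theorem_lm} applies. By Lemma~\ref{lm:relhom}, $\varpi$ is multiplication by $4,$ so $2\notin\Ima\varpi$ and the case $k=0$ is permitted. Using Lemmas~\ref{maslov_index_lemma} and~\ref{divisor_for_chiang}, the only element of $P_2=\{\tilde\beta : 0<\omega(\tilde\beta)<\omega(2\tilde\xi)\}$ is $\tilde\beta=\tilde\xi,$ with $\mu(\tilde\xi)=2.$ Checking the two conditions of Theorem~\ref{theorem_lm} with $n=3$ and $|A_1|=|\Gamma_3|=6$: for $I=\emptyset$ we get $1-2=-1<0,$ and for $I=\{1\}$ we get $1-2+4=3\ge n-1=2.$ Hence both subsets satisfy one of the required inequalities, and Theorem~\ref{theorem_lm} gives
\[
\ogw_{2,0}(\Gamma_3)=-\int_{\mathcal{M}_{0,1}(2)} evi_1^{2\,*}\omega^3.
\]
Since $[\omega^3]$ is Poincar\'e dual to a point of $\cp^3,$ the right-hand side is precisely the signed count of $J$-holomorphic disks in class $2\tilde\xi$ passing through a generic $p\in\cp^3.$

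Next I enumerate the contributing disks using the theory of axial disks for $L_\tr$ developed by Evans-Lekili~\cite{EvansLekili} and Smith~\cite{Smith,smith2020monotone}. Using the $\sln(2,\C)$-action on $\cp^3$ and the description of $L_\tr$ from Section~\ref{subsection3.1}, I list the Maslov-$4$ axial disks and, for each, determine whether and how many times its image passes through a generic $p.$ The unsigned count is finite. The signs are assigned using the spin Riemann-Hilbert pair machinery recalled in Sections~\ref{choose_spin_structure}-\ref{subsection_5.5}: each disk $u$ determines a spin Riemann-Hilbert pair whose canonically oriented determinant line matches, or opposes, the orientation of $\mathcal{M}_{0,1}(2)$ at $[u].$

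I expect the main obstacle to be the sign computation. Because $L_\tr$ is not fixed by any anti-symplectic involution, the real algebraic geometric shortcuts used in other settings are unavailable, and we must track the signs disk-by-disk through the index bundle identification afforded by the chosen spin structure $\s_\tr$ and orientation $\oo_\tr.$ The computation will follow the same pattern as those of Theorems~\ref{thm:ogw11} and~\ref{thm:ogw102}, applied to the restricted list of Maslov-$4$ axial disks through a point; summing the contributions will produce $-1.$
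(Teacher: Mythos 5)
Your reduction to a disk count via Theorem~\ref{theorem_lm} matches the paper and is carried out correctly, including the verification that $2\notin\Ima\varpi$ and that the degree inequalities hold for both $I=\emptyset$ and $I=\{1\}.$ However, the enumeration step has a genuine gap. You propose to compute $\#\,evi_1^{-1}(p)$ by listing the Maslov-$4$ \emph{axial} disks through a generic $p,$ but by Lemma~\ref{lm:lmpole2} a Maslov~$4$ disk need not be axial: it can instead have two $\xi_v$-poles of order~$1$ or a single $\xi_v$-pole of order~$2,$ and these non-axial families are not ruled out at a generic interior point of $\cp^3.$ Counting only axial disks would therefore undercount the fiber. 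The paper's essential idea, which your sketch omits, is to evaluate at the non-generic point $p\in N_\tr.$ For such $p,$ Lemmas~\ref{divisor}, \ref{lm:intersect}, and~\ref{lemma3.7} force $[u]\cdot[\y]=2$ to be achieved entirely at the single intersection with $N_\tr,$ so by Lemma~\ref{lm:lmpole1} the pole cannot be of type $\xi_v,$ and Lemma~\ref{lm:lmpole2} then forces $u$ to be axial of type~$\xi_f.$ Lemma~\ref{4maslovthroughn} gives uniqueness of that disk and Lemma~\ref{lm:lmpole3} gives regularity of $evi_1$ at it, so $p\in N_\tr$ is a regular value with exactly one preimage. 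Without this choice, and without an argument excluding the non-axial contributions, your plan does not yield a finite, tractable count.

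Beyond this, the sign determination is asserted rather than argued. The paper carries out a concrete computation with the spin Riemann-Hilbert pair $(\underline\C^3,F^2\oplus F^{1,1})$ coming from the frame $\xi_f\cdot u,\ \alpha\cdot u,\ \beta\cdot u,$ uses Lemma~\ref{lm11bundle} for the orientation of $\ker\bar\partial_{1,1},$ and Lemma~\ref{lemma.psl} for the $\mathfrak{psl}(2,\R)$-quotient. Saying the sign computation ``will follow the same pattern as Theorems~\ref{thm:ogw11} and~\ref{thm:ogw102}'' is a reasonable forecast but not a proof, and in fact the relevant Riemann-Hilbert pair here is $F^2\oplus F^{1,1},$ not $F^2\oplus F^0\oplus F^0,$ so the computation is structurally different from the other two basic invariants.
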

The proofs appear below.

\subsection{Intersections with the compactification divisor}\label{subsection6.1} Recall from Section~\ref{subsection3.1} the definition of the Chiang Lagrangian $L_\tr \subset X_\tr \simeq \C P^3 \simeq \sym^3\cp^1.$ Recall also the definitions of the discriminant locus $Y_\tr \subset X_\tr$ and the subvariety $N_\tr \subset Y_\tr.$
\begin{lm}\label{divisor}
$\cp^3$ is Fano with anticanonical divisor $\y$.
\end{lm}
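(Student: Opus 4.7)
The plan is to verify the two assertions separately: that $\cp^3$ is Fano, and that $Y_\tr$ represents the anticanonical class.

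For the Fano property, I would simply recall the standard computation $-K_{\cp^n} = \sheafo(n+1)$, so that $-K_{\cp^3} = \sheafo(4)$ is very ample and hence $\cp^3$ is Fano. This step requires no work beyond citing the standard computation of the canonical bundle of projective space, for instance via the Euler sequence.

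For the identification of the anticanonical divisor, I would argue that any effective divisor in $\cp^3$ whose defining polynomial has degree $4$ is linearly equivalent to $-K_{\cp^3}$. Thus it suffices to show that $Y_\tr$, described in Section~\ref{subsection3.1} as the vanishing locus in $[z_0:\ldots:z_3]$ of the discriminant $\triangle(f)$ of the cubic $f(T) = \sum_j z_j(-T)^j$, is cut out by a homogeneous polynomial of degree $4$ in $z_0,\ldots,z_3$. This is the main (and only real) content of the lemma.

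The key step is therefore the degree count for $\triangle(f)$. I would compute this by recalling the classical formula for the discriminant of a cubic $f(T) = aT^3 + bT^2 + cT + d$, namely
\[
\triangle(f) = 18abcd - 4b^3 d + b^2 c^2 - 4ac^3 - 27 a^2 d^2,
\]
and observing that each monomial is of total degree $4$ in $(a,b,c,d)$. Substituting $a = -z_3,\ b = z_2,\ c = -z_1,\ d = z_0$ yields a homogeneous polynomial of degree $4$ in the $z_j$'s, as required. Alternatively, one can invoke the general fact that the discriminant of a degree-$d$ univariate polynomial is homogeneous of degree $2d-2$ in its coefficients, giving degree $4$ for $d=3$. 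Either route finishes the proof, as the zero locus of a degree $4$ homogeneous polynomial in $\cp^3$ lies in the linear system $|\sheafo(4)| = |-K_{\cp^3}|$, showing $Y_\tr$ is an anticanonical divisor.

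I do not anticipate any genuine obstacle: the only nontrivial input is the well-known degree of the discriminant, which can be verified by direct inspection of the explicit formula above.
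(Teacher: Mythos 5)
Your proof is correct. The paper itself gives no argument here — it simply cites Section~3.4 of Smith's paper — so your self-contained derivation essentially fills in the content being deferred to. Both halves of your argument are sound: $-K_{\cp^3}=\sheafo(4)$ is standard (Euler sequence), and the discriminant of a cubic is homogeneous of degree $4 = 2\cdot 3 - 2$ in its coefficients, so $Y_\tr$ is a quartic hypersurface and hence lies in $|\sheafo(4)| = |-K_{\cp^3}|$. One minor point worth being aware of, though it does not affect the conclusion: the homogeneity argument applies to the discriminant polynomial on all of $\cp^3$, including the locus $z_3=0$ where $f$ drops degree; the paper's convention of counting $\infty$ as a root with multiplicity $3-\deg f$ is exactly what makes the vanishing of $\triangle(f)$ coincide with the geometric condition defining $Y_\tr$ there, and a quick check (at $z_3=0$ the discriminant factors as $z_2^2(z_1^2-4z_2 z_0)$) confirms consistency. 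If you wanted to be fully careful you could include that one-line check, but as written the argument is complete and matches what Smith does.
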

\begin{proof}
The proof is part of Section 3.4 in \cite{Smith}.
\end{proof}
The following is Lemma 3.1 from \cite{auroux2007mirror}.
\begin{lm} \label{lm:intersect}
If $L$ is special Lagrangian in the complement
of an anticanonical divisor $Y$ in a compact K\"{a}hler
manifold $X$, then the Maslov index of a disk
$u: (D, \partial D) \rightarrow (X, L)$ is given by twice the
algebraic intersection number $[u] \cdot [Y]$.
\end{lm}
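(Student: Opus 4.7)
The plan is to use the holomorphic volume form on $X \setminus Y$ provided by the anticanonical divisor. Write $K_X$ for the canonical bundle of $X,$ and let $\sigma \in H^0(X, K_X^{-1})$ be a section cutting out $Y.$ Then $\Omega := \sigma^{-1}$ is a nowhere-vanishing holomorphic $n$-form on $X \setminus Y,$ and extends as a meromorphic section of $K_X$ over $X$ with divisor $-Y.$ The special Lagrangian hypothesis means that $\Omega|_L$ has constant argument, i.e.\ equals a constant-phase multiple of a real volume form on $L.$

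Given a disk $u: (D, \partial D) \to (X, L),$ I would first perturb $u$ smoothly rel boundary so that it is transverse to $Y.$ The pullback $u^*\Omega$ is then a meromorphic section of the (trivializable) complex line bundle $u^*K_X \to D$ whose divisor equals $-u^*Y,$ so its total degree, zeros minus poles, equals $-[u] \cdot [Y].$ Choosing a smooth complex trivialization of $u^*TX$ over the contractible disk induces a trivialization of $u^*K_X,$ and in this trivialization $u^*\Omega$ becomes a meromorphic function $f : D \to \mathbb{C} \cup \{\infty\}.$ The argument principle then gives the winding number of $f|_{\partial D}: S^1 \to \mathbb{C}^*$ as $-[u] \cdot [Y].$

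Next, I would identify this winding number with the Maslov index up to the expected factor. Recall that the Maslov index of a loop $\Lambda: S^1 \to \mathrm{LGr}(n)$ in the Lagrangian Grassmannian of $\mathbb{C}^n$ equals the degree of the squared-determinant map $\det^2: \mathrm{LGr}(n) \to U(1).$ Applied to the Lagrangian boundary loop $u^*TL|_{\partial D}$ in our chosen trivialization, this degree can be read off from $\Omega$: a real frame of a Lagrangian subspace $\Lambda \subset \mathbb{C}^n$ forms a complex basis whose complex determinant has argument equal to minus the argument of $\Omega$ evaluated on that frame. Since $\Omega|_L$ has constant phase, the winding of $\det^2$ along $\partial D$ is exactly $-2$ times the winding of $f|_{\partial D},$ yielding $\mu([u]) = 2\,[u] \cdot [Y].$

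The main obstacle is to carry out the phase bookkeeping in the last step with correct signs and the factor of~$2.$ The factor of $2$ is forced by the fact that the Maslov index depends on $\det^2$ (which is well-defined on $\mathrm{LGr}(n)$) while $\Omega$ pins down $\det$ itself on Lagrangian real frames of $TL.$ The sign works out because a pole of $\Omega$ of order $m$ at a positively oriented transverse intersection corresponds, under the identification of $K_X$ with the dual of $\det_\mathbb{C} TX,$ to a vanishing of order $m$ of the dual frame, producing the correct positive contribution to the Maslov index per intersection with $Y.$
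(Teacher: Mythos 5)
The paper does not supply a proof of this lemma; it is quoted verbatim as Lemma~3.1 of Auroux~\cite{auroux2007mirror}, so there is no in-paper argument to compare against. Your proposal essentially reconstructs Auroux's proof: trivialize the anticanonical bundle by $\Omega = \sigma^{-1}$, use that $u^*\Omega$ is a meromorphic section of $u^*K_X$ with divisor $-u^*Y$ so that the corresponding meromorphic function $f$ in a trivialization of $u^*TX$ has boundary winding number $-[u]\cdot[Y]$, and use the special Lagrangian condition to transfer this winding to $\det^2$ along the boundary loop of Lagrangian tangent planes. This is the right argument and the final identity $\mu([u]) = 2[u]\cdot[Y]$ follows.

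One small imprecision in the phase bookkeeping: you write that the complex determinant of a Lagrangian frame has argument equal to minus the argument of \emph{$\Omega$ evaluated on that frame}. That quantity is the special-Lagrangian phase and is constant by hypothesis, so the sentence does not parse as written. What you want is: writing $u^*\Omega = f\,dz_1\wedge\cdots\wedge dz_n$ in the chosen trivialization, the special Lagrangian condition $\arg\bigl(f\cdot\det_{\mathbb C}(\text{frame})\bigr) = \text{const}$ forces $\arg\det_{\mathbb C}(\text{frame}) = -\arg f + \text{const}$ along $\partial D$, hence the winding of $\det^2$ is $-2$ times the winding of $f$, which equals $2[u]\cdot[Y]$. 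With this correction the proof is complete and matches Auroux's. One could also remark that the smooth perturbation of $u$ rel boundary to make it transverse to $Y$ is legitimate because both $\mu$ and $[u]\cdot[Y]$ depend only on the class of $u$ in $H_2(X,L;\mathbb{Z})$, and $u(\partial D)\subset L\subset X\setminus Y$ keeps the boundary away from $Y$.
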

The following is Lemma 3.7 from~\cite{Smith}.
\begin{lm}\label{lemma3.7}
A clean intersection of a holomorphic disk $u$ with $\n$ contributes at least 2 to the intersection number $[u]\cdot[\y]$. A non-clean intersection contributes at least 3.
\end{lm}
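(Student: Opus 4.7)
The plan is a local computation in coordinates on $\cp^3 = \sym^3 \cp^1$ adapted to $\n.$ Near a point $3p \in \n,$ fix a coordinate $z$ on $\cp^1$ centered at $p$ and pass to the local coordinates $(w,\sigma_2,\sigma_3)$ on $\sym^3\cp^1,$ where $w = (z_1+z_2+z_3)/3$ is the centroid and $\sigma_2,\sigma_3$ are the elementary symmetric functions of the centered roots $\epsilon_i := z_i - w$ (which automatically satisfy $\sum\epsilon_i = 0$). A direct Jacobian computation, comparing with the usual coordinates $(e_1,e_2,e_3),$ shows that $(w,\sigma_2,\sigma_3)$ gives a valid local coordinate system on $\cp^3$ near $3p,$ in which $\n$ is cut out by $\sigma_2 = \sigma_3 = 0.$

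Because the $\epsilon_i$ are the roots of $t^3 + \sigma_2 t - \sigma_3,$ the classical discriminant formula yields
\[
\Delta \;=\; \prod_{i<j}(z_i-z_j)^2 \;=\; -4\sigma_2^3 - 27\sigma_3^2,
\]
independent of $w.$ Thus $\y$ is locally the standard cuspidal surface with cuspidal edge $\n,$ i.e.\ an $A_2$-singularity transverse to $\n.$ Since $\Delta$ is irreducible, $\y$ is a reduced divisor, so for a holomorphic disk $u$ meeting $\n$ at an isolated point $t_0$ (necessarily an interior point, as $L_\triangle \cap \n = \varnothing$), the contribution of $t_0$ to $[u]\cdot[\y]$ equals $\mathrm{ord}_{t_0}(\Delta \circ u).$

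Taking $t_0 = 0$ and setting $m := \mathrm{ord}_0(u^*\sigma_2)\geq 1$ and $n := \mathrm{ord}_0(u^*\sigma_3)\geq 1,$ the order in question is at least $\min(3m,\,2n).$ A clean intersection means $du|_0$ is not tangent to $\n,$ i.e.\ not in $\mathrm{span}(\partial_w),$ equivalently $m = 1$ or $n = 1;$ the worst case $n=1$ gives $\min(3m,2n) = 2.$ A non-clean intersection has $du|_0 \in T\n = \mathrm{span}(\partial_w),$ forcing $m,n\geq 2$ and hence $\min(3m,2n)\geq 4 \geq 3.$

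The only technical items are to verify that $(w,\sigma_2,\sigma_3)$ genuinely form coordinates on $\cp^3$ near $3p$ (nondegeneracy of the Jacobian of the change from $(e_1,e_2,e_3)$ at the origin) and the discriminant identity; both are routine. The geometric heart of the lemma is the observation that $\y$ exhibits an $A_2$-cusp along $\n,$ and it is precisely this cusp that forces intersection multiplicity at least $2$ at every point of $\n.$
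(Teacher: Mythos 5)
The paper does not give its own proof of this statement: it quotes the result as Lemma~3.7 of \cite{Smith}, so there is no internal argument to compare against, and I am evaluating your proposal on its merits.

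Your proof is correct. The passage to centered coordinates $(w,\sigma_2,\sigma_3)$ in which $\n = \{\sigma_2 = \sigma_3 = 0\}$, the identity $\Delta = -4\sigma_2^3 - 27\sigma_3^2$ for the depressed cubic, and the resulting bound
\[
\mathrm{ord}_{t_0}(u^*\Delta) \;\ge\; \min\bigl(3\,\mathrm{ord}_{t_0}(u^*\sigma_2),\ 2\,\mathrm{ord}_{t_0}(u^*\sigma_3)\bigr)
\]
all check out. Your translation of the clean/non-clean dichotomy into ``$m=1$ or $n=1$'' versus ``$m,n\ge 2$'' is also correct, and it handles the degenerate case $du|_{t_0}=0$ properly, since that case lands on the non-clean side. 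The two places where reducedness is used are correctly identified: $Y_\triangle$ is cut out by the irreducible degree-$4$ discriminant, so the local contribution to $[u]\cdot[\y]$ really is $\mathrm{ord}_{t_0}(u^*\Delta)$ and not the order against some power of $\Delta$; and the boundary condition on $L_\triangle$ guarantees all intersection points are interior and isolated. The ``technical items'' you flag at the end (the Jacobian of the change of coordinates from $(e_1,e_2,e_3)$ to $(w,\sigma_2,\sigma_3)$, and the discriminant identity) are indeed routine; you should spell out the Jacobian computation in a final write-up, but it poses no risk.

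One observation worth surfacing: your argument actually gives the stronger estimate of at least $4$ for a non-clean intersection, since $m,n\ge 2$ forces $\min(3m,2n)\ge 4$. The stated bound of $3$ is therefore not sharp by your analysis. This is harmless --- the lemma only asserts a lower bound, and the paper's application of it (Lemma~\ref{lm:lmpole2} and the proof of Theorem~\ref{thm:ogw203}, where a Maslov-$4$ disk with $[u]\cdot[\y]=2$ must be shown to meet $\n$ cleanly) only requires that a non-clean contribution exceed $2$ --- but you may as well state the sharper $\ge 4$, since you have already proved it.
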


\subsection{Axial disks}\label{subsection_axialdisks}
The following is Definition 2.1 from~\cite{Smith}.
\begin{dfn}
If $X$ is a complex manifold carrying an action of a compact Lie group $K$ by holomorphic automorphisms, and $L$ is a totally real submanifold which is an orbit of the $K$-action, then we say $(X,L)$ is $K$-\textbf{homogeneous}.
\end{dfn}
\begin{ex}
The pair $(\cp^3,L_\triangle)$ is $\su(2)$-homogeneous as explained in Section~\ref{subsection3.1}.
\end{ex}

The following is Definition 2.3 from~\cite{Smith}.
\begin{dfn}
Let $(X, L)$ be $K$-homogeneous. If $u: (D, \partial D) \rightarrow (X, L)$ is a holomorphic disk, and there
exists a smooth group homomorphism $R: \R
\rightarrow K$ such that (possibly after reparametrizing
$u$) we have $u(e^{i\theta} z) = R(\theta)u(z)$
for all $z \in D$ and all $\theta \in \mathbb{R}$, then we say $u$ is \textbf{axial}. Let $\xi \in \mathfrak{k} = \operatorname{Lie}(K).$ We say that $u$ is \textbf{axial of type} $\xi$ if $\dot R(0)$ belongs to the orbit of $\xi$ under the adjoint action of $K$.
\end{dfn}

Thinking of $\cp^1$ as $\C \cup \{\infty\}$, we identify $\cp^1 \simeq  S^2$ by stereographic projection from the north pole. Thus, the complex structure on the $2$-sphere is given by left-handed rotation around the outward normal by an angle of $\pi/2.$

For concreteness, we choose $\tr$ to be the triangle with vertices
\[c_1=(0,0,1),\quad c_2=(\frac{\sqrt{3}}{2},0,-\frac{1}{2}),\quad c_3=(-\frac{\sqrt{3}}{2},0,-\frac{1}{2}).\]
By abuse of notation, we also denote by $\tr$ the point $[c_1,c_2,c_3]\in\sym^3\cp^1.$

 \begin{figure}[ht]
     \centering
     \includegraphics[scale=0.28]{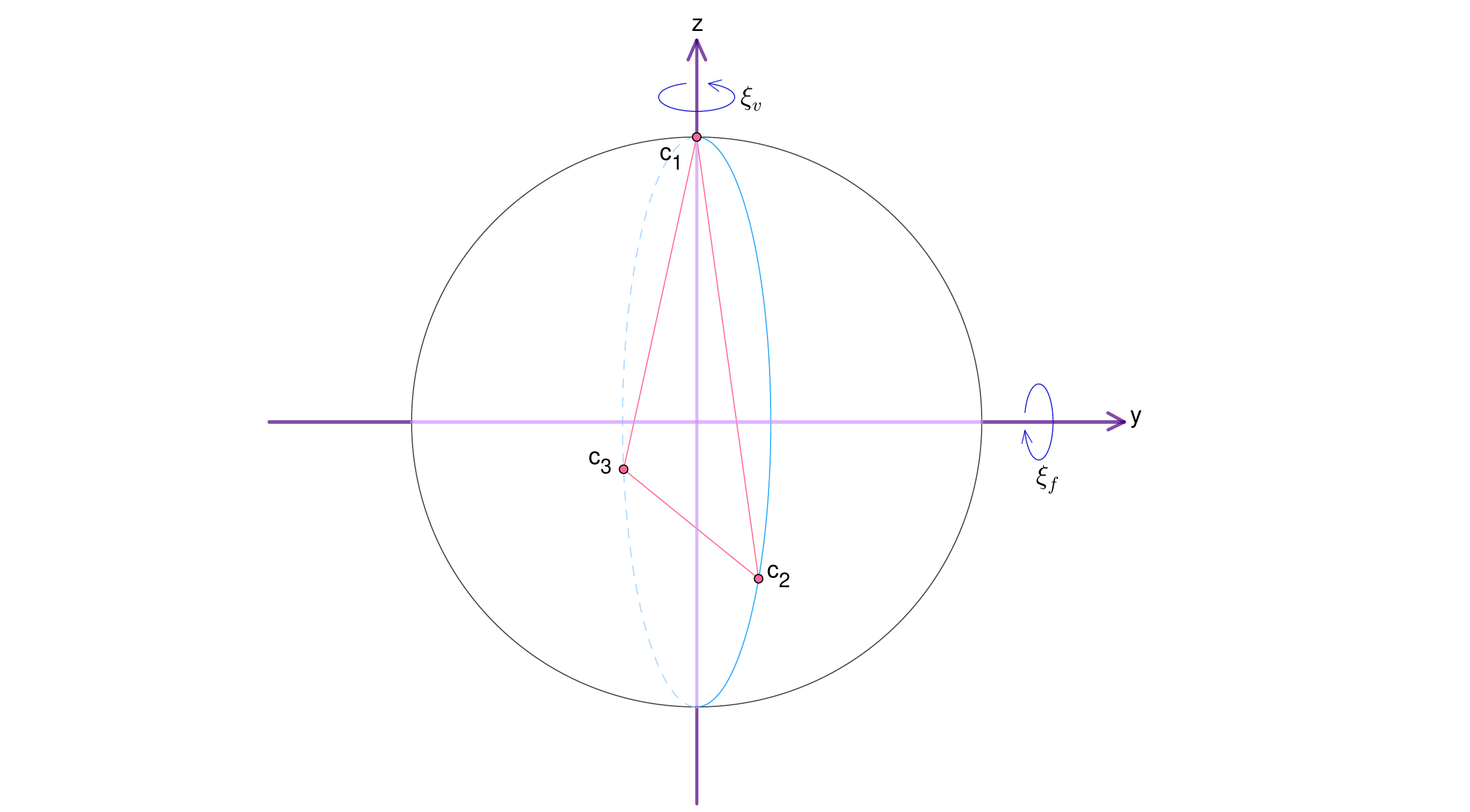}
     \caption{The choice of $\xi_v, \xi_f$ for the configuration $\tr$.}
     \label{fig:1}
 \end{figure}

Let $\xi_v\in\mathfrak{su}(2)$ be the infinitesimal right-handed rotation about the $z$ axis and let $\xi_f\in\mathfrak{su}(2)$ be the infinitesimal right-handed rotation about the $y$ axis scaled so that for $\xi = \xi_v,\xi_f,$ we have
$\{t \in \mathbb{R}: e^{2\pi \xi t}\cdot \tr = \tr\} =\mathbb{Z}$. Thus $\xi_v,\xi_f,$
are generators of rotations about the vertex $c_1$ and the center of the face of the triangle
respectively, as shown in Figure~\ref{fig:1}.

\begin{ex}[Axial Maslov 2 disk]\label{ex}
Consider the homomorphism
\[
R:\R\rightarrow \su(2)
\]
given by
\[
R(\theta)=e^{\theta\xi_v}.
\]
Let $u: (D,\partial D) \to (\cp^3,L_\triangle)$ be given by
$u(z)=e^{-i\xi_v\log z}\cdot \tr$. See below for more explicit formulas. Then $u$ satisfies $u(e^{i\theta}z)=R(\theta)u(z),$ so it is an axial disk of type $\xi_v$.

Recall we denote by $c_1,c_2,c_3$ the vertices of the triangle $\tr$.
The stereographic projection from the north pole
\[
p: S^2\to \C\cup \{\infty\},
\]
is given by
\[p(x,y,z)=
         \left\{\begin{array}{ll}
        \frac{x+iy}{1-z}, &  z\ne1\\
        \infty, & z=1 .
        \end{array} \right.
\]
We have
\[
\overline{c}_1:=p(c_1)=\infty, \quad \overline{c}_2:=p(c_2)=\frac{1}{\sqrt{3}}, \quad
\overline{c}_3:=p(c_3)=\frac{-1}{\sqrt{3}}.
\]
Since $\xi_v$ is the generator of a rotation about the vertex $c_1$, the flow of $\xi_v$ is given by
\[
\varphi^t_{\xi_v}(w):=e^{\xi_v t}\cdot w=e^{\chi i t}w, \qquad w\in \C\cup \{\infty\}.
\]
for some $\chi \in \R.$
Since $\xi_v$ is normalized so that
\[
\Z = \{t\in\R: e^{2\pi\xi_v t}\cdot\tr=\tr \} = \{t \in \R : \varphi^{2\pi t}_{\xi_v}(\bar c_2) = \bar c_2 \text{ or } \bar c_3\} = \{t\in \R : 2\pi\chi t \in \pi \Z\},
\]
we have $\chi = \frac{1}{2}$ and
\[
\varphi^t_{\xi_v}(w) = e^{\frac{1}{2} i t}w, \qquad w\in \C\cup \{\infty\}.
\]
Hence, for $z\in D$ we get
\begin{align*}
u(z)&= e^{-i\xi_v\log z}\cdot \tr\\
&=\big[ \ [1:\varphi_{\xi_v}^{-i\log z}(\overline{c}_1)], \ [1:\varphi_{\xi_v}^{-i\log z}(\overline{c}_2)], \ [1: \varphi_{\xi_v}^{-i\log z}(\overline{c}_3)]\  \big]\\
&=\big[ \ [0:1], \  [1:\sqrt{\frac{z}{3}}] , \  [1:-\sqrt{\frac{z}{3}}] \ \big]\in \sym^3(\cp^1).
\end{align*}
Hence, we can write $u(z)= [0:1:0: -\frac{z}{3}]\in \cp^3.$

We can describe $u$ geometrically as follows. The boundary of $u$ is obtained by the action of $R(\theta)$ on $\tr$. Call an isosceles triangle narrow if the congruent sides are longer than the base. A point in the interior of $u$ is described by a narrow isosceles triangle on a great circle where the north pole is the apex. For example, the triangle $c_1d_2d_3$ in Figure~\ref{fig:2} represents a point in the interior of $u$. Note that as $z\rightarrow 0$ the vertices $c_2$ and $c_3$ move toward the south pole, so $u$ intersects $\y$ in a single point. So, $[u]\cdot [\y]\ge 1$. The proof of Lemma 3.8 in \cite{Smith} shows that equality holds. Hence, by Lemma~\ref{lm:intersect} $u$ is a Maslov 2 disk.
\end{ex}
\begin{figure}[ht]
     \centering
     \includegraphics[scale=0.28]{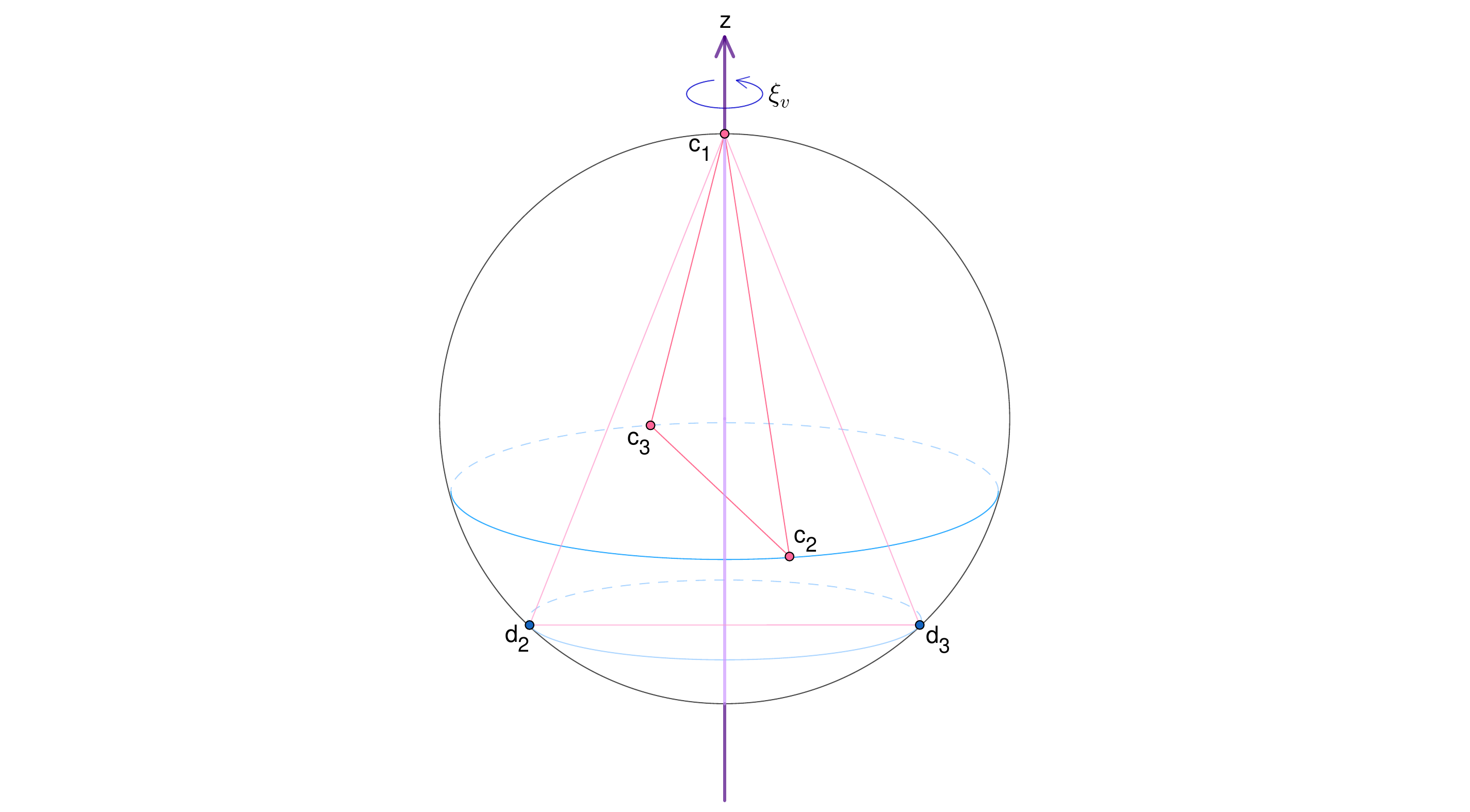}
     \caption{A Maslov 2 disk passing through $\tr$.}
     \label{fig:2}
 \end{figure}
\begin{ex}[Axial Maslov 4 disk]\label{ex2}
Consider the homomorphism
\[
R:\R\rightarrow \su(2),
\]
given by
\[
R(\theta)=e^{\theta\xi_f}.
\]
The disk $u:(D,\partial D) \to (\cp^3,L_\tr)$ given by $u(z)=e^{-i\xi_f\log z}\cdot \tr$ satisfies $u(e^{i\theta}z)=R(\theta)u(z),$ so it is axial of type $\xi_f$.

We can describe $u$ geometrically as follows. The boundary of $u$ is obtained by the action of $R(\theta)$ on $\tr$. The interior of $u$ is given by all the equilateral triangles on northern lines of latitude if we take $r:=(0,-1,0)$ for the north pole. For example, the triangle $s_1s_2s_3$ in Figure~\ref{fig:5} represents an interior point of $u.$ Since $u$ intersects $N_\tr$ at the unique point $[r,r,r]$, it follows by Lemma~\ref{lemma3.7} that $[u]\cdot[\y] \geq 2.$ The proof of Lemma 3.8 in~\cite{Smith} shows that equality holds. Hence, by Lemma~\ref{lm:intersect} $u$ is a Maslov 4 disk.
\end{ex}
\begin{figure}[ht]
     \centering
     \includegraphics[scale=0.28]{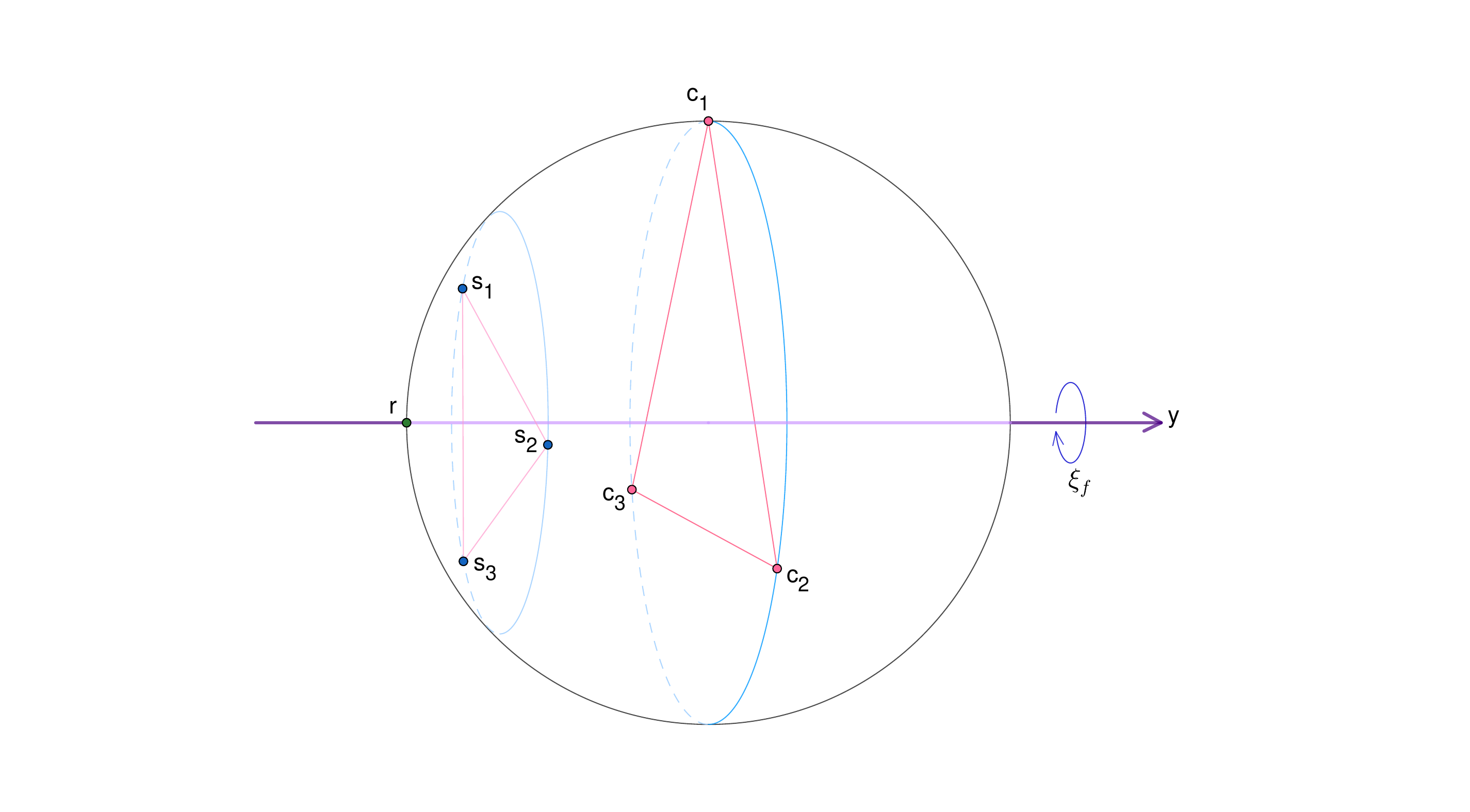}
     \caption{A Maslov 4 disk passing through $\tr$.}
     \label{fig:5}
 \end{figure}

\subsection{Classification of holomorphic disks}\label{subsection5.3}
The following definition is given in Section 4 from~\cite{Smith}.
\begin{dfn}
The intersection points of a holomorphic disk $u:(D,\partial D)\rightarrow(\x, L_\triangle)$ with the compactification divisor $\y$ are called \textbf{poles} of $u$.
\end{dfn}

The following is Definition 4.6 from~\cite{Smith}. Recall from Section~\ref{subsection3.1} the definition of $\Gamma_\tr.$
\begin{dfn}
A \textbf{pole germ} is the germ (at 0) of a holomorphic map $u,$ from an open neighborhood of $0$ in $\C$ to $\cp^3,$ such that $u^{-1}(\y)$ contains $0$ as an isolated point. More generally, for a Riemann surface $\Sigma$ and a point $a\in\Sigma,$ one can speak of a pole germ at $a.$ If we don't specify `at $a$' then we are implicitly working at $0$ in $\C.$ We define an equivalence relation on pole germs at $a$ by $u_1\sim u_2$ if and only if there exists a germ of holomorphic map $A,$ from a neighborhood of $a$ in $\Sigma$ to $\mathrm{SL}(2,\C),$ such that $u_2=A\cdot u_1.$ The \textbf{principal part} of a pole germ $u$ is its equivalence class $[u]_a$ under this relation.

We say a pole germ $u$ is of \textbf{type} $\xi\in\mathfrak{su}(2)$ and \textbf{order} $k\in\mathbb{Z}_{\ge1}$ if its principal part is
\[[z\mapsto e^{-ik\xi\log z}\cdot \tr]_0,\] and $\xi$ is scaled so that $\{t\in\R: e^{2\pi\xi t}\in \Gamma_\tr\}=\mathbb{Z}$.
\end{dfn}
\begin{ex}
An axial disk of type $\xi$ has a pole germ of type $\xi$ and order $1$.
\end{ex}
The following is Lemma 4.10 from~\cite{Smith}.
\begin{lm}\label{lm:lmpole1}
A pole germ $u$ is of type $\xi_v$ if and only if $u(0) \in Y_\triangle\backslash N_\triangle$.
\end{lm}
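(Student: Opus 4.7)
My plan is to prove the two implications separately, using the definition that $u$ is of type $\xi_v$ means its principal part equals $[z\mapsto e^{-ik\xi_v\log z}\cdot\tr]_0$ for some $k\in\Z_{>0}$, i.e., there exist a holomorphic germ $A:(\C,0)\to\sln(2,\C)$ and a $k$ with $u(z)=A(z)\cdot e^{-ik\xi_v\log z}\cdot\tr$ on a punctured neighborhood of $0$. For the forward direction, since $\sln(2,\C)$ preserves the multiplicity stratification of configurations, it suffices to analyze the standard representative $z\mapsto e^{-ik\xi_v\log z}\cdot\tr$. By construction $\xi_v$ generates a one-parameter subgroup fixing the vertex $c_1\in\tr$ and its antipode on $\cp^1$, and acting elsewhere by a (complex) scaling $w\mapsto\lambda(z)w$ with $|\lambda(z)|$ tending to $0$ or $\infty$ as $z\to 0$. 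Generalizing the calculation in Example~\ref{ex} from $k=1$ to arbitrary $k$, the two remaining vertices $c_2,c_3$ are swept into the antipode of $c_1$ in the limit while $c_1$ stays fixed, so the limit configuration has exactly two coincident points and lies in $Y_\tr\setminus N_\tr$.

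For the backward direction I would first observe that $\sln(2,\C)$ acts transitively on $Y_\tr\setminus N_\tr$, since any unordered $3$-point configuration in $\cp^1$ with one doubled point can be carried to any other such configuration by a M\"obius transformation. Setting $p_0:=\lim_{z\to 0}e^{-i\xi_v\log z}\cdot\tr$, I would choose a constant $A_0\in\sln(2,\C)$ sending $u(0)$ to $p_0$ and replace $u$ by $A_0\cdot u$, reducing to the case $u(0)=p_0$. The remaining task is to produce a holomorphic $A:(\C,0)\to\sln(2,\C)$ with $A(0)=I$ and an integer $k\in\Z_{>0}$ such that $u(z)=A(z)\cdot e^{-ik\xi_v\log z}\cdot\tr$.

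To extract $k$ and $A$, the plan is to track the three roots of the polynomial $f_z(T)=\sum z_j(z)(-T)^j$ associated to $u(z)=[z_0(z):\cdots:z_3(z)]$. By our normalization, $f_0$ has a simple root and a distinct double root, which after a further constant $\sln(2,\C)$ change of coordinates I may place at the two fixed points of $\xi_v$ on $\cp^1$. The simple root extends to a single-valued holomorphic function $p_1(z)$, while the two coalescing roots are the roots of a holomorphic quadratic factor of $f_z$ whose discriminant vanishes at $z=0$ to some positive order $k$; Puiseux expansion then yields $p_2(z),p_3(z)=\pm\alpha(z)z^{k/2}$ for a holomorphic unit $\alpha$. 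Single-valuedness of $u$ on a small punctured disk, compared against the $\Gamma_\tr$-monodromy of the standard germ, forces $k$ to be a positive integer compatible with the normalization of $\xi_v$ adopted in the definition of pole germs of type $\xi_v$ and order $k$. Finally, I would define $A(z)$ pointwise as the element of $\sln(2,\C)$ that carries the three roots of $e^{-ik\xi_v\log z}\cdot\tr$ to $p_1(z),p_2(z),p_3(z)$, with a determinant-one lift chosen consistently.

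The main obstacle I anticipate is the last step: checking that $A(z)$, a priori defined only on the punctured disk and a priori only well-defined in $\mathrm{PGL}(2,\C)$, extends holomorphically across $z=0$ with $A(0)=I$ and takes values in $\sln(2,\C)$. The crux is to reconcile the Puiseux leading coefficient $\alpha(0)$ with the constant $1/\sqrt{3}$ appearing in $\tr$, and to resolve the remaining scalar ambiguity in the determinant-one lift. Any residual scaling that cannot be absorbed by these choices lies in the $\C^*$-stabilizer of $p_0$ in $\sln(2,\C)$ and can be pushed back into the constant factor $A_0$ from the first step. Once this matching is carried out, the identity $u(z)=A(z)\cdot e^{-ik\xi_v\log z}\cdot\tr$ is precisely the statement that the principal part of $u$ is $[e^{-ik\xi_v\log z}\cdot\tr]_0$, proving that $u$ is of type $\xi_v$.
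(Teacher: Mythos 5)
The paper imports this statement as Lemma~4.10 of Smith's paper and gives no proof of its own, so there is no in-paper argument to compare against; I evaluate your proposal on its own merits. Your structure --- forward direction by inspection of the standard germ, backward direction by $\sln(2,\C)$-transitivity on $Y_\tr\setminus N_\tr$ followed by root-tracking --- is the right one, and the forward direction is fine as stated. Two points in the backward direction are imprecise. The Puiseux expression omits the centering term: the coalescing roots have the form $p_{2,3}(z)=\beta(z)\pm\alpha(z)z^{k/2}$ with $\beta=(p_2+p_3)/2$ a holomorphic function vanishing at $0$, not simply $\pm\alpha(z)z^{k/2}$. And there is no need for (nor content in) a monodromy argument to conclude $k\in\Z_{>0}$, since $k$ is by definition the vanishing order of the holomorphic discriminant of the quadratic factor, hence automatically a positive integer.

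The substantive gap is the one you flag, but your proposed resolution does not engage it. Matching the leading Puiseux coefficient to $1/\sqrt3$ and absorbing scalars into $A_0$ are cosmetic adjustments; the real obstruction is that when $k$ is odd the individual roots $p_2(z),p_3(z)$ (and likewise $\pm z^{k/2}/\sqrt3$) are multivalued on the punctured disk, so a map defined "pointwise as the Möbius transformation carrying one root triple to the other" is not manifestly single-valued, let alone holomorphic across $z=0$. The fix is to build $A(z)^{-1}$ out of symmetric functions of the roots, which are single-valued and holomorphic. Concretely, take $C_1(z)=\begin{pmatrix}1&0\\-1/p_1(z)&1\end{pmatrix}$, holomorphic with $C_1(0)=I$, which sends $p_1(z)\mapsto\infty$; the images $\tilde p_{2,3}$ of the small roots still have holomorphic mean $\tilde\beta(z)$ and holomorphic squared difference $(\tilde p_2-\tilde p_3)^2$ vanishing to order $k$. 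Then take $C_2(z)\colon T\mapsto c(z)\bigl(T-\tilde\beta(z)\bigr)$, where $c$ is a holomorphic unit with $c^2(\tilde p_2-\tilde p_3)^2=\tfrac{4}{3}z^k$ (a branch exists since $(\tilde p_2-\tilde p_3)^2/z^k$ is a nowhere-vanishing holomorphic germ); $C_2$ fixes $\infty$ and carries $\{\tilde p_2,\tilde p_3\}$ to $\{\pm z^{k/2}/\sqrt3\}$, the roots of the order-$k$ standard germ. Setting $A(z)=(C_2(z)C_1(z))^{-1}$ and lifting from $\mathrm{PSL}(2,\C)$ to $\sln(2,\C)$ over the disk (simply connected, so no obstruction) gives the required holomorphic germ, with $A(0)$ stabilizing $p_0$ and hence absorbable into your constant $A_0$.
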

The following is Corollary 4.13 from~\cite{Smith}.
\begin{lm}
\label{lemmaxi}
All Maslov index $2$ holomorphic disks $u: (D,\partial D)\to (\cp^3,L_\tr)$ are, up to reparametrization, of the form
\[
u(z)=A\cdot e^{-i\xi_v \log z}\cdot \tr
\]
for $A\in \su(2)$. In particular they are all axial.
\end{lm}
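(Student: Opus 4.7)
The plan is to use intersection-theoretic bounds on the poles of $u$ to pin down its local structure at its unique intersection with $\y$, globalize this structure via a covering-space argument, and then use the totally real boundary condition on $L_\tr$ to force the resulting $\sln(2,\C)$-valued gauge to be a constant element of $\su(2)$.

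First, Lemma~\ref{lm:intersect} converts the hypothesis $\mu=2$ into the intersection equality $[u]\cdot[\y]=1$. By Lemma~\ref{lemma3.7}, any clean intersection with $\n$ contributes at least $2$ and any non-clean intersection contributes at least $3$, so $u$ meets $\y$ at a single point lying in $Y_\tr\setminus N_\tr$, contributing exactly $1$ to the intersection number. Lemma~\ref{lm:lmpole1} identifies the pole germ there as being of type $\xi_v$; since a pole germ of type $\xi_v$ and order $k$ contributes $k$ to $[u]\cdot[\y]$, we must have $k=1$. Reparametrizing $D$ to put the pole at $0$, the definition of principal part produces a holomorphic $A_0:U\to\sln(2,\C)$ on a neighborhood $U$ of $0$ with
\[
u(z)=A_0(z)\cdot e^{-i\xi_v\log z}\cdot \tr\quad\text{on }U\setminus\{0\}.
\]

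Next, I would globalize this factorization to all of $\bar D$. On $\bar D\setminus\{0\}$ the disk $u$ avoids $\y$ and therefore lies in the open orbit $W_\tr=\sln(2,\C)/\Gamma_\tr$. Since $\sln(2,\C)$ is simply connected, $\sln(2,\C)\to W_\tr$ is the universal cover with deck group $\Gamma_\tr$. Going once around $z=0$ in the local model sends $e^{-i\xi_v\log z}\mapsto e^{2\pi\xi_v}\cdot e^{-i\xi_v\log z}$, and $e^{2\pi\xi_v}\in\Gamma_\tr$ by the normalization of $\xi_v$; this matches the monodromy of $u$ viewed in $W_\tr$, so $A_0$ continues analytically along $\bar D\setminus\{0\}$ to a single-valued holomorphic map $A:\bar D\setminus\{0\}\to\sln(2,\C)$ satisfying $u=A\cdot(e^{-i\xi_v\log z}\cdot \tr)$. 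Since the entries of the local model $A_0(z)e^{-i\xi_v\log z}$ remain bounded as $z\to 0$ on suitable matrix-entry charts, the Riemann removable singularity theorem extends $A$ holomorphically across $0$ to a map $A:\bar D\to\sln(2,\C)$ with the factorization valid throughout.

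Finally, the boundary condition $u(\partial D)\subset L_\tr=\su(2)\cdot \tr$ translates to $A(e^{i\theta})\cdot e^{\theta\xi_v}\in\su(2)\cdot \Gamma_\tr$ for all $\theta\in\R$. By continuity and discreteness of $\Gamma_\tr$, $A(e^{i\theta})e^{\theta\xi_v}$ remains in a fixed coset $\su(2)\gamma$; absorbing $\gamma\in\Gamma_\tr$ into the action on $\tr$ (which preserves the axial form) reduces to $A(e^{i\theta})\in\su(2)\cdot e^{-\theta\xi_v}$. Using the polar decomposition $\sln(2,\C)=\su(2)\cdot P$ with $P$ positive Hermitian, the twisted boundary condition forces the Hermitian factor $P(e^{i\theta})=\mathrm{Id}$ for all $\theta$; then the maximum principle applied to the subharmonic function $\log\|A(z)\|$ (equivalently, to diagonal entries of $A(z)^*A(z)$) propagates this into $P\equiv \mathrm{Id}$ on $\bar D$. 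Thus $A:\bar D\to\su(2)$ is a holomorphic disk with image in a compact totally real submanifold of $\sln(2,\C)$, hence constant. This gives $u(z)=A\cdot e^{-i\xi_v\log z}\cdot \tr$ with $A\in\su(2)$, and the disk is axial of type $\xi_v$.

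The main obstacle will be the globalization step: carefully matching and extending the local gauge $A_0$ across the pole so that it is genuinely single-valued holomorphic on $\bar D$ into $\sln(2,\C)$, despite the logarithmic twist. The subsequent rigidity argument, while standard in spirit, requires the reduction to an untwisted boundary condition; handling the twist cleanly relies on the multiplicative normalization $\{t\in\R:e^{2\pi\xi_v t}\cdot \tr=\tr\}=\Z$ used to pass the boundary monodromy of $A$ into the deck group $\Gamma_\tr$.
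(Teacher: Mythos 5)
The paper does not prove this statement itself; it quotes it as Corollary~4.13 of Smith's paper, and your argument is essentially a correct reconstruction of the proof from that source. The chain is sound: Lemma~\ref{lm:intersect} gives $[u]\cdot[\y]=1$, Lemma~\ref{lemma3.7} rules out the deep stratum so the unique pole lies in $Y_\tr\setminus N_\tr$ and Lemma~\ref{lm:lmpole1} makes it of type $\xi_v$ and order $1$; the local gauge globalizes over the punctured disk because the monodromy of the local model, right multiplication by $e^{2\pi\xi_v}\in\Gamma_\tr$, agrees with that of $u$ in $W_\tr=\sln(2,\C)/\Gamma_\tr$ and $\sln(2,\C)$ is simply connected; and the maximum principle applied to $\log\|A\|_{op}$, combined with $\det A=1$ (which forces $\|A\|_{op}\ge 1$ pointwise), upgrades $A(\partial D)\subset \su(2)$ to $A(\bar D)\subset\su(2)$, after which holomorphicity into a totally real submanifold forces $A$ to be constant. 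Two small points, neither a genuine gap: first, the assertion that a pole germ of type $\xi_v$ and order $k$ contributes exactly $k$ to $[u]\cdot[\y]$ is used without justification; it follows because the discriminant cutting out $\y$ is $\sln(2,\C)$-invariant and vanishes to order exactly $k$ along $z\mapsto e^{-ik\xi_v\log z}\cdot\tr$, and this deserves a line (it is the same input the paper invokes via ``the proof of Lemma 3.8 in Smith'' in Example~\ref{ex}). Second, your boundary condition is not actually twisted: since $e^{\theta\xi_v}$ and $\Gamma_\tr$ both lie in $\su(2)$, the condition $A(e^{i\theta})e^{\theta\xi_v}\in\su(2)\cdot\Gamma_\tr$ collapses immediately to $A(\partial D)\subset\su(2)$, so the coset-tracking and the polar-decomposition discussion can be replaced by the one-line operator-norm argument you already give.
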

The following is Corollary 4.14 from~\cite{Smith}.
\begin{lm}\label{lm:lmpole2}
Suppose $u : (D,\partial D) \to (\cp^3,L_\tr)$ is a holomorphic disk of Maslov index 4, then either $u$ has two poles of
type $\xi_v$ and order $1$, one pole of type $\xi_v$ and order
$2$, or one pole of type $\xi_f$ and order $1$.
In the last case, the disk is axial of type $\xi_f$.
\end{lm}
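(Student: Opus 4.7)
The plan is to convert the Maslov hypothesis into an intersection-theoretic constraint against the anticanonical divisor $\y$ and then enumerate the pole configurations compatible with that constraint. By Lemmas~\ref{divisor} and~\ref{lm:intersect} one has $[u]\cdot[\y]=\mu(u)/2=2$. Combined with Lemma~\ref{lemma3.7}, which forces any clean intersection with $\n$ to cost at least $2$ and any non-clean intersection to cost at least $3$, this splits the analysis into exactly two scenarios: \textbf{(A)} no intersection with $\n$, so all poles of $u$ lie in $\y\setminus\n$ with total multiplicity $2$; or \textbf{(B)} exactly one clean intersection with $\n$ of multiplicity precisely $2$ and no further poles.

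In scenario (A), Lemma~\ref{lm:lmpole1} forces each pole to be of type $\xi_v$. I would then verify the standard local fact that a type-$\xi_v$ pole germ of order $k$ contributes exactly $k$ to $[u]\cdot[\y]$; this is a direct computation from the normal form $z\mapsto e^{-ik\xi_v\log z}\cdot\tr$, whose two mobile vertices approach the fixed vertex at rate $z^{\pm k/2}$, producing a zero of order $k$ in the discriminant coordinate cutting out $\y$. Splitting $2=1+1=2$ then yields precisely the first two alternatives of the statement.

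In scenario (B), I would first classify principal parts of pole germs landing in $\n$. After using the $\sln(2,\C)$ gauge built into the definition of principal part to normalize the image point, such a germ has the form $[z\mapsto e^{-ik\eta\log z}\cdot\tr]_0$ for some $\eta\in\mathfrak{sl}(2,\C)$ whose one-parameter flow collapses the three vertices of $\tr$ to a common limit. A weight calculation with respect to the $\C^\ast$-subgroup generated by $\eta$ (carried out on the representation $S^3 V$ cut out by the defining polynomial of $\y$) shows that, up to the adjoint action of $\Gamma_\tr$, the only such $\eta$ that is $\su(2)$-conjugate to a real element is $\xi_f$, and that the resulting intersection multiplicity at the pole equals $2k$. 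Since the multiplicity here is $2$, this pins down $k=1$ and forces the pole to be of type $\xi_f$ and order $1$, as claimed.

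It remains to upgrade this local axial behavior at the single pole to global axiality of $u$, and this is the main obstacle. The plan is to compare $u$ with the axial Maslov $4$ disk $u_0(z)=e^{-i\xi_f\log z}\cdot\tr$ of Example~\ref{ex2}: agreement of principal parts yields a germ of $\sln(2,\C)$-valued holomorphic gauge $A$ near $0$ with $u=A\cdot u_0$, and the totally real boundary condition $u(\partial D)\subset L_\tr=u_0(\partial D)$ forces $A|_{\partial D}$ to take values in $\su(2)$. Schwarz reflection across $\partial D$ then extends $A$ to a holomorphic $\sln(2,\C)$-valued function on all of $\C P^1$, which by compactness is constant; hence $A\in\su(2)$ and $u$ lies in the $\su(2)$-orbit of $u_0$. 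The delicate point in this last step is the reflection/extension argument for the gauge $A$ against the totally real Lagrangian boundary, which is considerably more involved than the purely local weight analysis used in Steps~(A) and in the classification part of~(B).
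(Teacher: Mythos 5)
The paper itself offers no proof of this statement: it is imported verbatim as Corollary~4.14 of \cite{Smith}, so there is no internal argument to compare against, and what you have written is in effect a reconstruction of Smith's proof. Your architecture does match his: $[u]\cdot[\y]=\mu(u)/2=2$ via Lemmas~\ref{divisor} and~\ref{lm:intersect}, the dichotomy forced by Lemma~\ref{lemma3.7} and positivity of local intersection multiplicities, classification of principal parts at the poles, and a Schwarz-reflection rigidity argument for the comparison gauge in the $\xi_f$ case. Scenario~(A) is essentially complete once you supply the multiplicity--order relation for type-$\xi_v$ poles (and note that the local multiplicity is a gauge invariant, since $\y$ is $\sln(2,\C)$-invariant and the gauge is an invertible holomorphic family).

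The genuine gap is at the start of Scenario~(B): you assert that, after normalizing the image point, \emph{every} pole germ landing in $\n$ has principal part of the axial form $[z\mapsto e^{-ik\eta\log z}\cdot\tr]_0$. This does not follow from the definition of principal part. A priori the three configuration points can collapse to the triple point at different rates --- in symmetric-function coordinates near $\n$ one can have, say, $e_1=0$, $\operatorname{ord}_0 e_2=2$, $\operatorname{ord}_0 e_3=1$, which still gives $\operatorname{ord}_0 \triangle(f)=2$ --- and reducing such a germ to the exponential normal form under a holomorphic $\sln(2,\C)$-valued gauge that is \emph{invertible at the pole} is precisely the substantive content of the local classification. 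Your weight calculation only compares axial germs with one another, so it cannot close this step; what is needed is the normal-form statement that a clean multiplicity-$2$ intersection with $\n$ forces gauge-equivalence to the order-$1$ $\xi_f$ model. Two smaller repairs are needed in the final rigidity argument: the gauge $A$ with $u=A\cdot u_0$ is initially defined only near the pole, and its extension over $D\setminus\{0\}$ requires observing that the admissible gauges over each point form a torsor over the finite stabilizer $\Gamma_\tr$, so the analytic continuation is single-valued because it already is on a punctured neighborhood of $0$; and $L_\tr\ne u_0(\partial D)$ as written, though the intended conclusion $A(\partial D)\subset\su(2)$ does hold because the $\sln(2,\C)$-stabilizer of any point of $L_\tr$ is a conjugate of $\Gamma_\tr$ lying in $\su(2)$.
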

Recall the notation for moduli spaces of holomorphic disks from Section~\ref{ssec:moduli spaces}. The following is Corollary 4.21 from~\cite{Smith}.
\begin{lm}\label{lm:lmpole3}
Let $u : (D,\partial D) \to (\cp^3,L_\tr)$ be an axial disk of type $\xi_f$. Then for any $w \in \Int D$, the evaluation map
\[
evi_1: \M_{0,1}(2) \to \cp^3,
\]
is a
submersion at $[u;w]$.
\end{lm}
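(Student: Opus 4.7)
The plan is to prove surjectivity of $d(evi_1)_{[u;w]}:T_{[u;w]}\M_{0,1}(2)\to T_{u(w)}\cp^3$. Under the regularity assumption the moduli space is smooth of real dimension $n+\mu(\beta)+2l-3=3+4+2-3=6$, matching $\dim_\R\cp^3$, so submersion, surjectivity, and local diffeomorphism are equivalent at $[u;w]$. I would build up the image of $d(evi_1)$ in two stages, first obtaining four dimensions from the $\su(2)$-symmetry and the marked point variation, then producing the remaining two from non-axial deformations of $u$.

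For the first stage, the $\su(2)$-action on $(\cp^3,L_\tr)$ lifts to $\M_{0,1}(2)$ by post-composition and $evi_1$ intertwines the two actions, so the image of $d(evi_1)_{[u;w]}$ contains the tangent space at $u(w)$ to the $\su(2)$-orbit of $u(w)$. By the geometric description in Example~\ref{ex2}, for $w\in\Int D\setminus\{0\}$ the point $u(w)$ is a non-degenerate isosceles triangle on a latitude circle with trivial $\su(2)$-stabiliser, so this orbit tangent is $3$-dimensional. Variation of the marked point contributes the complex line $du(w)(T_wD)\subset T_{u(w)}\cp^3$. The axial relation $u(e^{i\theta}z)=e^{\theta\xi_f}\cdot u(z)$ gives $du(w)(iw)=\xi_f\cdot u(w)$, so these two subspaces share exactly the real line spanned by $\xi_f\cdot u(w)$, and their combined span has real dimension $3+2-1=4$.

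For the remaining two dimensions I would analyse $\ker D_u$, the kernel of the linearised Cauchy--Riemann operator on sections of $u^*T\cp^3$ with totally real boundary condition in $(\partial u)^*TL_\tr$, modulo infinitesimal reparametrisations of $D$. Since $u$ is $S^1$-equivariant via $z\mapsto e^{i\theta}z$, the operator $D_u$ commutes with the induced $S^1$-action and $\ker D_u$ splits into $S^1$-weight spaces. Trivialising $u^*T\cp^3$ using the $\mathrm{SL}(2,\C)$-orbit structure on the image of $u$ reduces the problem to an explicit weighted Riemann--Hilbert problem on $D$. Non-axial deformations of $u$, which by Lemma~\ref{lm:lmpole2} correspond to splitting the order-one $\xi_f$-pole of $u$ into two order-one $\xi_v$-poles, contribute kernel sections with $S^1$-weights distinct from those of the equivariant directions above, so their evaluations at $w$ are linearly independent from the four-dimensional subspace already identified.

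The main obstacle is this Riemann--Hilbert computation: one must show that the predicted non-axial kernel sections actually exist with the expected weights and that their values at $w$ really do lie outside the four-dimensional equivariant subspace. This is a concrete ODE-type calculation tailored to the axial disk $u(z)=e^{-i\xi_f\log z}\cdot\tr$, and is the technical content of the results of~\cite{Smith} leading up to this corollary. Once performed, combining the two stages yields a $6$-dimensional image of $d(evi_1)_{[u;w]}$, establishing the submersion property.
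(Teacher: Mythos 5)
The paper does not prove this lemma; it simply cites it as Corollary~4.21 of~\cite{Smith}, so there is no internal proof to compare against. Your reconstruction is a reasonable sketch of a strategy, but as written it has two genuine gaps.

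First, the statement is asserted for \emph{every} $w\in\Int D$, including $w=0$, and your first stage only handles $w\neq 0$. At $w=0$ the point $u(0)\in N_\tr$ has $S^1$-stabiliser in $\su(2)$ (rotation about $\xi_f$ fixes it), so the orbit tangent there is only $2$-dimensional, not $3$. You can still get $4$ dimensions at $w=0$ because $du(0)T_0D$ is a complex line transverse to $T_{u(0)}N_\tr$ (this follows from the fact that $u$ meets $N_\tr$ cleanly, cf.\ Lemma~\ref{lemma3.7}), but that requires a separate argument from the one you give, and you don't address it.

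Second, the decisive step in your second stage is not correct as stated. You argue that the two ``non-axial'' kernel sections have $S^1$-weights different from those of the equivariant directions, hence their evaluations at $w$ must be linearly independent of the already-constructed $4$-dimensional subspace. But the $S^1$-action on $\M_{0,1}(2)$ sends $[u;w]$ to $[u;e^{-i\theta}w]$, so for $w\neq 0$ the point $[u;w]$ is \emph{not} $S^1$-fixed and $d(evi_1)_{[u;w]}$ is not an $S^1$-equivariant map into a single weight-decomposed target; evaluation at a non-fixed interior point does not respect the weight grading of $\ker D_u$. Different weights therefore do not by themselves force linear independence of the images, and you acknowledge this yourself when you write that the needed Riemann--Hilbert computation ``is the technical content'' of Smith's result. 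That computation is precisely what is missing. A cleaner route (in the spirit of the present paper's proof of Theorem~\ref{thm:ogw203}) is to use the explicit splitting $(u^*T\x, u|_{\partial D}^*TL_\tr)\simeq(\underline\C^3, F^2\oplus F^{1,1})$ and read off surjectivity of evaluation directly from Lemmas~\ref{lemmaA21} and~\ref{lm11bundle}, which handles $w=0$ and $w\neq 0$ uniformly and avoids the weight argument altogether.
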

\begin{lm}\label{pointslemma}
Let $a,b\in\cp^1$ be two sufficiently close points. There exists a unique point $c\in\cp^1$ such that there exists a holomorphic disk $u:(D, \partial D)\rightarrow (\x,L_\triangle)$ of Maslov index 2 passing through $[a, b, c]\in\sym^3\cp^1 = \cp^3.$ This disk is unique up to reparameterization.
\end{lm}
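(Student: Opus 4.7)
The plan is to leverage the classification of Maslov~$2$ disks in Lemma~\ref{lemmaxi} together with the explicit model in Example~\ref{ex}. Every Maslov~$2$ disk has the form $u_A(z)=A\cdot e^{-i\xi_v\log z}\cdot\tr$ for some $A\in\su(2)$, and the circle subgroup $\{e^{\theta\xi_v}\}$ both fixes $c_1$ and pulls back along the reparametrization $z\mapsto e^{i\theta}z$ of $D$; hence the space of unparametrized Maslov~$2$ disks is $\su(2)/S^1\simeq S^2$, parametrized by the vertex $v=A\cdot c_1$. The pole of $u_v$ is the configuration $[v,-v,-v]\in Y_\tr\setminus N_\tr$, in accordance with Lemma~\ref{lm:lmpole1}.

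Next I would unpack Example~\ref{ex} to describe the interior image. The reference disk $u_0$ has interior consisting of triples $[c_1,p,r_{c_1}(p)]$, where $r_{c_1}$ denotes the $\pi$-rotation about the axis through $\pm c_1$ and $p$ ranges over the open spherical cap $\{p\in S^2:p_z<-1/2\}$, of angular radius $\pi/3$ centered at $-c_1$ (the ``narrow isosceles'' condition of Example~\ref{ex}). By $\su(2)$-equivariance, the interior of $u_v$ equals $\{[v,e,r_v(e)]:e\in C_{-v}\}$, where $r_v(x)=2(x\cdot v)v-x$ on $S^2\subset\R^3$ and $C_{-v}$ is the cap of angular radius $\pi/3$ centered at $-v$.

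Given $a,b\in\cp^1\simeq S^2$ close and distinct, I would then perform a case analysis on which of $\{a,b,c\}$ plays the role of the vertex $v$ in some decomposition $[a,b,c]=[v,e,r_v(e)]$ with $e\in C_{-v}$. If $v=a$ or $v=b$, then $e$ is forced to equal the other of $a,b$, which lies near $v$ rather than $-v$ and so fails to lie in $C_{-v}$ once $d(a,b)<\pi/3$. If $v=c$, the relation $r_v(a)=b$ forces $a+b\parallel v$, giving $v=\pm(a+b)/|a+b|$: the $+$ sign again places $a$ outside $C_{-v}$, whereas the $-$ sign gives $d(a,-v)=d(a,b)/2$, which is less than $\pi/3$ as soon as $d(a,b)<2\pi/3$. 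Thus the unique solution is $c=-(a+b)/|a+b|$, realized by the unique disk $u_c$ up to reparametrization.

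The main obstacle will be making the ``sufficiently close'' condition explicit (it suffices to take $d(a,b)<2\pi/3$) and confirming that $[a,b,c]\notin L_\tr$ in this regime, so that the disk truly passes through $[a,b,c]$ at an interior point and the cap analysis applies. The latter check is immediate, since an equilateral triangle on a great circle in $\cp^1$ cannot have two nearby vertices without collapsing entirely to a point of $N_\tr$.
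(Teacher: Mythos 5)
Your proposal follows essentially the same route as the paper: reduce to axial disks via Lemma~\ref{lemmaxi}, describe the interior image as narrow isosceles triangles as in Example~\ref{ex}, and then argue that two nearby points $a,b$ force the third vertex to be the unique apex. What you add is an explicit normal form $c=-(a+b)/|a+b|$ and a quantitative bound $d(a,b)<2\pi/3$; the paper instead argues more softly that $a,b$ determine a unique great circle carrying a unique narrow apex, and that the condition $c=A\cdot(0,0,1)$ pins down $A$ up to the reparametrization $S^1$. These are two packagings of the same geometry.

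One step in your case analysis is stated imprecisely. When $v=a$ (resp.\ $v=b$) you claim ``$e$ is forced to equal the other of $a,b$,'' but the decomposition $\{b,c\}=\{e,r_a(e)\}$ also allows $e=c$ and $b=r_a(c)$. The conclusion still holds, for a slightly different reason: the rotation $r_v$ fixes $-v$ and is an isometry, so it preserves the cap $C_{-v}$. Hence \emph{both} base vertices $e$ and $r_v(e)$ lie in $C_{-v}$, so if $v=a$ then both $b$ and $c$ lie in $C_{-a}$; in particular $d(a,b)>2\pi/3$, contradicting the hypothesis. With this repair (and reconciling the stray ``$<\pi/3$'' threshold with the correct ``$<2\pi/3$'' you use at the end), the plan is sound and complete.
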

\begin{proof}
Following Example~\ref{ex}, we call an isosceles triangle narrow if the congruent sides are longer than the base. Since $a,b,$ are sufficiently close, they cannot belong to an equilateral triangle on a great circle, so a point of the form $[a,b,c]$ must be in $\cp^3\setminus L_\tr.$ Thus, if a $J$-holomorphic disk $u : (D,\partial D) \to (\cp^3,L_\tr)$ passes through a point of the form $[a,b,c],$ then $[a,b,c] \in u(\Int D).$
By Lemma~\ref{lemmaxi} all $J$-holomorphic disks $u: (D,\partial D) \to (\cp^3,L_\tr)$ of Maslov index $2$ have the form $u(z)=A\cdot e^{-i\xi_v\log z}\cdot \tr.$ For such $u,$ it follows from the geometric explanation given in Example~\ref{ex} that a point of $u(\Int{D} )$ is given by the vertices of a narrow isosceles triangle with apex $A \cdot (0,0,1).$ So, since $a,b,$ are sufficiently close, if $u$ passes through $[a,b,c]$, then $a,b,c,$ must be the vertices of a narrow isosceles triangle on a great circle with apex $c = A \cdot (0,1,1).$ We claim that such $c$ is unique.
Indeed, the points $a$ and $b$ are sufficiently close, so they determine a unique great circle. In addition, there exists a unique point $c$ on the great circle such that the triangle with vertices $a,b,c,$ is narrow isosceles. The condition $c = A \cdot (0,0,1)$ determines $A$ up to the action of $S^1 = \Stab_{(0,0,1)} \subset \su(2).$ This $S^1$ acts on $u$ by reparameterization.
\end{proof}
\begin{lm}\label{4maslovthroughn}
Let $p\in N_\triangle.$ There exists an axial disk of type $\xi_f$ that passes through $p.$ This disk is unique up to reparameterization.
\end{lm}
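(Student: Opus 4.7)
The plan is to exploit the $\su(2)$-equivariance of $(\x, L_\triangle, \y, \n)$ and reduce both existence and uniqueness to the axial disk $u_0$ constructed explicitly in Example~\ref{ex2}, which is of type $\xi_f$ and has its unique pole at $u_0(0) = [r,r,r]$ for $r = (0,-1,0)$.

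For existence, given $p = [q,q,q] \in \n$, pick $A \in \su(2)$ with $A \cdot r = q$; this is possible since $\su(2)$ acts transitively on $\cp^1 \simeq S^2$. Then $u := A \cdot u_0$ is a holomorphic disk with boundary on $L_\triangle$ satisfying $u(0) = p$. It is axial of type $\xi_f$ because the equivariance relation becomes $u(e^{i\theta}z) = e^{\theta\,\mathrm{Ad}_A(\xi_f)} u(z)$, and $\mathrm{Ad}_A(\xi_f)$ lies in the adjoint orbit of $\xi_f$.

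For uniqueness, let $v$ be any axial disk of type $\xi_f$ passing through $p$. By Example~\ref{ex2}, $v$ has Maslov index $4$, so $[v] \cdot [\y] = 2$ by Lemma~\ref{lm:intersect}. Since $p \in \n$, Lemma~\ref{lemma3.7} forces any intersection of $v$ with $\y$ at $p$ to contribute at least $2$; hence $p$ is the unique pole of $v$, and after reparameterization we may assume $v(0) = p$. Translating by $A^{-1}$ reduces to $v(0) = [r,r,r]$. Writing the axiality as $v(e^{i\theta}z) = e^{\theta\,\mathrm{Ad}_B(\xi_f)} v(z)$ with $B \in \su(2)$, the condition that $v(0) = [r,r,r]$ is fixed by this $S^1$-action forces the rotation axis of $\mathrm{Ad}_B(\xi_f)$ to pass through $r$, and matching the direction of the limiting hyperbolic flow as $z \to 0$ pins down $\mathrm{Ad}_B(\xi_f) = \xi_f$. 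Hence $B$ centralizes $\xi_f$, and $v(z) = e^{-i\xi_f \log z} \cdot v(1)$ with $v(1) \in L_\triangle$. The final step is to show $v(1)$ lies in the $\exp(\R \xi_f)$-orbit of $\tr$, whence $v$ is a reparameterization of $u_0$ via $z \mapsto e^{i\alpha} z$; this is planned via the monodromy constraint $e^{2\pi\xi_f} \cdot v(1) = v(1)$ required for $v$ to extend across $z = 0$, combined with a dimension count matching $\dim \n = 2$ against $\dim(\su(2)/\exp(\R \xi_f)) = 2$.

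The main obstacle is executing this final dimension-count argument cleanly. A robust alternative, if it proves delicate, is to invoke Lemma~\ref{lm:lmpole2} to first restrict attention to axial disks of type $\xi_f$ and then use rigidity of their principal part at the pole (as an $\sln(2,\C)$-equivalence class of germs), together with the boundary-on-$L_\triangle$ condition, to determine $v$ up to reparameterization.
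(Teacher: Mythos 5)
Your existence argument coincides with the paper's. Your uniqueness argument, however, takes a much longer route and does not close: the key step --- showing $v(1)\in\exp(\R\xi_f)\cdot\tr$ --- is explicitly left ``planned,'' and the proposed fallback dimension count does not deliver.

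The paper's uniqueness is immediate because it starts from the explicit parametrization of axial disks of type $\xi_f$: every such disk is, up to reparametrization, $u(z) = A\cdot e^{-i\xi_f\log z}\cdot\tr$ for $A\in\su(2)$. Then $u(0)=A\cdot[r,r,r]$, so the pole determines $A\cdot r$, and $\{A : A\cdot r = q\}$ is a single $\Stab_q$-coset, which acts only by reparametrizing $z$. You instead attempt to re-derive this parametrization from the bare axiality condition $v(e^{i\theta}z)=e^{\theta\mathrm{Ad}_B(\xi_f)}v(z)$, reach $v(z)=e^{-i\xi_f\log z}\cdot v(1)$ with $v(1)=g\cdot\tr$, and then stall.

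The gap is genuine. The dimension count $\dim\n = 2 = \dim\bigl(\su(2)/\exp(\R\xi_f)\bigr)$ cannot by itself rule out several inequivalent axial disks of type $\xi_f$ over the same $p$: equal dimensions are consistent with a covering of any positive degree (Lemma~\ref{degree3} of this very paper is a degree-$\pm3$ map between equidimensional spaces). The monodromy condition you invoke is the right tool but must actually be carried out: periodicity of $\theta\mapsto e^{\theta\xi_f}v(1)$ forces $g^{-1}e^{2\pi\xi_f}g\in\Gamma_\tr$; since $e^{2\pi\xi_f}$ projects in $SO(3)$ to a rotation of order $3$, and the only order-$3$ elements of the dihedral symmetry group of $\tr$ are the rotations about the $y$-axis, $g$ must preserve $\{y,-y\}$, so $g\in\exp(\R\xi_f)\cup\sigma\exp(\R\xi_f)$ with $\sigma$ the $\pi$-rotation about the $z$-axis; and $\sigma\in\Gamma_\tr$, so $g\cdot\tr\in\exp(\R\xi_f)\cdot\tr$ in either case. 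Without supplying that computation (or simply invoking the explicit form of axial disks of type $\xi_f$, as the paper does), uniqueness is not established. Finally, the Maslov-index/Lemma~\ref{lemma3.7} detour at the start of your uniqueness argument is correct but unnecessary once the explicit form is in hand, since the unique pole is then visibly $u(0)=A\cdot[r,r,r]$.
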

\begin{proof}
Let $q\in\cp^1$ such that $p:=[q,q,q].$ Denote $r:=(0,-1,0)$. By definition, the general axial disk of type $\xi_f$ has the form $u(z)=A\cdot e^{-i\xi_f\log z}\cdot \tr.$ It follows from Example~\ref{ex2} that such a disk intersects $N_\tr$ at the unique point $[A\cdot r,A\cdot r,A\cdot r].$ Let $A\in \su(2)$ be a rotation such that $A\cdot r=q.$ Then $u(0) = p.$ On the other hand, the condition $A\cdot r = q$ determines $A$ up to the action of $S^1 = \Stab_{q} \subset \su(2).$ This $S^1$ acts on $u$ by reparameterization.
\end{proof}

\subsection{Riemann-Hilbert pairs}\label{choose_spin_structure}
\begin{dfn}
A \textbf{Riemann-Hilbert pair} consists of holomorphic rank $n$ vector bundle $E\rightarrow D$ over the closed unit disk with a smooth totally real $n$-dimensional subbundle $F\subset E|_{\partial D}$. A \textbf{spin} Riemann-Hilbert pair is a Riemann-Hilbert pair $(E,F)$ along with an orientation and spin structure on $F.$
\end{dfn}
\begin{dfn} Let $(E,F),$ $(E',F')$ be spin Riemann-Hilbert pairs.
An \textbf{isomorphism} of Riemann-Hilbert pairs $(E,F)\rightarrow (E',F')$ consists of
\begin{enumerate}
    \item a biholomorphism $f:D\rightarrow D$, and
    \item a holomorphic isomorphism of bundles $\phi:E\rightarrow E'$ covering $f$ such that $\phi|_{\partial D}$ takes $F$ to $F'$.
\end{enumerate}
If $(E,F)$ and $(E',F')$ are spin Riemann-Hilbert pairs, we say that $(f,\phi)$ is an isomorphism if additionally $\phi|_F : F \to F'$ preserves orientation and spin structure.
We may refer to the pair $(f,\phi)$ by $\phi$ alone.
\end{dfn}

Let $(E,F)$ be a Riemann-Hilbert pair. Since $D$ is contractible, we identify $E$ with the trivial bundle $\underline{\C}^n$ and thus the pair is determined by the subbundle $F$.
A totally real subspace of $\mathbb{C}^n$ is described by $A\cdot\mathbb{R}^n$ where $A\in GL(n,\mathbb{C})$. Moreover, $A' \in GL(n,\mathbb{C})$ gives the same totally real subspace if and only if $A^{-1}A'\in GL(n,\mathbb{R})$. The fibers of a totally real subbundle $F \subset \underline \C^n$ can be written as $F_z=A(z)\cdot\mathbb{R}^n$ where $A(z)\in GL(n,\mathbb{C})$, so such a family of matrices $\{A(z)\}_{z\in \partial D}$ determines a Riemann-Hilbert pair.

Let $F^\kappa \subset \underline{\C}$ denote the totally real subbundle given by $F^\kappa_z= z^{\kappa/2}\R$ and let $F^{\kappa_1,\kappa_2}\subset \underline{\C}^2$ denote the totally real subbundle given by $F^{\kappa_1,\kappa_2}_z= z^{\kappa_1/2}\R\oplus z^{\kappa_2/2}\R.$
The following is Theorem~1 from \cite{oh1995rhproblem}.
\begin{thm}\label{theorem_oh}
Any Riemann-Hilbert pair $(E,F)$ splits as a direct sum of Riemann-Hilbert pairs
\[(E,F)\simeq \oplus_{i=1}^n(\underline{\mathbb{C}},F^{\kappa_i}),\]
where $\kappa_i\in \mathbb{Z}$.
\end{thm}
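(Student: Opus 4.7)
The plan is to reduce the classification to Grothendieck's splitting theorem on $\mathbb{P}^1$ via a doubling construction, then refine the splitting so that it respects the real structure induced by $F$. First, I would construct a holomorphic vector bundle $\tilde E$ on $\mathbb{P}^1$ by regarding $\mathbb{P}^1 = D \cup_{\partial D} D'$, where $D' = \{|z| \geq 1\} \cup \{\infty\}$ is glued to $D$ via the antiholomorphic involution $\sigma(z) = 1/\bar z$, placing $E$ on $D$ and its complex conjugate $\bar E$ on $D'$, and gluing the fibers along $\partial D$ through the canonical identification $E_z \to \bar E_z$ provided by the totally real subbundle $F$. The resulting bundle $\tilde E \to \mathbb{P}^1$ carries a fiberwise antiholomorphic involution $\tau$ covering $\sigma$, whose fixed locus over $\partial D$ is precisely $F$.

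By Grothendieck's theorem, $\tilde E \simeq \bigoplus_{i=1}^n \mathcal{O}_{\mathbb{P}^1}(\kappa_i)$ for some integers $\kappa_i$. To refine this to a $\tau$-equivariant splitting, I would consider the Harder--Narasimhan filtration $0 = W_0 \subset \cdots \subset W_r = \tilde E$ with quotients $W_j/W_{j-1} \simeq \mathcal{O}(k_j)^{\oplus m_j}$ and strictly decreasing slopes $k_1 > \cdots > k_r$. Being canonical, the filtration is preserved by $\tau$. The obstructions to splitting lie in $\mathrm{Ext}^1(\mathcal{O}(k_{j+1}), \mathcal{O}(k_i)) = H^1(\mathbb{P}^1, \mathcal{O}(k_i - k_{j+1}))$ for $i \leq j$, which vanish since $k_i - k_{j+1} \geq 1$. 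The set of holomorphic splittings of each extension is a non-empty complex affine space on which $\tau$ acts by an antilinear involution; averaging any splitting $s$ with its image $\tau \circ s \circ \tau^{-1}$ (using that $\tau$ is an involution, so the average is $\tau$-fixed) produces a $\tau$-equivariant splitting. Within each isotypic quotient $\mathcal{O}(k_j)^{\oplus m_j}$, $\tau$ decomposes as $\tau_0 \otimes A$ for a canonical real structure $\tau_0$ on $\mathcal{O}(k_j)$ and an antilinear involution $A$ on $\mathbb{C}^{m_j}$; diagonalizing $A$ over $\mathbb{R}$ refines the isotypic piece into $\tau$-equivariant line bundles.

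It remains to identify each $\tau$-equivariant line bundle summand $(\mathcal{O}(\kappa), \tau)|_D$ with the standard pair $(\underline{\mathbb{C}}, F^\kappa)$: in a trivialization of $\mathcal{O}(\kappa)$ over $D$, the fixed locus $\mathrm{Fix}(\tau)|_{\partial D}$ takes the form $z^{\kappa/2}\mathbb{R}$ up to an overall nonzero real scalar, which can be absorbed into the choice of trivialization. The main obstacle is the $\tau$-equivariant splitting of the isotypic pieces: one must verify carefully that the antilinear involution $A$ on each $\mathbb{C}^{m_j}$ is conjugate to standard complex conjugation (equivalently, admits a real basis), which is true for any antilinear involution of a finite-dimensional complex vector space but requires correctly identifying $A$ through the tensor decomposition $\tau = \tau_0 \otimes A$. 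The doubling construction, the $H^1$-vanishing, the averaging of splittings, and the final identification of line bundle summands are all routine once this point is settled.
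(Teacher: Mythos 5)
The paper does not prove this theorem---it is quoted verbatim from Oh's 1995 paper, whose own argument goes through Vekua-type singular-integral methods / Birkhoff factorization. Your doubling-plus-Grothendieck route is therefore a genuinely different (and well-known) approach, and the equivariantization steps you sketch are essentially sound: the Harder--Narasimhan filtration is canonical and hence $\tau$-invariant; $\mathrm{Ext}^1(\mathcal{O}(k_{j+1}),\mathcal{O}(k_i)) = H^1(\mathbb{P}^1,\mathcal{O}(k_i-k_{j+1})) = 0$ because the HN slopes strictly decrease; a $\tau$-invariant splitting is produced by averaging $s$ with $\tau s\tau^{-1}$ (the affine space of splittings is $\tau$-stable and $\tau$ acts as an antilinear involution); and on each isotypic block $\mathcal{O}(k_j)\otimes\mathbb{C}^{m_j}$ the residual antilinear involution on $\mathbb{C}^{m_j}$ always has a real basis. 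Where the analytic approach wins is in not requiring Grothendieck's theorem as input; where yours wins is in clarity and in the painlessness of the refinement to a line-bundle splitting.

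There are, however, two points that need more care than your sketch gives them. First, the doubling itself: gluing $E|_D$ to $\bar{E}|_{D'}$ along $\partial D$ using the conjugation determined by $F$ produces a smooth complex vector bundle on $\mathbb{P}^1$, but not automatically a \emph{holomorphic} one, because the transition datum lives only on the circle and is merely smooth. To promote $\tilde E$ to a holomorphic bundle one needs either a real-analytic boundary condition (to which one must first reduce, e.g.\ by a preliminary Birkhoff-type normalization of the loop $z\mapsto F_z$, or by extending the $\bar\partial$-operator across $\partial D$ via Schwarz reflection), and this preliminary reduction is nontrivial---it is where the analytic content of the theorem actually sits. Second, the closing identification of a $\tau$-equivariant line bundle $(\mathcal{O}(\kappa),\tau)|_D$ with $(\underline{\mathbb{C}},F^\kappa)$ is not ``up to an overall nonzero real scalar'': the fixed locus is a general totally-real line field of Maslov index $\kappa$ over $\partial D$, and matching it to $z^{\kappa/2}\mathbb{R}$ requires changing the trivialization of $\mathcal{O}(\kappa)|_D$ by a nowhere-vanishing \emph{holomorphic} function on $D$. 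That this is possible is exactly the rank-one case of the theorem and must be proved separately (e.g.\ via the Hilbert transform, writing the phase as $e^{u}$ with $u$ of zero winding number and splitting $u$ into boundary values of holomorphic functions on $D$ and $D'$). With these two steps supplied, the argument closes.
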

If the boundary real subbundle $F$ is orientable, which is equivalent to having even total index, by Theorem~\ref{theorem_oh} we can write
\[(E,F)\simeq \oplus_{i=1}^m(\underline{\mathbb{C}},F^{\kappa_i})\oplus_{i=1}^l(\underline{\mathbb{C}}^2,F^{{\kappa_1}_i,{\kappa_2}_i}),\] where $\kappa_i$ and  ${\kappa_1}_i+{\kappa_2}_i$ are even numbers. Hence, each summand is orientable.

For a Riemann-Hilbert pair $(E,F),$ let
\[\Bar{\partial}_{(E,F)}:\Gamma((D,\partial D),(E,F))\rightarrow \Gamma(D, \Omega^{0,1}(E)),
\]
denote the Cauchy-Riemann operator determined by the holomorphic structure on $E$. Let $\det\Bar{\partial}_{(E,F)}$ denote the Fredholm determinant. Abbreviate
\[\bar{\partial}_{\kappa}:=\bar{\partial}_{(\underline{\mathbb{C}},F^{\kappa})}, \quad \bar{\partial}_{\kappa_1,\kappa_2}:=\bar{\partial}_{(\underline{\mathbb{C}}^2,F^{\kappa_1,\kappa_2})}.\]

The following is a special case of Proposition 2.8 from \cite{jakethesis}.

\begin{prop}\label{pr:canor}
Let $(E,F)$ be a spin Riemann-Hilbert pair. Then
$\det\bar{\partial}_{(E,F)}$ admits a canonical orientation. If $\phi:(E,F)\rightarrow (E',F')$ is an isomorphism of spin Riemann-Hilbert pairs, then the induced isomorphism
\[\phi:\det\bar{\partial}_{(E,F)}\rightarrow \det\bar{\partial}'_{(E',F')}\] preserves the canonical orientation. Furthermore, the canonical orientation varies continuously in a family of Cauchy-Riemann operators.
\end{prop}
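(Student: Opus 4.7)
The plan is to construct the canonical orientation via the splitting furnished by Theorem~\ref{theorem_oh}, handle the dependence on the splitting by homotopy, and then bootstrap naturality and continuity from the construction.

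First I would reduce to a standard local model. Because $F$ carries a spin structure, it is orientable, so the total Maslov index $\mu(E,F)=\sum\kappa_i$ is even; hence the odd-index summands $(\underline{\C},F^{\kappa_i})$ appearing in Theorem~\ref{theorem_oh} can be paired off to rewrite
\[
(E,F)\;\simeq\;\bigoplus_{i=1}^{m}(\underline{\C},F^{\kappa_i})\;\oplus\;\bigoplus_{j=1}^{l}(\underline{\C}^{2},F^{\kappa_{1,j},\kappa_{2,j}}),
\]
with each summand orientable. The determinant of a direct sum of Cauchy-Riemann operators is canonically the tensor product of the determinants of the summands, so it suffices to orient $\det\bar\partial_\kappa$ for even $\kappa$ and $\det\bar\partial_{\kappa_{1},\kappa_{2}}$ for $\kappa_{1}+\kappa_{2}$ even from the restricted orientation and spin structure on each summand of $F$.

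Next I would orient each standard model by hand. For $\kappa$ even, the kernel and cokernel of $\bar\partial_\kappa$ consist of explicit polynomial sections (of degree $\kappa/2$ in the kernel when $\kappa\geq 0$, and an analogous cokernel description when $\kappa<0$), and the orientation of the real line bundle $F^\kappa$ together with the spin structure picks out a preferred square-rooted trivialization $z\mapsto z^{\kappa/2}$; this singles out ordered bases of kernel and cokernel and hence an orientation on $\det\bar\partial_\kappa$. For the orientable rank-$2$ models $F^{\kappa_{1},\kappa_{2}}$ with $\kappa_{1}+\kappa_{2}$ even but $\kappa_{i}$ possibly odd, I would deform continuously through a path of spin Riemann-Hilbert pairs to $F^{0,0}$ (or to $F^{1,1}$ in the odd case), where the boundary orientation and spin structure directly specify an ordered frame, and then transport the orientation back using the continuous dependence of $\det\bar\partial$ on the operator.

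I would then prove the canonical orientation is independent of the splitting. Any two splittings of $(E,F)$ into standard summands of the above form can be connected by a path in the space of such decompositions: the space of holomorphic trivializations of $E$ is connected, and within it the space of compatible totally real subbundle splittings preserving the spin structure is connected (once one has pinned down the parity pairing of the $\kappa_i$). Continuous variation of the canonical orientation along the path (see the last clause of the proposition, which I would establish in parallel) forces the two orientations to agree.

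Naturality under an isomorphism $\phi:(E,F)\to(E',F')$ of spin Riemann-Hilbert pairs follows similarly: the group of spin Riemann-Hilbert automorphisms of a fixed standard model is connected (being homotopy equivalent to the loop group of the spin frame bundle), so any $\phi$ is isotopic to an isomorphism of standard models that visibly preserves the chosen frames. Continuity in families is built into the construction: the Cauchy-Riemann determinant is a continuous line bundle over the base of any smooth family, the splitting of Theorem~\ref{theorem_oh} can be made locally continuous, and the homotopies used in the model case are continuous in parameters. The main obstacle is the well-definedness step, namely checking that the orientation produced from a splitting is invariant under the connected group of spin-preserving automorphisms of the standard sums; once this sign computation is done carefully in rank one and rank two, everything else follows by functoriality and continuity, and the whole construction reduces to the general framework of Proposition~2.8 of~\cite{jakethesis}.
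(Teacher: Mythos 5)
The paper itself gives no proof of this proposition; it is stated as a direct citation of Proposition~2.8 of Solomon's thesis~\cite{jakethesis}. So the relevant comparison is with the construction there (and the closely related constructions of de Silva and of Fukaya--Oh--Ohta--Ono in Chapter~8 of~\cite{FOOO}). That construction does \emph{not} go through Oh's splitting theorem. Instead, it degenerates the Riemann--Hilbert pair $(E,F)$ over the disk to a nodal surface $D\cup_{\rm pt}S^2$: the spin structure on $F$ determines, up to homotopy, a trivialization of $F$ over $\partial D$, which in turn determines a homotopy class of degeneration in which all of the topology of $E$ is pushed into a holomorphic bundle over $S^2$ while the piece over $D$ becomes the trivial pair $(\underline\C^n,\underline\R^n)$. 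The determinant then factors as a tensor product of a complex (hence canonically oriented) factor over $S^2$ and the real factor $\Lambda^{\rm top}\R^n$ over $D$, oriented by the orientation of $F$. Naturality and continuity are immediate from this construction because it depends only on the homotopy class of trivialization, which is exactly what the spin structure records.

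Your route through Theorem~\ref{theorem_oh} is a genuinely different strategy, and in principle it can be made to work (the paper itself uses explicit Oh splittings for its computations in Section~\ref{choose_spin_structure}), but as written it has a real gap precisely at the well-definedness step. First, given the spin structure on $F$ and an Oh splitting, it is not automatic how to induce spin structures on the individual summands: Lemma~\ref{lm8.1jake} runs from a spin structure on \emph{two} of the three terms in a short exact sequence to the third, so you must make a discrete choice on one factor before the other is determined, and you then need to show the resulting orientation on $\det\bar\partial_{(E,F)}$ is independent of that choice. Second, ``the space of splittings is connected'' is stated but not justified: the relevant automorphism group is a group of holomorphic maps $D\to GL(n,\C)$ whose boundary values preserve $F$ and its spin structure, and identifying it with ``the loop group of the spin frame bundle'' is not correct as stated (the interior holomorphy is an essential constraint, and the group can have more than one component before you impose the spin condition). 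Third, the deformation of $F^{\kappa_1,\kappa_2}$ to $F^{0,0}$ or $F^{1,1}$ must itself be shown to be through spin Riemann--Hilbert pairs and to be canonical up to homotopy, which is exactly the content one is trying to establish. None of these gaps is obviously fatal, but each needs a concrete argument; the degeneration construction of~\cite{jakethesis} sidesteps them by never choosing a splitting at all, which is why the paper defers to it rather than arguing via Theorem~\ref{theorem_oh}.
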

The following is a consequence of the proof of a special case of Lemma 8.1 from \cite{jakethesis}.
\begin{lm}\label{lm8.1jake}
Suppose
\[
0\rightarrow V' \rightarrow V\rightarrow V''\rightarrow 0
\]
is a short exact sequence of real vector bundles over a base $B.$ Then an orientation on any two of $V, V' , V''$ naturally induces an orientation on the third. A spin structure on any two of $V, V' , V''$ naturally induces a spin structure on the third.

Suppose now that $V',V'',$ are oriented and spin and equip $V = V' \oplus V''$ with the induced orientation and spin structure. If $\xi'_1,\ldots,\xi'_{k'}$ (resp. $\xi''_1,\ldots,\xi''_{k''})$ is an oriented frame of $V'$ (resp. $V''$) that lifts to the associated spin bundle, then $\xi'_1 \oplus 0,\ldots,\xi'_{k'} \oplus 0, 0 \oplus \xi''_1,\ldots,0\oplus \xi''_{k''}$ is an oriented frame of $V' \oplus V''$ that lifts to the associated spin bundle.
\end{lm}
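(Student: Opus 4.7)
The plan is to reduce everything to the direct sum situation by choosing a splitting, and then to use the standard block-diagonal maps on structure groups. First, choose any splitting $s : V'' \to V$ of the short exact sequence, which identifies $V$ with $V' \oplus V''$. The space of such splittings is an affine space modeled on $\mathrm{Hom}(V'', V')$, hence contractible. For the orientation part, I would simply decree that, with respect to a splitting, an oriented frame of $V' \oplus V''$ is the concatenation of an oriented frame of $V'$ followed by an oriented frame of $V''$; given orientations on any two of $V, V', V''$, this rule pins down the orientation of the third. Independence from the chosen splitting follows from contractibility of the space of splittings: a one-parameter family of splittings deforms the induced orientation continuously, but orientations form a discrete set, so the induced orientation is constant.

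For the spin part, recall that a spin structure on an oriented rank-$k$ real vector bundle $W$ is a principal $\mathrm{Spin}(k)$-bundle $\mathrm{Spin}(W) \to B$ together with a double cover $\mathrm{Spin}(W) \to \mathrm{SO}(W)$ of the oriented orthonormal frame bundle, restricting fiberwise to the standard double cover $\mathrm{Spin}(k) \to \mathrm{SO}(k)$. The block-diagonal inclusion $\mathrm{SO}(k') \times \mathrm{SO}(k'') \hookrightarrow \mathrm{SO}(k'+k'')$ lifts canonically to a homomorphism
\[
\bigl(\mathrm{Spin}(k') \times \mathrm{Spin}(k'')\bigr)/\{\pm(1,1)\} \longrightarrow \mathrm{Spin}(k'+k''),
\]
where the quotient identifies $(-1,-1)$ with $(1,1)$. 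Given spin structures on two of $V, V', V''$, I would define the spin structure on the third via the associated bundle construction using this lift: concretely, $\mathrm{Spin}(V' \oplus V'')$ is obtained by extending the structure group of $\mathrm{Spin}(V') \times_B \mathrm{Spin}(V'')$ along the displayed homomorphism. In the mixed cases (spin on $V$ and $V'$, or on $V$ and $V''$), the spin structure on the missing summand is recovered as the unique one compatible with this rule. Contractibility of the space of splittings again ensures independence of choices, since spin structures on a fixed oriented bundle form a torsor over $H^1(B; \mathbb{Z}/2)$, which is discrete, so continuous deformation is constant.

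Finally, the compatibility claim for direct sums is immediate from the construction: if $\xi'_1,\ldots,\xi'_{k'}$ is an oriented frame of $V'$ that lifts to a section of $\mathrm{Spin}(V')$ over its domain, and similarly for $\xi''_1,\ldots,\xi''_{k''}$, then the pair defines a point of $\mathrm{Spin}(V') \times_B \mathrm{Spin}(V'')$. Its image under the extension-of-structure-group map produces, by definition, a spin frame in $\mathrm{Spin}(V' \oplus V'')$ lying over the concatenated oriented orthonormal frame $\xi'_1 \oplus 0,\ldots,\xi'_{k'}\oplus 0,\, 0 \oplus \xi''_1,\ldots,0 \oplus \xi''_{k''}$ in $\mathrm{SO}(V' \oplus V'')$. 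The main obstacle I expect is bookkeeping around the $\{\pm 1\}$ ambiguity intrinsic to spin lifts — one must check that the inclusion $\mathrm{Spin}(k') \times \mathrm{Spin}(k'') \to \mathrm{Spin}(k'+k'')$ is well-defined only after quotienting by the diagonal center, and that this does not obstruct the definition. An alternative, perhaps cleaner, approach would be to phrase the whole argument in the language of Clifford algebras, where the map $\mathrm{Cl}(V') \,\widehat{\otimes}\, \mathrm{Cl}(V'') \to \mathrm{Cl}(V' \oplus V'')$ is tautologically an isomorphism of $\mathbb{Z}/2$-graded algebras, and the spin compatibility is visible directly.
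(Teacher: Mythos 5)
The paper does not give its own proof of this lemma; it simply cites it as ``a consequence of the proof of a special case of Lemma 8.1 from \cite{jakethesis},'' so there is no argument in the paper for me to compare yours against. Taken on its own terms, your proposal is the standard textbook argument and is essentially correct: reduce to a splitting, observe contractibility of the space of splittings plus discreteness of orientations and of $H^1(B;\Z/2)$-torsors to get independence of the splitting, and build the spin structure on the sum via the lift of the block-diagonal inclusion to spin groups. The compatibility of frames in the third paragraph is indeed tautological from the fiber-product-and-extend-structure-group construction, which is exactly what the statement is used for in the paper.

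Two places could use tightening. First, the map you display, $\bigl(\mathrm{Spin}(k')\times\mathrm{Spin}(k'')\bigr)/\{\pm(1,1)\}\to\mathrm{Spin}(k'+k'')$, is the correct identification of the preimage of block-diagonal $\mathrm{SO}(k')\times\mathrm{SO}(k'')$ inside $\mathrm{Spin}(k'+k'')$, but $\mathrm{Spin}(V')\times_B\mathrm{Spin}(V'')$ is a $\mathrm{Spin}(k')\times\mathrm{Spin}(k'')$-bundle, not a bundle for the quotient group; you should either first pass to the associated quotient bundle or, more simply, extend structure group directly along $\mathrm{Spin}(k')\times\mathrm{Spin}(k'')\to\mathrm{Spin}(k'+k'')$, which factors through your displayed homomorphism. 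Second, in the mixed cases (spin on $V$ and $V'$, say) you assert that the spin structure on $V''$ is ``the unique one compatible,'' but this needs a one-line justification: orientability of $V''$ and vanishing of $w_2(V'')$ follow from the Whitney sum formula, and then the direct-sum map on spin structures is equivariant for the addition map $H^1(B;\Z/2)\times H^1(B;\Z/2)\to H^1(B;\Z/2)$, so fixing two of the three determines the third uniquely. Neither of these is a serious gap, but both should be spelled out to make the argument airtight.
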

In light of Lemma~\ref{lm8.1jake}, the direct sum of spin Riemann-Hilbert pairs is naturally a spin Riemann-Hilbert pair.
The following can be deduced from the proof of Proposition 8.4 from \cite{jakethesis}.
\begin{lm}\label{orientation_on_ker}
Let $(E,F)$ be a spin Riemann-Hilbert pair. If $(E,F)$ splits as a direct sum of spin Riemann-Hilbert pairs $(E',F')\oplus (E'',F'')$, then the isomorphism
\[\det\Bar{\partial}_{(E,F)}\simeq\det\Bar{\partial}_{(E',F')}\otimes \det\Bar{\partial}_{(E'',F'')} \] is orientation-preserving.
\end{lm}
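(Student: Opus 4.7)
The plan is to reduce to the case of standard pairs via Theorem~\ref{theorem_oh} and then exploit continuity of canonical orientations (Proposition~\ref{pr:canor}) together with the compatibility of spin structures under direct sums guaranteed by Lemma~\ref{lm8.1jake}. Specifically, I would first recall that the canonical orientation on $\det\bar{\partial}_{(E,F)}$ of a general spin Riemann-Hilbert pair is \emph{constructed} by choosing a splitting of $(E,F)$ into the elementary pieces $(\underline{\C},F^\kappa)$ with $\kappa$ even and $(\underline{\C}^2,F^{\kappa_1,\kappa_2})$ with $\kappa_1+\kappa_2$ odd, fixing canonical orientations on these pieces, and taking the tensor product under $\det\bar\partial_{\oplus} \simeq \bigotimes \det\bar\partial_i$. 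So the statement is essentially built into the definition, once we verify compatibility of the spin structures.

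The main steps would be as follows. First, apply Theorem~\ref{theorem_oh} to each summand to obtain decompositions $(E',F') \simeq \bigoplus_{i=1}^{p}(\underline{\C},F^{\kappa'_i}) \oplus \bigoplus_{j=1}^{q}(\underline{\C}^2,F^{\kappa'_{1,j},\kappa'_{2,j}})$ and similarly for $(E'',F'')$ into elementary pieces indexed by $i,j$ with primes $''$. The direct sum of these splittings yields an abstract splitting of $(E,F)$ into the same elementary pieces, and Lemma~\ref{lm8.1jake} implies that the spin structures on $F$ coming from this splitting and from the given spin structure on $F$ agree (since they are both built from the spin structures on $F'$ and $F''$ by the direct sum procedure). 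Second, by Proposition~\ref{pr:canor}, the canonical orientation depends only on the spin structure and varies continuously, so it is invariant under the deformation of $\bar{\partial}_{(E,F)}$ through Cauchy-Riemann operators compatible with the given splitting that carries it to the direct sum of standard operators on the elementary pieces.

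Third, with both sides expressed as tensor products over the common indexing set of elementary pieces, the isomorphism $\det\bar{\partial}_{(E,F)} \simeq \det\bar{\partial}_{(E',F')} \otimes \det\bar{\partial}_{(E'',F'')}$ becomes, after rearranging tensor factors, the identity on $\bigotimes_i \det\bar{\partial}_{\kappa'_i} \otimes \bigotimes_j \det\bar\partial_{\kappa'_{1,j},\kappa'_{2,j}} \otimes \bigotimes_i \det\bar\partial_{\kappa''_i} \otimes \bigotimes_j \det\bar\partial_{\kappa''_{1,j},\kappa''_{2,j}}$. The canonical orientations on both sides are, by construction, the tensor product of the chosen orientations on the elementary factors, so the isomorphism is orientation-preserving.

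The main obstacle will be controlling the signs introduced by the Koszul rearrangement of tensor factors when passing from the splitting adapted to $F = F' \oplus F''$ to the refined splitting into elementary pieces, and making sure the corresponding rearrangement of oriented frames (as in the second half of Lemma~\ref{lm8.1jake}) produces no extra sign. I would resolve this by choosing the ordering of the elementary pieces to list all summands of $F'$ before all summands of $F''$, so that the reorderings on the two sides of the isomorphism agree, and by invoking Lemma~\ref{lm8.1jake} to confirm that the spin-framed orientations on $F = F' \oplus F''$ and $F' \oplus F''$ as a sum of elementary pieces coincide without any extra sign. The continuity and splitting arguments are exactly the ones used in the proof of Proposition~8.4 of~\cite{jakethesis}.
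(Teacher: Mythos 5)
The paper does not give a proof of this lemma; it simply records that the statement ``can be deduced from the proof of Proposition 8.4 from \cite{jakethesis}.'' Your proposal is a plausible reconstruction of that argument and it invokes exactly the same references (Oh's splitting theorem, the canonical orientation of Proposition~\ref{pr:canor}, the direct-sum compatibility of orientations and spin structures in Lemma~\ref{lm8.1jake}, and ultimately the argument of Proposition~8.4 of~\cite{jakethesis}). Two small corrections. First, your description of the orientable elementary pieces has the parity reversed: in the grouping of Oh's normal form into orientable blocks, the rank-$2$ pieces $(\underline{\C}^2,F^{\kappa_1,\kappa_2})$ have $\kappa_1,\kappa_2$ both odd and hence $\kappa_1+\kappa_2$ \emph{even} (as the paper states after Theorem~\ref{theorem_oh}); a piece with $\kappa_1+\kappa_2$ odd would have non-orientable real boundary condition and could not carry a spin structure. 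Second, the intermediate ``deformation through Cauchy-Riemann operators'' is not really needed: Oh's theorem already provides an isomorphism of Riemann-Hilbert pairs to the split model, and once the spin structures on the elementary summands are chosen so that this is an isomorphism of \emph{spin} Riemann-Hilbert pairs, Proposition~\ref{pr:canor} directly transports the canonical orientation; the only point where continuity genuinely enters is inside the construction of the canonical orientation itself, which you are entitled to take as given. With those adjustments, the argument -- refine both sides to a common decomposition into orientable elementary pieces with concatenated ordering so that no Koszul signs arise, and compare the product orientations factor by factor -- is sound and is indeed the content of the cited proof.
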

\begin{rem}\label{rem_spinstructure}
An oriented real line bundle has a canonical spin structure. So, a one-dimensional Riemann-Hilbert pair with an orientation on the real bundle determines a spin Riemann-Hilbert pair. We will use this implicitly below.
\end{rem}

The proof of the following lemma is given as a part of the proof of Theorem C.4.1 in \cite{mcduffsalamon2012}.
\begin{lm}\label{lemmakerbasis}
If $\kappa\ge -1$, then $\ker\bar{\partial}_{\kappa}$ can be written explicitly as
\[\ker\bar{\partial}_{\kappa}=\{\sum_{j=0}^{\kappa}a_jz^j| \ a_j\in\mathbb{C} \text{ and } a_j=\bar{a}_{\kappa-j}\} \]
and $\coker(\bar{\partial}_{\kappa}) = 0.$ In particular, $\dim\ker\bar{\partial}_{\kappa} = \kappa +1.$
\end{lm}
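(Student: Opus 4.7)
The plan is to solve the boundary value problem defining $\ker\bar\partial_\kappa$ explicitly by Taylor expansion on $D$ combined with Fourier analysis on $\partial D$, and then deduce the cokernel statement from the Fredholm index of $\bar\partial_\kappa$.

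First, an element of $\ker\bar\partial_\kappa$ is a holomorphic function $f:D\to\C$, smooth up to the boundary, satisfying $f(z)\in z^{\kappa/2}\R$ for every $z\in\partial D$. Since $\bar z=z^{-1}$ on $\partial D$, this reality condition is equivalent to $f(z)=z^{\kappa}\overline{f(z)}$ on $\partial D$. I would expand $f(z)=\sum_{j\ge 0}a_jz^j$ as a power series (with smooth boundary values, so the expansion converges uniformly on $\partial D$), compute $\overline{f(z)}=\sum_{j\ge 0}\bar a_jz^{-j}$ on the unit circle, and match Fourier coefficients of the identity $f(z)=z^\kappa\overline{f(z)}$. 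Setting $a_j=0$ for $j<0$, this yields $a_j=\bar a_{\kappa-j}$ for all $j\in\Z$; holomorphy already gave $a_j=0$ for $j<0$, so the symmetry forces $a_j=0$ also for $j>\kappa$, producing the claimed formula.

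Second, I would count real dimensions. For $\kappa\ge 0$, the constraints $a_j=\bar a_{\kappa-j}$ identify each pair $(a_j,a_{\kappa-j})$ with one complex parameter, except that when $\kappa$ is even the middle coefficient $a_{\kappa/2}$ contributes a single real parameter. In either parity this gives $\dim_\R\ker\bar\partial_\kappa=\kappa+1$. For $\kappa=-1$ the indexing set is empty, so the kernel is trivial, again of real dimension $\kappa+1=0$.

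Finally, the standard Riemann-Roch type index formula for a rank-one Riemann-Hilbert pair with Maslov number $\kappa$ gives $\operatorname{ind}_\R\bar\partial_\kappa=\chi(D)+\kappa=1+\kappa$, and combining this with the kernel dimension computed above yields $\dim_\R\coker\bar\partial_\kappa=0$. There is no real obstacle here: the whole argument is a direct application of the Schwarz reflection principle packaged as Fourier matching on $S^1$. The only minor point requiring care is that the Taylor coefficients of $f$ at $0$ really do agree with its Fourier coefficients on $\partial D$, which follows from smoothness of $f$ on $\overline D$ and standard Hardy space considerations.
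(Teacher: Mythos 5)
Your proof is correct, and the kernel computation is precisely the argument of Theorem~C.4.1 in McDuff--Salamon, which is the reference the paper cites in lieu of giving a proof: write the holomorphic function as a power series, translate the totally real boundary condition into the reality relation $f(z)=z^\kappa\overline{f(z)}$ on $|z|=1$, and match Fourier coefficients to obtain both the polynomial truncation and the conjugate symmetry. For the cokernel, you appeal to the real Riemann--Roch index formula $\operatorname{ind}_\R\bar\partial_\kappa = \chi(D)+\mu = 1+\kappa$, whereas the cited reference establishes surjectivity directly (again by reflection/doubling to $\cp^1$, where $\operatorname{coker}$ identifies with the real part of $H^1(\cp^1,\mathcal O(\kappa))$, vanishing for $\kappa\ge -1$). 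Your shortcut is perfectly valid so long as the index formula is taken from an independent source -- proofs of the index formula that proceed by homotopy invariance plus explicit computation on the model operators $\bar\partial_\kappa$ would be circular here, but the doubling proof of the index formula is not. The remaining analytic point you flag (Taylor coefficients at $0$ agree with Fourier coefficients on $\partial D$) is indeed routine Hardy-space material for $f$ smooth on $\overline D$.
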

Below, we often use the fact that if $(E,F)$ is a Riemann-Hilbert pair with $\coker \bar{\partial}_{(E,F)} = 0,$ then an orientation of $\det \bar{\partial}_{(E,F)}$ is the same as an orientation of $\ker \bar{\partial}_{(E,F)}.$
\begin{lm}\label{remarkker0}
Evaluation at 1 defines an orientation-preserving isomorphism
\[
f_0:\ker\bar{\partial}_0\rightarrow F^0_1.
\]
\end{lm}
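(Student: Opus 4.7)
The plan is to verify the statement by directly unwinding the two sides. By Lemma~\ref{lemmakerbasis} with $\kappa = 0,$ the kernel $\ker\bar{\partial}_0$ consists of the constants $z \mapsto a_0$ with $a_0 = \bar{a}_0 \in \R,$ and $\coker \bar{\partial}_0 = 0.$ Hence evaluation at $z=1$ is the $\R$-linear bijection $a_0 \mapsto a_0$ from $\ker\bar{\partial}_0$ to $F^0_1 = \R,$ and the vanishing of the cokernel gives a natural isomorphism $\det \bar{\partial}_0 \simeq \Lambda^{\max} \ker \bar{\partial}_0.$ So an orientation on $\det\bar{\partial}_0$ is the same thing as an orientation on the real line $\ker\bar{\partial}_0,$ and the question reduces to comparing this with the standard orientation on $F^0_1 = \R$ via $f_0.$

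Next, I would unpack the canonical orientation supplied by Proposition~\ref{pr:canor} in the special case of the trivial pair $(\underline{\C}, F^0)$ with $F^0 = \underline{\R}$ endowed with its standard orientation and its induced spin structure (Remark~\ref{rem_spinstructure}). The construction of the canonical orientation in \cite{jakethesis} is set up so that the simplest pair $(\underline{\C}, \underline{\R})$ serves as the normalization; in particular, the orientation on $\det\bar{\partial}_0$ is characterized by declaring the positively oriented basis $1 \in \ker\bar{\partial}_0$ to correspond to the positive direction $+1 \in F^0_1.$ Under this convention, $f_0$ is orientation-preserving by definition.

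The main, and essentially only, obstacle is the bookkeeping step of matching conventions: one has to confirm that the sign conventions (choice of orientation on $F^0 = \underline{\R}$, choice of canonical spin structure on an oriented real line bundle, and choice of isomorphism $\det \bar{\partial}_0 \simeq \Lambda^{\max}\ker\bar{\partial}_0$ induced by $\coker\bar{\partial}_0 = 0$) used here agree with the normalization in~\cite{jakethesis} underlying Proposition~\ref{pr:canor}. Since continuity of the canonical orientation in families, additivity under direct sums (Lemma~\ref{orientation_on_ker}), and isomorphism invariance (Proposition~\ref{pr:canor}) all pin down the orientation uniquely once the trivial pair is fixed, no further analytic input is needed and the statement follows.
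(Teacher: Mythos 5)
Your proposal is correct and takes essentially the same approach as the paper, whose entire proof is the one-line citation that the statement follows from the definition of the orientation on $\det\bar{\partial}_0$ in Section~2 of~\cite{jakethesis}. You simply unpack what that definition says, identifying $\ker\bar{\partial}_0$ with the constants via Lemma~\ref{lemmakerbasis} and observing that the canonical orientation is normalized precisely so that evaluation at a boundary point is orientation-preserving on the trivial pair.
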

\begin{proof}
This follows from the definition of the orientation on $\det(\partial_0)$ given in Section 2 of~\cite{jakethesis}.
\end{proof}

The following is Lemma A.2.1 from \cite{smith2020monotone}.
\begin{lm}\label{lemmaA21}
Let $(E,F)$ be a rank 1 Riemann-Hilbert pair of Maslov index $2$. Evaluation at $0$ and $1$ defines an isomorphism
\[
f_2:\ker\bar{\partial}_{(E,F)}\rightarrow E_0\oplus F_1.
\]
Moreover, for any choice of orientation on $F,$ this isomorphism is orientation-preserving if the codomain is oriented by the complex structure on $E_0$ and the orientation on $F_1$.
\end{lm}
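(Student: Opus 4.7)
The plan is to reduce to the standard model $(\underline{\mathbb{C}}, F^2)$ via Oh's splitting theorem, compute the kernel explicitly, and then match orientations by a direct calculation.

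\emph{Reduction.} Since $(E, F)$ has Maslov index $2$, Theorem~\ref{theorem_oh} gives an isomorphism of Riemann--Hilbert pairs $(E, F) \simeq (\underline{\mathbb{C}}, F^2)$, where $F^2_z = z\mathbb{R}$. The chosen orientation on $F$ transports to an orientation on $F^2$, which by Remark~\ref{rem_spinstructure} is equivalent to a spin structure. Naturality of canonical orientations (Proposition~\ref{pr:canor}), together with the compatibility of evaluation under holomorphic bundle isomorphism, reduces the problem to the case $(\underline{\mathbb{C}}, F^2)$ with a chosen orientation.

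\emph{Explicit kernel and bijectivity.} By Lemma~\ref{lemmakerbasis}, $\coker \bar{\partial}_2 = 0$ and
\[
\ker \bar{\partial}_2 = \{\, a + b\,z + \bar{a}\,z^2 \;:\; a \in \mathbb{C},\; b \in \mathbb{R}\,\},
\]
a real $3$-dimensional space matching $\dim_{\mathbb{R}}(E_0 \oplus F^2_1) = 2+1$. Evaluation at $0$ and $1$ sends $f$ to $\bigl(a,\; 2\operatorname{Re}(a) + b\bigr) \in \mathbb{C} \oplus \mathbb{R}$, which is visibly a linear bijection with explicit inverse, giving the abstract isomorphism claimed.

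\emph{Orientation comparison.} Fix the orientation on $F^2$ so that the section $z \mapsto z$, which is nowhere vanishing on $\partial D$, is positive; in particular $1 \in F^2_1 = \mathbb{R}$ is positive. Order the kernel basis as $(f_1, f_2, f_3) = (1+z^2,\; i(1-z^2),\; z)$, corresponding to the parameters $(\operatorname{Re} a, \operatorname{Im} a, b)$. In the ordered basis $\bigl((1,0),\,(i,0),\,(0,1)\bigr)$ of $E_0 \oplus F^2_1$, the evaluation map is upper-triangular with $+1$'s on the diagonal, hence orientation-preserving \emph{relative to the basis} $(f_1,f_2,f_3)$. It then remains to verify that $(f_1,f_2,f_3)$ represents the canonical orientation on $\ker \bar{\partial}_2$. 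Once this is established, the general case follows from invariance under reversal of the orientation on $F$: reversing simultaneously flips the orientation on $F_1$ and the spin structure on $F$, and hence flips the canonical orientation on $\det \bar{\partial}_{(E,F)}$, so both sides of the identification change sign in tandem.

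\emph{Main obstacle.} Pinning down the canonical orientation in terms of the ordered basis $(f_1,f_2,f_3)$ is the crux of the argument. The approach I would take is to form the auxiliary spin Riemann--Hilbert pair $(\underline{\mathbb{C}}^2, F^0 \oplus F^2)$, whose determinant canonical orientation splits as a tensor product by Lemma~\ref{orientation_on_ker}. Lemma~\ref{remarkker0} pins down the $F^0$ factor via evaluation at $1$. One then connects $(\underline{\mathbb{C}}^2, F^0 \oplus F^2)$ through a continuous family of spin Riemann--Hilbert structures of total Maslov index $2$ to a model in which the full canonical orientation is directly computable, and transports the orientation back using continuity in families (Proposition~\ref{pr:canor}). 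The careful bookkeeping of signs across this deformation — or, alternatively, unpacking the construction of canonical orientations in~\cite{jakethesis} directly on the standard pair $(\underline{\mathbb{C}}, F^2)$ — is where the genuine work lies.
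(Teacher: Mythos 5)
The paper does not prove this statement: it quotes Lemma A.2.1 of \cite{smith2020monotone} verbatim, and then \emph{uses} it to determine the orientation on $\ker\bar\partial_2$ (Lemma~\ref{lemmaind2}) and, from there, on $\ker\bar\partial_{1,1}$ (Lemma~\ref{lm11bundle}). There is therefore no internal proof to compare against, and invoking either of those later lemmas while proving Lemma~\ref{lemmaA21} would be circular.

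Your reduction to $(\underline{\C},F^2)$, the explicit description of the kernel, and the bijectivity computation are all fine. The genuine gap is the claim that $(1+z^2,\,i(1-z^2),\,z)$ is a positively oriented basis of $\ker\bar\partial_2$ for the canonical orientation, which you explicitly defer. The auxiliary route you sketch does not close it: splitting $(\underline{\C}^2,F^0\oplus F^2)$ via Lemmas~\ref{orientation_on_ker} and~\ref{remarkker0} only shows that the canonical orientation of $\det\bar\partial_{0,2}$ equals a known factor ($\det\bar\partial_0$) tensored with the unknown one ($\det\bar\partial_2$), giving independent access to neither; and there is no rank-two, Maslov-index-two model with directly computable canonical orientation to deform to, since the only elementary input available before Lemma~\ref{lemmaA21} is Lemma~\ref{remarkker0} for the index-zero pair, and every candidate endpoint ($F^{0,2}$, $F^{1,1}$, $\ldots$) carries the same unknown. (The paper's deformation from $F^{1,1}$ to $F^{0,2}$ in Lemma~\ref{lm11bundle} runs in the opposite direction and presupposes Lemma~\ref{lemmaA21}.) What the proof actually requires is to unwind the FOOO/\cite{jakethesis} definition of the canonical orientation: pinch the boundary of $D$, degenerating $(\underline{\C},F^2)$ to a ghost disk carrying the trivial pair $(\underline{\C},\underline{\R})$ attached at an interior node to a $\cp^1$-bubble carrying $\mathcal{O}(1)$; orient the kernel of the nodal operator using the standard orientation on $\R$, the complex orientation of $H^0(\cp^1;\mathcal{O}(1))$, and the complex orientation of the matching fibre at the node; and transport this orientation along the gluing family to the smooth pair. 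That bubbling computation is exactly what Smith carries out in~\cite{smith2020monotone}, and it is the step missing from your proposal.
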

We choose an orientation on $F^2$ such that the frame $z$ is positively oriented. By Remark~\ref{rem_spinstructure} this choice of orientation determines a spin structure on $F^2.$

\begin{lm}\label{lemmaind2}
The orientation on $\ker\bar{\partial}_2$ is determined by the ordered basis
\[
(z^2+1), \quad i(1-z^2), \quad z.
\]
\end{lm}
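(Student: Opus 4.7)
The plan is to apply Lemma~\ref{lemmaA21} directly. By Lemma~\ref{lemmakerbasis}, the real vector space $\ker\bar\partial_2$ consists of polynomials $a_0+a_1z+a_2z^2$ with $a_1\in\R$ and $a_2=\bar a_0$, so it is $3$-dimensional, and the three elements $z^2+1$, $i(1-z^2)$, $z$ indeed form a basis (corresponding to $a_0=1$, $a_0=i$, and $a_1=1$, respectively). Since $\coker\bar\partial_2=0$, an orientation of $\det\bar\partial_2$ is the same as an orientation of $\ker\bar\partial_2$.

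Next I would compute the image of this ordered basis under the isomorphism of Lemma~\ref{lemmaA21}, namely evaluation at $0$ and $1$ into $E_0\oplus F^2_1$. The values at $z=0$ are $1,\, i,\, 0$, and the values at $z=1$ are $2,\, 0,\, 1$. The codomain $E_0\oplus F^2_1=\C\oplus\R$ is oriented using the complex structure on $E_0$ and the chosen orientation on $F^2_1$; by the convention fixed just before the lemma (the frame $z$ of $F^2$ is declared positively oriented), $F^2_1=1\cdot\R$ is positively oriented by $+1$, so an oriented basis of $E_0\oplus F^2_1$ is
\[
(1,0),\qquad (i,0),\qquad (0,1).
\]

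Expressing the images $(1,2),\,(i,0),\,(0,1)$ in this oriented basis gives the change-of-basis matrix
\[
\begin{pmatrix} 1 & 0 & 0 \\ 0 & 1 & 0 \\ 2 & 0 & 1 \end{pmatrix},
\]
whose determinant equals $1>0$. By Lemma~\ref{lemmaA21}, the evaluation isomorphism is orientation-preserving, so the ordered basis $(z^2+1,\,i(1-z^2),\,z)$ gives the canonical orientation on $\ker\bar\partial_2$, and hence on $\det\bar\partial_2$. There is no real obstacle here; the only thing to be careful about is bookkeeping the orientation convention on $F^2$, which is what makes the sign come out positive rather than negative.
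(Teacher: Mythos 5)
Your proof is correct and follows essentially the same approach as the paper: apply Lemma~\ref{lemmakerbasis} to identify $\ker\bar\partial_2$, then use the evaluation isomorphism of Lemma~\ref{lemmaA21} at $0$ and $1$, compute the images $(1,2)$, $(i,0)$, $(0,1)$, and check that they form an oriented basis of $\C\oplus F^2_1$. The paper simply asserts that these three vectors form an oriented basis without writing out the determinant, whereas you spell out the change-of-basis matrix explicitly; the content is identical.
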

\begin{proof}
By Lemma~\ref{lemmakerbasis} we have
\[\ker\bar{\partial}_2=span_{\R}\{(z^2+1),  i(1-z^2), z\}.\]
Consider the map \[f_2:\ker\bar{\partial}_2\rightarrow \underline{\C}_0\oplus F_1^2,\]
from Lemma~\ref{lemmaA21}. Orient the fiber $\underline{\C}_0$ by its complex structure and the fiber $F^2_1 = \R \subset \underline\C_1$ by the basis $1$.  By Lemma~\ref{lemmaA21} the map $f_2$ is orientation-preserving. Since
\[
f_2((z^2+1))=(1,2),\quad f_2(i(1-z^2))=(i,0), \quad f_2(z)=(0,1),
\]
is an oriented basis, it follows that the basis
\[
(z^2+1), \quad i(1-z^2), \quad z
\]
determines the orientation on $\ker\bar{\partial}_2.$
\end{proof}
We choose the orientation on $F^{1,1}$ such that the frame \[\zeta(z)=\begin{pmatrix}
\frac{z+1}{4}\\
\frac{-i(1-z)}{4}
\end{pmatrix}, \quad \eta(z)=\begin{pmatrix}
\frac{i(1-z)}{4}\\
\frac{z+1}{4}
\end{pmatrix}\]
is positively oriented. In addition, we choose the spin structure on $F^{1,1}$ such that this frame can be lifted to the associated double cover of the frame bundle.
\begin{lm}\label{lm11bundle} Evaluation at zero
\[f_{1,1}:\ker\Bar{\partial}_{1,1}\rightarrow \underline{\C}^2_0,\]
defines an isomorphism. If the codomain is oriented by the complex structure on $\underline{\C}^2_0,$ this isomorphism is orientation-preserving.
\end{lm}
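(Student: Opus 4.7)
The plan is to identify $\ker \bar{\partial}_{1,1}$ explicitly using Lemma~\ref{lemmakerbasis}, construct a basis adapted to the framing of $F^{1,1}$, and then reduce the orientation statement to a concrete determinant computation at $z=0$. First I would observe that by Lemma~\ref{lemmakerbasis}, an element of $\ker \bar{\partial}_{1,1}$ is a pair $(p_1(z), p_2(z))$ with $p_j(z) = a_j + \bar{a}_j z$ for $a_j \in \C$, and $f_{1,1}$ sends this pair to $(a_1, a_2) \in \underline{\C}^2_0$. This map is $\R$-linear and injective (since $a_j = 0$ forces $p_j \equiv 0$), and both sides are $4$-real-dimensional, so it is an isomorphism.

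For the orientation comparison, I would first note that the given frame vectors $\zeta, \eta$ already extend as holomorphic sections of $\underline{\C}^2$ over $D$, so $\zeta, \eta \in \ker \bar{\partial}_{1,1}$. Next I would introduce two additional elements,
\[
\tilde{\zeta}(z) := \frac{1}{4}\bigl(z+1,\ i(1-z)\bigr), \qquad \tilde{\eta}(z) := \frac{1}{4}\bigl(-i(1-z),\ z+1\bigr),
\]
and verify that their boundary values lie in $F^{1,1}$ by the same check used for $\zeta, \eta$ (expanding $z = e^{i\theta}$ and factoring $e^{i\theta/2}$). Computing values at $z=0$ shows $\zeta, \eta, \tilde{\zeta}, \tilde{\eta}$ are $\R$-linearly independent, hence a basis of $\ker \bar{\partial}_{1,1}$.

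The hard part will be to justify that the ordered basis $(\zeta, \eta, \tilde{\zeta}, \tilde{\eta})$ is \emph{positively} oriented with respect to the canonical orientation on $\det \bar{\partial}_{1,1}$ determined by the spin structure on $F^{1,1}$. I would handle this via Proposition~\ref{pr:canor}: connect $(\underline{\C}^2, F^{1,1})$, through a one-parameter family of spin Riemann-Hilbert pairs with constant partial indices $\{1,1\}$, to a configuration in which the orientation on the kernel can be read off directly via Lemmas~\ref{remarkker0}, \ref{lemmaA21}, and \ref{orientation_on_ker} applied to the natural splitting appearing in the limit. The continuity assertion of Proposition~\ref{pr:canor} then transfers the sign from a positively oriented limiting basis back to $(\zeta, \eta, \tilde{\zeta}, \tilde{\eta})$.

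Granting that step, the conclusion reduces to a determinant computation. The basis evaluates at $z=0$ to the vectors $(1,-i)/4$, $(i,1)/4$, $(1,i)/4$, $(-i,1)/4$; expressing these in the positively oriented $\R$-basis $(1,0), (i,0), (0,1), (0,i)$ of $\underline{\C}^2_0 \simeq \C^2$ yields a $4 \times 4$ real matrix whose determinant equals $-1/64 < 0$. Hence $f_{1,1}$ sends a positively oriented basis of $\ker \bar{\partial}_{1,1}$ to a negatively oriented basis of $\underline{\C}^2_0$, and so is orientation-reversing.
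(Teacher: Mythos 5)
Your outline follows the same general strategy as the paper---use Lemma~\ref{lemmakerbasis} to identify the kernel, degenerate the boundary condition to a splittable one, apply Proposition~\ref{pr:canor} to transport the orientation, and finish with an explicit determinant at $z=0$. The determinant computation at the end is correct (the value $-1/64$ checks out), and the basis $(\zeta,\eta,\tilde\zeta,\tilde\eta)$ you introduce is indeed a basis of $\ker\bar\partial_{1,1}$, related to the paper's $(v_{1,1},\ldots,v_{4,1})$ by an invertible change of basis.

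However, the crucial step is not carried out: you explicitly write ``Granting that step'' and never verify that $(\zeta,\eta,\tilde\zeta,\tilde\eta)$ is positively oriented with respect to the canonical orientation on $\det\bar\partial_{1,1}$ determined by the chosen spin structure on $F^{1,1}$. That verification is the entire technical content of the paper's proof, and it is not a routine application of the cited lemmas: one has to produce a concrete family of boundary conditions $A_t$ connecting $F^{1,1}$ to $F^{0,2}$, check that the induced orientations \emph{and} spin structures on $F(1)$ and $F(0)$ agree with the prescribed ones, exhibit holomorphic isomorphisms $B(t,z)$ transporting the kernel basis, and---because the transported basis degenerates as $t\to 0$---rescale it (as the paper does with $u_{i,t}=t\,w_{i,t}$) before the limit can be taken and compared against the split orientation via Lemmas~\ref{remarkker0}, \ref{lemmaind2}, and \ref{orientation_on_ker}. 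You mention none of this. Moreover, your phrase ``a one-parameter family of spin Riemann-Hilbert pairs with constant partial indices $\{1,1\}$'' is in tension with your own plan: the lemmas you propose to read the orientation off of require a splitting into even-index rank-one factors ($F^0\oplus F^2$), and an individual $F^1$ summand is not even orientable, so the limiting configuration cannot have partial indices $\{1,1\}$. The partial indices must jump (to $\{0,2\}$) at the limit, which is exactly where the subtlety of the degeneration argument lives. As written, there is a genuine gap.
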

\begin{proof}
By Lemma~\ref{lemmakerbasis} the map $f_{1,1}$ defines an isomorphism.
We degenerate $(\underline{\mathbb{C}}^2,F^{1,1})$ to $(\underline{\mathbb{C}}^2,F^{0,2})$ in order to determine the orientation on $\ker\bar{\partial}_{1,1}$.
Consider the family of loops
\[
A_t:\partial D\rightarrow GL(2,\mathbb{C}), \quad t\in[0,1],
\]
given by
\[A_t(z)=\begin{pmatrix}
{\frac{1+t^2z}{2}} & \frac{t(z-1)}{2i}\\
\frac{t(1-z)}{2i} & {\frac{t^2+z}{2}}
\end{pmatrix}.\]
We claim that the family of boundary conditions $F(t)$ given by $F(t)_z = A_t(z) \cdot \R^2$ is a degeneration from $F^{1,1}$ to $F^{0,2}.$
Indeed
\[A_0(z)=\begin{pmatrix}
1 & 0\\
0 & z
\end{pmatrix}\begin{pmatrix}
\frac{1}{2} & 0\\
0 & \frac{1}{2}
\end{pmatrix}, \qquad A_1(z)=\begin{pmatrix}
z^\frac{1}{2} & 0\\
0 & z^\frac{1}{2}
\end{pmatrix}\begin{pmatrix}
{\frac{z^{-\frac{1}{2}}+z^{\frac{1}{2}}}{2}} & \frac{z^{\frac{1}{2}}-z^{-\frac{1}{2}}}{2i}\\
\frac{z^{-\frac{1}{2}}-z^{\frac{1}{2}}}{2i} & {\frac{z^{-\frac{1}{2}}+z^{\frac{1}{2}}}{2}}
\end{pmatrix},\]
where the matrix
\[\begin{pmatrix}
{\frac{z^{-\frac{1}{2}}+z^{\frac{1}{2}}}{2}} & \frac{z^{\frac{1}{2}}-z^{-\frac{1}{2}}}{2i}\\
\frac{z^{-\frac{1}{2}}-z^{\frac{1}{2}}}{2i} & {\frac{z^{-\frac{1}{2}}+z^{\frac{1}{2}}}{2}}
\end{pmatrix}\]
is an invertible real matrix for every
$z\in \partial D.$
The family of frames given by the columns of $A_t$ gives rise to a continuous family of orientations and spin structures on $F(t)$. The orientation on $F(1)$ coincides with the orientation on $F^{1,1}.$ The orientation on $F(0)$ coincides with the direct sum orientation $F^{0,2} = F^0 \oplus F^2.$  The spin structure on $F(1)$ coincides with the spin structure on  $F^{1,1}$ and the spin structure on $F(0)$ coincides with the direct sum spin structure on $F^{0,2}$ as in Lemma~\ref{lm8.1jake}.
Let
\[R(t):=\begin{pmatrix}
1 & 0\\
0 & t
\end{pmatrix}, \quad B(t,z):=\frac{1}{4}\begin{pmatrix}
(3+t^2)+(t^2-1)z & i(1-t^2)+i(t^2-1)z\\
i(\frac{1}{t}-t)+i(t-\frac{1}{t})z & (3t+\frac{1}{t})+(\frac{1}{t}-t)z
\end{pmatrix}.\]
Then $B(t,z)$ is an invertible holomorphic matrix on $D$ for every $t\in(0,1]$, and the matrices satisfy
\[B(t,z)|_{\partial D}A_1(z)R(t)=A_t(z),\quad  t\in(0,1], \ z\in\partial D.\]
Hence, $B(t,z)$ is an isomorphism between $(\underline{\C}^2, F(1))$ and $(\underline{\C}^2, F(t))$ for every $t\in(0,1]$. Denote by $\bar{\partial}(t)$ the family of the Cauchy-Riemann operators obtained from $(\underline{\C}^2, F(t))$ for $t\in(0,1]$. Note that $\ker\bar{\partial}(1)=\ker\bar{\partial}_{1,1}$.
By Lemma~\ref{lemmakerbasis}
the vectors \[\begin{pmatrix}
1+z\\
0
\end{pmatrix},\quad \begin{pmatrix}
i(1-z)\\
0
\end{pmatrix},\quad \begin{pmatrix}
0\\
1+z
\end{pmatrix},\quad \begin{pmatrix}
0\\
i(1-z)
\end{pmatrix}  ,\] form a basis for $\ker\Bar{\partial}_{1,1}.$
So, the following vectors form a basis for $\ker\bar{\partial}(t)$:
\[
v_{1,t}(z):=B(t,z)\begin{pmatrix}
1+z\\
0
\end{pmatrix},
\quad v_{2,t}(z):=B(t,z)\begin{pmatrix}
i(1-z)\\
0
\end{pmatrix},
\]
\[
v_{3,t}(z):=B(t,z)\begin{pmatrix}
0\\
1+z
\end{pmatrix},
\quad v_{4,t}(z):=B(t,z)\begin{pmatrix}
0\\
i(1-z)
\end{pmatrix}.\]
Let
\begin{align*}
w_{1,t}(z)&:=v_{1,t}(z)-v_{4,t}(z)=\begin{pmatrix}
zt^2+1\\
izt-it
\end{pmatrix}, \\
 w_{2,t}(z)&:=v_{1,t}(z)+v_{4,t}(z)=\frac{1}{2}\begin{pmatrix}
z^2(t^2-1)+2z+t^2+1\\
iz^2(t-\frac{1}{t})+i(t+\frac{1}{t})-2izt
\end{pmatrix},\\
w_{3,t}(z)&:=v_{2,t}(z)-v_{3,t}(z)=\frac{1}{2}\begin{pmatrix}
iz^2(1-t^2)+(1+t^2)i-2iz\\
z^2(t-\frac{1}{t})-t-\frac{1}{t}-2zt
\end{pmatrix},
\\
w_{4,t}(z)&:=v_{2,t}(z)+v_{3,t}(z)=\begin{pmatrix}
-iz+i\\
t+\frac{z}{t}
\end{pmatrix}.
\end{align*}
For $i=2,3,4,$ define $u_{i,t}(z):=tw_{i,t}(z)$ when $t>0,$ and $u_{i,0}(z):=\lim_{t\rightarrow 0}tw_{i,t}(z).$
In addition, define $u_{1,t}(z):=w_{1,t}(z).$
Note that the bases
\[
\{v_{1,t}, v_{2,t}, v_{3,t}, v_{4,t}\}, \qquad \{w_{1,t}, w_{2,t}, w_{3,t}, w_{4,t}\}, \qquad \{u_{1,t}, u_{2,t}, u_{3,t}, u_{4,t}\}
\]
determine the same orientation on $\ker\bar{\partial}(t)$ for $t\in(0,1]$.
We have
\begin{align*}
u_{1,0}(z)&= \begin{pmatrix}
1\\
0
\end{pmatrix},\\
u_{2,0}(z)&= \frac{1}{2}\begin{pmatrix}
0\\
i(1-z^2)
\end{pmatrix},\\
u_{3,0}(z)&= -\frac{1}{2}\begin{pmatrix}
0\\
z^2+1
\end{pmatrix},\\
u_{4,0}(z)&= \begin{pmatrix}
0\\
z
\end{pmatrix}.
\end{align*}
By Lemma~\ref{lemmaind2} the basis $\{-u_{3,0},u_{2,0},u_{4,0}\}$ determines the orientation on $\ker\bar{\partial}_2$, and by Lemma~\ref{remarkker0}, the vector $u_{1,0}(z)$ determines the orientation on $\ker\bar{\partial}_0.$
Hence, by Lemma~\ref{orientation_on_ker} the basis \[
\{ u_{1,0},u_{2,0},u_{3,0},u_{4,0}\}
\]
determines the orientation on
 $\ker\bar{\partial}(0)$.
Continuity implies that $\{u_{1,t}, u_{2,t}, u_{3,t}, u_{4,t}\}$ is positively oriented for every $t\in [0,1],$ and so is  $\{v_{1,t}, v_{2,t}, v_{3,t}, v_{4,t}\}$ for $t\in (0,1]$. Hence, the basis
\[v_{1,1}= \begin{pmatrix}
1+z\\
0
\end{pmatrix}, \quad v_{2,1}= \begin{pmatrix}
i(1-z)\\
0
\end{pmatrix}, \quad v_{3,1}= \begin{pmatrix}
0\\
1+z
\end{pmatrix}, \quad v_{4,1}= \begin{pmatrix}
0\\
i(1-z)
\end{pmatrix}.\]
is positively oriented. Therefore, $f_{1,1}$ is orientation-preserving.

\end{proof}

The orientation convention for $\partial D$ is the counter-clockwise orientation. For a point $z\in\partial D$, we identify $T_z\partial D$ with $iz\R,$ so $iz$ is a positively oriented basis. The following definition is the orientation convention for $\mathrm{PSL}(2,\R)$ in~\cite{FOOO}.
\begin{dfn}\label{basis}
Let $z_0, z_1, z_2\in \partial D$ be three distinct points in anticlockwise order. Consider the embedding
\[F:\mathrm{PSL}(2,\R)\rightarrow \partial D\times\partial D\times\partial D,\]
\[g\mapsto(g\cdot z_0, g\cdot z_1, g\cdot z_2).\]
The orientation on $\mathrm{PSL}(2,\R)$ is determined such that $F$ is orientation-reversing.
\end{dfn}
Consider the Lie algebra $\mathfrak{psl}(2,\mathbb{R}).$ Let
\[\eta_1=\begin{pmatrix}
1 & 0\\
0 & -1
\end{pmatrix},
\quad \eta_2= \begin{pmatrix}
0 & 0\\
1 & 0 \end{pmatrix},\quad
\eta_3=  \begin{pmatrix}
0 & 1\\
0 & 0
\end{pmatrix},\] be a basis for this algebra.
\begin{lm}\label{etalemma}
$\{\eta_1,\eta_2,\eta_3\}$ is a positively oriented basis of $\mathfrak{psl}(2,\R).$
\end{lm}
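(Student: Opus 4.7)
The plan is to compute $dF|_e$ explicitly at three convenient boundary points and read off the sign of its determinant.

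I would work in the upper half-plane model, identifying $D$ with $\mathbb{H}$ via the Cayley transform $\phi(w)=(w-i)/(w+i)$, so that the action of $\mathrm{PSL}(2,\R)$ on $D$ is obtained from its standard M\"obius action on $\mathbb{H}$. Since $\phi$ is a biholomorphism it preserves boundary orientations; for instance, $\phi(-1,0,1)=(i,-1,-i)$ is in anticlockwise order on $\partial D$, matching the order of $-1,0,1$ in the induced boundary orientation of $\partial\mathbb{H}$ (which is the positive real direction, by the outward-normal convention). Consequently, it suffices to prove the analogous statement for the map
\[
\tilde F:\mathrm{PSL}(2,\R)\longrightarrow(\partial\mathbb{H})^3,\qquad g\longmapsto(g\cdot(-1),\,g\cdot 0,\,g\cdot 1).
\]
The claim reduces to showing that $\det d\tilde F|_e<0$, computed in the basis $\{\eta_1,\eta_2,\eta_3\}$ of $\mathfrak{psl}(2,\R)$ and the standard positive bases of each $T_{z_i}\partial\mathbb{H}\cong\R$.

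The second step is a direct one-parameter-subgroup calculation. Differentiating $\exp(t\eta_j)\cdot z=\frac{a(t)z+b(t)}{c(t)z+d(t)}$ at $t=0$ yields the vector fields $X_1(z)=2z$, $X_2(z)=-z^2$, and $X_3(z)=1$ on $\mathbb{H}$ for $\eta_1,\eta_2,\eta_3$ respectively. Evaluating at $-1,0,1$ produces the matrix
\[
M=\begin{pmatrix} -2 & -1 & 1 \\ 0 & 0 & 1 \\ 2 & -1 & 1 \end{pmatrix},
\]
and a cofactor expansion along the middle row gives $\det M=-4<0$. Hence $\tilde F$, and therefore $F$, is orientation-reversing when $\{\eta_1,\eta_2,\eta_3\}$ is declared positively oriented, which by Definition~\ref{basis} is precisely the chosen orientation on $\mathfrak{psl}(2,\R)$.

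The main obstacle is the orientation bookkeeping rather than the linear algebra: one must verify carefully that the Cayley transform carries the boundary orientation of $\partial\mathbb{H}$ (from the outward-normal convention) to the anticlockwise orientation of $\partial D$ under which $iz\in T_z\partial D$ is positive. An alternative is to bypass the half-plane model entirely and work with $\mathrm{PSU}(1,1)$ acting on $D$, computing the three generating vector fields directly in the $iz$ basis; this is more intrinsic but requires transporting $\{\eta_1,\eta_2,\eta_3\}$ across the Lie algebra isomorphism induced by the Cayley matrix, which is arithmetically messier.
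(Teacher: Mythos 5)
Your proof is correct and uses essentially the same approach as the paper: pass to the upper-half-plane model, compute the vector fields $X_1(z)=2z$, $X_2(z)=-z^2$, $X_3(z)=1$ via one-parameter subgroups, evaluate at $-1,0,1$, and read off the sign of the determinant (your matrix is the transpose of the paper's, which of course does not affect the sign). The one place where you go beyond the paper is in spelling out the orientation compatibility of the Cayley transform --- checking that $\phi(-1,0,1)=(i,-1,-i)$ is anticlockwise on $\partial D$ and that the outward-normal convention on $\partial\mathbb{H}$ yields the positive real direction --- which the paper leaves implicit; this is a welcome clarification rather than a change of method.
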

\begin{proof}
Let $\mH$ denote the closed upper half plane and let $\bmH$ denote the compactification by adding a point at infinity.
Since there is a biholomorphism $D\rightarrow \bmH$, we can identify the range of the map $F$ from Definition~\ref{basis} with $\partial\bmH\times \partial\bmH \times\partial\bmH$. Choose
\[z_0=-1,\quad z_1=0,\quad z_2=1.\]
The orientation of
\[
T_{-1}\partial\bmH\times T_0\partial\bmH \times T_1\partial\bmH
\]
is determined by the basis
\[(1,0,0),\quad (0,1,0)\quad (0,0,1).\]
We have
\begin{align*}
    \frac{d}{dt}|_{t=0}\exp(t\eta_1)\cdot z&=\frac{d}{dt}|_{t=0}\begin{pmatrix}
e^t & 0\\
0 & e^{-t}
\end{pmatrix}\cdot z\\
    &=\frac{d}{dt}|_{t=0}\frac{e^tz}{e^{-t}}\\
    &=2z.
\end{align*}
Similarly,
\[
\frac{d}{dt}|_{t=0}\exp(t\eta_2)\cdot z=-z^2, \quad  \frac{d}{dt}|_{t=0}\exp(t\eta_3)\cdot z=1.\]
Hence,
\[F(\eta_1)= (-2,0,2),\quad F(\eta_2)= (-1,0,-1),\quad F(\eta_3)= (1,1,1).\]
Since
\[
\det \begin{pmatrix}
-2 & 0 & 2 \\
-1 & 0 & -1 \\
1 & 1 & 1
\end{pmatrix} < 0,
\]
it follows that $F$ is orientation-reversing. Thus,
by Definition~\ref{basis} the basis $\{\eta_1, \eta_2, \eta_3\}$ is positively oriented.
\end{proof}
\begin{lm}\label{lemma.psl}
The action of $\mathrm{PSL}(2,\R)$ on $D$ induces
an isomorphism
\[\mathcal{D}:\mathfrak{psl}(2,\mathbb{R})\rightarrow \ker\Bar{\partial}_2\]
which is orientation-reversing.
\end{lm}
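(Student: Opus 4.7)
The plan is to compute the matrix of $\mathcal{D}$ in the positively-oriented basis $\eta_1,\eta_2,\eta_3$ of $\mathfrak{psl}(2,\R)$ from Lemma~\ref{etalemma} and the positively-oriented basis $z^2+1,\,i(1-z^2),\,z$ of $\ker\bar{\partial}_2$ from Lemma~\ref{lemmaind2}, and to read off the sign of its determinant. Since both sides are $3$-dimensional, a nonzero determinant also confirms that $\mathcal{D}$ is an isomorphism.

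The first step is to identify $(TD, T\partial D)$ with $(\underline{\C}, F^2)$ as oriented Riemann--Hilbert pairs. Under the trivialization $TD \cong \underline{\C}$ given by $\partial_z \mapsto 1$, the boundary subbundle $T\partial D$ becomes $iz\R \subset \underline{\C}|_{\partial D}$ rather than $z\R = F^2_z$, so I need to compose with a further holomorphic automorphism. Multiplication by $-i$ takes $iz\R$ to $z\R$, and sends the counter-clockwise tangent $iz$ (the positive generator of $T\partial D$) to $z$ (the chosen positive generator of $F^2$). Hence the composite identification is orientation-preserving.

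The second step is to write out each $\mathcal{D}(\eta_i)$ explicitly. Using the Cayley transform $\phi(w) = (w-i)/(w+i)$, with inverse $\phi^{-1}(z) = i(1+z)/(1-z)$ and derivative $\phi'(w) = 2i/(w+i)^2$, I would transport the infinitesimal action $\eta_1 \cdot w = 2w$, $\eta_2 \cdot w = -w^2$, $\eta_3 \cdot w = 1$ from $\bmH$ (as given in the proof of Lemma~\ref{etalemma}) to $D$. The pushforwards are polynomial vector fields $Y_i(z)\partial_z$ on $D$; applying the identification from the previous step transports them to polynomial sections of $(\underline{\C}, F^2)$ lying in $\ker\bar{\partial}_2$.

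The final step is to expand these sections in the basis from Lemma~\ref{lemmaind2} and evaluate the resulting $3\times 3$ determinant. A negative determinant establishes the claim. The main obstacle is orientation bookkeeping: the identification of $T\partial D$ with $F^2$, and correct matching of the chosen positive directions on both sides, must be handled carefully, since a sign error at that step would flip the conclusion. Once this is settled, the remaining computation is routine algebra.
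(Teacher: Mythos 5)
Your approach is correct, but it is genuinely different from the paper's. The paper identifies $D$ with $\bmH$ from the outset, identifies $(\underline{\C},F^2)$ with $(T\bmH,T\partial\bmH)$ (invoking Proposition~\ref{pr:canor} so that this preserves the canonical orientation of the determinant line), and then uses Lemma~\ref{lemmaA21} to reduce the orientation question to the map $ev_i\oplus ev_0\circ\mathcal{D}$, where evaluation is at the interior point $i$ and the boundary point $0$ of $\bmH$; the $3\times 3$ determinant of that map on the basis $\eta_1,\eta_2,\eta_3$ is read off directly from the infinitesimal-action formulas already computed in the proof of Lemma~\ref{etalemma}. You instead stay in the disk model, make the trivialization $(TD,T\partial D)\cong(\underline{\C},F^2)$ explicit (multiplying by $-i$ so that the positive boundary generator $iz$ maps to the positive generator $z$ of $F^2_z$, which is the crucial sign choice and you have it right), push the three vector fields through the Cayley transform, and expand against the oriented basis $\{z^2+1,\,i(1-z^2),\,z\}$ of Lemma~\ref{lemmaind2}. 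Carrying this out: $\phi'(w)=-i(1-z)^2/2$, so $\mathcal{D}(\eta_1)=-i(1-z^2)$, $\mathcal{D}(\eta_2)=-(1+z)^2/2$, $\mathcal{D}(\eta_3)=-(1-z)^2/2$, and in the given basis the matrix is
\[
\begin{pmatrix} 0 & -1 & 0\\ -\tfrac12 & 0 & -1\\ -\tfrac12 & 0 & 1\end{pmatrix},
\]
whose determinant is $-1<0$, confirming the claim. The trade-off is that the paper's route avoids the Cayley transform entirely and replaces the full matrix computation with the shortcut of Lemma~\ref{lemmaA21}, while yours substitutes Lemma~\ref{lemmaind2} for Lemma~\ref{lemmaA21} and spells out the change of model; both ultimately rest on the same evaluation-map orientation conventions, since Lemma~\ref{lemmaind2} is itself proved via Lemma~\ref{lemmaA21}. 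Your write-up is a plan rather than a finished proof: to be complete you must actually carry out the pushforward and the determinant evaluation, since that is where the sign lives.
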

\begin{proof}
Identify $D$ with the one-point compactified upper half plane $\bmH$ and identify the Riemann-Hilbert pair $(\underline \C,F^2)$ with the Riemann-Hilbert pair $(T\bmH,T\partial\bmH).$ The latter identification preserves the canonical orientation on the determinant line by Proposition~\ref{pr:canor}.
The map $\mathcal{D}$ is given by
\[\mathcal{D}(A)=(z \mapsto \frac{d}{dt}|_{t=0}\exp(tA)\cdot z).\]
Let $ev:=ev_i\oplus ev_0$, where $ev_i$ and $ev_0$ are the evaluation maps at $i$ and $0$ respectively.
By Lemma~\ref{lemmaA21}, $\mathcal{D}$ is orientation reversing if and only if $ev\circ \mathcal{D}$ is orientation reversing.
Consider the points $0,i\in\bmH$.
The basis
\[
(1,0),\quad (i,0)\quad (0,1)
\]
is positively oriented basis of $T_i\bmH\oplus T_0\partial\bmH$.
By the calculation of the previous lemma we have
\[
ev\circ\mathcal{D}(\eta_1)=(2i,0), \quad ev\circ\mathcal{D}(\eta_2)=(1,0), \quad ev\circ\mathcal{D}(\eta_3)=(1,1).
\]
Since
\[
\det \begin{pmatrix}
0 & 2 & 0 \\
1 & 0 & 0 \\
1 & 0 & 1
\end{pmatrix} < 0,
\]
it follows that $\mathcal{D}$ is orientation-reversing.
\end{proof}
 \subsection{Riemann-Hilbert pairs and holomorphic disks}\label{subsection_5.5}

Let $(X,J)$ be an $n$-dimensional complex manifold, and let $L\subset X$ be a smooth totally real $n$-dimensional submanifold. A holomorphic disk $u:(D,\partial D)\rightarrow (X,L)$ gives rise to a holomorphic vector bundle $u^*TX\rightarrow D$ and a smooth totally real subbundle $u|_{\partial D}^*TL\subset u^*TX|_{\partial D}$. Thus, we obtain a Riemann-Hilbert pair $(E_u,F_u)$ associated to $u.$

\subsubsection{A useful example}
\begin{lm}\label{isomorphism_2_0_0}
Let $u:(D,\partial D)\rightarrow (\x,L_\tr)$ be the holomorphic disk given by $u(z)=e^{-i\xi_v\log z}\cdot \tr.$ Let $\theta_1,\theta_2 \in \mathfrak{su}(2)$ be such that $\xi_v,\theta_1,\theta_2$ form a basis of $\mathfrak{su}(2).$ Equip $L_\tr$ with the orientation and spin structure arising from the trivialization of $TL_\tr$ given by the infinitesimal action of $\mathfrak{su}(2)$ and the basis $\xi_v,\theta_1,\theta_2.$ Let
\[
(E,F):=(u^*T\x,u|_{\partial D}^*TL_\tr).
\]
Then, the sections
\begin{equation}\label{eq:holosecs}
\frac{\xi_v}{z}\cdot u,\; \theta_1\cdot u,\;\theta_2\cdot u \in \Gamma(E)
\end{equation}
form a holomorphic frame.
Moreover, there is an isomorphism of spin Riemann-Hilbert pairs
\[
\Psi:(E,F)\rightarrow (\underline{\C}^3,F^2\oplus F^0\oplus F^0)
\]
given by
\begin{equation}\label{eq:iso001}
    \frac{\xi_v}{z}\cdot u\mapsto \begin{pmatrix}
        1\\
        0\\
        0
    \end{pmatrix},\quad\theta_1\cdot u \mapsto \begin{pmatrix}
        0\\
        1\\
        0
    \end{pmatrix},\quad
\theta_2\cdot u\mapsto \begin{pmatrix}
        0\\
        0\\
        1
    \end{pmatrix}.
\end{equation}
 \end{lm}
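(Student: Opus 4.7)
The plan is to verify both claims of the lemma — that the three sections form a holomorphic frame, and that $\Psi$ is an isomorphism of spin Riemann-Hilbert pairs — by direct computation, exploiting the formula $u(z) = e^{-i\xi_v \log z}\cdot\tr$ together with the holomorphicity of the $\mathrm{SL}(2,\mathbb{C})$-action on $\x$.

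First, I will show that the three sections in~\eqref{eq:holosecs} are holomorphic. For any $\eta \in \mathfrak{sl}(2,\mathbb{C})$, the vector field on $\x$ induced by $\eta$ is holomorphic, so $\eta \cdot u$ is a holomorphic section of $u^*T\x$ for every holomorphic $u$; this handles $\theta_1\cdot u$ and $\theta_2\cdot u$. For the first section, differentiating $u$ with respect to $z$ directly yields $du/dz = -(i/z)\,\xi_v\cdot u$, hence $(\xi_v/z)\cdot u = i\,du/dz$, which is manifestly holomorphic on all of $D$.

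Second, I will verify that these three sections form a $\mathbb{C}$-basis at every point of $D$. When $z \ne 0$, the point $u(z)$ lies in the open $\mathrm{SL}(2,\mathbb{C})$-orbit $W_\tr$, so its stabilizer is finite and the infinitesimal action gives an isomorphism $\mathfrak{sl}(2,\mathbb{C}) \to T_{u(z)}\x$; since $\xi_v,\theta_1,\theta_2$ is an $\mathbb{R}$-basis of $\mathfrak{su}(2)$, hence a $\mathbb{C}$-basis of $\mathfrak{sl}(2,\mathbb{C})$, the three images form a $\mathbb{C}$-basis of the tangent space. When $z = 0$, Lemma~\ref{lm:lmpole1} gives $u(0) \in Y_\tr \setminus N_\tr$; the complex stabilizer Lie algebra at $u(0)$ is $\mathbb{C}\,\xi_v$, so the image of the infinitesimal action equals $T_{u(0)}Y_\tr$ and is spanned over $\mathbb{C}$ by $\theta_1\cdot u(0)$ and $\theta_2\cdot u(0)$, as $\theta_1,\theta_2$ project to a basis of $\mathfrak{sl}(2,\mathbb{C})/\mathbb{C}\,\xi_v$. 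The remaining vector $(\xi_v/z)\cdot u|_{z=0} = i\,du/dz|_0$ is not in $T_{u(0)}Y_\tr$ because $u$ meets $Y_\tr$ transversely at $z=0$: by Lemmas~\ref{divisor} and~\ref{lm:intersect}, the Maslov index $\mu(u)=2$ equals $2\,[u]\cdot[Y_\tr]$, so $[u]\cdot[Y_\tr]=1$, forcing a single transverse intersection. Thus the three sections span $T_{u(0)}\x$, and \eqref{eq:iso001} defines a holomorphic bundle trivialization $\Psi : E \to \underline{\mathbb{C}}^3$.

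Third, I will check that $\Psi$ sends the boundary condition $F$ onto $F^2\oplus F^0\oplus F^0$ and preserves orientation and spin structure. For $z \in \partial D$, the frame $(\xi_v\cdot u(z),\,\theta_1\cdot u(z),\,\theta_2\cdot u(z))$ spans $T_{u(z)}L_\tr$ over $\mathbb{R}$ by the infinitesimal action, and under~\eqref{eq:iso001} maps to $(z,1,1) \in z\mathbb{R} \oplus \mathbb{R} \oplus \mathbb{R} = F^2_z \oplus F^0_z \oplus F^0_z$, which spans the required real boundary subspace. By the hypothesis of the lemma, $F$ is oriented and spin via this same frame. On the target, the chosen orientation on $F^2$ (for which $z$ is positively oriented), the standard orientation on $F^0$, Remark~\ref{rem_spinstructure}, and Lemma~\ref{lm8.1jake} together produce a direct-sum spin structure on $F^2\oplus F^0\oplus F^0$ for which the frame $(z,1,1)$ is positively oriented and lifts to the associated spin bundle. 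Hence $\Psi$ is an isomorphism of spin Riemann-Hilbert pairs. The main obstacle in this plan is the linear-independence verification at $z=0$; it ultimately depends on extracting transversality of $u$ to the anticanonical divisor $Y_\tr$ from the Maslov/intersection correspondence.
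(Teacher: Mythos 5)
Your proof is correct and follows the same overall structure as the paper's, but it replaces the paper's key citation with a self-contained argument, which is worth flagging. The paper establishes the holomorphic-frame claim by first observing that the kernel of the infinitesimal $\mathfrak{sl}(2,\mathbb{C})$-action at $u(0)$ is $\mathbb{C}\xi_v$ and then deferring to Smith [Appendix A.3] of \cite{smith2020monotone}. You instead verify the claim directly: holomorphicity of $\theta_i\cdot u$ comes from the $\mathrm{SL}(2,\mathbb{C})$-action being holomorphic, while $(\xi_v/z)\cdot u = i\,du/dz$ is holomorphic as the derivative of a holomorphic map; pointwise $\mathbb{C}$-linear independence on $z\neq 0$ follows from the finiteness of the stabilizer on the open orbit $W_\tr$; and at $z=0$ you identify the stabilizer Lie algebra with $\mathbb{C}\xi_v$ (so that $\theta_1\cdot u(0),\theta_2\cdot u(0)$ span $T_{u(0)}Y_\tr$) and then use transversality of $u$ to the anticanonical divisor $Y_\tr$, deduced from $[u]\cdot[Y_\tr]=1$ via the Maslov/intersection formula of Lemma~\ref{lm:intersect}, to see that $i\,du/dz|_0$ lies outside $T_{u(0)}Y_\tr$. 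That last transversality observation is not spelled out in the paper and makes your argument genuinely more self-contained at the cost of length. The remaining steps---checking that the frame sends the real boundary subbundle to $F^2\oplus F^0\oplus F^0$ and invoking Lemma~\ref{lm8.1jake} for the orientation and spin structure---match the paper. One stylistic nit: writing the image frame as ``$(z,1,1)$'' conflates the three basis vectors $(z,0,0),(0,1,0),(0,0,1)$ with a single vector; the intent is clear from context, but it is worth writing out.
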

 \begin{proof}
Recalling Example~\ref{ex}, we see that the kernel of the infinitesimal action of $\mathfrak{sl}(2,\mathbb{C}) $ at $u(0)$ is spanned by $\xi_v$. Given this, Smith \cite[Appendix A.3]{smith2020monotone} shows that the sections~\eqref{eq:holosecs} form a holomorphic frame for~$E.$
We split $(E,F)$ by
\begin{align*}
    E=\left< \frac{\xi_v}{z}\cdot u\right>_\C \oplus \left< \theta_1\cdot u \right>_\C\oplus \left< \theta_1\cdot u \right>_\C,
\end{align*}
\begin{align*}
     F=z\left< \frac{\xi_v}{z}\cdot u\right>_\R \oplus \left< \theta_1\cdot u \right>_\R\oplus \left< \theta_1\cdot u \right>_\R.
\end{align*}
It follows that the formula~\eqref{eq:iso001} gives the desired isomorphism $\Psi.$
Recall the choices of the spin structures and orientations of $F^2$ and $F^0$ given in Section~\ref{choose_spin_structure}.
By Lemma~\ref{lm8.1jake} the frame
\[\begin{pmatrix}
        z\\
        0\\
        0
    \end{pmatrix},\quad
    \begin{pmatrix}
        0\\
        1\\
        0
    \end{pmatrix},\quad
    \begin{pmatrix}
        0\\
        0\\
        1
    \end{pmatrix}.
\]
of $F^2\oplus F^0\oplus F^0$ is positively oriented and can be lifted to the associate double cover of the frame bundle.
Therefore, the isomorphism $\Psi$ preserves orientation and spin structure.
\end{proof}

\subsubsection{Orientation convention for disk moduli spaces}
Let $(X,\omega)$ be a symplectic manifold with $\omega$-tame (integrable) complex structure $J,$ and let $L \subset X$ be a Lagrangian submanifold.
Let $\widetilde{\mathcal{M}}(\beta)$ denote the space of parameterized $J$-holomorphic maps $u: (D^2,\partial D^2) \to (X,L)$ such that $u_*([D^2,\partial D^2]) = \beta$. Each $u \in \widetilde{\mathcal{M}}(\beta)$, determines a Riemann-Hilbert pair $(E_u,F_u) = (u^*TX,u^*TL)$ as explained above. For the following discussion, we assume that the linear Cauchy-Riemann operator $\bar\partial_{(E_u,F_u)} : \Gamma((D^2,\partial D^2),(E_u,F_u)) \to \Gamma(D^2,\Omega^{0,1}(E_u))$ is surjective for every $u$, so $\widetilde{\mathcal{M}}(\beta)$ is a smooth manifold and there is a canonical isomorphism
\[
T_u\widetilde{\mathcal{M}}(\beta) \simeq \ker \bar\partial_{(E_u,F_u)}.
\]
Thus, Proposition~\ref{pr:canor} gives a canonical orientation of $\widetilde{\mathcal{M}}(\beta).$

We discuss now the relevant conventions concerning the orientations of the moduli spaces $\mathcal{M}_{k+1,l}(\beta)$ of unparameterized stable $J$-holomorphic maps with marked points. The following definition is Convention 8.2.1 from~\cite{FOOO}.
\begin{dfn}\label{dfn:qo}
Let $G$ be an oriented Lie group with a smooth, proper, free right action on an oriented manifold $M$. For $p \in M,$ let $\varphi_p : G \to M$ be given by $g \mapsto p\cdot g.$ Let $\pi : M \to M/G$ be the quotient map. Split the short exact sequence
\[
0 \to T_eG \overset{d\varphi_p}\to T_pM \overset{d\pi}\to T_{\pi(p)}(M/G) \to 0
\]
to obtain an isomorphism
\[
T_pM \simeq T_{\pi(p)}(M/G) \oplus T_eG.
\]
The \textbf{quotient orientation} on $M/G$ is determined by the condition that the preceding isomorphism preserves orientation for all $p \in M$.
\end{dfn}
The following definition is based on~\cite[p. 698]{FOOO}.
\begin{dfn}\label{dfn:ormod}
Let $U \subset \widetilde{\mathcal{M}}(\beta) \times (\partial D^2)^{k+1} \times (\Int D^2)^l$ denote the open subset where the marked points are pairwise disjoint and the cyclic ordering on the boundary marked points given by the orientation on $\partial D^2$ induced from the complex orientation of $D^2$ agrees with the order of the labels. Thus, points of $U$ are tuples $(u,z,w)$ where
\[
u \in \widetilde{\mathcal{M}}(\beta), \quad z = (z_0,\ldots,z_k) \in \partial D^2, \quad w = (w_1,\ldots,w_l) \in \Int D^2.
\]
An automorphism of the disk $\psi \in \mathrm{PSL}_2(\R)$ acts on $U$ by
\[
\psi \cdot (u,z,w) = (u\circ \psi, \psi^{-1}(z),\psi^{-1}(w)).
\]
The \textbf{orientation} of $\mathcal{M}_{k+1,l}(\beta)$ is determined by the quotient orientation on the open subset $U/PSL_2(\R) \subset \mathcal{M}_{k+1,l}(\beta).$
\end{dfn}

\subsection{Computation of \texorpdfstring{$\overline{OGW}_{1, 1}$}{degree 1 single boundary constraint invariant}}\label{subsection_ogw11}
Recall from Section~\ref{sec4} that we denote by $\mathcal{M}_{k+1,l}(\beta)$ the moduli space of holomorphic disks $u:(D, \partial D)\rightarrow (\x,L_\triangle)$ representing a class $\beta\in H_2(\x,L_\triangle;\mathbb{Z})$ with $k+1$ boundary points and $l$ interior points. We denote elements in $\mathcal{M}_{k+1,l}(\beta)$ by $[u;z_0,\ldots,z_k,w_1,\ldots,w_l]$, where $z_i\in \partial D$ and $w_i\in \mathrm{int}D$.
Consider the moduli space $\mathcal{M}_{1,0}(1)$, and the evaluation map  \[evb_0:\mathcal{M}_{1,0}(1)\rightarrow L_\triangle,\]
\[evb_0([u;z])=u(z).\]
The following lemma is part of Proposition 4.2 from \cite{Smith}.
\begin{lm}\label{degree3}
The evaluation map $evb_0:\mathcal{M}_{1,0}(1)\rightarrow L_\triangle$ is a covering map of degree $\pm 3$.
\end{lm}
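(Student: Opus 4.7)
The plan is to parameterize $\mathcal{M}_{1,0}(1)$ explicitly via the $\su(2)$-action on Maslov~$2$ disks. By Lemma~\ref{lemmaxi}, every such disk is, up to reparametrization, of the form $A\cdot u_0$ for $A\in\su(2),$ where $u_0(z)=e^{-i\xi_v\log z}\cdot\tr.$ Using the axial symmetry $u_0(e^{i\theta}z)=e^{\theta\xi_v}u_0(z),$ I first normalize any class $[u,z]\in\mathcal{M}_{1,0}(1)$ by a rotation reparametrization so that the marked point becomes $1\in\partial D,$ producing a representative of the form $(B\cdot u_0,1)$ with $B\in\su(2).$ Under this description the evaluation map reads $evb_0([B\cdot u_0,1])=B\cdot\tr,$ so the composition $\su(2)\to\mathcal{M}_{1,0}(1)\xrightarrow{evb_0}L_\tr=\su(2)/\Gamma_\tr$ is simply $B\mapsto B\cdot\tr$ followed by the quotient by $\Gamma_\tr.$

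The second step is to pin down the equivalence relation among such normalized representatives. Suppose $(Bu_0,1)\sim(B'u_0,1)$ via $\phi\in\mathrm{PSL}(2,\R)$ with $\phi(1)=1$ and $Bu_0=B'u_0\circ\phi.$ Geometrically, the setwise stabilizer of $\mathrm{Im}(u_0)$ in $\su(2)$ is the one-parameter subgroup $\{e^{t\xi_v}\}$: an element that preserves the entire family of narrow isosceles triangles composing the disk must fix the apex $c_1.$ Combined with the axial identity $e^{\theta\xi_v}\cdot u_0=u_0\circ\phi_\theta,$ this forces $\phi$ to be a rotation $\phi_\theta,$ and the constraint $\phi(1)=1$ then imposes $\theta\in 2\pi\Z.$ Therefore $(B')^{-1}B$ lies in the finite group
\[
K:=\{e^{t\xi_v}:t\in\R\}\cap\Gamma_\tr,
\]
and conversely every element of $K$ gives a valid identification. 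Hence $\mathcal{M}_{1,0}(1)\cong\su(2)/K$ (right action), and $evb_0$ is the natural covering $\su(2)/K\to\su(2)/\Gamma_\tr=L_\tr$ of degree $|\Gamma_\tr|/|K|.$

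To finish, I compute $|K|.$ The one-parameter subgroup $\{e^{t\xi_v}\}\subset\su(2)$ is an $S^1$ double covering the stabilizer $\mathrm{SO}(2)\subset\mathrm{SO}(3)$ of $c_1.$ The group $\Gamma_\tr$ is the preimage in $\su(2)$ of the dihedral group $D_3\subset\mathrm{SO}(3)$ of triangle symmetries; the intersection $D_3\cap\mathrm{SO}(2)$ consists precisely of the identity and the $\pi$-rotation about the $z$-axis swapping $c_2$ and $c_3,$ a group of order $2.$ Its preimage in the double cover $\su(2)\to\mathrm{SO}(3)$ is cyclic of order $4,$ so $|K|=4$ and the degree of $evb_0$ equals $|\Gamma_\tr|/|K|=12/4=3.$ The sign $\pm$ reflects the comparison of the orientation on $\mathcal{M}_{1,0}(1)$ coming from the spin structure $\s_\tr$ with the chosen orientation $\oo_\tr$ on $L_\tr,$ and its precise value is not needed here.

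The main obstacle is the rigidity step: verifying that any reparametrization $\phi\in\mathrm{PSL}(2,\R)$ with $\phi(1)=1$ intertwining two normalized representatives must be a rotation. Once this is settled, the remainder is a routine group-theoretic computation of $|\Gamma_\tr|/|K|,$ and the covering property follows from the covering property of $\su(2)\to\su(2)/\Gamma_\tr.$
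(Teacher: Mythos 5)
Your argument is correct. The paper gives no proof of this lemma at all, simply citing it as part of Proposition~4.2 of Smith's paper; your self-contained reconstruction is presumably in the same spirit as the missing argument. A few small points worth making explicit: in the rigidity step, passing from $g\cdot u_0 = u_0\circ\phi$ and $g = e^{t\xi_v}$ to $\phi = \phi_t$ uses that $u_0$ has trivial stabilizer in $\mathrm{PSL}(2,\R)$, which holds because $u_0$ is simple (it has minimal Maslov index~$2$); and the cleanest way to see that the setwise stabilizer of $\mathrm{Im}(u_0)$ lies in $\{e^{t\xi_v}\}$ is to note that any such $g$ must fix the unique intersection point $u_0(0)=[c_1,-c_1,-c_1]$ of the disk with $Y_\triangle$, and since $g$ acts by an isometry of $\cp^1$ it must send the non-repeated point $c_1$ to itself, forcing $g$ to be a rotation about the $z$-axis. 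Your "narrow isosceles triangles have a well-defined apex" argument reaches the same conclusion. The identification of $K=\{e^{t\xi_v}\}\cap\Gamma_\tr$ with the cyclic group of order $4$ generated by $e^{2\pi\xi_v}$ (a lift of the $\pi$-rotation swapping $c_2$ and $c_3$), and the resulting degree $\lvert\Gamma_\tr\rvert/\lvert K\rvert = 12/4 = 3$, are both correct, as is your observation that the sign is fixed by orientation conventions and not needed for this lemma (it is settled in the proof of Theorem~\ref{thm:ogw11}).
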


\begin{proof}[Proof of Theorem~\ref{thm:ogw11}]
By Theorem~\ref{theorem_lm} we have
\[\ogw_{1,1}= - \int_{\mathcal{M}_{1,0}(1)}{evb_0}^*\bar{b},\] where $\pd([\bar{b}])=pt$.
Hence, $\ogw_{1,1}$ is determined by the degree of the map $evb_0$, and Lemma~\ref{degree3} gives $\ogw_{1,1}=\mp 3,$ where the sign depends on whether $evb_0$ preserves or reverses orientation.

We show that $evb_0$ preserves orientation.
Choose $\theta_1,\theta_2$ that together with $\xi_v$ form a basis of $\mathfrak{su}(2).$ Equip $L_\tr$ with the orientation and spin structure arising from the trivialization of $TL_\tr$ given by the infinitesimal action of $\mathfrak{su}(2)$ and the basis $\xi_v,\theta_1,\theta_2.$ The orientation of this basis does not affect the calculation below, in accordance with the orientation axiom Proposition~\ref{prop:open_axioms}~ \ref{ax_orientation}.

Let $u:(D,\partial D)\rightarrow (\x,L_\triangle)$ be a holomorphic disk of Maslov index 2. Let
\[
(E,F):=(u^*T\x,u|_{\partial D}^*TL_\tr).
\]
By Lemma~\ref{lemmaxi} we can write
$u(z)=A\cdot e^{-i\xi_v \log z}\cdot \tr$, where $A\in \su(2)$. We may assume that $A$ is the identity since acting by $A^{-1}$ doesn't change the isomorphism class of $(E,F)$.

Let $z\in \partial D$ and let $\zeta \in T_z\partial D$ denote the unit vector in the direction of the orientation. By Definition~\ref{dfn:ormod}, the oriented tangent space of $\mathcal{M}_{1,0}(1)$ at $[u;z]$ is given by
\[T_{[u;z]}\mathcal{M}_{1,0}(1)\simeq(\ker\Bar{\partial}_{(E,F)}\oplus T_z\partial D)/\mathfrak{psl}(2,\mathbb{R}).\]
By Proposition~\ref{pr:canor} and Lemma~\ref{isomorphism_2_0_0}, we have an orientation-preserving isomorphism
\[
\ker\Bar{\partial}_{(E,F)}\oplus T_z\partial D \simeq \ker \bar{\partial}_2 \oplus \ker\bar{\partial}_0 \oplus \ker \bar{\partial}_0 \oplus T_z\partial D.
\]
By Lemma~\ref{lemmakerbasis}, we have an orientation-reversing isomorphism
\[
\ker \bar{\partial}_2 \oplus \ker\bar{\partial}_0 \oplus \ker \bar{\partial}_0 \oplus T_z\partial D \simeq \ker\bar{\partial}_0 \oplus \ker \bar{\partial}_0 \oplus T_z\partial D \oplus \ker \bar{\partial}_2
\]
The linearization of the action of $\mathrm{PSL}_2(\R)$ in Definition~\ref{dfn:ormod} composed with the projection on $\ker \bar \partial_2$ gives the map $\mathcal{D}:\mathfrak{psl}(2,\mathbb{R})\rightarrow \ker\Bar{\partial}_2$ of Lemma~\ref{lemma.psl}. So, by Definition~\ref{dfn:qo} and Lemmas~\ref{lemma.psl} and~\ref{lemmakerbasis}, we have an orientation-preserving isomorphism
\[
\ker\bar{\partial}_0\oplus\ker\Bar{\partial}_0\oplus T_z\partial D \simeq (\ker\Bar{\partial}_{(E,F)}\oplus T_z\partial D)/\mathfrak{psl}(2,\mathbb{R}),
\]
given by
\[
1 \oplus 0 \oplus 0 \mapsto [\theta_1 \cdot u \oplus 0], \qquad 0 \oplus 1 \oplus 0 \mapsto [\theta_2 \cdot u\oplus 0],
\qquad
0 \oplus 0 \oplus \zeta \mapsto [0 \oplus \zeta].
\]
Abbreviate
\[
\Bar{\theta}_i=[\theta_i\cdot u\oplus 0], \quad \Bar{\zeta}=[0\oplus \zeta].
\]
Thus, by Lemmas~\ref{orientation_on_ker} and~\ref{remarkker0}, the orientation on $T_{[u:z]}\mathcal{M}_{1,0}(1)$ is given by the basis $\Bar{\theta}_1, \Bar{\theta}_2, \Bar{\zeta}$.
In order to show that $evb_0$ preserves orientation, it suffices to show that
\[
devb_{0_{[u;z]}}:T_{[u;z]}\mathcal{M}_{1,0}(1)\rightarrow T_{u(z)}L_\triangle
\]
preserves orientation when $z=1$.
The tangent vector $\zeta \in T_1 \partial D$ is represented by the path $t \mapsto e^{it}.$
So,
\[
devb_{0_{[u;1]}}(\Bar{\zeta}) = du_1(\zeta) = \left.\frac{d}{dt}u(e^{it})\right|_{t=0}= \left.\frac{d}{dt} e^{t\xi_v}\right|_{t = 0} = \xi_v\cdot u(1).
\]
Since
\[devb_{0_{[u;1]}}(\Bar{\theta}_1)= \theta_1\cdot u(1),\quad devb_{0_{[u;1]}}(\Bar{\theta}_2)= \theta_2\cdot u(1), \quad devb_{0_{[u;1]}}(\Bar{\zeta})= \xi_v\cdot u(1),\]
it follows that $evb_0$ preserves orientation. Therefore $\ogw_{1,1}=-3$.
\end{proof}

\subsection{Computation of \texorpdfstring{$\overline{OGW}_{1, 0}(\Gamma_2)$}{degree 1 single interior constraint invariant)}}
 By Theorem~\ref{theorem_lm} we have
\begin{equation*}
\ogw_{1, 0}(\Gamma_2)= -\int_{\mathcal{M}_{0,1}(1)}evi_1^*\gamma_2,
\end{equation*}
where $\gamma_2$ is a a differential form representing $\Gamma_2 \in H^4(\x, L_\triangle; \R)$.
By Poincar\'e-Lefschetz duality we have $H^4(\x, L_\triangle; \R) \simeq H_2(\x \backslash L_\triangle;\R)$.
Hence, our strategy for computing $\ogw_{1,0}(\Gamma_2)$
is to find a complex curve in $\cp^3\setminus L_\tr$ representing the Poincar\'e-Lefschetz dual to $\Gamma_2,$ and determine how many holomorphic disks intersect it.

Any complex subvariety $\Upsilon \subset \x \backslash L_\triangle$ of complex
dimension 2 represents $k\pd([\omega])$ where $k \in \mathbb{Z}$ is the degree
of $\Upsilon$, i.e. the number of intersection points with a generic line.
Consider complex subvarieties $\Upsilon_1, \Upsilon_2$
of degree $k_1, k_2$ respectively, in general position. Then
$\Upsilon_1 \cap \Upsilon_2$ represents $k_1k_2\pd([\omega^2])$,
and thus $\pd(\Gamma_2) = [\Upsilon_1 \cap \Upsilon_2] / (k_1k_2)$.

We consider the case where $\Upsilon_1, \Upsilon_2$
are small perturbations of the anticanonical divisor~$Y_\triangle$. For $i = 1, 2$, let $g_i \in \su(2)$ be lifts of rotations by arbitrary small angles $\epsilon_i$
about different axes.
Write
\[
\Upsilon_i = \{[a, g_i(a), b] \in \sym^3\cp^1 \ |\  a,b \in \cp^1 \}.
\]
Note that $\Upsilon_i\subset \cp^3\setminus L_\tr$ since $\epsilon_i$ are small.

\begin{prop}\label{propdeg}
$\deg \Upsilon_i = 4$
\end{prop}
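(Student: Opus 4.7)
The plan is to exhibit $\Upsilon_i$ as the birational image of the parametrization $\Phi : \cp^1 \times \cp^1 \to \cp^3$ defined by $\Phi(a, b) = [a, g_i(a), b]$, and then compute the pullback of the hyperplane class to obtain the degree via the projection formula. First I would verify birationality: take generic $(a, b)$ and suppose $(a', b')$ satisfies $\{a', g_i(a'), b'\} = \{a, g_i(a), b\}$ as multisets. Among the six matchings of the two ordered triples, five impose constraints of the form $g_i^2(a) = a$, $g_i(b) = a$, or $b = a$, all of which fail on the complement of a proper subvariety provided $g_i \neq \id$. Hence only $(a', b') = (a, b)$ survives, and $\Phi$ is birational onto $\Upsilon_i$.

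Next I would compute $\Phi^* \mathcal{O}_{\cp^3}(1)$. Under the identification $\cp^3 = \mathbb{P}(S^3 V)$, a cubic with roots $[x_j : y_j]$ for $j = 1, 2, 3$ is given up to scalar by $\prod_j (y_j X - x_j Y)$, whose coefficients in $X, Y$ are trilinear in the triples $(x_j, y_j)$. In our map, the factors corresponding to $a = [a_0 : a_1]$ and $g_i(a)$ are each linear in $(a_0, a_1)$, while the factor corresponding to $b = [b_0 : b_1]$ is linear in $(b_0, b_1)$, so each homogeneous coordinate of $\Phi(a, b)$ is of bidegree $(2, 1)$. Hence $\Phi^* \mathcal{O}_{\cp^3}(1) \simeq \mathcal{O}(2, 1)$, and writing $h_j$ for the hyperplane class of the $j$-th factor (so $h_j^2 = 0$ and $\int_{\cp^1 \times \cp^1} h_1 h_2 = 1$), the projection formula gives
\[
\deg \Upsilon_i = \int_{\cp^1 \times \cp^1} \Phi^* c_1(\mathcal{O}(1))^2 = \int_{\cp^1 \times \cp^1} (2 h_1 + h_2)^2 = 4.
\]

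I expect the main obstacle to be the birationality verification, since it requires systematically ruling out each non-trivial matching of the multisets. An alternative route that bypasses the cohomological computation is to intersect $\Upsilon_i$ with the explicit line $\ell = \{[p_1, p_2, t] : t \in \cp^1\} \subset \cp^3$ for generic $p_1, p_2 \in \cp^1$ (easily verified to be a line since the cubic $(X - p_1)(X - p_2)(X - t)$ is linear in $t$), and solve the multiset equation $\{p_1, p_2, t\} = \{a, g_i(a), b\}$ directly: the three cases of which element equals $b$ yield, generically, zero, two, and two transverse solutions, for a total of four intersection points.
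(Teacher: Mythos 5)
Your main computation is essentially the same as the paper's: both parametrize $\Upsilon_i$ by $\cp^1 \times \cp^1$, observe that the map has bidegree $(2,1)$, and compute $\deg \Upsilon_i = \int_{\cp^1\times\cp^1} c_1(\mathcal{O}(2,1))^2 = 4$. The one place where you go beyond the paper is the birationality check, and this is worth flagging: the paper simply asserts that $f_i$ ``is an embedding,'' but that is not literally true. For instance, $f_i(a, g_i^2(a))$ and $f_i(g_i(a), a)$ give the same unordered triple $\{a, g_i(a), g_i^2(a)\}$, so $f_i$ is $2$-to-$1$ along a curve in $\cp^1\times\cp^1$ rather than injective. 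What is true, and what your enumeration of the six matchings establishes, is that each non-identity matching imposes a codimension-one constraint (your list is slightly informal --- matchings (2) and (4) impose $b = g_i(a)$ and $b = g_i^2(a)$ respectively, not quite the three forms you name --- but every one of the five is clearly a proper subvariety), hence $f_i$ is generically one-to-one onto its image. That birationality is exactly what the projection formula needs, so your proof is correct and in fact tighter than the paper's on this point. Your alternative argument via the explicit line $\ell = \{[p_1, p_2, t]\}$ and the case analysis on which element of the triple equals $b$ is also valid and gives a pleasant elementary cross-check.
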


\begin{proof}
Write $a=[a_0:a_1],$ $b=[b_0: b_1]\in\cp^1$. Identifying $\cp^3$ with the projectivization of the space of homogeneous polynomials of degree $3$ in two variables, we can write \[\Upsilon_i = \left\{\left[(a_0 y - a_1 x)\big(((g_i)_{11}a_0+(g_i)_{12}a_1)y - ((g_i)_{21}a_0+(g_i)_{22}a_1) x\big) (b_0y - b_1 x)\right]\,| \,  a,b\in \cp^1\right\},
\]
where $(g_i)_{kl}$ are the components of the matrix $g_i.$
Hence, we have an embedding \[f_i: \cp^1 \times \cp^1 \rightarrow \cp^3,\]
\[([a_0:a_1],[b_0:b_1])\mapsto[f_i^0(a,b):f_i^1(a,b):f_i^2(a,b):f_i^3(a,b)],\]
where $f_i^j$ are homogeneous polynomials of bidegree $(2,1),$ such that $\Upsilon_i=\Ima f_i$.

Let $H_1$ and $H_2$ be hyperplanes in $\cp^3$ given by
\[
H_1 = \{\sum h_j z_j = 0\}, \qquad H_2 =
\{ \sum k_j z_j = 0 \}.
\]
The intersection $H_1\cap H_2$ is a generic line, so we have
\[
\deg\Upsilon_i=\# \Upsilon_i \cap H_1 \cap H_2.
\]
The preimages $f_i^{-1}(H_1)$
and $f_i^{-1}(H_2)$
are the vanishing sets of the polynomials
\[p^h_i(a,b)=\sum_{j=0}^3 h_jf_i^j(a,b),\qquad p^k_i(a,b)=\sum_{j=0}^3 k_jf_i^j(a,b) \]
of bidegree $(2,1)$.
Denote by $\pi_1, \pi_2 : \cp^1 \times \cp^1 \rightarrow \cp^1$ the projection maps.
Let
\[
\sheafo(i,j) = \pi_1^* \sheafo(i) \otimes \pi_2^*\sheafo (j).
\]
So,
\[
c_1(\sheafo(i,j)) = (i,j) \in H^2(\cp^1 \times \cp^1) = \Z \oplus \Z.
\]
The polynomials $p^h_i, p^k_i,$ define sections of the line bundle $\sheafo(2,1).$
So,
\begin{multline*}
\# \Upsilon_i \cap H_1 \cap H_2 = \# f_i^{-1}(H_1 \cap H_2) = \# f_i^{-1}(H_1) \cap f_i^{-1}(H_2) = \\ = \int_{\cp^1 \times \cp^1} c_1(\sheafo(2,1)) \smile c_1(\sheafo(2,1)) = 4.
\end{multline*}

\end{proof}

\begin{proof}[Proof of Theorem \ref{thm:ogw102}]
Let $\gamma_2$ be a a differential form representing $\Gamma_2 \in H^4(\x, L_\triangle; \R)$. We show below that $evi_1$ is transverse to $\Upsilon_1 \cap \Upsilon_2.$
Thus, by Theorem~\ref{theorem_lm} and Poincar\'e duality we have
\begin{equation}\label{eq:ogw102enum}
\ogw_{1, 0}(\Gamma_2)= -\int_{\mathcal{M}_{0,1}(1)}evi_1^*\gamma_2 = -\frac{\# evi_1^{-1}(\Upsilon_1 \cap \Upsilon_2)}{\deg \Upsilon_1 \deg \Upsilon_2}.
\end{equation}
So, we need to count Maslov 2 disks passing through $\Upsilon_1 \cap \Upsilon_2$ with sign given by the orientation of the moduli space $\mathcal{M}_{0,1}(1)$ and the complex orientation of the normal bundle to $\Upsilon_1 \cap \Upsilon_2.$

We represent a point of $\Upsilon_i$ by three dots with an arrow between two of them labeled by $g_i$ as follows.
\[
\begin{tikzcd}
 \bullet \arrow[r, "g_i"] &  \bullet & \bullet
\end{tikzcd}
\]
Hence, a point of $\Upsilon_1 \cap
\Upsilon_2$ is represented by three dots with two arrows between them labeled by
$g_1$ and $g_2$. There are 6 types up to continuous deformations:
\[
\begin{tikzcd}
 \bullet \arrow[r, "g_1"] & \bullet  \arrow[r, "g_2"] & \bullet  \tag{$\Theta_1$}
\end{tikzcd}
\]

\[
\begin{tikzcd}
 \bullet \arrow[r, "g_1"] & \bullet & \arrow[l, swap, "g_2"]  \bullet
\end{tikzcd}
\tag{$\Theta_2$}
\]

\[
\begin{tikzcd}
 \bullet \arrow[r, "g_2"] & \bullet \arrow[r, "g_1"] & \bullet
\end{tikzcd}
\tag{$\Theta_3$}
\]

\[
\begin{tikzcd}
 \bullet & \arrow[l, swap, "g_1"]  \bullet \arrow[r, "g_2"] & \bullet
\end{tikzcd}
\tag{$\Theta_4$}
\]

\[
\begin{tikzcd}
 \bullet \arrow[r, bend left, "g_1"] & \arrow[l, bend left, swap, "g_2"]  \bullet
 & \bullet
\end{tikzcd}
\tag{$\Theta_5$}
\]

\[
\begin{tikzcd}
 \bullet \arrow[r, bend left, "g_1"]  \arrow[r, bend right, swap, "g_2"] &  \bullet
 & \bullet
\end{tikzcd}
\tag{$\Theta_6$}
\]
More explicitly,
\begin{align*}
\Theta_1 &=\{ [a, g_1(a), g_2g_1(a)] \in \sym^3 \cp^1|\ a\in \cp^1\},    \\
\Theta_2 &=\{ [a, g_1(a), g_2^{-1}g_1(a)] \in \sym^3 \cp^1|\ a\in \cp^1  \},   \\
\Theta_3&=\{ [a, g_2(a), g_1g_2(a)] \in \sym^3 \cp^1|\ a\in \cp^1  \},   \\
\Theta_4&=\{ [a, g_1^{-1}(a), g_2g_1^{-1}(a)] \in \sym^3 \cp^1|\ a\in \cp^1\},   \\
\Theta_5&=\{[a, g_1(a), c]\in \sym^3 \cp^1 |\ a,c \in \cp^1, \ g_2g_1(a) =  a\},   \\
\Theta_6 &= \{[a, g_1(a), c] \in \sym^3 \cp^1 | \ a,c \in \cp^1,\ g_2^{-1} g_1(a) = a \}.
\end{align*}
Since each of $\Theta_1,\ldots,\Theta_4$ is the image of a map $\cp^1 \to \cp^3$ of degree 3, it follows that $\deg \Theta_1 = \ldots  =  \deg \Theta_4 =3.$
We claim that each of $\Theta_5, \Theta_6$ is a union of two lines. Indeed,
both $g_2g_1$ and $g_2^{-1}g_1$ have two eigenvectors,
so there are two choices for $a\in\cp^1$.
Each such choice $a$ gives a line $\{[a, g_1(a), c]|\ c\in \cp^1 \}.$ Hence, $\deg \Theta_5 = \deg \Theta_6 = 2$.
Therefore, $\sum_{i=1}^6 \deg \Theta_i=16$.
By Bezout's theorem and Proposition~\ref{propdeg}, we have
\[
\deg \Upsilon_1 \cap \Upsilon_2  =\deg \Upsilon_1 \deg \Upsilon_2
= 16.
\]
Since the degrees coincide, it follows that each
$\Theta_i$ occurs with  multiplicity $1$ in the intersection $\Upsilon_1 \cap \Upsilon_2.$ In particular, the intersection is generically transverse.

By Lemma \ref{lemmaxi} all Maslov 2 disks are axial of the form $u(z)=A\cdot e^{-i\xi_v\log z}\cdot \tr$.
Let $p_1, p_2, p_3 \in \mathbb{C}P^1$ such that $[p_1, p_2, p_3]\in\sym^3 \cp^1$ is in the image of a Maslov index $2$ disk. As in Example~\ref{ex}, there exist $i,j\in\{1,2,3\}$ such that $d(p_i,p_j)\ge e$, where $e$ is the distance between two vertices in an equilateral triangle on a great circle in $\cp^1$. By choosing $\epsilon_i$ sufficiently small, $\Theta_1,\ldots, \Theta_4$ can be brought
arbitrarily close to the locus $N_\triangle$ where all 3 points coincide. This rules out a Maslov index 2 disk passing through them. Hence, the number of Maslov 2 disks through $\Upsilon_1 \cap \Upsilon_2$
is equal to the number of Maslov $2$ disks through $\Theta_5$ and $\Theta_6.$  Since $a$ and $g_1(a)$ are sufficiently close, it follows by Lemma~\ref{pointslemma} that one such disk passes through each of the lines making up $\Theta_5, \Theta_6,$ for a total of four disks.

Next we determine the sign with which each disk passing through $\Theta_5$ and $\Theta_6$ contributes to $\# evi_1^{-1}(\Upsilon_1 \cap \Upsilon_2).$ The treatment of $\Theta_5$ and $\Theta_6$ is parallel, so we focus on $\Theta_6$.
Let $a \in \cp^1$ be one of the two solutions of $g_2^{-1}g_1(a) = a$ as in the above formula for $\Theta_6.$
Denote by $\Theta_6^a$ the corresponding component of $\Theta_6.$
Let $[u;w]\in \mathcal{M}_{0,1}(1)$ with $u(w) \in \Theta_6^a$. Let $(\nu_{\Theta_6})_{u(w)}:= T_{u(w)}\cp^3/ T_{u(w)}\Theta_6$ be the normal space to $\Theta_6$ at $u(w)$ equipped with the complex orientation. The differential ${devi_1}_{[u;w]}$ induces a linear map
\[
\overline{devi_1}_{[u;w]}:T_{[u;w]}\mathcal{M}_{0,1}(1)\to (\nu_{\Theta_6})_{u(w)}.
\]
We show $\overline{devi_1}_{[u;w]}$ is an orientation-reversing isomorphism.

By Lemma~\ref{lemmaxi} we can write $u(z)=A\cdot e^{-i\xi_v\log z}\cdot \tr$.
By applying an appropriate rotation to $\cp^1 = S^2$, which does not affect orientations, we may assume that $a$ and $g_1(a)$ are positioned in the $xz$ plane symmetrically about the south pole as in Figure~\ref{fig:3}.  It follows from Example~\ref{ex} that we may take $A$ to be the identify matrix.
\begin{figure}[ht]
        \centering
        \includegraphics[scale=0.28]{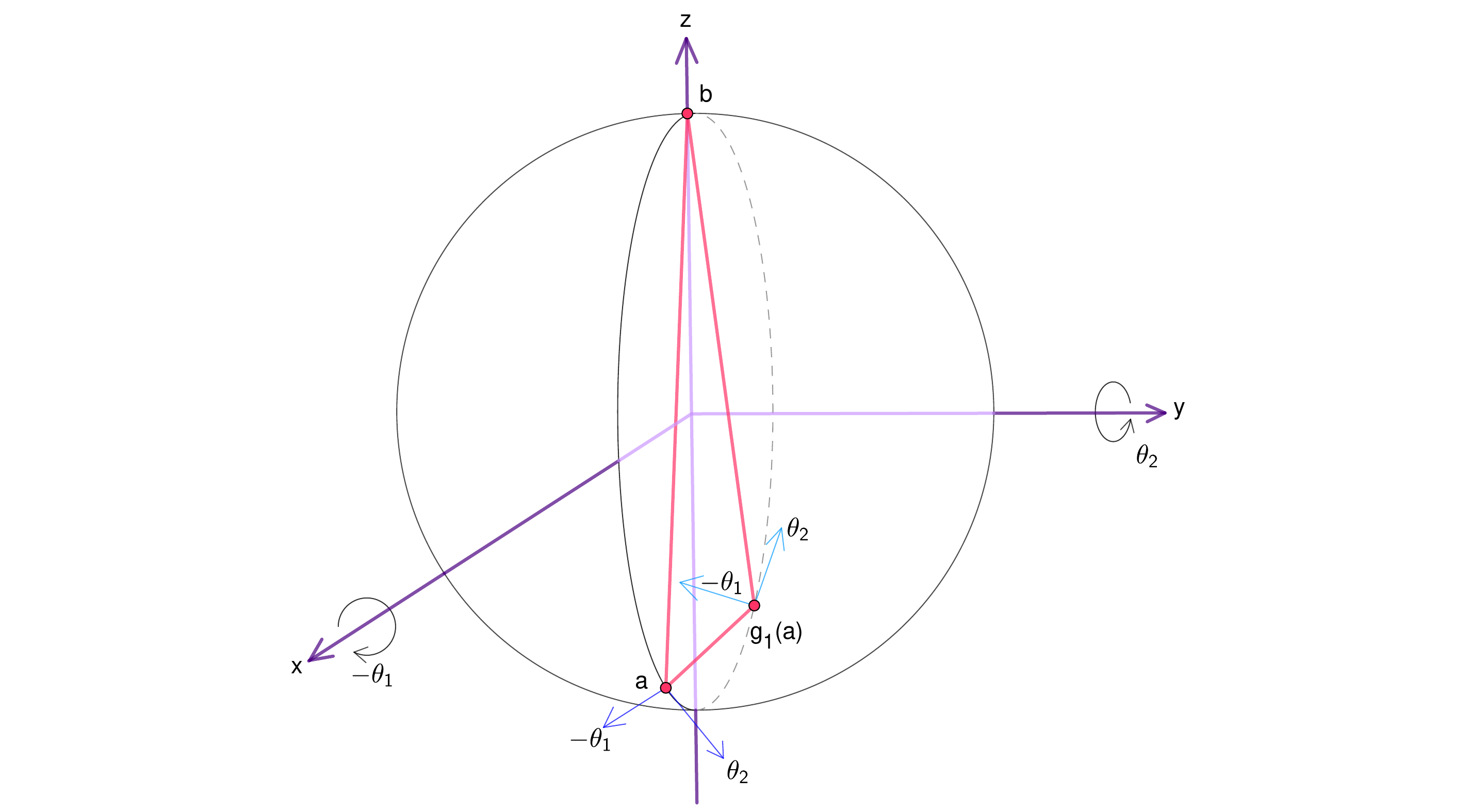}
        \caption{The triangle $u(w).$}
        \label{fig:3}
\end{figure}
We begin by finding an oriented basis for $T_{[u;w]}\mathcal{M}_{0,1}(1)$.
Let
\[
(E,F):=(u^*T\x,u|_{\partial D}^*TL_\triangle).
\]
We choose $\theta_1,\theta_2 \in \mathfrak{su}(2)$ to be lifts of infinitesimal right-handed rotations around the $x$-axis and the $y$-axis respectively. Thus, the trivialization of $TL_\tr$ given by the infinitesimal action of $\mathfrak{su}(2)$ and the basis $\xi_v,\theta_1,\theta_2,$ determines the orientation and the spin structure of
$L_\tr$ given in Section~\ref{subsection_orientation_spin}.
Let $w\in \Int D$. The tangent space of $\mathcal{M}_{0,1}(1)$ at $[u;w]$ is given by
\[
T_{[u;w]}\mathcal{M}_{0,1}(1)\simeq (\ker\Bar{\partial}_{(E,F)}\oplus T_wD)/\mathfrak{psl}(2,\mathbb{R}).
\]
Consider the canonical identification $\C\simeq T_w D$. We denote by $\hat{x}$ and $\hat{y}$ the vectors in $T_w D$ that correspond to $1$ and $i$ in $\C$ respectively.
Hence, the basis $\{\hat{x}, \hat{y}\}$ determines the orientation on $T_w D$ that induced by its complex structure.
By Proposition~\ref{pr:canor} and Lemmas~\ref{orientation_on_ker},~\ref{remarkker0},~\ref{lemma.psl} and~\ref{isomorphism_2_0_0},
we have an orientation-reversing isomorphism
\[
\ker\Bar{\partial}_0\oplus \ker\Bar{\partial}_0\oplus T_wD\simeq(\ker\Bar{\partial}_{(E,F)}\oplus T_wD)/\mathfrak{psl}(2,\mathbb{R}),
\]
given by
\[
1\oplus 0\oplus 0\mapsto[\theta_1\cdot u\oplus 0], \quad 0\oplus 1\oplus 0 \mapsto[\theta_2\cdot u\oplus 0],\quad 0\oplus0\oplus\hat{x}\mapsto [0\oplus \hat{x}], \quad  0\oplus0\oplus \hat{y}\mapsto [0\oplus \hat{y}].
\]
Abbreviate
\[
\bar\theta_i = [\theta_i \cdot u \oplus 0], \qquad \bar x = [0\oplus \hat x], \qquad \bar y = [0\oplus \hat y].
\]
Thus, $\{-\bar\theta_1, \bar\theta_2,\bar{x}, \bar{y}\}$ is an oriented basis for $T_{[u;w]}\mathcal{M}_{0,1}(1)$.

Next, we compute $devi_1$ on $\bar x, \bar y.$
Indeed,
\begin{equation*}
    du_w(\hat{x})=\left.\frac{d}{dt}\right|_{t=0}e^{-i\xi_v\log(w+t)}\cdot \tr = \frac{-i\xi_v}{w}\cdot u(w),
\end{equation*}
and
\begin{equation*}
    du_w(\hat{y})=\left.\frac{d}{dt}\right|_{t=0}e^{-i\xi_v\log(w+it)}\cdot \tr
    = \frac{\xi_v}{w}\cdot u(w).
\end{equation*}
It follows that
\begin{equation}\label{eq:devixy}
{devi_1}_{[u;w]}(\bar{x})=\frac{-i\xi_v}{w}\cdot u(w),\qquad {devi_1}_{[u;w]}(\bar{y})=\frac{\xi_v}{w}\cdot u(w).
\end{equation}

To determine whether $\overline{devi_1}_{[u;w]}$ preserves orientation, it is helpful to work in a holomorphic coordinate chart on $\sym^3(\cp^1)$. Let
\[
\psi : \C^3 \to \sym^3(\cp^1)
\]
be given by
\[
\psi(z_1,z_2,z_3)  = [[1:z_1],[1:z_2],[z_3:1]].
\]
Recall the stereographic projection $p$ from Example~\ref{ex}. Observe that
\[
\psi(p(a),p(g_1(a)),0) = u(w).
\]
Since $a \neq g_1(a)$, we may choose an open $U \subset \C^3$ containing $(p(a),p(g_1(a)),0)$ such that $\hat \psi = \psi|_U$ is a biholomorphism onto its image.
Then
\[
\hat \psi^{-1}(\Theta_6^a) = U \cap(\{(p(a),p(g_1(a)))\}\times \C) = : \hat\Theta_6^a.
\]
Observe that the normal bundle to $\hat\Theta_6^a$ is canonically identified with $\C^2 \times \{0\} \subset \C^3.$
Let $\pi : \C^3 \to \C^2$ be the projection onto the first two factors, and let
\[
\widehat{evi_1} = \pi\circ \hat\psi^{-1}\circ evi_1.
\]
Since $\hat \psi$ is biholomorphic and thus orientation preserving, it suffices to determine whether the map
\[
d\widehat{evi_1}_{[u;w]} :  T_{[u;w]}\mathcal{M}_{0,1}(1)\to \C^2
\]
preserves orientation.

We proceed to compute $d\widehat{evi_1}$ on the oriented basis $\{-\bar \theta_1,\bar \theta_2,\bar x,\bar y\}$ of $T_{[u;w]}\mathcal{M}_{0,1}(1)$.
By Example~\ref{ex}, we have
\begin{equation}
(p(a),p(g_1(a)))= \pi\circ \hat \psi^{-1}\circ u(w) = \left(\sqrt{\frac{w}{3}},-\sqrt{\frac{w}3}\right) \label{eq:psi-1u}
\end{equation}
Since $a$ lies in the $xz$ plane, it follows that $p(a)\in \R$ and consequently $w \in \R_{>0}.$
Recall the map $\varphi^t_{\xi_v}$ from Example~\ref{ex}. For $z \in \C,$ we have
\[
\xi_v\cdot z=\frac{d}{dt}|_{t=0}\varphi^t_{\xi_v}(z)=\frac{i}{2}z.
\]
So, by equations~\eqref{eq:devixy} and~\eqref{eq:psi-1u}, we obtain
\begin{gather*}
d\widehat{evi_1}(\bar y) = \pi\circ d(\hat\psi)^{-1}\left(\frac{\xi_v}{w}\cdot  u(w)\right) =\frac{1}{w}(\frac{i\sqrt{w}}{2\sqrt{3}}, -\frac{i\sqrt{w}}{2\sqrt{3}}), \\
d\widehat{evi_1}(\bar x) = \pi \circ d(\hat\psi)^{-1}\left(\frac{-i\xi_v}{w}\cdot u(w)\right) = \frac{1}{w}(\frac{\sqrt{w}}{2\sqrt{3}}, -\frac{\sqrt{w}}{2\sqrt{3}}).
\end{gather*}
On the other hand, we can see by stereographically projecting Figure~\ref{fig:3} or by direct calculation that
\[
d\widehat{evi_1}(-\bar \theta_1) = -(i,i) + O(\epsilon_1), \qquad d\widehat{evi_1}(\bar \theta_2) = -(1,1) + O(\epsilon_1).
\]
Since
\[
\det \begin{pmatrix}
    0 & -1 & 0 & -1\\
    -1 & 0 & -1 & 0\\
     \frac{1}{2\sqrt{3w}} & 0 & -\frac{1}{2\sqrt{3w}} &0\\
     0 & \frac{1}{2\sqrt{3w}} & 0 &-\frac{1}{2\sqrt{3w}}
\end{pmatrix}=-\frac{1}{3w}<0,
\]
it follows that the basis
\[
d\widehat{evi_1}(\{-\theta_1,\theta_2,\bar x,\bar y\})
\]
is not complex oriented. Thus, $d\widehat{evi_1}$ and consequently also $\overline{devi_1}$ reverse orientation.

Therefore, each of the 4 Maslov 2 disks passing through $\Upsilon_1\cap\Upsilon_2$ contributes to the signed count $\# evi_1^{-1}(\Upsilon_1 \cap \Upsilon_2)$ with  sign $-1$.
By Proposition~\ref{propdeg} and equation~\eqref{eq:ogw102enum},
we obtain
\[
\ogw_{1, 0}(\Gamma_2)=4/16=1/4.
\]
\end{proof}

\subsection{Computation of \texorpdfstring{$\overline{OGW}_{2, 0}(\Gamma_3)$}{degree 2 single interior constraint invariant}}\label{subsection_ogw20}
\begin{proof}[Proof of Theorem \ref{thm:ogw203}]
By Theorem~\ref{theorem_lm} we have
\[\ogw_{2, 0}(\Gamma_3)= -\int_{\mathcal{M}_{0,1}(2)}evi_1^*\gamma_3,\]
where $\gamma_3$ is a representative of $\Gamma_3$. Since $\Gamma_3$ is Poincar\'e dual to a point, it follows that
\[
\ogw_{2, 0}(\Gamma_3) = -\# evi_1^{-1}(p),
\]
where $p \in \C P^3$ is a regular value of $evi_1$.

Let $p \in N_\triangle$. First, we show that $p$ is a regular value of $evi_1$ and compute $\#evi_1^{-1}(p)$ up to sign. We claim that any Maslov 4 disk $u$ that passes through $p$ is an axial disk of type $\xi_f.$
Indeed, by Lemma~\ref{divisor} $\y$ is an
anticanonical divisor, so by Lemma \ref{lm:intersect}
the Maslov index of $u$ is given by twice
the algebraic intersection number $[u] \cdot [\y]$.
Hence, $[u] \cdot [\y] = 2$. Thus, Lemma~\ref{lemma3.7} implies that $u$ intersects $\y$ only at $p$. Hence, by Lemma~\ref{lm:lmpole1}  a pole germ of $u$ is not of type $\xi_v$. So, by Lemma~\ref{lm:lmpole2} $u$ is an axial disk of type $\xi_f$ as claimed. Thus, Lemma~ \ref{lm:lmpole3} guarantees that $p$ is a regular value of $evi_1$. Since by Lemma~\ref{4maslovthroughn} there exists a unique
axial disk of type $\xi_f$ of Maslov index $4$ that passes through $p,$
it follows that $\# evi_1^{-1}(p)=\pm 1$.

It remains to show that in fact $\# evi_1^{-1}(p)= -1$. Let $r:=(0,-1,0),$ and choose
\[
p=[r,r,r]\in N_\Delta \subset \sym^3\cp^1.
\]
Let $u:(D,\partial D)\rightarrow (\x,L_\triangle)$ be the $J$-holomorphic disk of Maslov index 4 that intersects $N_\triangle$ at $p$. As shown above, $u$ is an axial disk of type $\xi_f$, so by Example~\ref{ex2} we have $u(z)= e^{-i\xi_f \log z}\cdot \tr$.
Let
\[
(E,F):=(u^*T\x,u|_{\partial D}^*TL_\triangle).
\]

\begin{figure}[ht]
        \centering
        \includegraphics[scale=0.27]{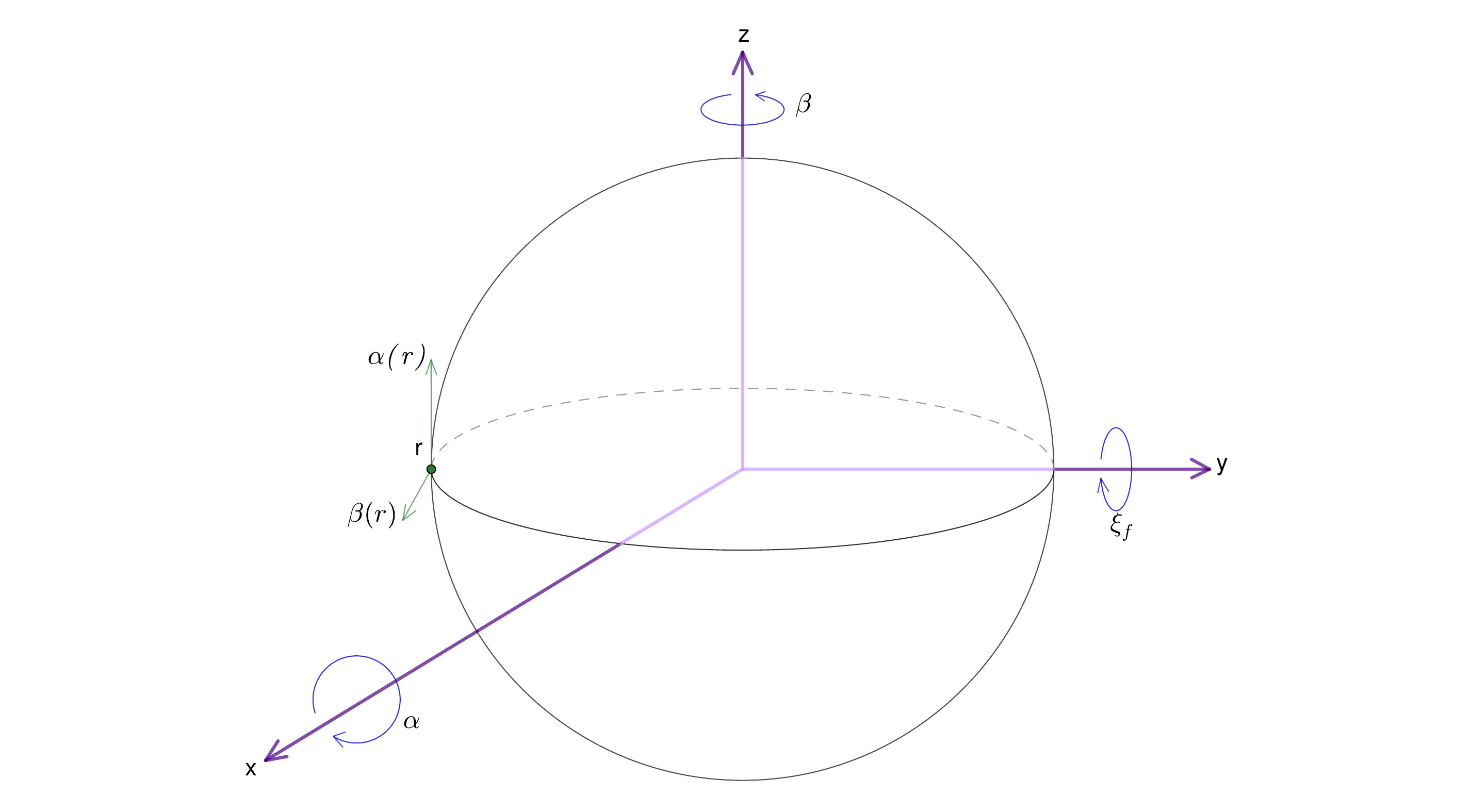}
        \caption{The rotations $\alpha$ and $\beta$ at $p$.}
        \label{fig:6}
    \end{figure}

We construct frames for $E$ and $F$ as follows.
Let $\alpha\in \mathfrak{su}(2)$ be an infinitesimal right-handed rotation about the $-x$ axis, and let $\beta\in \mathfrak{su}(2)$ be an infinitesimal right-handed rotation about the $z$ axis. Recall that $\xi_f$ is an infinitesimal right-handed rotation about the $y$ axis, so $\xi_f, \alpha, \beta$ is a basis of $\mathfrak{su}(2)$ corresponding to infinitesimal right-handed rotations about a right-handed set of orthogonal axes as in the definition of the orientation $\oo_\tr$ and spin structure $\s_\tr$ on $L_\triangle$ given in Section~\ref{subsection_orientation_spin}. Since $u(0)=p,$ and we have taken the complex structure on $\cp^1$ given by left-handed rotation around the outward normal by an angle of $\pi/2,$ it follows that $i\beta\cdot u(0)= -\alpha\cdot u(0).$ See Figure~\ref{fig:6}. Thus the kernel of the infinitesimal action of $\mathfrak{sl}(2,\mathbb{C})$ at $u(0)$ is spanned by $\xi_f$ and by $\alpha+i\beta$.
So, it follows from the argument of Smith \cite[Appendix~A.3]{smith2020monotone} that the following sections give a holomorphic frame of~$E:$
\[
\frac{\xi_f}{z}\cdot u, \qquad\frac{\alpha+i\beta}{z}\cdot u,\qquad
(\alpha-i\beta)\cdot u.
\]
Taking $\C$-linear combinations, we see that
\[
\frac{\xi_f}{z}\cdot u, \qquad \frac{(1+z)\alpha+i(1-z)\beta}{z}\cdot u,\qquad
\frac{(1+z)\beta-i(1-z)\alpha}{z}\cdot u,
\]
is also a holomorphic frame of $E.$
Since $\xi_f, \alpha, \beta$ is a basis of $\mathfrak{su}(2)$, the sections
\begin{equation*}
\xi_f\cdot u,\qquad \alpha\cdot u, \qquad \beta\cdot u,
\end{equation*}
give a frame of for $F.$
This frame is $\oo_\tr$ oriented and can be lifted to the double cover of the frame bundle of $F$ associated to $\s_\tr$ because of how we have chosen $\alpha,\beta$.

Let $\Psi: E \to \underline \C^3$ be the isomorphism given by
\[
    \frac{\xi_f}{z}\cdot u\mapsto \begin{pmatrix}
        1\\
        0\\
        0
    \end{pmatrix}, \quad
    \frac{(1+z)\alpha+i(1-z)\beta}{z}\cdot u\mapsto \begin{pmatrix}
        0\\
        1\\
        0
    \end{pmatrix}, \quad \frac{(1+z)\beta-i(1-z)\alpha}{z}\cdot u\mapsto \begin{pmatrix}
        0\\
        0\\
        1
    \end{pmatrix}.
\]
Since
\[
    \xi_f\cdot u\mapsto \begin{pmatrix}
        z\\
        0\\
        0
    \end{pmatrix},
\quad\alpha\cdot u \mapsto \begin{pmatrix}
        0\\
        \frac{z+1}{4}\\
        \frac{-i(1-z)}{4}
    \end{pmatrix},\quad
\beta\cdot u\mapsto \begin{pmatrix}
        0\\
        \frac{i(1-z)}{4}\\
        \frac{z+1}{4}
    \end{pmatrix},
\]
we see that $\Psi:(E,F) \to (\underline \C^3,F^{2} \oplus F^{1,1}).$
Recall the
choices of spin structures and orientations on $F^2$ and $F^{1,1}$ given in Section~\ref{choose_spin_structure}.
By Lemma~\ref{lm8.1jake} the frame
\[\begin{pmatrix}
        z\\
        0\\
        0
    \end{pmatrix},\quad \begin{pmatrix}
        0\\
        \frac{z+1}{4}\\
        \frac{-i(1-z)}{4}
    \end{pmatrix},\quad \begin{pmatrix}
        0\\
        \frac{i(1-z)}{4}\\
        \frac{z+1}{4}
    \end{pmatrix},\]
of  $F^2\oplus F^{1,1}$ is positively oriented and can be lifted to the spin double cover of the frame bundle.
Hence, the isomorphism $\Psi$ preserves orientation and spin structure.
Thus, by Lemma~\ref{orientation_on_ker}, we obtain an orientation preserving isomorphism
\[
T_{[u;w]}\mathcal{M}_{0,1}(2)\simeq (\ker \Bar\partial_{(E,F)} \oplus T_w D)/\mathfrak{psl}(2,\mathbb{R})\simeq(\ker\Bar{\partial}_2\oplus\ker\Bar{\partial}_{1,1}\oplus T_wD)/\mathfrak{psl}(2,\mathbb{R}),
\]
where $w\in\mathrm{int}D$. By Lemma~\ref{lemma.psl} this gives an orientation-reversing isomorphism
\[
\Psi_*: T_{[u;w]}\mathcal{M}_{0,1}(2)\overset{\sim}{\longrightarrow}\ker\Bar{\partial}_{1,1}\oplus T_wD.
\]

It suffices to show that
\[ {devi_1}_{[u;w]}: T_{[u;w]}\mathcal{M}_{0,1}(2)\rightarrow  T_{u(w)}\cp^3\]
reverses orientation when $w=0$.
Denote by $f_D:T_0D\rightarrow \C$ the canonical isomorphism, and recall that $f_{1,1}: \ker\bar{\partial}_{1,1}\rightarrow \underline{\C}^2_0$ is the evaluation map at zero.
In the following commutative diagram of isomorphisms, the left vertical arrow reverses orientation, and the right vertical arrow preserves orientation.
\[\begin{tikzcd}[ampersand replacement=\&]
	T_{[u:0]}\mathcal{M}_{0,1}(2)  \&  T_{u(0)}\cp^3  \\
	\ker\bar{\partial}_{1,1}\oplus T_0D \& \C^3
	\arrow["{devi_1}_{[u;0]}", from=1-1, to=1-2]
	\arrow["\Psi|_{E_0}",  , from=1-2, to=2-2]
	\arrow["\Psi_*", from=1-1, to=2-1]
	\arrow["f_{1,1}\oplus f_D", from=2-1, to=2-2]
\end{tikzcd}\]
So, it suffices to show that $f_{1,1}\oplus f_D$ preserves orientation.
Since $f_D:T_0D\rightarrow \C$ preserves orientation, and by Lemma~\ref{lm11bundle} the map $f_{1,1}$ preserves orientation, it follows that
$f_{1,1}\oplus f_D$ preserves orientation as desired.
\end{proof}

\section{Recursions}\label{section5}
\begin{proof}[Proof of Theorem \ref{Theorem 1}]
Recall $\Delta_i=[\omega^i]\in H^*(X;\R),$ for $i=0,\ldots,3.$ So, by the definition of $g^{ij}$ given in Section~\ref{subsection_g}, it follows that
 $g^{ij}=\delta_{i,3-j}.$ In order to derive recursion \ref{recusion a}, let $I = \{j_2,\ldots,j_l\}.$ Apply $\partial_s^{k-1} \partial_{t_I}$ = $\partial_s^{k-1} \partial_{t_{j_2}} \ldots \partial_{t_{j_l}}$ to equation~\eqref{eq16} with $v={j_1-1}$, $w=1$. We consider the coefficients of $T^{\beta}$ and evaluate at $s=t_j=0$. Using the closed zero axiom Proposition~\ref{prop:closed_axioms}\ref{zerooo}, we single out instances of $\ogw_{\beta, k}(\Gamma_{j_1},\Gamma_I)$ and obtain:
\begin{align*}
    &[T^{\beta}]\sum_{i=0}^3(\partial_s^{k-1} \partial_{t_I}(\partial_s \partial_{t_{3-i}} \Om \cdot \partial_{t_i} \partial_{t_w} \partial_{t_v} \Phi )|_{s=t_j=0})=\\
    &\qquad = \ogw_{\beta, k} (\Gamma_{j_1},\ldots, \Gamma_{j_l})+ \sum_{\substack{\varpi(\hat{\beta}) + \beta_1 = \beta \\  I_1 \sqcup I_2 = I \\ \beta_1 \neq \beta}} \sum_{i = 0}^{3}
    \gw_{\hat{\beta}}(\Delta_1, \Delta_{j_1-1}, \Delta_{I_1}, \Delta_i)
    \ogw_{\beta_1, k} (\Gamma_{3-i}, \Gamma_{I_2}), \\
    &[T^{\beta}](\partial_s^{k-1} \partial_{t_I}(\partial_s^2 \Om \cdot \partial_{t_{w}} \partial_{t_v} \Om )|_{s=t_j=0})= \\
    &\qquad =\sum_{\substack{\beta_1 + \beta_2 = \beta \\
    k_1 + k_2 = k-1 \\
    I_1 \sqcup I_2 = I }}
    \binom{k-1}{k_1}
    \ogw_{\beta_1, k_1}(\Gamma_{j_1-1}, \Gamma_1, \Gamma_{I_1})
    \ogw_{\beta_2, k_2+2} (\Gamma_{I_2}), \\
    &[T^{\beta}](\partial_s^{k-1} \partial_{t_I}(\partial_{s} \partial_{t_w} \Om \cdot \partial_{t_v} \partial_s \Om )|_{s=t_j=0})= \\
    & \qquad =\sum_{\substack{\beta_1 + \beta_2 = \beta \\
    k_1 + k_2 = k-1 \\
    I_1 \sqcup I_2 = I }}
    \binom{k-1}{k_1}
    \ogw_{\beta_1, k_1+1}(\Gamma_{j_1-1}, \Gamma_{I_1})
    \ogw_{\beta_2, k_2+1} (\Gamma_1, \Gamma_{I_2}).
\end{align*}
Substituting the expressions in~\eqref{eq16} gives the required recursion.

In order to derive recursion \ref{recursion b}, let $I = \{j_1,\ldots,j_l\}.$ Apply $\partial_s^{k-2} \partial_{t_I}$ = $\partial_s^{k-2} \partial_{j_1} \ldots \partial_{j_l}$ to equation \eqref{eq16} with $v=2$, $w=2$. We consider the coefficients of $T^{\beta+2}$ and evaluate at $s=t_j=0$. We obtain:
\begin{align*}
    &[T^{\beta+2}]\sum_{i=0}^3(\partial_s^{k-2} \partial_{t_I}(\partial_s\partial_{t_{3-i}}  \Om\cdot\partial_{t_i} \partial_{t_w} \partial_{t_v} \Phi)|_{s=t_j=0})= \\
    &\qquad =\sum_{\substack{\varpi(\hat{\beta}) + \beta_1 = \beta+2 \\  I_1 \sqcup I_2 = I }} \sum_{i = 0}^{3}
    \gw_{\hat{\beta}}(\Delta_2, \Delta_2, \Delta_{I_1}, \Delta_i)
    \ogw_{\beta_1, k-1} (\Gamma_{3-i}, \Gamma_{I_2}), \\
    &[T^{\beta+2}](\partial_s^{k-2} \partial_{t_I}(\partial_s^2 \Om \cdot \partial_{t_w} \partial_{t_v} \Om )|_{s=t_j=0})= \\
    &\qquad =\ogw_{2, 0}(\Gamma_2, \Gamma_2)
    \ogw_{\beta, k} (\Gamma_{j_1},\ldots,\Gamma_{j_l})\\
     & \ \ \ \ \ \ \ \ \ \ \ \ \ \ \ \ \ \
     +\sum_{\substack{\beta_1 + \beta_2 = \beta+2 \\
    k_1 + k_2 = k-2 \\ (\beta_1,k_1)\ne(2,0)\\
    I_1 \sqcup I_2 = I }}
    \binom{k-2}{k_1}
    \ogw_{\beta_1, k_1}(\Gamma_2, \Gamma_2, \Gamma_{I_1})
    \ogw_{\beta_2, k_2+2} (\Gamma_{I_2}), \\
    &[T^{\beta+2}](\partial_s^{k-2} \partial_{t_I}(\partial_{s} \partial_{t_w} \Om \cdot \partial_{t_v} \partial_s \Om) |_{s=t_j=0})= \\
    &\qquad  =\sum_{\substack{\beta_1 + \beta_2 = \beta+2 \\
    k_1 + k_2 = k-2 \\
    I_1 \sqcup I_2 = I }}
    \binom{k-2}{k_1}
    \ogw_{\beta_1, k_1+1}(\Gamma_2, \Gamma_{I_1})
    \ogw_{\beta_2, k_2+1} (\Gamma_2, \Gamma_{I_1}).
\end{align*}
Substituting the expressions in~\eqref{eq16} gives the required recursion.

Recursion \ref{recursion c} is derived in the same way as recursion (a) in Theorem 10 from  \cite{RelativeQuantumCohomology}.
\end{proof}
\begin{lm}\label{lm:ogw2022}
$\ogw_{2,0}(\Gamma_2, \Gamma_2)=\frac{35}{64}.$
\end{lm}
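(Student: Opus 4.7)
The plan is to apply recursion~\ref{recursion c} of Theorem~\ref{Theorem 1} with the parameters $\beta = 2$, $k = 0$, $l = 2$, $j_1 = j_2 = 2$, and $I = \emptyset$. This expresses $\ogw_{2,0}(\Gamma_2, \Gamma_2)$ as the leading term $\ogw_{2,0}(\Gamma_1, \Gamma_3)$, plus a sum involving closed Gromov--Witten invariants of $\cp^3$, plus a quadratic sum in lower-complexity open invariants. Each of these pieces can then be evaluated using the basic invariants of Theorem~\ref{thm:iv}, the axioms of Proposition~\ref{prop:open_axioms}, and Lemmas~\ref{lm:relhom} and~\ref{divisor_for_chiang}.

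First I would dispose of the leading term and the closed/open cross sum. The divisor axiom~\ref{ax_divisor}, combined with Lemma~\ref{divisor_for_chiang} giving $\int_{2\tilde{\xi}} \Gamma_1 = \tfrac{1}{2}$, and Theorem~\ref{thm:ogw203}, yields $\ogw_{2,0}(\Gamma_1, \Gamma_3) = \tfrac{1}{2} \cdot (-1) = -\tfrac{1}{2}$. The closed/open sum vanishes entirely: by Lemma~\ref{lm:relhom} the map $\varpi$ is multiplication by $4$, so $\varpi(\hat\beta) > 0$ forces $\beta_1 = 2 - \varpi(\hat\beta) < 0$, killing $\ogw_{\beta_1, 0}$ by the effective axiom~\ref{ax_eff}, while the case $\hat\beta = 0$ is excluded by the condition $\beta_1 \neq \beta$ in the summation.

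For the third sum, the constraint $k_1 + k_2 = 0$ forces $k_1 = k_2 = 0$, and I would analyze the three cases $(\beta_1, \beta_2) \in \{(0,2),(1,1),(2,0)\}$ separately. The case $(0,2)$ contributes zero: by the zero axiom~\ref{ax_zero}, $\ogw_{0,0}(\Gamma_1, \Gamma_1) = P_{\R}(\Gamma_1 \smile \Gamma_1) = P_{\R}(\Gamma_2) = 0$, where the relative cup product identity $\Gamma_1 \smile \Gamma_1 = \Gamma_2$ in $\hH^*(\cp^3, L_\tr)$ is immediate from the representatives $\omega, \omega^2 \in \widetilde{A}^*(\cp^3, L_\tr)$; and $\ogw_{0,0}(\Gamma_1, \Gamma_2)$ vanishes by the degree axiom~\ref{ax_degree}. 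The case $(2,0)$ contributes zero because $\ogw_{0,1}(\Gamma_1) = \ogw_{0,1}(\Gamma_2) = 0$ by the zero axiom~\ref{ax_zero}. In the case $(1,1)$, the degree axiom~\ref{ax_degree} kills $\ogw_{1,0}(\Gamma_1, \Gamma_1)$ and $\ogw_{1,1}(\Gamma_2)$, while two applications of the divisor axiom together with Theorems~\ref{thm:ogw11} and~\ref{thm:ogw102} yield $\ogw_{1,0}(\Gamma_1, \Gamma_2) = \tfrac{1}{16}$ and $\ogw_{1,1}(\Gamma_1) = \tfrac{3}{4}$, so this case contributes $-\tfrac{1}{16} \cdot \tfrac{3}{4} = -\tfrac{3}{64}$.

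Assembling the pieces gives $\ogw_{2,0}(\Gamma_2, \Gamma_2) = -\tfrac{1}{2} + 0 + (-\tfrac{3}{64}) = -\tfrac{35}{64}$. The main obstacle is purely bookkeeping: carefully enumerating every term produced by recursion~\ref{recursion c} and justifying the many vanishings by the degree, zero, or effective axioms. The sole nontrivial cohomological input is the identity $\Gamma_1 \smile \Gamma_1 = \Gamma_2$, which however is immediate from the relative de Rham model $\widetilde{A}^*(\cp^3, L_\tr) \subset A^*(\cp^3)$.
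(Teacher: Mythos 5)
Your proof is correct and follows the same route as the paper: apply recursion~\ref{recursion c} of Theorem~\ref{Theorem 1} with $\beta=2$, $k=0$, $j_1=j_2=2$, $I=\emptyset$, then reduce via the divisor, zero, degree, and effective axioms to the two surviving terms $\ogw_{2,0}(\Gamma_1,\Gamma_3)=-\tfrac12$ and $-\ogw_{1,0}(\Gamma_1,\Gamma_2)\cdot\ogw_{1,1}(\Gamma_1)=-\tfrac{3}{64}$. The paper's proof states the reduced identity $\ogw_{2,0}(\Gamma_2,\Gamma_2)=\ogw_{2,0}(\Gamma_1,\Gamma_3)-\ogw_{1,1}(\Gamma_1)\ogw_{1,0}(\Gamma_1,\Gamma_2)$ directly and leaves the elimination of the remaining terms implicit; you have simply filled in that bookkeeping, including the vanishing of the closed/open cross-sum (correct: $\varpi$ is multiplication by $4$, so $\hat\beta\neq 0$ forces one of the factors to vanish by effectivity) and of the $(\beta_1,\beta_2)=(0,2)$ and $(2,0)$ pieces of the quadratic sum.
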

\begin{proof}
In Theorems ~\ref{thm:ogw11}-\ref{thm:ogw203}  we computed
\[\ogw_{1,1}=-3,\quad \ogw_{1,0}(\Gamma_2)=\frac{1}{4}, \quad \ogw_{2,0}(\Gamma_3)=1.\]
By recursion~\ref{recursion c} of Theorem~\ref{Theorem 1}, we get
\[
    \ogw_{2,0}(\Gamma_2, \Gamma_2)=\ogw_{2,0}(\Gamma_1, \Gamma_3)-\ogw_{1,1}(\Gamma_1)\cdot \ogw_{1,0}(\Gamma_1, \Gamma_2).
\]
By the open divisor axiom Proposition~\ref{prop:open_axioms}\ref{ax_divisor}, we get
\[
\ogw_{2,0}(\Gamma_2, \Gamma_2)=
    \frac{1}{2} + \frac{3}{4}\cdot \frac{1}{16}
    =\frac{35}{64}.
\]
\end{proof}

\begin{proof}[Proof of Theorem~\ref{thm:iv}]
This follows from Theorems ~\ref{thm:ogw11}-\ref{thm:ogw203} and Lemma~\ref{lm:ogw2022}.
\end{proof}
In order to prove Corollary~\ref{Corollary 1.2} we will need the following lemma.
\begin{lm}\label{lm:beta1}
Assume $|\Gamma_{i_j}|\ge 4.$ If $\ogw_{1,k}(\Gamma_{i_1},\ldots,\Gamma_{i_l})\ne0$, then $(k,l)=(1,0)$ or $(k,l)=(0,1)$ and $|\Gamma_{i_1}|=4$.
\end{lm}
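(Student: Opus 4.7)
The plan is to extract this directly from the open degree axiom (Proposition~\ref{prop:open_axioms}\ref{ax_degree}) once we identify the relevant numerical invariants for the Chiang Lagrangian. First I would record that $n = \dim_\R L_\tr = 3$, and, using Lemma~\ref{lm:relhom} together with Lemma~\ref{maslov_index_lemma}, note that the generator $\tilde\xi \in H_2(\cp^3,L_\tr;\Z) \simeq \Z$ identified with $1$ satisfies $\mu(\tilde\xi) = 2$. Indeed, $\varpi(\xi) = 4\tilde\xi$ and $\mu(\varpi(\xi)) = 2c_1(\xi) = 8$, so $\mu(\tilde\xi) = 2$.

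With these inputs the open degree axiom applied to $\ogw_{1,k}(\Gamma_{i_1},\ldots,\Gamma_{i_l})$ becomes
\[
n - 3 + \mu(1) + k + 2l = kn + \sum_{j=1}^l |\Gamma_{i_j}|,
\]
which rearranges to
\[
2 = 2k + \sum_{j=1}^l (|\Gamma_{i_j}| - 2).
\]
Under the hypothesis $|\Gamma_{i_j}| \geq 4$, every summand on the right-hand side satisfies $|\Gamma_{i_j}| - 2 \geq 2$, and $2k \geq 0$, so the identity forces $2k + 2l \leq 2$, i.e.\ $k + l \leq 1$. This leaves only three possibilities for $(k,l)$.

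I would then rule out or refine each case. If $(k,l) = (0,0)$, the axiom reads $2 = 0$, which is impossible. If $(k,l) = (1,0)$, the axiom reads $2 = 2$, which is consistent and imposes no further condition. If $(k,l) = (0,1)$, the axiom reads $2 = |\Gamma_{i_1}| - 2$, forcing $|\Gamma_{i_1}| = 4$. This exhausts the cases and yields exactly the statement of the lemma; no geometric input beyond the degree axiom is needed, so the proof is essentially a one-line computation after the above identifications.
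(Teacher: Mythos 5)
Your proof is correct and follows essentially the same route as the paper: apply the open degree axiom with $n=3$ and $\mu(1)=2$ to get $2(k-1)+\sum_{j}(|\Gamma_{i_j}|-2)=0$, then use $|\Gamma_{i_j}|\ge 4$ to rule out everything except $(k,l)=(1,0)$ and $(k,l)=(0,1)$ with $|\Gamma_{i_1}|=4$. Your derivation of $\mu(\tilde\xi)=2$ from Lemma~\ref{lm:relhom} and $\mu(\varpi(\xi))=2c_1(\xi)=8$ is a slightly more explicit justification than the paper's direct citation of Lemma~\ref{maslov_index_lemma}, but both yield the same fact and the remainder of the argument is identical.
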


\begin{proof}
By the open degree axiom Proposition~\ref{prop:open_axioms}\ref{ax_degree} and Lemma~\ref{maslov_index_lemma}
\begin{align*}
 & \beta=1 \implies 2+2l=2k+\sum_{j=1}^l |\Gamma_{i_j}| \\
 &\implies 2(k-1)+\sum_{j=1}^l (|\Gamma_{i_j}|-2)=0.
\end{align*}
Both summands are positive when $k>1.$ Hence,
equality can hold only if $(k,l)=(1,0),$ or
$(k,l)=(0,1)$ and $|\Gamma_{i_1}|=4$.

\end{proof}

\begin{proof}[Proof of Corollary \ref{Corollary 1.2}]
By Theorem \ref{wall_crossing} invariants with interior constraints in $\Gamma_{\diamond}$ are computable in terms of invariants with interior constraints of the form $\Gamma_j=[\omega^j]$. Furthermore, by the open unit and divisor axioms Proposition~\ref{prop:open_axioms}\ref{ax_unit},\ref{ax_divisor}, we may assume that $|\Gamma_j| \geqslant  4$. Finally, assume for convenience that interior constraints are written in ascending degree order.
It follows from the open degree axiom Proposition~\ref{prop:open_axioms}\ref{ax_degree} that for any $\beta,$ there are only finitely many values of $k,l,$ for which there may be nonzero invariants with constraints of the above type.
Hence, we give a process for computing $\ogw_{\beta, k}(\Gamma_{i_1}, \ldots , \Gamma_{i_l})$ which is inductive on $(\beta, k, l, i_1)$ with respect to the lexicographical order on $\mathbb{Z}^{\oplus 4}_{\geqslant 0}$.

Consider a triple $(\beta, k, l)$ with $ k + l < 2 $.
If $\beta=0$, by the open zero axiom Proposition~\ref{prop:open_axioms}\ref{ax_zero} all invariants vanish. For $\beta=1,2$ all possible values have been computed explicitly in Theorem \ref{thm:iv}. Indeed, if $\beta=1$ this follows from Lemma~\ref{lm:beta1}.
If $\beta=2,$ we have
\begin{align*}
 & \beta=2 \implies 4+2l=2k+\sum_{j=1}^l |\Gamma_{i_j}| \\
 &\implies 2(k-2)+\sum_{j=1}^l (|\Gamma_{i_j}|-2)=0.
\end{align*}
Equality can only hold if $k=0$, $l=1$ and $|\Gamma_{i_1}|=6$. Similarly, the open degree axiom implies that for $\beta \geqslant 3$ the only invariants that don't vanish have $k+l \geqslant 2$.

Consider a triple $(\beta, k, l)$ with $ k + l \geqslant 2 $.
If $l \geqslant 2$, by Theorem \ref{Theorem 1} \ref{recursion c} and the open divisor and zero axiom we can express the invariant $\ogw_{\beta, k}(\Gamma_{i_1}, \ldots , \Gamma_{i_l})$ as a combination of invariants $\ogw_{\beta',k'}(\Gamma_{j_1},\ldots,\Gamma_{j_{l'}})$ that either have
$\beta' < \beta,$ or $\beta' = \beta, k' = k$ and $l'< l,$ or $\beta' = \beta, k' = k, l'=l$ and $ j_1 < i_1$. Thus, the invariant is reduced to invariants with data of smaller lexicographical order, known by induction.

Note that by Lemma~\ref{lm:ogw2022} we have $\ogw_{2,0}(\Gamma_2,\Gamma_2)\ne0$. So, if $k \geqslant 2$, by Theorem \ref{Theorem 1} \ref{recursion b} we can express the required invariant in terms of invariants that are either of smaller degree or have equal degree and fewer boundary marked points. Indeed, in formula \ref{recursion b}, the closed zero axiom Proposition~\ref{prop:closed_axioms}\ref{ax_degree_GW} and the open zero axiom imply that all the products involving invariants with degree $ \beta +2$ vanish.
By Lemma~\ref{lm:relhom} the map $\varpi$ is given by multiplication by~$4.$ This and Lemma~\ref{lm:beta1} imply that products involving invariants with degree $\beta +1 $ do not occur.

If $k \geqslant 1$ and $ l\geqslant$ 1, by Theorem \ref{Theorem 1} \ref{recusion a}, the open zero axiom and the closed zero axiom, we can express the required invariant in terms of invariants that are of smaller degree.
\end{proof}

\section{Small relative quantum cohomology}\label{section7}
In this section we compute the small relative quantum cohomology of $(\x,L_\tr).$
\begin{lm}\label{ogw22}
$\ogw_{2,2}=-\frac{5}{4}.$
\end{lm}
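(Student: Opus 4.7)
The plan is to reduce $\ogw_{2,2}$, via the recursions of Theorem~\ref{Theorem 1}, to the basic invariants of Theorem~\ref{thm:iv} and the auxiliary value $\ogw_{2,0}(\Gamma_2,\Gamma_2)=-\tfrac{35}{64}$. First I would apply recursion~\ref{recursion b} with $(\beta,k,l)=(2,2,0)$. In the closed--open cross term, the summand with $\hat\beta=0$ contributes zero since $\Delta_2\smile\Delta_2=0$ in $H^*(\cp^3;\R)$ by dimension, and the only surviving summand has $(\hat\beta,\beta_1)=(1,0)$ with $i=3$, contributing $\gw_1(\Delta_2,\Delta_2,\Delta_3)\cdot\ogw_{0,1}(\Gamma_0)=1\cdot(-1)=-1$ by the open zero axiom~\ref{ax_zero}; here $\gw_1(\Delta_2,\Delta_2,\Delta_3)=1$ counts lines through a point meeting two lines in $\cp^3$. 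The purely open sum reduces by the open degree axiom~\ref{ax_degree} to the single term $\ogw_{2,1}(\Gamma_2)^2$, and the subtracted sum is empty since its only degree-admissible summand $(\beta_1,k_1)=(2,0)$ is exactly the excluded case. This yields
\[
\ogw_{2,2}\cdot\ogw_{2,0}(\Gamma_2,\Gamma_2) \;=\; -1+\ogw_{2,1}(\Gamma_2)^2.
\]

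Next I would compute $\ogw_{2,1}(\Gamma_2)$ using recursion~\ref{recusion a} with $(\beta,k,l,j_1)=(2,1,1,2)$. The closed--open sum vanishes, as its only candidate pair $(\hat\beta,\beta_1)=(0,2)$ is excluded. In the open--open sum over $\beta_1+\beta_2=2$, the summands with $\beta_1\in\{0,2\}$ all vanish via the open zero axiom, the unit axiom, and degree checks (in particular $\ogw_{1,0}=0$ and $\ogw_{1,2}()=0$ by the open degree axiom~\ref{ax_degree}). The only surviving summand is $\beta_1=\beta_2=1$, giving
\[
\ogw_{2,1}(\Gamma_2) \;=\; -\ogw_{1,1}(\Gamma_1)^2 \;=\; -\tfrac{9}{16},
\]
where $\ogw_{1,1}(\Gamma_1)=\int_{\tilde\xi}\Gamma_1\cdot\ogw_{1,1}=\tfrac14\cdot 3=\tfrac34$ by the divisor axiom~\ref{ax_divisor} and Lemma~\ref{divisor_for_chiang}.

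The most subtle point, which I will address carefully, is the term $\ogw_{0,0}(\Gamma_1,\Gamma_1)=P_\R(\Gamma_1\smile\Gamma_1)$ appearing in the $\beta_1=0,\beta_2=2$ summand of the second calculation. Since $\omega$ and $\omega^2$ are representatives of $\Gamma_1$ and $\Gamma_2$ in $\widehat{H}^*(\cp^3,L_\tr;\R)$, the cup product gives $\Gamma_1\smile\Gamma_1=\Gamma_2$, and our chosen $P_\R$ with $\ker P_\R=\mathrm{span}\{\Gamma_j\}_{j=0}^3$ forces $P_\R(\Gamma_1\smile\Gamma_1)=0$; this is what allows the recursion to close up in terms of the basic invariants alone.

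Combining the two displays produces
\[
\ogw_{2,2} \;=\; \frac{-1+(9/16)^2}{-35/64} \;=\; \frac{-175/256}{-35/64} \;=\; \tfrac{5}{4}.
\]
The main obstacle is really just the bookkeeping in the case analysis of recursions~\ref{recursion b} and~\ref{recusion a}: one must verify that the axiomatic vanishings (Propositions~\ref{prop:closed_axioms} and~\ref{prop:open_axioms}) together with the degree constraints and Lemma~\ref{lm:relhom} leave exactly the summands listed above, and that the $P_\R$-dependent term drops out.
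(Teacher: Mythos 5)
Your proposal is correct and follows essentially the same route as the paper: recursion~\ref{recusion a} to get $\ogw_{2,1}(\Gamma_2) = -\ogw_{1,1}(\Gamma_1)^2 = -\tfrac{9}{16}$, then recursion~\ref{recursion b} to get $\ogw_{2,2}\cdot\ogw_{2,0}(\Gamma_2,\Gamma_2) = \gw_1(\Delta_2,\Delta_2,\Delta_3)\cdot\ogw_{0,1}(\Gamma_0) + \ogw_{2,1}(\Gamma_2)^2$, and the arithmetic to reach $\tfrac{5}{4}$ (you just present the two recursions in the opposite order). Your explicit check that $\ogw_{0,0}(\Gamma_1,\Gamma_1) = P_\R(\Gamma_1\smile\Gamma_1) = P_\R(\Gamma_2) = 0$ — which rules out a circular appearance of $\ogw_{2,2}$ in the $\beta_1 = 0$ term of recursion~\ref{recusion a} — is a point the paper leaves implicit, so making it explicit is a welcome clarification rather than a deviation.
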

\begin{proof}
By recursion~\ref{recusion a} of Theorem~\ref{Theorem 1}, we get
\[\ogw_{2,1}(\Gamma_2)=-(\ogw_{1,1}(\Gamma_1))^2.\]
In Theorem~\ref{thm:ogw11} we computed $\ogw_{1,1}=-3.$
Thus, by the open divisor axiom Proposition~\ref{prop:open_axioms}\ref{ax_divisor}, we get $\ogw_{2,1}(\Gamma_2)=-\frac{9}{16}.$
By recursion~\ref{recursion b} of Theorem~\ref{Theorem 1}, we get
\[
\ogw_{2,2}\cdot\ogw_{2,0}(\Gamma_2,\Gamma_2)=\gw_1(\Delta_2,\Delta_2, \Delta_3)\cdot\ogw_{0,1}(\Gamma_0)+\ogw_{2,1}((\Gamma_2))^2.
\]
Lemma~\ref{lm:ogw2022} gives $\ogw_{2,0}(\Gamma_2,\Gamma_2)=\frac{35}{64},$ and the number of lines through a point and two lines is $\gw_1(\Delta_2,\Delta_2, \Delta_3)=1.$
So, by the open unit axiom Proposition~\ref{prop:open_axioms}\ref{ax_unit}  we get
\[
\ogw_{2,2}= \frac{-1 +\frac{81}{256}}{\frac{35}{64}} = -\frac{5}{4}.
\]
\end{proof}
Recall the definition of $\Gamma_\diamond$ from Section~\ref{subsection_wallcrossing}. By Lemma~\ref{lm:relhom} we have $H_2(\cp^3, L_\triangle;\mathbb{Z})=\Z,$ and the map
\[\varpi:H_2(\cp^3;\mathbb{Z})\longrightarrow H_2(\cp^3, L_\triangle;\mathbb{Z}),\]
is given by $m\mapsto 4m.$
Hence, since $g^{ij}=\delta_{i,3-j}$, it follows that the relative small quantum product~\eqref{relqua} for $(X,L)=(\x, L_\triangle)$ is given by
\begin{multline*}
\mem(\Gamma_u, \Gamma_v)=\sum_{\substack{0\le l\le 3\\  \beta\in H_2(\x;\Z)}}T^{\varpi(\beta)}\cdot\gw_\beta(\Delta_u, \Delta_v,\Delta_l)\cdot \Gamma_{3-l}+\\
+\sum_{\beta\in H_2(\x,L_\triangle;\Z)}T^\beta \cdot\ogw_{\beta,0}(\Gamma_u, \Gamma_v)\cdot\Gamma_\diamond.
\end{multline*}

\begin{proof}[Proof of Theorem~\ref{relativequantumpro}]
We claim that $QH^*(\x, L_\triangle)$ is generated as a ring by $1,T, \Gamma_1, \Gamma_\diamond.$ Indeed,
by the open degree axiom Proposition~\ref{prop:open_axioms} \ref{ax_degree} and the closed degree axiom Proposition\ref{prop:closed_axioms}\ref{ax_degree_GW} we obtain
\[\mem(\Gamma_1, \Gamma_1)=\gw_0(\Delta_1, \Delta_1, \Delta_1)\cdot\Gamma_2.\]
\[
    \mem(\Gamma_1, \Gamma_2)= \gw_0(\Delta_1, \Delta_2, \Delta_0)\cdot \Gamma_3+ T\cdot\ogw_{1,0}(\Gamma_1, \Gamma_2)\cdot \Gamma_\diamond.
\]
So, by the closed zero axiom Proposition~\ref{prop:closed_axioms}~\ref{zerooo}, the open divisor axiom Proposition~\ref{prop:open_axioms}~\ref{ax_divisor} and Theorem~\ref{thm:ogw102}, we get
\[
\mem(\Gamma_1, \Gamma_1)=\Gamma_2,
\]
and
\[
\mem(\Gamma_1, \Gamma_2) = \Gamma_3+\frac{T}{16}\cdot \Gamma_\diamond.
\]
Consider the ring
$\R[x,y][[q^{1/4}]]/I$ with
\[I=( x^4-q-\frac{1}{2}q^{1/2}y-\frac{3}{64}q^{1/4}y,\ y^2+\frac{5}{4}q^{1/2}y,\  xy-\frac{3}{4}q^{1/4}y ).\]
Let
\[\phi:\R[x,y][[q^{1/4}]]/I \rightarrow QH^*(\x,L_\tr) \]
be the ring homomorphism defined by \[1\mapsto 1,\quad q^{1/4}\mapsto T,\quad x\mapsto \Gamma_1, \quad y\mapsto \Gamma_\diamond.\]
Hence,
\begin{equation}\label{eq:imphi}
\phi(x)^2=\Gamma_2,\quad \phi(x)^3-\frac{\phi(q^\frac{1}{4})}{16}\phi(y)=\Gamma_3.
\end{equation}
In order to show the map $\phi$ is well defined, it suffices to show that the generators of the ideal $I$ are sent to $0.$ Using the closed degree axiom, the wall crossing formula Theorem~\ref{wall_crossing} and the open degree axiom, we obtain
\begin{align*}
\mem(\Gamma_1,\mem(\Gamma_1, \Gamma_2))
    &=\mem(\Gamma_1, \Gamma_3) + \mem(\Gamma_1, \frac{T}{16} \Gamma_\diamond)\\
    &= T^4\cdot\gw_1(\Delta_1, \Delta_3, \Delta_3)\cdot \Gamma_0+ T^2\cdot\ogw_{2,0}(\Gamma_1, \Gamma_3)\cdot \Gamma_\diamond-\frac{T}{16}\ogw_{1,1}(\Gamma_1)\cdot \Gamma_\diamond.
    \end{align*}
The number of lines through two points and a plane is $\gw_1(\Delta_1,\Delta_3,\Delta_3)=1$.  Hence, by Theorem~\ref{thm:ogw203}, Theorem~\ref{thm:ogw11} and the open divisor axiom, we get
\begin{align*}
     \phi(x)^4&=\phi(q^{1/4})^4+\frac{1}{2}\phi(q^{1/4})^2\phi(y)+\frac{3}{64}\phi(q^{1/4})\phi(y).
\end{align*}
So,
\[
    \phi(x^4-q-\frac{1}{2}q^{1/2}y-\frac{3}{64}q^{1/4}y)=0.
\]
By the wall crossing formula Theorem~\ref{wall_crossing} and the open degree axiom, we get
\begin{align*}
    \mem(\Gamma_\diamond, \Gamma_\diamond)&=\sum_{\beta\in H_2(\x,L_\triangle;\Z)}T^\beta \cdot\ogw_{\beta,0}(\Gamma_\diamond, \Gamma_\diamond)\cdot\Gamma_\diamond\\
    &=\sum_{\beta\in H_2(\x,L_\triangle;\Z)}T^\beta\cdot \ogw_{\beta,2}\cdot\Gamma_\diamond\\
    &=T^2\cdot\ogw_{2,2}\cdot\Gamma_\diamond.
\end{align*}
Hence, by Lemma~\ref{ogw22} we get $\phi(y^2+\frac{5}{4}q^{1/2}y)=0.$ Using the wall crossing formula Theorem~\ref{wall_crossing}, the open degree and divisor axioms, and Theorem~\ref{thm:ogw11}, we get
\begin{align*}
    \mem(\Gamma_1, \Gamma_\diamond) &=\sum_{\beta\in H_2(\x,L_\triangle;\Z)}T^\beta\cdot \ogw_{\beta,0}(\Gamma_1,\Gamma_\diamond)\cdot\Gamma_\diamond\\
    &=-\sum_{\beta\in H_2(\x,L_\triangle;\Z)}T^\beta\cdot \ogw_{\beta,1}(\Gamma_1)\cdot\Gamma_\diamond\\
    &=-T\cdot\ogw_{1,1}(\Gamma_1)\cdot\Gamma_\diamond\\
    &=\frac{3}{4}T\cdot\Gamma_\diamond.
\end{align*}
So, $\phi(xy-\frac{3}{4}q^{1/4}y)=0.$ Thus, the map $\phi$ is well defined. It is surjective because its image generates $QH^*(\x, L_\triangle)$ by equation~\eqref{eq:imphi}.

Abbreviate
\[N:=QH^*(X,L) \quad M:= \R[x,y][[q^{1/4}]]/I, \quad m:=(q^{1/4})\triangleleft \R[[q^{1/4}]]. \]
We think of $M$ and $N$ as modules over the local ring $\R[[q^{1/4}]].$ Consider the induced map
  \[\Bar{\phi}:M/mM\rightarrow N/mN.\]
It follows from the definition of the ideal $I$ that $M/mM$ is a real vector space with basis $1,y,x,x^2,x^3$.
Since $\Bar{\phi}$ is surjective and
\[\dim M/mM=\dim N/mN=\dim \widehat{H}^*(\x,L_\tr;\R),\]
it follows that $\Bar{\phi}$ is injective.
Since $\ker\phi/m\ker\phi\subset \ker\Bar{\phi}=0,$
it follows that $m\ker\phi=\ker\phi.$
Since $\R[[q^{1/4}]]$ is a local ring with maximal ideal $m$, Nakayama's lemma gives $\ker\phi=0.$ Therefore, $\phi$ is an isomorphism.
\end{proof}

\section{Dependence of invariants on left inverse}\label{section8}
In this section we review in greater detail the dependence of the invariants $\ogw_{\beta,k}$ on the map $P_\R.$ We quantify this dependence in Proposition~\ref{prop:general_map_p}, from which we obtain Proposition~\ref{prop_map_p} as a special case. Finally, we prove Corollary~\ref{cor:map_p}.

In the present section, we continue in the setting of Section~\ref{subsection_ogw}. In particular, $L\subset X$ is a connected spin Lagrangian submanifold with $H^*(L;\R) \simeq H^*(S^n;\R)$ and $[L] = 0 \in H_n(X;\R).$ Recall the definition of the map $P_\R: \widehat{H}^{n+1}(X,L;\R) \to H^n(L;\R)$ from Section~\ref{subsection_ogw}.
In this section, we extend $P_\R$ to a map $P_\R: \widehat{H}^*(X,L;\R) \to H^n(L;\R)$ by setting it to zero outside $\widehat{H}^{n+1}(X,L;\R)$. We use Poincar\'e duality to identify $H^n(L;\R)\simeq \R.$ Let $\rho: \widehat{H}^{*}(X,L;\R)\rightarrow H^{*}(X;\R)$ denote the natural map. Consider $P_\R,P_\R'$ with associated invariants $\ogw_{\beta,k}$ and $\ogw\,'\!\!_{\beta,k}$ respectively. The long exact sequence of the pair $(X,L)$ implies there exists a unique map
\[\mathfrak{p}_\R:H^{*}(X;\R)\rightarrow \R\]
such that $\mathfrak{p}_\R\circ\rho=P_\R-P_\R'.$
Let $\Delta_0,\ldots,\Delta_N\in H^*(X;\R)$ be a basis.
Recall that
\[
g_{ij} = \int_X \Delta_i \smile \Delta_j, \qquad i, j = 0,\ldots,N,
\]
and $g^{ij}$ denotes the inverse matrix.
\begin{prop}\label{prop:general_map_p}
Let $A_1,..,A_l\in\widehat{H}^*(X,L;\R)$.
    \[\ogw_{\beta,0}(A_1,\ldots,A_l)-\ogw\,'\!\!_{\beta,0}(A_1,\ldots,A_l)=\sum_{\substack{\Tilde{\beta}\in H_2(X;\Z)\\ \varpi(\Tilde{\beta})=\beta}} g^{ij}\gw_{\Tilde{\beta}}(\Delta_i,\rho(A_1),\ldots,\rho(A_l))\mathfrak{p}_\R(\Delta_j).\]
\end{prop}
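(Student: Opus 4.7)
The plan is to split into two cases based on whether $\beta$ lies in the image of $\varpi$. For $\beta \notin \Ima\varpi$, Section~\ref{subsection_ogw} tells us that $\ogw_{\beta,0} = \oogw_{\beta,0}$, which is defined without reference to $P_\R$. Hence the left-hand side vanishes, and the right-hand side is an empty sum since there is no $\tilde{\beta} \in H_2(X;\Z)$ with $\varpi(\tilde{\beta}) = \beta$.

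For the main case $\beta = \varpi(\tilde{\beta})$, the strategy is to exploit the additive structure of the enhanced superpotential. Recall from~\cite{RelativeQuantumCohomology} that $\overline{\Omega}$ is obtained from the standard superpotential $\Omega$ by adding a correction term that accounts for $J$-holomorphic spheres in $X$ passing through the chain $C$ with $\partial C = L$ representing $P_\R$. Choosing a basis $\Gamma_0,\ldots,\Gamma_N$ of $\ker P_\R \subset \widehat{H}^*(X,L;\R)$ lifting the basis $\Delta_0,\ldots,\Delta_N$ of $H^*(X;\R)$ via $\rho$, this correction can be expressed as a closed Gromov-Witten integral with inputs $\rho(\Gamma_{i_j})$ together with an additional insertion encoding $P_\R$. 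After expansion in the basis $\{\Delta_i\}$, dualizing via $g^{ij}$, and extracting the coefficient of the relevant monomial via Definition~\ref{def:ogw}, one arrives at an explicit formula of the type
\[
\ogw_{\beta,0}(A_1,\ldots,A_l) - \oogw_{\beta,0}(A_1,\ldots,A_l) = \sum_{\varpi(\tilde\beta)=\beta}\sum_{i,j} g^{ij}\gw_{\tilde\beta}(\Delta_i,\rho(A_1),\ldots,\rho(A_l))P_\R(\Gamma^{(j)}),
\]
where $\Gamma^{(j)}$ is the element of $\widehat{H}^{n+1}(X,L;\R)$ that mediates between $\Delta_j$ and the Poincar\'e dual of $C.$

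The essential observation is then that the $\oogw$ contribution is independent of $P_\R$, so the difference $\ogw_{\beta,0} - \ogw\,'\!\!_{\beta,0}$ reduces to the difference of the two correction terms. This difference is controlled by $C - C'$, which is a closed $(n+1)$-cycle in $X$. By the defining relation $\mathfrak{p}_\R \circ \rho = P_\R - P_\R'$, the quantity $P_\R(\Gamma^{(j)}) - P_\R'(\Gamma^{(j)})$ collapses to $\mathfrak{p}_\R(\Delta_j)$, and the stated formula follows.

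The main technical obstacle will be to extract the precise form of the correction term in the construction of $\overline{\Omega}$ from~\cite{RelativeQuantumCohomology}, in particular to justify the appearance of the matrix $g^{ij}$. This factor encodes the duality between the $(n+1)$-chain $C \subset X$ and the basis $\{\Delta_j\}$ of $H^*(X;\R)$, and arises naturally when one splits the relative cohomology insertion dual to $C$ along the splitting of \eqref{ses} determined by $P_\R$. Once this identification is carried out, the rest of the argument is straightforward bookkeeping.
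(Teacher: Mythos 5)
Your strategic reading is sound: $\oogw_{\beta,0}$ vanishes for $\beta \in \Ima\varpi$, so all of $\ogw_{\beta,0}$ there comes from the enhancement, and the left-hand side should isolate exactly what changes under $P_\R \mapsto P_\R'$. But the argument stops short of a proof. Your intermediate displayed identity --- expressing $\ogw_{\beta,0}(A_1,\ldots,A_l) - \oogw_{\beta,0}(A_1,\ldots,A_l)$ as $\sum_{i,j} g^{ij}\gw_{\tilde\beta}(\Delta_i,\rho(A_1),\ldots,\rho(A_l))P_\R(\Gamma^{(j)})$ --- is precisely the thing that needs proving, and you explicitly defer it as ``the main technical obstacle'' rather than establish it. Moreover the object $\Gamma^{(j)}$ ``that mediates between $\Delta_j$ and the Poincar\'e dual of $C$'' is never pinned down; if it is built from the chain $C$, which itself encodes $P_\R$, then $\Gamma^{(j)}$ depends on $P_\R$ and the cancellation $P_\R(\Gamma^{(j)}) - P_\R'(\Gamma^{(j)}) = \mathfrak{p}_\R(\Delta_j)$ does not follow automatically. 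The picture of $\overline{\Omega}$ as $\Omega$ plus a sphere-count correction through $C$ is only a heuristic gloss on the construction in~\cite{RelativeQuantumCohomology}; no chain-level formula of that shape is stated here, so there is nothing explicit to appeal to.

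The paper's proof avoids the heuristic entirely and works algebraically with the cone machinery of Section~\ref{section8}. It starts from $\overline{\Omega} = P\Psi$ (equation~\eqref{P_psi}) and Lemma~\ref{pi_psi}, i.e.\ $\pi\Psi = \rho^*(\nabla\Phi)$, together with the observation that since $(P-P')\circ\bar{x} = 0$, the difference $P-P'$ factors through $\pi: H^*(C(\mathfrak{i})) \to H^*(X;Q_W)$ via a map $\mathfrak{p}$, which is in turn determined by $\mathfrak{p}_\R$ (namely $\mathfrak{p}_Q = \mathfrak{p}_\R\otimes\id_Q$). Then $\overline{\Omega} - \overline{\Omega}' = (\mathfrak{p}\circ\pi)\Psi = \mathfrak{p}(\rho^*\nabla\Phi)$, and expanding $\nabla\Phi = g^{ij}\Delta_j\,\partial_i\Phi$ both explains the factor $g^{ij}$ and reads off the stated formula directly, with no case split on $\beta$. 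The missing ingredient in your outline is exactly this factorization argument; without it the proof is not complete.
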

\begin{rem}
Proposition~\ref{prop:general_map_p} continues to hold in the more general setting of~\cite{RelativeQuantumCohomology} without change. We have formulated it only for the case of $L$ spin and $H^*(L;\R) \simeq H^*(S^n;\R)$ to streamline the exposition since the Chiang Lagrangian satisfies these assumptions.
\end{rem}
In order to prove Proposition~\ref{prop:general_map_p}, we will need a number of results from~\cite{RelativeQuantumCohomology}, which we now summarize. The proof appears toward the end of this section. Recall the definition of the rings $Q_W$ and $R_W$ from Section~\ref{subsection_novikovrings}.
Define \[\mathfrak{i}: A^*(X;Q_W)\rightarrow R_W[-n],\]
by
\[\eta\mapsto (-1)^{n+|\eta|}\int_Li^*\eta.\]
This is a map of complexes when $R_W$ is equipped with the trivial differential. The \textbf{cone} $C(\mathfrak{i})$ is the complex with underlying graded $Q_W$ module $A^*(X;Q_W)\oplus R_W[-n-1]$ and differential
\[d_C(\eta,\xi):= (d\eta, \mathfrak{i}(\eta)-d\xi)=(d\eta, \mathfrak{i}(\eta)).\]
Note that if $[L]=0\in H_n(X;\R),$ then $\coker\mathfrak{i}\simeq R_W[-n-1]$.
Thus, we consider the following commutative diagram with exact rows and columns, which is taken from Section 4.4 in \cite{RelativeQuantumCohomology} except that $\coker\mathfrak{i}$ is replaced with $R_W[-n-1].$

\begin{equation}\label{commutative_diagram}
\xymatrix{
        & 0 & 0 \\
0 \ar[r] & R_W/Q_W \ar[u] \ar[r]^\sim & R_W/Q_W \ar[u] \ar[r] & 0 \\
0 \ar[r] & R_W \ar[u]^{ \Bar{q}} \ar[r]^(.45){\Bar{x}} & H^*(C(\mathfrak{i})) \ar[u]^{\overline P} \ar[r]^(.55){\pi} & H^*(X;Q_W) \ar[u] \ar[r] & 0 \\
0 \ar[r] & Q_W \ar[r]^(.4){\bar{y}_Q}\ar[u]^{\bar{a} } & \widehat{H}^*(X,L;Q_W) \ar[r]^(.6){\rho_Q}\ar[u]^a & H^*(X;Q_W) \ar[u]^\wr \ar[r] & 0 \\
& 0 \ar[u] & 0 \ar[u] & 0 \ar[u]
}
\end{equation}
Here, $\bar a : Q_W \to R_W$ is the inclusion and $\bar q : R_W \to R_W/Q_W$ is the quotient map. Let \[P:H^*(C(\mathfrak{i}))\rightarrow R_W,\] be a left inverse to the map $\Bar{x}$ from the diagram~\eqref{commutative_diagram} satisfying the following two conditions. The first condition is that
\begin{equation}\label{first_condition}
    \Bar{q}\circ P=\overline{P}.
\end{equation}
This condition and the exactness of the diagram~\eqref{commutative_diagram} imply that there exists a unique $P_Q:\widehat{H}^*(X,L;Q_W)\rightarrow Q_W$ such that the following diagram commutes
\begin{equation}\label{eq:PQ}
\xymatrix{
R_W  & H^*(C(\mathfrak{i})) \ar[l]_P \\
 Q_W \ar[u]^{\Bar{a}}  & \widehat{H}^*(X,L;Q_W) \ar[u]^a \ar[l]_(.55){P_Q} \\
}
\end{equation}
The second condition is that there exists $P_\R : \widehat{H}^*(X,L;\R) \rightarrow\R$, such that
\begin{equation}\label{eq:PR}
P_Q = P_\R \otimes \id_Q.
\end{equation}
The following is Lemma 4.10 from \cite{RelativeQuantumCohomology}.
\begin{lm}
    $P_Q\circ \Bar{y}_Q=\id$.
\end{lm}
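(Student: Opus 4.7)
The plan is to chase the commutative diagram~\eqref{commutative_diagram} together with the defining diagram~\eqref{eq:PQ} for $P_Q$, exploiting that $\bar a : Q_W \hookrightarrow R_W$ is injective. The key relations to combine are: commutativity of the lower-left square of~\eqref{commutative_diagram}, which yields $a \circ \bar y_Q = \bar x \circ \bar a$; the hypothesis that $P$ is a left inverse of $\bar x$, giving $P \circ \bar x = \id_{R_W}$; and the defining relation $\bar a \circ P_Q = P \circ a$ of~\eqref{eq:PQ}.

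Concretely, I would compute
\[
\bar a \circ P_Q \circ \bar y_Q = P \circ a \circ \bar y_Q = P \circ \bar x \circ \bar a = \bar a,
\]
where the first equality uses~\eqref{eq:PQ}, the second uses the commutativity of~\eqref{commutative_diagram}, and the third uses that $P$ is a left inverse to $\bar x$. Since $\bar a$ is the inclusion of $Q_W$ into $R_W$, it is injective, so one may cancel $\bar a$ on the left to conclude $P_Q \circ \bar y_Q = \id_{Q_W}$, which is the claim.

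There is essentially no obstacle here: the statement is a formal diagram chase, and once the two squares are written down the conclusion is immediate. The only minor point worth verifying carefully is that the map $P_Q$ produced by the universal property in~\eqref{eq:PQ} is truly determined by the equation $\bar a \circ P_Q = P \circ a$ (so that the diagram chase applies to this specific $P_Q$ and not to some other lift), but this follows directly from the injectivity of $\bar a$, which also underwrites the final cancellation step.
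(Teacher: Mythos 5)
Your proof is correct. The argument is a clean diagram chase: from the commuting square $\bar a\circ P_Q=P\circ a$ of diagram~\eqref{eq:PQ}, the commuting square $a\circ \bar y_Q=\bar x\circ\bar a$ of diagram~\eqref{commutative_diagram}, and the left-inverse property $P\circ\bar x=\id$, one gets $\bar a\circ P_Q\circ\bar y_Q=\bar a$, and injectivity of the inclusion $\bar a$ lets you cancel to conclude. The present paper does not reproduce a proof of this statement (it simply cites Lemma~4.10 of~\cite{RelativeQuantumCohomology}), so there is nothing to compare against in the text, but your chase is the natural one and is complete; the remark you make about well-definedness of $P_Q$ being underwritten by injectivity of $\bar a$ is also accurate, since the paper already asserts existence and uniqueness of $P_Q$ via exactness of~\eqref{commutative_diagram}.
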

The following is Lemma 4.11 from \cite{RelativeQuantumCohomology}.
\begin{lm}\label{lm:PRP}
Let $l:\widehat{H}^*(X,L;\R)\rightarrow \R$ satisfy $l\circ y=\id.$ There exists a unique choice of $P:H^*(C(\mathfrak{i}))\rightarrow R_W$ satisfying conditions~\eqref{first_condition} and~\eqref{eq:PR} such that $l=P_\R$. Moreover, $\ker P= a(\ker P_Q)$.
\end{lm}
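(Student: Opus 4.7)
The plan is to build $P$ by explicitly specifying what it does on a complement to $\bar{x}(R_W) \subset H^*(C(\mathfrak{i}))$, using $l$ to rigidify the construction. First, set $P_\R := l$ and define $P_Q := l \otimes \id_Q \colon \widehat{H}^*(X,L;Q_W) \to Q_W$. This is $Q_W$-linear by construction, and $P_Q \circ \bar{y}_Q = \id$ because $l \circ y = \id$. Any $P$ satisfying~\eqref{first_condition} and~\eqref{eq:PR} with $P_\R = l$ must make the diagram~\eqref{eq:PQ} commute for this $P_Q$; thus existence/uniqueness of $P$ reduces to producing a unique splitting of the middle row of~\eqref{commutative_diagram} compatible with both $\overline{P}$ (modulo $Q_W$) and $P_Q$ (after pulling back through $a$).

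To construct $P,$ choose any $\R$-linear section $\sigma \colon H^*(X;Q_W) \to H^*(C(\mathfrak{i}))$ of $\pi$, so that every element of $H^*(C(\mathfrak{i}))$ has a unique expression $\bar{x}(r) + \sigma(\xi)$. I will define $P(\bar{x}(r)+\sigma(\xi)) := r + P(\sigma(\xi))$ where $P(\sigma(\xi))$ is forced as follows. Pick any $\alpha \in \widehat{H}^*(X,L;Q_W)$ with $\rho_Q(\alpha) = \xi$; since $\pi \circ a = \rho_Q$, there is a unique $r_\alpha \in R_W$ with $a(\alpha) - \sigma(\xi) = \bar{x}(r_\alpha)$. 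Set
\[
P(\sigma(\xi)) := P_Q(\alpha) - r_\alpha.
\]
Well-definedness: a different lift $\alpha' = \alpha + \bar{y}_Q(q)$ for some $q \in Q_W$ gives $r_{\alpha'} = r_\alpha + q$ (using $a\circ \bar{y}_Q = \bar{x}\circ \bar{a}$) and $P_Q(\alpha') = P_Q(\alpha) + q$, so the difference $P_Q(\alpha) - r_\alpha$ is unchanged. Condition~\eqref{first_condition} holds because $\overline{P}\circ a = 0$ (exactness of the middle column) gives $\overline{P}(\sigma(\xi)) = -\overline{P}(\bar{x}(r_\alpha)) = -\bar{q}(r_\alpha) = \bar{q}(P(\sigma(\xi)))$, using $\overline{P}\circ\bar{x} = \bar{q}$. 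By construction $P\circ a = \bar{a}\circ P_Q$, so the $P_Q$ associated to $P$ via~\eqref{eq:PQ} equals $l\otimes\id_Q$, and condition~\eqref{eq:PR} holds with $P_\R = l$.

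For uniqueness, if $P'$ is another left inverse of $\bar{x}$ satisfying both conditions with the same $P_\R$, then $P' - P$ vanishes on $\bar{x}(R_W)$, factors through $\pi$, and composing with $\bar{q}$ gives $0 = \overline{P} - \overline{P}$, so $(P'-P)$ takes values in $Q_W$. Pulling back through $a$ gives $\bar{a}\circ(P'_Q - P_Q) = 0$; since both $P'_Q$ and $P_Q$ equal $l\otimes\id_Q$ by hypothesis, $P'-P$ vanishes on $a(\widehat{H}^*(X,L;Q_W))$, hence on $\rho_Q(\widehat{H}^*(X,L;Q_W)) = H^*(X;Q_W)$, so $P' = P$. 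Finally, for the kernel statement, $a(\ker P_Q)\subset \ker P$ follows from $P\circ a = \bar{a}\circ P_Q$ and injectivity of $\bar{a}$; conversely, if $P(c) = 0$ then condition~\eqref{first_condition} forces $\overline{P}(c) = 0$, so $c\in \ker\overline{P} = \mathrm{im}(a)$ by exactness of the middle column, and the unique $\alpha$ with $a(\alpha) = c$ satisfies $\bar{a}(P_Q(\alpha)) = P(a(\alpha)) = 0$, hence $\alpha \in \ker P_Q$. The only step I expect to require care is the independence of $P(\sigma(\xi))$ on the lift $\alpha$, since this is where the compatibility between the splittings on the $R_W$-side and the $Q_W$-side is enforced; everything else is a diagram chase.
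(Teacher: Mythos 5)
The paper does not prove this lemma; it imports it verbatim as Lemma~4.11 of \cite{RelativeQuantumCohomology}, so there is no internal argument to compare against. Your reconstruction is correct: the decomposition $c = \bar{x}(r) + \sigma(\xi)$, the forced formula for $P(\sigma(\xi))$, the well-definedness check via $a\circ\bar{y}_Q=\bar{x}\circ\bar{a}$ and $P_Q\circ\bar{y}_Q=\id$, the verification of~\eqref{first_condition} using $\overline{P}\circ a = 0$ and $\bar{q}\circ\bar{a}=0$, the uniqueness argument from vanishing of $P'-P$ on $\bar{x}(R_W)+a(\widehat{H}^*(X,L;Q_W))=H^*(C(\mathfrak{i}))$, and both inclusions for $\ker P = a(\ker P_Q)$ all hold.

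Two small remarks. First, in the well-definedness check the precise statement is $r_{\alpha'}=r_\alpha+\bar{a}(q)$, and $P(\sigma(\xi))$ should be read as $\bar{a}(P_Q(\alpha))-r_\alpha\in R_W$; the cancellation is then $\bar{a}(q)-\bar{a}(q)$. Second, $P$ must be $Q_W$-linear (all maps in~\eqref{commutative_diagram} are), but your $\sigma$ is only declared $\R$-linear, so $Q_W$-linearity of $P(\bar{x}(r)+\sigma(\xi)):=r+P(\sigma(\xi))$ is not immediate from that formula. This is harmless, though, because your own computation shows the result is independent of $\sigma$: given $c$, choose any $\alpha$ with $\rho_Q(\alpha)=\pi(c)$, let $s$ be the unique element with $\bar{x}(s)=a(\alpha)-c$, and set $P(c):=\bar{a}(P_Q(\alpha))-s$. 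That formula is manifestly $Q_W$-linear and lets you dispense with $\sigma$ entirely, which would streamline the write-up.
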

Let $\Psi \in C(\mathfrak{i})$ be the relative potential defined in Section 1.3.1 in \cite{RelativeQuantumCohomology}. The definition of $\overline{\Omega}$ given in Section 1.3.3 in~\cite{RelativeQuantumCohomology} is
\begin{equation}\label{P_psi}
    \overline{\Omega}= P\Psi.
\end{equation}
This together with~\eqref{eq:esup} and Lemma~\ref{lm:PRP} makes precise the dependence of the invariants $\ogw_{\beta,k}$ on $P_\R.$ To quantify this dependence, we recall another lemma.

Denote by $\rho^*: Q_U\rightarrow Q_W$ the inclusion map. Recall the map $\pi$ from diagram~\eqref{commutative_diagram}.
The following is Lemma 5.9 from \cite{RelativeQuantumCohomology}
\begin{lm}\label{pi_psi}
    $\pi(\Psi)=\rho^*(\nabla \Phi).$
\end{lm}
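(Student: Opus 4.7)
The plan is to unpack the definition of the relative potential $\Psi$ from Section~1.3 of~\cite{RelativeQuantumCohomology} and compare its closed component to the gradient of $\Phi$ term by term. Writing an element of the cone $C(\mathfrak{i}) = A^*(X;Q_W)\oplus R_W[-n-1]$ as a pair $(\eta,\xi)$, the projection $\pi$ sends $[(\eta,\xi)] \in H^*(C(\mathfrak{i}))$ to the cohomology class $[\eta]\in H^*(X;Q_W)$. Thus the assertion amounts to identifying the cohomology class of the first component of~$\Psi$ with $\rho^*(\nabla\Phi)$.

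The first step would be to recall from~\cite{RelativeQuantumCohomology} that the first component $\Psi^{\mathrm{cl}}$ of $\Psi$ is assembled out of $J$-holomorphic sphere counts in exactly the same way as the closed potential $\Phi$ is assembled, but with one slot left free and paired against Poincar\'e dual forms on $X$. Concretely, one expects a formula of the shape
\[
[\Psi^{\mathrm{cl}}] \;=\; \sum_{\substack{\beta \in H_2(X;\Z)\\ r_0,\ldots,r_N\geq 0}} \frac{T^{\varpi(\beta)}\, t_0^{r_0}\cdots t_N^{r_N}}{r_0!\cdots r_N!}\sum_{i,j} g^{ij}\,\gw_\beta(\Delta_0^{\otimes r_0}\otimes\cdots\otimes \Delta_N^{\otimes r_N}\otimes \Delta_i)\,\Delta_j \;\in\; H^*(X;Q_W).
\]
The remaining $R_W[-n-1]$ component of $\Psi$ is irrelevant for $\pi(\Psi)$ and may be ignored at this stage.

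The second step is to compute $\rho^*(\nabla \Phi)$ directly from the definition of $\Phi$ given in Section~\ref{potential}. Since
\[
\partial_{t_i}\Phi \;=\; \sum_{\substack{\beta\in H_2(X;\Z)\\ r_0,\ldots,r_N \geq 0}}\frac{T^{\varpi(\beta)}\, t_0^{r_0}\cdots t_N^{r_N}}{r_0!\cdots r_N!}\,\gw_\beta(\Delta_0^{\otimes r_0}\otimes\cdots\otimes\Delta_N^{\otimes r_N}\otimes\Delta_i),
\]
one has $\nabla\Phi = \sum_{i,j} g^{ij}(\partial_{t_i}\Phi)\,\Delta_j$, which after applying $\rho^* : Q_U \to Q_W$ coincides coefficient-by-coefficient with the expression obtained in the first step. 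Matching the two establishes the lemma.

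The only real subtlety will be that the first component of $\Psi$ must be read off correctly from~\cite{RelativeQuantumCohomology}: in particular, it is a chain-level object constructed from pushforwards along $ev_0$ and one must pass to cohomology and verify that the resulting class has the explicit form written above. This is the main obstacle, since it relies on interpreting the cone $C(\mathfrak{i})$ construction carefully, but once the formula for $[\Psi^{\mathrm{cl}}]$ is in hand the comparison with $\rho^*(\nabla\Phi)$ is immediate from the defining formula of $\Phi$.
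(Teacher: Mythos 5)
The paper does not prove this statement; it is cited verbatim as Lemma~5.9 of~\cite{RelativeQuantumCohomology}, so there is no in-paper proof to compare against.

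On its own terms, your proposal has a genuine gap. You correctly reduce the problem to computing the cohomology class of the first (closed) component of $\Psi$, and your ``second step'' --- matching that class coefficient-by-coefficient against $\rho^*(\nabla\Phi)$ using the definition of $\Phi$ in Section~\ref{potential} --- is fine but trivial. The substance is entirely in the ``first step,'' and there you do not actually compute anything: you write ``one expects a formula of the shape'' and then record precisely the formula that makes the lemma hold. That is assuming the conclusion. In~\cite{RelativeQuantumCohomology}, $\Psi$ is a chain-level object built from pushforwards over moduli spaces of stable spheres (not a cohomology class defined via GW numbers), and the content of Lemma~5.9 is that, after passing to cohomology, the $A^*(X;Q_W)$-component of $\Psi$ equals $\rho^*(\nabla\Phi)$. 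That requires identifying the relevant fiber integrals with closed Gromov--Witten invariants, handling the combinatorics of the $t_i$'s and the factorials, and checking that the $Q_U\hookrightarrow Q_W$ coefficient extension is exactly $\rho^*$. None of this appears in your sketch.

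You do flag the issue yourself (``the only real subtlety\ldots''), which is honest, but it also means the proposal is a plan rather than a proof: it identifies the target identity for $[\Psi^{\mathrm{cl}}]$ without deriving it. To turn this into an argument you would need to recall the actual chain-level formula for $\Psi$ from Section~1.3.1 of~\cite{RelativeQuantumCohomology} and perform the pushforward-to-GW-number identification explicitly; as written, the critical step is asserted, not proved.
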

\begin{proof}[Proof of Proposition~\ref{prop:general_map_p}]
Apply Lemma~\ref{lm:PRP} to obtain maps $P,P,'$ corresponding to $P_\R,P_\R',$ that are left inverses to $\Bar{x}$ and satisfy conditions~\eqref{first_condition} and~\eqref{eq:PR}. Denote by $P_Q, P_Q'$ the corresponding maps from diagram \eqref{eq:PQ}. Since
\[
(P-P')\circ \Bar{x}=0, \quad (P_Q-P_Q')\circ\Bar{y}_Q =0,
\]
the maps $P-P'$, $P_Q-P_Q',$ factor through $H^*(X;Q_W)$. Consequently, there exist unique maps $\mathfrak{p},\mathfrak{p}_Q,$ such that the following diagram commutes.
\begin{equation}\label{diagram_curve}
\begin{tikzcd}
	R_W && H^*(C(\mathfrak{i})) && H^*(X;Q_W) \\
	\\
	Q_W && \widehat{H}^*(X,L;Q_W) && H^*(X;Q_W)
	\arrow["{P_Q-P_Q'}"', from=3-3, to=3-1]
	\arrow["\rho_Q", from=3-3, to=3-5]
	\arrow["\mathfrak{p}_Q", shift left=1, curve={height=-30pt}, from=3-5, to=3-1]
	\arrow["a", from=3-3, to=1-3]
	\arrow["\Bar{a}", from=3-1, to=1-1]
	\arrow["\wr", from=3-5, to=1-5]
	\arrow["{P-P'}"', from=1-3, to=1-1]
	\arrow["\mathfrak{p}"', shift right=1, curve={height=30pt}, from=1-5, to=1-1]
	\arrow["\pi", from=1-3, to=1-5]
\end{tikzcd}
\end{equation}
Indeed, since \[(P_Q-P_Q')\circ \Bar{y}_Q=0, \quad\rho_Q\circ \Bar{y}_Q=0,\] then
$(P_Q-P_Q')(\ker\rho_Q)=0.$ By diagram~\eqref{commutative_diagram} $\rho_Q$ is surjective. Thus, for every $\Tilde{\eta}\in \rho_Q^{-1}(\eta)$ we define $\mathfrak{p}_Q(\eta)= (P_Q-P_Q')(\Tilde{\eta})$. Hence, $\mathfrak{p}_Q$ is determined by $P_Q$ and $P_Q'.$ A similar argument applies for $\mathfrak{p}.$
Since
\begin{align*}
(\mathfrak{p}_\R\otimes\id_Q)\circ \rho_Q&=(\mathfrak{p}_\R\otimes\id_Q)\circ (\rho\otimes \id_Q)\\
&= (\mathfrak{p}_\R\circ\rho)\otimes\id_Q\\
&=(P_\R-P_\R')\otimes \id_Q\\
&=P_Q-P_Q',
\end{align*}
it follows by the uniqueness of $\mathfrak{p}_Q$ that
\[
\mathfrak{p}_Q=\mathfrak{p}_\R\otimes \id_Q.
\]
Let $\ogw,$ $ \ogw'$ be the Gromov-Witten invariants correspond to $P$ and $P'$ respectively, and let $\overline{\Omega}$, $\overline{\Omega}'$ be the corresponding superpotentials.

From multilinearity of $\ogw_{\beta,k}$ is suffices to prove this proposition for $A_i=\Gamma_i.$
By~\eqref{P_psi}, diagram~\eqref{diagram_curve}, and Lemma~\ref{pi_psi} we get
\begin{align*}
    \overline{\Omega}-\overline{\Omega}'&=(P-P')\Psi\\
    &=(\mathfrak{p}\circ \pi)\Psi\\
    &=\mathfrak{p}(\rho^*(\nabla \Phi))\\
    &=\Bar{a}\circ \mathfrak{p}_Q(\rho^*(\nabla \Phi)).
\end{align*}
Recall that $[T^\beta]$ denotes the coefficient of $T^\beta.$ So,
\[[T^\beta]\partial_{t_{i_1}}\cdots\partial_{t_{i_l}}(\overline{\Omega}-\overline{\Omega}')|_{s=t_i=0}=[T^\beta]\partial_{t_{i_1}}\cdots\partial_{t_{i_l}}(\Bar{a}\circ \mathfrak{p}_Q(\rho^*(\nabla \Phi)))|_{s=t_i=0}.\]
By~\eqref{eq:esup} we have
\[[T^\beta]\partial_{t_{i_1}}\cdots\partial_{t_{i_l}}(\overline{\Omega}-\overline{\Omega}')|_{s=t_i=0}=\ogw_{\beta,0}(\Gamma_{t_{i_1}},\ldots,\Gamma_{t_{i_l}})-\ogw\,'\!\!_{\beta,0}(\Gamma_{i_1},\ldots,\Gamma_{i_l}).\]
Since $\mathfrak{p}_Q=\mathfrak{p}_\R\otimes \id_Q$ and $\nabla\Phi=g^{ij}\Delta_j\partial_i\Phi,$ it follows that
\begin{align*}
    [T^\beta]\partial_{t_{i_1}}\cdots\partial_{t_{i_l}}(\Bar{a}\circ \mathfrak{p}_Q(\rho^*(\nabla \Phi)))|_{s=t_i=0}&=[T^\beta]\partial_{t_{i_1}}\cdots\partial_{t_{i_l}}(\Bar{a}\circ \mathfrak{p}_Q(\rho^*(g^{ij}\Delta_j\partial_i\Phi)))|_{s=t_i=0}\\
    &=[T^\beta]\partial_{t_{i_1}}\cdots\partial_{t_{i_l}}( \mathfrak{p}_\R(\Delta_j)g^{ij}\partial_i\Phi)|_{s=t_i=0}\\
    &=\sum_{\substack{\Tilde{\beta}\in H_2(X;\Z)\\ \varpi(\Tilde{\beta})=\beta}} g^{ij}\gw_{\Tilde{\beta}}(\Delta_i,\Delta_{i_1},\ldots,\Delta_{i_l})\mathfrak{p}_\R(\Delta_j),
\end{align*}
 which completes the proof.
\end{proof}
\begin{proof}[Proof of Proposition~\ref{prop_map_p}]Lemma~\ref{lm:relhom} asserts that $\varpi$ is given by multiplication by $4$. Recall the choice of the basis $\Gamma_i \in \widehat H^*(\cp^3,L_\tr;\R)$ from Section~\ref{section1}. Since $P_\R(\Gamma_i)\ne0$ only if $i=2,$ it follows that $\mathfrak{p}_\R(\Delta_i)\ne0$ only if $i=2.$
Hence, since $g^{ij}=\delta_{i,3-j}$, it follows by Proposition~\ref{prop:general_map_p}, Lemma~\ref{divisor_for_chiang} and the closed divisor axiom Proposition~\ref{prop:closed_axioms}\ref{closed_divisor_ax}, that
\[
\ogw_{\beta,0}(\Gamma_{i_1},\ldots,\Gamma_{i_l})-\ogw\,'\!\!_{\beta,0}(\Gamma_{i_1},\ldots,\Gamma_{i_l})=\frac{\beta}{4}\gw_{\frac{\beta}{4}}(\Delta_{i_1},\ldots,\Delta_{i_l})\mathfrak{p}_\R(\Delta_2).
\]
\end{proof}
\begin{proof}[Proof of Corollary~\ref{cor:map_p}]
Assume there exists a map $P'_\R$ such that $\ogw_{\beta,k}'$ vanishes where $k=0$ and $\beta\in\Ima\varpi.$ Hence, by Proposition~\ref{prop_map_p} we get
\[\ogw_{\beta,0}(\Gamma_{i_1},\ldots,\Gamma_{i_l})=\frac{\beta}{4}\gw_{\frac{\beta}{4}}(\Delta_{i_1},\ldots,\Delta_{i_l})\mathfrak{p}_\R(\Delta_2)\] where $\beta\in\Ima\varpi.$ By Table~\ref{table:interior} we have
\[
\ogw_{4,0}(\Gamma_3,\Gamma_3)=\frac{1}{4},\quad \ogw_{4,0}(\Gamma_2,\Gamma_2,\Gamma_3)=\frac{11}{32}.
\]
Since the number of lines through two points is $\gw_{1}(\Delta_3,\Delta_3)=1,$ we get
\[
\mathfrak{p}_\R(\Delta_2) =\frac{1}{4}.
\]
On the other hand, the number of lines through a point and two lines is $\gw_{1}(\Delta_2,\Delta_2,\Delta_3)=1,$  so
\[
\mathfrak{p}_\R(\Delta_2) =\frac{11}{32},
\]
which is a contradiction.

\end{proof}

\bibliography{references.bib}
\bibliographystyle{amsabbrvcnobysame}
\end{document}